\NewDocumentCommand{\makeabbrev}{mmm}
 {
  \yoruk_makeabbrev:nnn { #1 } { #2 } { #3 }
 }
\makeabbrev{\textbf}{tbf#1}{a,b,c,d,e,f,g,h,i,j,k,l,m,n,o,p,q,r,s,t,u,v,w,x,y,z,A,B,C,D,E,F,G,H,I,J,K,L,M,N,O,P,Q,R,S,T,U,V,W,X,Y,Z}
\makeabbrev{\textbf}{bf#1}{a,b,c,d,e,f,g,h,i,j,k,l,m,n,o,p,q,r,s,t,u,v,w,x,y,z,A,B,C,D,E,F,G,H,I,J,K,L,M,N,O,P,Q,R,S,T,U,V,W,X,Y,Z}
\makeabbrev{\textsf}{tsf#1}{a,b,c,d,e,f,g,h,i,j,k,l,m,n,o,p,q,r,s,t,u,v,w,x,y,z,A,B,C,D,E,F,G,H,I,J,K,L,M,N,O,P,Q,R,S,T,U,V,W,X,Y,Z}
\makeabbrev{\mathsf}{mss#1}{a,b,c,d,e,f,g,h,i,j,k,l,m,n,o,p,q,r,s,t,u,v,w,x,y,z,A,B,C,D,E,F,G,H,I,J,K,L,M,N,O,P,Q,R,S,T,U,V,W,X,Y,Z}
\makeabbrev{\mathfrak}{mf#1}{a,b,c,d,e,f,g,h,i,j,k,l,m,n,o,p,q,r,s,t,u,v,w,x,y,z,A,B,C,D,E,F,G,H,I,J,K,L,M,N,O,P,Q,R,S,T,U,V,W,X,Y,Z}
\makeabbrev{\mathrm}{mrm#1}{a,b,c,d,e,f,g,h,i,j,k,l,m,n,o,p,q,r,s,t,u,v,w,x,y,z,A,B,C,D,E,F,G,H,I,J,K,L,M,N,O,P,Q,R,S,T,U,V,W,X,Y,Z}
\makeabbrev{\mathbf}{mbf#1}{a,b,c,d,e,f,g,h,i,j,k,l,m,n,o,p,q,r,s,t,u,v,w,x,y,z,A,B,C,D,E,F,G,H,I,J,K,L,M,N,O,P,Q,R,S,T,U,V,W,X,Y,Z}
\makeabbrev{\mathcal}{mc#1}{A,B,C,D,E,F,G,H,I,J,K,L,M,N,O,P,Q,R,S,T,U,V,W,X,Y,Z}
\makeabbrev{\mathbb}{mbb#1}{A,B,C,D,E,F,G,H,I,J,K,L,M,N,O,P,Q,R,S,T,U,V,W,X,Y,Z}
\makeabbrev{\mathscr}{ms#1}{A,B,C,D,E,F,G,H,I,J,K,L,M,N,O,P,Q,R,S,T,U,V,W,X,Y,Z}
\makeabbrev{\mathrm}{#1}{
Id,id,ran,rk,diag,stab,ann,conv,pr,ev,tr,End,Hom,sgn,im,op,can,fin,ext,red,tot,
%
rot,usc,lsc,Lip,lip,bSymLip,osc,AC,loc,coz,z,Tr,
%
supp,Opt,Adm,Cpl,Geo,GeoOpt,GeoAdm,GeoCpl,reg,res,
%
bd,co,Ric,Exp,dExp,dist,seg,Seg,cut,fcut,Cut,SDiff,Iso,Isom,diam,cl,Homeo,Diff,Der,vol,dvol,inj,relint, Gph, sub,
%
var,law,Var,Poi,Gam,pa,so,iso,fs,inv,dinv,pqi,mix,erg,cov,Ber,mme,Mosco,
TestF,
}
\makeabbrev{\mathsf}{#1}{CD,BE,MCP,Ent,wMTW,MTW,Ch,RCD,EVI,Rad,dRad,SL,cSL,dSL,ScL,Irr,SC,wFe,VA}
\makeabbrev{\mathsc}{#1}{mmaf,cg,fv,df,g}
\DeclareMathOperator{\cenv}{c-env}
\newcommand{\Leb}{\msL}
\newcommand{\eps}{\varepsilon}
\newcommand{\defeq}{\eqqcolon}
\renewcommand{\complement}{\mathrm{c}}
\newcommand{\mathsc}[1]{\text{\textsc{#1}}}
\newcommand{\emparg}{{\,\cdot\,}}
\newcommand{\forallae}[1]{{\textrm{\,for ${#1}$-a.e.~}}}
\newcommand{\as}[1]{\quad #1\text{-a.e.}}
\newcommand{\dom}[1]{D(#1)}
\DeclareMathOperator{\eqdef}{\coloneqq}
\DeclareMathOperator*{\argmin}{argmin}
\DeclareMathOperator*{\Limsup}{Limsup}
\DeclareMathOperator*{\Liminf}{Liminf}
\DeclareMathOperator*{\Lim}{Lim}
\let\epsilon\varepsilon
\newcommand{\longrar}{\longrightarrow}
\newcommand{\nlim}{\lim_{n}}								
\newcommand{\diff}{\mathop{}\!\mathrm{d}}						
\newcommand{\tabs}[1]{\big\lvert#1\big\rvert}	
\newcommand{\abs}[1]{\left\lvert#1\right\rvert}						
\newcommand{\tnorm}[1]{\big\lVert#1\big\rVert}					
\newcommand{\norm}[1]{\left\lVert#1\right\rVert}					
\newcommand{\set}[1]{\left\{#1\right\}}							
\newcommand{\paren}[1]{\left(#1\right)}							
\newcommand{\tparen}[1]{\big({#1}\big)}
\newcommand{\braket}[1]{\left[#1\right]}							
\newcommand{\sym}[1]{{\scriptscriptstyle{(#1)}}}
\newcommand{\tscalar}[2]{\big\langle #1 \, \big |\, #2\big\rangle}			
\newcommand{\ttscalar}[2]{\langle #1 \, |\, #2\rangle}			
\newcommand{\scalar}[2]{\left\langle #1 \,\middle |\, #2\right\rangle}		
\newcommand{\seq}[1]{\paren{#1}}								
\DeclareMathOperator*{\esssup}{esssup}
\newcommand{\tperp}{{\scriptscriptstyle{\perp}}}
\newcommand{\tym}[1]{{\scriptscriptstyle{\times #1}}}
\newcommand{\opl}[1]{{\scriptscriptstyle{\oplus #1}}}
\DeclareMathOperator{\car}{\mathds{1}}
\newcommand{\cproj}{P}
\newcommand{\aproj}{Q}
\DeclareMathOperator{\emp}{\varnothing}
\newcommand{\N}{{\mathbb N}}
\newcommand{\R}{{\mathbb R}}
\newcommand{\restr}{\big\lvert}
\newcommand{\tleq}{{\scriptscriptstyle{\leq}}}
\newcommand{\comma}{\,\mathrm{,}\;\,}
\newcommand{\semicolon}{\,\mathrm{;}\;\,}
\newcommand{\fstop}{\,\mathrm{.}}
\newcommand{\ground}[1]{#1_\circ}
\let\temp\phi
\let\phi\varphi
\let\varphi\temp
\numberwithin{equation}{section}
\theoremstyle{plain}
\newtheorem{theorem}{Theorem}[section]
\newtheorem*{thm*}{Theorem}
\newtheorem*{mthm*}{Main Theorem}
\newtheorem{proposition}[theorem]{Proposition}
\newtheorem{lemma}[theorem]{Lemma}
\newtheorem{corollary}[theorem]{Corollary}
\theoremstyle{definition}
\newtheorem{definition}[theorem]{Definition}
\newtheorem*{defs*}{Definition}
\theoremstyle{remark}
\newtheorem{remark}[theorem]{Remark}
\newtheorem{example}[theorem]{Example}
\newtheorem*{ass*}{Assumption}
\renewcommand{\paragraph}[1]{\medskip \emph{#1.} \ }
\newcommand{\nparagraph}[1]{\medskip \emph{#1}}
\begin{document}

\title[Non-bilinear Dirichlet Functionals I]{Non-bilinear Dirichlet Functionals:\\Markovianity, locality, invariance}

\author{Giovanni Brigati}
\address{Institute of Science and Technology Austria\\ Am Campus~1, 3400 Klosterneuburg, Austria}

\author{Lorenzo Dello Schiavo}
\address{Dipartimento di Matematica -- Universit\`a degli Studi di Roma ``Tor Vergata''\\ Via della Ricerca Scientifica~1, 00133 Rome, Italy}

\begin{abstract}
We present in a unified setting the foundations for a theory of non-bilinear Dirichlet functionals on Hilbert spaces. 
We prove known and new equivalences between non-linear semigroups, non-linear resolvents, non-linear generators, and their energy functionals, including a complete characterization of Markovianity.
We introduce and characterize strong notions of invariance for general lower semicontinuous convex functionals, and notions of locality and strong locality for non-bilinear Dirichlet functionals.
Contrary to many partial results in the literature, these characterizations are complete and correctly extend the analogous assertions for the bilinear case in full generality.
\end{abstract}

\subjclass[2020]{47H05, 47H20 (Primary) 31C25, 47D07, 60J46, 31C45 (Secondary)}
\keywords{non-bilinear Dirichlet forms; non-linear semigroups; monotone operators; non-linear Markov processes.}

\maketitle

\vspace{.5cm}
\begin{center}
\today
\end{center}
\vspace{.5cm}

\setcounter{tocdepth}{2}
\tableofcontents

\section{Introduction}
Let~$(X,\mfA,\mssm)$ be a $\sigma$-finite measure space.
A closed, symmetric, and densely defined bilinear form~$E\colon \dom{E}^\tym{2}\to (-\infty,\infty]$ on~$L^2_\mssm$ is a (\emph{symmetric}) \emph{Dirichlet form} if the associated quadratic functional~$E(u)\eqdef E(u,u)$ satisfies
\[
E(u_+ \wedge 1) \leq E(u)\comma \qquad u\in\dom{E}\fstop
\]
Every such form is uniquely associated with a strongly continuous contraction semigroup~$P_\bullet\eqdef \seq{P_t}_{t\geq 0}$ of bounded (non-negative self-adjoint) operators~$P_t\colon L^2_\mssm\to L^2_\mssm$ for~$t\geq 0$, satisfying the (\emph{sub-})\emph{Markov property}
\[
0\leq f \leq 1 \implies 0\leq P_t f \leq 1 \comma \qquad f\in L^2_\mssm\comma \qquad t\geq 0\fstop
\]
Under additional assumptions of a topological nature, $P_\bullet$~is in turn the transition semigroup of an $X$-valued $\mssm$-symmetric Markov process.

This correspondence provides a powerful unifying analytic framework for the study of Markov processes, potential theory, and the probabilistic representation of solutions to linear partial differential equations; see, e.g., the monographs~\cite{MaRoe92,BouHir91,FukOshTak11,CheFuk11}.

\medskip

In recent years, the interest has grown in \emph{non-linear} versions of the theory, providing a suite of analytical tools in the study of solutions to certain nonlinear stochastic differential equations.
On the analytic side: 
\emph{non\emph{(}bi\emph{)}linear Dirichlet functionals} have been originally considered by Bénilan and Picard~\cite{BenPic79}, and much later by Claus~\cite{Cla21}, by Brigati and Hartarsky~\cite{BriHar22}, by Puchert~\cite{Puc25} and by Schmidt and Zimmermann~\cite{SchZim25};
\emph{nonquadratic Dirichlet forms} by van Beusekom~\cite{vBe94}, and more recently by Beznea, Beznea, and~R\"ockner~\cite{BezBezRoe24}; \emph{$p$-energy forms} by Kigami~\cite{Kig23} and by Kajino and Shimizu~\cite{KajShi24}; \emph{non-quadratic resistance forms} by Puchert and Schmidt~\cite{PucSch25}.
On the stochastic side, \emph{nonlinear Markov processes} have been considered by Rehmeier and Röckner~\cite{RehRoe25}.
However, a systematic link between analytic and stochastic aspects has not yet been established.

\paragraph{Convex lower semicontinuous functionals}
Since the seminal work of K\={o}mura~\cite{Kom67}, Crandall and Pazy~\cite{CraPaz69}, Crandall and Ligget~\cite{CraLig71}, and Br\'ezis~\cite{Bre71}, an equivalence has been established among:
\begin{itemize}
\item[$(T)$] strongly continuous non-expansive non-linear semigroups (see Dfn.~\ref{d:Semigroup}), 
\item[$(A)$] maximal monotone generators (see Dfn.~\ref{d:Generator}),
\item[$(J)$] strongly continuous non-expansive non-linear resolvents (see Dfn.~\ref{d:Resolvent});
\end{itemize}
and, when the generator is additionally cyclically monotone, 
\begin{itemize}
\item[$(E)$] proper convex lower semicontinuous functionals.
\end{itemize}
We call semigroups, generators, resolvents, and functionals collectively \emph{TAJE-operators}.

\paragraph{Markovianity and Dirichlet functionals}
When TAJE-operators are defined on a Lebesgue space, its natural Banach-lattice structure may be used to introduce further properties of the operators.
The main property of our interest is Markovianity, in this generality firstly put forward as a fundamental concept by Cipriani and Grillo in~\cite{CipGri03}.
(For a comparison of this notion with the weaker one considered by van~Beusekom in~\cite{vBe94}, see Remark~\ref{r:SubMarkovUnivariate}.)

Let~$C$ be a convex subset of~$L^2_\mssm$.
A strongly continuous non-expansive non-linear semigroup~$T_\bullet\eqdef \seq{T_t}_{t\geq 0}$, with~$T_t\colon C\to  L^2_\mssm$, is called \emph{sub-Markovian} if it is both:
\emph{order-preserving}, i.e.\ for every~$u,v\in C$ 
\[
u\leq v \implies T_t(u)\leq T_t(v)\comma \qquad t\in [0,\infty)\comma
\]
and
\emph{$L^\infty$-non-expansive}, i.e.\ for every~$u,v\in C\cap L^\infty_\mssm$ 
\[
\norm{T_t(u)-T_t(v)}_{L^\infty_\mssm}\leq \norm{u-v}_{L^\infty_\mssm} \comma \qquad t\in [0,\infty) \fstop
\]

Following~\cite{CipGri03}, we say that a proper convex lower semicontinuous functional $E\colon L^2_\mssm \to (-\infty,\infty]$ is a \emph{Dirichlet functional} if the associated non-expansive non-linear semigroup is Markovian.

Nonlinear Dirichlet functionals have been systematically studied in~\cite{Cla21}, and, in connection with normal contractions, in~\cite{BriHar22}.
The fundamental concepts of \emph{transience} and \emph{recurrence} have been recently explored by M.~Schmidt and I.~Zimmermann in~\cite{SchZim25}.
Here ---in addition to an extensive characterization of \emph{Markovianity}--- we address \emph{locality} and \emph{invariance}.

\subsection{Our contribution}
We present a framework for the study of non-quadratic Dirichlet functionals on \emph{Hilbert} spaces.
In contrast to much work on the subject, our treatment is 
\begin{itemize}[]
\item \emph{systematic}, as we simultaneously consider: semigroups, generators, resolvents, and energy functionals;
\item \emph{complete}, for we consider relations between \emph{all possible pairs} among semigroups, generators, resolvents, and energy functionals;
\item \emph{general}, since we work under minimal assumptions, dispensing whenever possible with unnecessary assumptions like positive-definiteness, evenness, homogeneity, dense-definiteness, linear definiteness, etc.
\end{itemize}

\subsubsection{Markovianity across relations}
In the setting of Hilbert spaces, we collect all known relations ---and prove some seemingly novel ones--- among TAJE-operators.
Additionally, in Theorem~\ref{t:Main} we completely characterize Markovianity (the Dirichlet property) across these relations.
This is best exemplified by the diagram in Fig.~\ref{fig:Intro}.

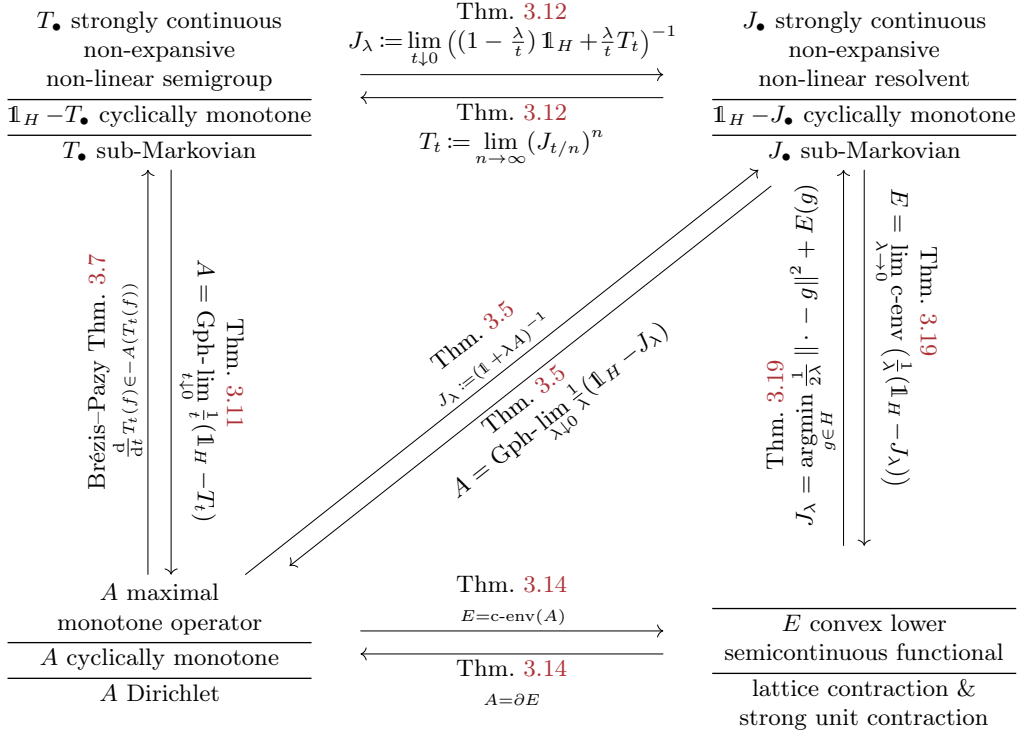
\begin{figure}[htb!]
\begin{adjustbox}{center}
{\small
\begin{tikzcd}[row sep=5cm, column sep=4cm]
\parbox{5cm}{\centering $T_\bullet$ strongly continuous\\non-expansive\\non-linear semigroup\\ \rule[1.5ex]{4cm}{.5pt}\vspace{-.3cm}\\ $\car_H-T_\bullet$ cyclically monotone\\ \rule[1.5ex]{4cm}{.5pt}\vspace{-.3cm}\\ $T_\bullet$ sub-Markovian}
\arrow[r, shift left=2, "\parbox{5cm}{\centering Thm.~\ref{t:SemigroupResolvent}\\$\displaystyle{J_\lambda\eqdef \lim_{t\downarrow 0} \tparen{(1-\tfrac{\lambda}{t})\car_H + \tfrac{\lambda}{t} T_t}^{-1}}$}"]
\arrow[d, shift left=2, "\parbox{3cm}{\centering Thm.~\ref{t:SemigroupGenerator}\\$\displaystyle{A=\Gph\text{-}\lim_{t \downarrow 0} \tfrac{1}{t}(\car_H-T_t)}$}",sloped]
& \parbox{5cm}{\centering $J_\bullet$ strongly continuous\\non-expansive\\non-linear resolvent\\ \rule[1.5ex]{4cm}{.5pt}\vspace{-.3cm}\\ $\car_H-J_\bullet$ cyclically monotone\\ \rule[1.5ex]{4cm}{.5pt}\vspace{-.3cm}\\ $J_\bullet$ sub-Markovian}
\arrow[l, shift left=2, "\parbox{5cm}{\centering Thm.~\ref{t:SemigroupResolvent}\\$\displaystyle{T_t\eqdef \lim_{n\to\infty} (J_{t/n})^n}$}"] \arrow[dl,shorten >=15, sloped, yshift=-2ex, labels=below, "\parbox{3cm}{\centering Thm.~\ref{t:ResolventGenerator}\\$\displaystyle{A=\Gph\text{-}\lim_{\lambda\downarrow 0} \tfrac{1}{\lambda}(\car_H-J_\lambda)}$}"]
\arrow{d}[anchor=center, rotate=-90, yshift=4ex, pos=.35]{\parbox{3cm}{\centering Thm.~\ref{t:FormResolvent}\\ $\displaystyle{E=\lim_{\lambda\to 0} \cenv\tparen{\tfrac{1}{\lambda}(\car_H-J_\lambda)}}$}}
\\
\parbox{5cm}{\centering $A$ maximal\\monotone operator\\ \rule[1.5ex]{4cm}{.5pt}\vspace{-.3cm}\\$A$ cyclically monotone\\ \rule[1.5ex]{4cm}{.5pt}\vspace{-.3cm}\\$A$ Dirichlet} 
\arrow[u,shift left=2,"\parbox{5cm}{\centering Br\'ezis--Pazy Thm.~\ref{t:BrezisPazy}\\$\tfrac{\diff}{\diff t}T_t(f)\in -A(T_t(f))$}",sloped]
\arrow{ur}[shift left=2, sloped]{\parbox{5cm}{\centering Thm.~\ref{t:ResolventGenerator}\\$J_\lambda\eqdef (\car +\lambda A)^{-1}$}}
\arrow[r, shift left=2, "\parbox{3cm}{\centering Thm.~\ref{t:MaxMonSubDiff}\\$E=\cenv(A)$}"]
& \parbox{5cm}{\centering \vspace{.7cm}\rule[1.5ex]{4cm}{.5pt}\vspace{-.3cm}\\$E$ convex lower\\semicontinuous functional\\ \rule[1.5ex]{4cm}{.5pt}\vspace{-.3cm}\\ lattice contraction \& \\ strong unit contraction}
\arrow[l, shift left=2, "\parbox{3cm}{\centering Thm.~\ref{t:MaxMonSubDiff}\\$A=\partial E$}"]
\arrow[u,anchor=center, yshift=2ex, pos=.35, "\parbox{3cm}{\centering Thm.~\ref{t:FormResolvent}\\ $\displaystyle{J_\lambda=\argmin_{g\in H} \tfrac{1}{2\lambda}\norm{\emparg-g}^2+E(g)}$}", sloped]
\end{tikzcd}
}
\end{adjustbox}
\caption{Equivalences between \emph{sub-Markovian} semigroups/resolvents and \emph{Dirichlet} functionals/operators.}\label{fig:Intro}
\end{figure}

\subsubsection{Dirichlet functionals}
We further consider several characterizations of Markovianity for proper convex lower semicontinuous functionals, including a novel one in terms of the one-sided contractions
\begin{align*}
h_\alpha^+(s,t)&\eqdef s \wedge (t+\alpha)\comma & k_\alpha^+(s,t)&\eqdef t \vee (s-\alpha)\comma
\\
h_\alpha^-(s,t)&\eqdef s \vee (t-\alpha) \comma & k_\alpha^-(s,t)&\eqdef t\wedge (s+\alpha)\fstop
\end{align*}
We say that a functional~$E\colon L^2_\mssm\to (-\infty,\infty]$ has
\begin{itemize}
\item the \emph{strong upper unit-contraction property} if, for~$u,v\in \dom{E}$ and~$\alpha> 0$,
\[
h_\alpha^+(u,v)\comma k_\alpha^+(u,v)\in\dom{E} \qquad \text{and}\qquad E\tparen{h_\alpha^+(u,v)}+E\tparen{k_\alpha^+(u,v)} \leq E(u)+E(v) \semicolon
\]
\item the \emph{strong lower unit-contraction property} if, for~$u,v\in \dom{E}$ and~$\alpha> 0$,
\[
h_\alpha^-(u,v)\comma k_\alpha^-(u,v)\in\dom{E} \qquad \text{and}\qquad E\tparen{h_\alpha^-(u,v)}+E\tparen{k_\alpha^-(u,v)} \leq E(u)+E(v) \fstop
\]
\end{itemize}

\begin{proposition}[see Proposition~\ref{p:HKpm}]\label{p:MarkovianityIntro} 
A proper convex lower semicontinuous functional is Markovian if and only if it has any of the strong upper/lower unit-contraction properties.
\end{proposition}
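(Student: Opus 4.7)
The plan is to go through the resolvent. By the semigroup--resolvent equivalence (Theorem~\ref{t:SemigroupResolvent}), Markovianity of~$E$ is equivalent to sub-Markovianity of the resolvent~$J_\bullet$, which I would reformulate as the T-contraction property
\[
u \leq v + \alpha \quad\Longrightarrow\quad J_\lambda u \leq J_\lambda v + \alpha,
\]
valid for every constant $\alpha \geq 0$ (order-preservation is $\alpha = 0$, and $L^\infty$-non-expansiveness follows by taking $\alpha = \norm{u-v}_{L^\infty_\mssm}$ and applying it symmetrically in $u,v$). The identities $h_\alpha^-(u,v) = k_\alpha^+(v,u)$ and $k_\alpha^-(u,v) = h_\alpha^+(v,u)$ reduce the strong \emph{lower} case to the strong \emph{upper} one, so I restrict attention to the latter throughout.

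For the forward implication (strong upper $\Rightarrow$ T-contractive resolvent), I would pick $u \leq v + \alpha$, set $f \eqdef J_\lambda u$, $g \eqdef J_\lambda v$, and compare these minimizers to the competitors $\tilde f \eqdef h_\alpha^+(f,g)$, $\tilde g \eqdef k_\alpha^+(f,g)$. Summing the two variational inequalities provided by Theorem~\ref{t:FormResolvent} and eliminating $E$-terms via the strong upper contraction hypothesis $E(\tilde f) + E(\tilde g) \leq E(f) + E(g)$ reduces matters to showing $\norm{u-f}^2 + \norm{v-g}^2 \leq \norm{u-\tilde f}^2 + \norm{v-\tilde g}^2$. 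A direct pointwise expansion on $S \eqdef \set{f > g + \alpha}$, where the competitors actually differ from $(f,g)$, yields
\[
(u-f)^2 + (v-g)^2 - (u-\tilde f)^2 - (v-\tilde g)^2 = 2(f-g-\alpha)(v-u+\alpha),
\]
with both factors $\geq 0$ on $S$ by virtue of the hypothesis $u \leq v+\alpha$. The resulting integral identity is then promoted to $\mssm(S)=0$ by a minor perturbation $\alpha \rightsquigarrow \alpha' > \alpha$, which makes the factor $v - u + \alpha'$ strictly positive.

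For the converse, the natural tool is the Moreau--Yosida regularisation $E_\lambda(u) \eqdef \inf_g \tbraket{\tfrac{1}{2\lambda}\norm{u-g}^2 + E(g)}$, which is known to monotonically increase to $E$ as $\lambda \downarrow 0$. I would test the infima defining $E_\lambda(\tilde u)$ and $E_\lambda(\tilde v)$ against the specific competitors $f \eqdef J_\lambda u$ and $g \eqdef J_\lambda v$; the T-contraction $f \leq g + \alpha$ inherited from sub-Markovianity of $J_\lambda$ then enters through the dual pointwise identity
\[
(\tilde u - f)^2 + (\tilde v - g)^2 - (u-f)^2 - (v-g)^2 = 2(v+\alpha-u)(\alpha+g-f) \quad\text{on } \set{u > v+\alpha},
\]
whose two factors now have opposite signs. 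This gives $E_\lambda(\tilde u) + E_\lambda(\tilde v) \leq E_\lambda(u) + E_\lambda(v)$, and the conclusion $\tilde u, \tilde v \in \dom{E}$ together with $E(\tilde u) + E(\tilde v) \leq E(u) + E(v)$ follows by monotone passage to the limit $\lambda \downarrow 0$ and the properness of~$E$.

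The main obstacle is locating the right pointwise algebraic identity in each direction, and recognising that the sign of the distinguished factor is governed precisely by whichever hypothesis is available --- the functional inequality $u \leq v + \alpha$ in the forward direction, and the T-contraction $f \leq g + \alpha$ of the resolvent in the reverse. The $\alpha' > \alpha$ perturbation needed to pass from an integral equality to a measure-zero conclusion is a small but essential technical step.
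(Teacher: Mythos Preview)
Your forward implication is correct and is a pleasantly direct route that the paper does not take: the paper instead establishes $(E_8'^\pm)\Leftrightarrow(E_8^\pm)$ by verifying the idempotency hypothesis of \cite[Thm.~2.3]{BriHar22} on the half-stripe $C_\alpha^\pm$, and then invokes Proposition~\ref{p:MarkovianPM}. Your competitor argument with $\tilde f=h_\alpha^+(f,g)$, $\tilde g=k_\alpha^+(f,g)$ and the pointwise identity on $S=\{f>g+\alpha\}$, followed by the $\alpha'>\alpha$ perturbation, is a genuine alternative.

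The converse, however, has a real gap. You test $E_\lambda(\tilde u)$, $E_\lambda(\tilde v)$ against $f=J_\lambda u$, $g=J_\lambda v$ and then assert the ``T-contraction $f\leq g+\alpha$ inherited from sub-Markovianity of $J_\lambda$''. But T-contraction of $J_\lambda$ yields $f\leq g+\alpha$ only under the premise $u\leq v+\alpha$, which is precisely what fails on the region $\{u>v+\alpha\}$ where your identity lives; since $J_\lambda$ is in general nonlocal, no pointwise version of this implication is available, and the sign of $\alpha+g-f$ on that set is uncontrolled. Replacing $(f,g)$ by $\tparen{h_\alpha^+(f,g),k_\alpha^+(f,g)}$ as competitors does not help either: the quadratic terms then behave, but bounding $E\tparen{h_\alpha^+(f,g)}+E\tparen{k_\alpha^+(f,g)}$ by $E(f)+E(g)$ is exactly the inequality you are trying to prove. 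The cleanest fix is to bypass the Moreau--Yosida approximation and observe that $h_\alpha^+(u,v)=u-\psi(u-v)$ and $k_\alpha^+(u,v)=v+\psi(u-v)$ with $\psi(t)=(t-\alpha)_+\in\ground{\Phi}^\tleq$, so the strong upper unit-contraction is a special case of Claus' inequality~\eqref{eq:Claus}; the converse then follows at once from Proposition~\ref{p:CP}. This is essentially how the paper proceeds in the proof of Proposition~\ref{p:MarkovianPM}\ref{i:p:MarkovianPM:2}.
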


See Figure~\ref{fig:DiagramMarkovFunctionals} for several characterizations of Markovianity.

\begin{figure}[hbt!]
\begin{adjustbox}{center}
{\small
\begin{tikzcd}[row sep=3cm, column sep=2cm]
\parbox{3.6cm}{\centering $(T_\bullet,C)$ strongly cont.\\non-expansive non-lin.\\ $\car_H-T_\bullet$ cycl. monotone\\  \rule[1.5ex]{3.6cm}{.5pt}\vspace{-.3cm}\\ $T_\bullet$ order-preserving \rule[1.5ex]{3.6cm}{.5pt}\vspace{-.3cm}\\ $T_\bullet$ $L^\infty$-non-expansive \rule[1.5ex]{3.6cm}{.5pt}\vspace{-.3cm}\\ $T_\bullet$ sub-Markovian}
\arrow[r, leftrightarrow, "\parbox{5cm}{\centering Thm.~\ref{t:Main}}"]
& \parbox{3cm}{\centering $E$ proper convex\\lower semicontinuous\\ \rule[1.5ex]{3cm}{.5pt}\vspace{-.3cm}\\ order preserving \\ \rule[1.5ex]{3cm}{.5pt}\vspace{-.3cm} $L^\infty$-non-expansive\\ \rule[1.5ex]{3cm}{.5pt}\vspace{-.3cm} Markovian}
\arrow[dl, shorten >=15, leftrightarrow, sloped, yshift=-2ex, labels=below, "\parbox{4cm}{\centering \cite{Cla21,Puc25}, see~Prop.~\ref{p:CP}}"]
\arrow[d, leftrightarrow, sloped, anchor=center, rotate=-180, "\parbox{3cm}{\centering \cite{Bar96}, see~Prop.~\ref{p:Barthelemy}}"]
&
\parbox{3cm}{\centering $E$ proper convex\\lower semicontinuous\\ \rule[1.5ex]{3cm}{.5pt}\vspace{-.3cm}\\ order preserving \\ \rule[1.5ex]{3cm}{.5pt}\vspace{-.3cm}\\ upper \& lower\\ $L^\infty$-non-expansive\\ \rule[1.5ex]{3cm}{.5pt}\vspace{-.3cm} Markovian} 
\arrow[l, leftrightarrow, "\parbox{3cm}{\centering Prop.~\ref{p:MarkovianPM}}"]
\\
\parbox{3cm}{\centering $E$ proper convex\\lower semicontinuous\\ \rule[1.5ex]{3cm}{.5pt}\vspace{-.3cm}\\ - \\ \rule[1.5ex]{3cm}{.5pt}\vspace{-.3cm}\\ - \\ \rule[1.5ex]{3cm}{.5pt}\vspace{-.3cm} Bénilan--Picard~\eqref{eq:Claus} or Puchert~\eqref{eq:Puchert}} 
& \parbox{3cm}{\centering $E$ proper convex\\ lower semicontinuous\\ \rule[1.5ex]{3cm}{.5pt}\vspace{-.3cm}\\ lattice contraction\\ \rule[1.5ex]{3cm}{.5pt}\vspace{-.3cm}\\strong unit contraction\\ \rule[1.5ex]{3cm}{.5pt}\vspace{-.3cm}\\ Dirichlet}
& \parbox{3cm}{\centering $E$ proper convex\\ lower semicontinuous\\ \rule[1.5ex]{3cm}{.5pt}\vspace{-.3cm}\\ lattice contraction\\ \rule[1.5ex]{3cm}{.5pt}\vspace{-.3cm}\\ upper \& lower\\ strong unit contraction\\ \rule[1.5ex]{3cm}{.5pt}\vspace{-.3cm}\\ Dirichlet}
\arrow[l, leftrightarrow, "\parbox{3cm}{\centering Prop.~\ref{p:HKpm}}"]
\end{tikzcd}
}
\end{adjustbox}
\caption{Characterization of Markovianity for functionals}\label{fig:DiagramMarkovFunctionals}
\end{figure}
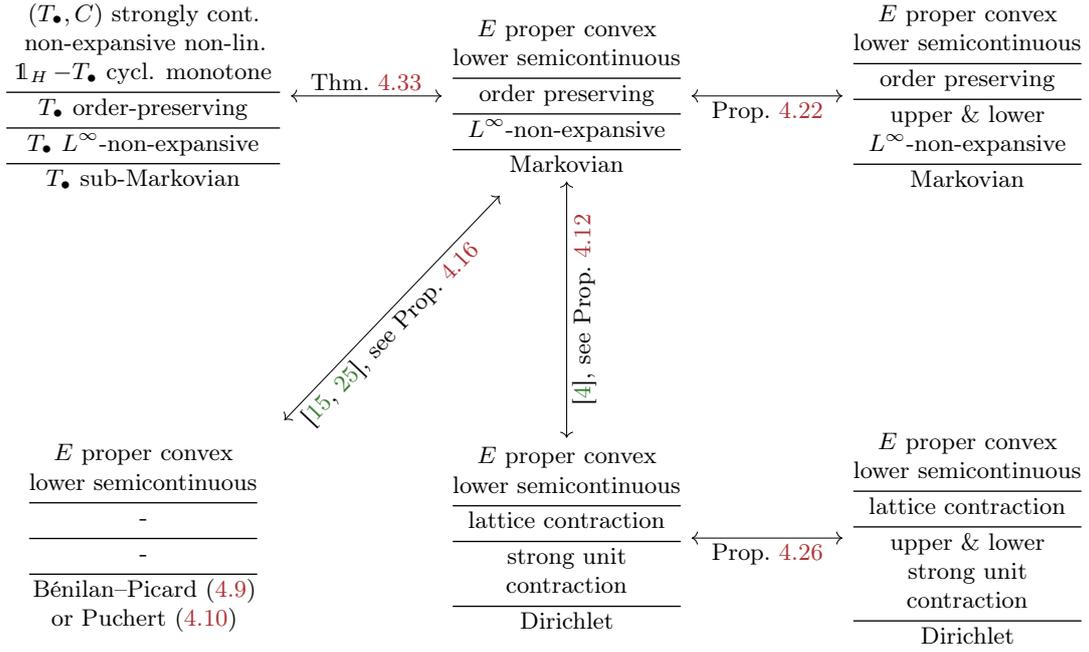

\subsubsection{Locality}
We give a complete characterization of (\emph{weak} and \emph{strong}) \emph{locality} for non-quadratic Dirichlet functionals.
For (weak) locality, see Proposition~\ref{p:WLocality}. 

As for strong locality, let~$\Phi$ be the space of real-valued non-expansive maps on the line, viz.
\[
\Phi\eqdef\set{\varphi\colon \R\to\R : \abs{\varphi(s)-\varphi(t)}\leq \abs{s-t}\comma \quad s,t\in\R} \comma
\]
and, for~$\varphi\in\Phi$, set~$\ground{\varphi}\eqdef \varphi-\varphi(0)$.
We prove the following.

\begin{theorem}[Strong locality, see Theorem~\ref{t:Locality}]\label{t:LocalityIntro}
Let~$E\colon L^2_\mssm\to [0,\infty]$ be a non-negative \emph{even} Dirichlet functional with~$E(0)=0$. Then, the following are equivalent:
\begin{enumerate}[$({L}_1)$]\setcounter{enumi}{-1}
\item\label{i:t:LocalityIntro:0} for every~$u\in\dom{E}$,
\begin{equation*}
E\tparen{\abs{u+\alpha}-\alpha}=E(u) \comma \qquad \alpha\geq 0 \semicolon
\end{equation*}
\item\label{i:t:LocalityIntro:1} for every~$u,v\in\dom{E}$ with~$(u-c)v=0$ for some~$c\in\R$ and~$u+v\in\dom{E}$,
\begin{equation*}
E(u+v)=E(u)+E(v) \semicolon
\end{equation*}
\item\label{i:t:LocalityIntro:2} 
for every~$u,v\in\dom{E}$, with~$vv=0$,
\[
E(u\vee v)+E(u\wedge v) = E(u) + E(v) \comma
\]
and, for every~$u,v\in\dom{E}$, with~$(u-c)v=0$ for some~$c\in\R$,
\begin{equation*}
E\tparen{h_{\abs{c}}^\pm(u,v)} + E\tparen{k_{\abs{c}}^\pm(u,v)} = E(u)+E(v) \semicolon
\end{equation*}

\item\label{i:t:LocalityIntro:3} for every~$u\in\dom{E}$, for every~$\varphi,\psi\in\Phi$ with~$\supp\varphi\cap \supp\psi=\emp$, we have~$(\ground{\varphi}+\ground{\psi})\circ u\in\dom{E}$ and
\begin{equation*}
E\tparen{(\ground{\varphi}+\ground{\psi})\circ u} = E(\ground{\varphi}\circ u) + E(\ground{\psi}\circ u) \fstop
\end{equation*}
\end{enumerate}
\end{theorem}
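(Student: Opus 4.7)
The plan is to prove the cycle of implications $(L_0) \Rightarrow (L_1) \Rightarrow (L_2) \Rightarrow (L_3) \Rightarrow (L_0)$. Throughout we use that $E$ is a Dirichlet functional, so that the lattice-contraction and strong unit-contraction inequalities of Proposition~\ref{p:MarkovianityIntro} supply the ``$\leq$'' half of every relevant identity; strong locality then amounts to saturation of these Markov inequalities.

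The cleanest implication is $(L_3) \Rightarrow (L_0)$, via two applications of $(L_3)$. Given $\alpha \geq 0$, set
\[
\varphi(s) \eqdef (s+\alpha)_+ \comma \qquad \psi(s) \eqdef (s+\alpha)_- \comma \qquad \tilde\psi(s) \eqdef (s+\alpha) \wedge 0 \fstop
\]
Each lies in $\Phi$, with supports $[-\alpha,\infty)$ and $(-\infty,-\alpha]$ meeting only at $\{-\alpha\}$, hence essentially disjoint (exactly so via a routine approximation if the convention requires strict disjointness). Since $\varphi(0) = \alpha$ and $\psi(0) = \tilde\psi(0) = 0$, direct calculation gives $\ground{\varphi} + \ground{\psi} = \abs{{\cdot}+\alpha} - \alpha$ and $\ground{\varphi} + \ground{\tilde\psi} = \id$. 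Two applications of $(L_3)$ yield
\[
E\tparen{\abs{u+\alpha}-\alpha} = E(\ground{\varphi} \circ u) + E(\ground{\psi} \circ u) \comma \qquad E(u) = E(\ground{\varphi} \circ u) + E(\ground{\tilde\psi} \circ u) \comma
\]
and since $\ground{\tilde\psi} = -\ground{\psi}$, evenness identifies the second summands, yielding $(L_0)$.

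For $(L_1) \Rightarrow (L_2)$: setting $c = 0$ in $(L_1)$ gives $E(u+v) = E(u) + E(v)$ whenever $uv = 0$; combined with $u = u_+ - u_-$ (so $u_+ u_- = 0$) and evenness, this yields $E(u) = E(u_+) + E(u_-)$. Both halves of $(L_2)$ then follow by direct algebraic decomposition: under $uv = 0$, $u \vee v = u_+ + v_+$ and $u \wedge v = -(u_- + v_-)$; under $(u-c)v = 0$, analogous disjoint-support decompositions apply to $h^\pm_{|c|}(u,v)$ and $k^\pm_{|c|}(u,v)$. For $(L_2) \Rightarrow (L_3)$: the disjointness $\supp\varphi \cap \supp\psi = \emp$ forces, by continuity, at most one of $\varphi(0), \psi(0)$ to be nonzero (WLOG $\psi(0) = 0$); setting $f \eqdef \ground{\varphi}\circ u$, $g \eqdef \ground{\psi}\circ u$, and $c \eqdef -\varphi(0)$, a pointwise check gives $(f-c)g = (\varphi\circ u)(\psi\circ u) = 0$. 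Applying $(L_2)$'s second part to $(f, g)$, together with the identities $h^+_{|c|}(f,g) + k^+_{|c|}(f,g) = f+g$ and $(h^+_{|c|}(f,g) - c) \, k^+_{|c|}(f,g) = 0$, closes to the additivity $E(f+g) = E(f) + E(g)$, which is exactly $(L_3)$.

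The main obstacle is $(L_0) \Rightarrow (L_1)$. Given $(u-c)v = 0$ with $u+v \in \dom{E}$, reduce to $c \geq 0$ by evenness. The strategy is to apply the strong upper unit-contraction inequality to a suitable pair constructed from $u$ and $v$, using the identity $u \equiv c$ on $\supp v$ to simplify $h^+_c$ and $k^+_c$ explicitly; the folding invariance $(L_0)$, together with its evenness-reflected counterpart, then forces the Markov inequality ``$\leq$'' to be an equality. The case analysis on $\{v = 0\}$ versus $\{v \neq 0\}$ and the combinatorics of matching the folding invariance against the disjoint-support structure constitute the principal technical burden of the proof; the hypotheses of evenness and $E(0) = 0$ are essential precisely here, as they allow one to fold in both directions and to handle the vanishing locus coherently.
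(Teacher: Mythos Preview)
Your implication $(L_3)\Rightarrow(L_0)$ is correct and essentially the paper's argument (the paper uses the same pair $\varphi_\alpha^\pm(t)=(t+\alpha)_+-\alpha$ and $(t+\alpha)_-$; it also ignores the one-point overlap of supports, which is harmless).

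The cycle breaks at $(L_2)\Rightarrow(L_3)$. Your two identities are correct: with $\tilde f\eqdef h^+_{|c|}(f,g)$ and $\tilde g\eqdef k^+_{|c|}(f,g)$ one indeed has $\tilde f+\tilde g=f+g$ and $(\tilde f-c)\tilde g=0$. But $(L_2)$ only gives $E(\tilde f)+E(\tilde g)=E(f)+E(g)$; to reach $E(f+g)=E(f)+E(g)$ you still need $E(\tilde f+\tilde g)=E(\tilde f)+E(\tilde g)$, i.e.\ additivity on the \emph{shifted}-orthogonal pair $(\tilde f,\tilde g)$. That is exactly $(L_1)$, which you have not yet derived from $(L_2)$. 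Iterating does not help: one checks $h^+_{|c|}(\tilde f,\tilde g)=\tilde f$ and $k^+_{|c|}(\tilde f,\tilde g)=\tilde g$, so a second application of $(L_2)$ is tautological; and $\tilde f\tilde g=0$ fails as soon as $c\neq 0$ (on $\{\tilde g>0\}$ one has $\tilde f=c$), so the lattice half of $(L_2)$ is unavailable. The paper escapes this trap by \emph{not} attempting $(L_2)\Rightarrow(L_3)$ directly: it first extracts $(L_0)$ from $(L_2)$ via two nested applications of the $h^-,k^-$ identity with $v=0$ (a genuinely different maneuver), and then runs $(L_0)\Rightarrow(L_1)\Rightarrow(L_3)$.

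Two further points. Your $(L_1)\Rightarrow(L_2)$ sketch for the $h,k$ half tacitly assumes $u+v\in\dom{E}$, which $(L_2)$ does not; without it you cannot apply $(L_1)$ to either $(u,v)$ or $(h^+,k^+)$. And your description of $(L_0)\Rightarrow(L_1)$ as ``saturation of the unit-contraction inequality'' is a heuristic, not a proof; the paper's actual mechanism is to fold $u+v$ by $\varphi_\alpha(t)=|t+\alpha|-\alpha$ with $\alpha=|c|/2$, observe that under $(u-c)v=0$ this splits as $\varphi_\alpha\circ u+\psi_\alpha\circ v$ with \emph{genuinely} disjoint supports, and then invoke the weak-locality Proposition~\ref{p:WLocality} (available since $(L_0)$ at $\alpha=0$ is $(\ell_0)$).
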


The above characterization extends to our general non-linear setting the standard characterizations in~\cite[\S{I.5}]{BouHir91}.
Combined with Proposition~\ref{p:MarkovianityIntro}, it shows that strong locality coincides with the \emph{saturation} of the inequality(/ies) defining the Markov property along suitable hyperplanes in the double of the domain.
A similar fact is true for weak locality, see Proposition~\ref{p:WLocality}.

\subsubsection{Invariance}
We give the following complete characterization of \emph{invariance} of a set w.r.t.\ the action of TAJE-operators.

\begin{theorem}[Invariance, see Thm.~\ref{t:Invariance}]\label{t:InvarianceIntro}
Let~$E\colon L^2_\mssm\to [0,\infty]$ be a non-negative Dirichlet functional with maximal monotone cyclically monotone generator~$A$, non-linear semigroup~$T_\bullet$ and non-linear resolvent~$H_\bullet$.
For every Borel subset~$Y\subset X$, the following are equivalent:
\begin{enumerate}[$(i)$]
\item\label{i:t:InvarianceIntro:1} $\car_Y f, \car_{Y^\complement} f\in\overline{\dom{E}}$ for every~$f\in\overline{\dom{E}}$, and, for every~$t\geq 0$,
\[
\car_Y T_t(f)=T_t(\car_Y f)\qquad \text{and} \qquad \car_{Y^\complement} T_t(f)=T_t(\car_{Y^\complement} f) \semicolon
\]

\item\label{i:t:InvarianceIntro:2} $\car_Y f, \car_{Y^\complement} f\in\dom{A}$ for every~$f\in\dom{A}$, and
\[
\car_Y A(f)=A(\car_Y f) \qquad \text{and} \qquad \car_{Y^\complement} A(f)= A(\car_{Y^\complement} f) \semicolon
\]

\item\label{i:t:InvarianceIntro:3} $\car_Y f, \car_{Y^\complement} f\in\overline{\dom{E}}$ for every~$f \in\overline{\dom{E}}$, and, for every~$\lambda>0$,
\[
\car_Y J_\lambda(f) = J_\lambda(\car_Y f)\qquad \text{and}\qquad \car_{Y^\complement}  J_\lambda(f) = J_\lambda(\car_{Y^\complement} f) \fstop
\]
\item\label{i:t:InvarianceIntro:4} $\car_Y f, \car_{Y^\complement} f\in\dom{E}$ for every~$f\in\dom{E}$, and
\[
E(f)=E(\car_Y f) + E(\car_{Y^\complement} f) \fstop
\]
\end{enumerate}
\end{theorem}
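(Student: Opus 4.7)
The plan is to interpret each of conditions (i)--(iv) as the statement that the corresponding TAJE-operator associated to $E$ commutes with the two bounded linear orthogonal projections
\[
P_Y\colon f\mapsto \car_Y f\comma \qquad P_{Y^\complement}\colon f\mapsto \car_{Y^\complement} f\comma
\]
which satisfy $P_Y+P_{Y^\complement}=\car_H$, $P_Y P_{Y^\complement}=0$, and induce the orthogonal decomposition $H=L^2(Y)\oplus L^2(Y^\complement)$. Every formula in Figure~\ref{fig:Intro} relating $T_\bullet$, $J_\bullet$, $A$ and $E$ is built from $\car_H$ and the relevant object by linear combinations, inversions, and pointwise or graph limits, each of which preserves commutation with a fixed bounded linear projection. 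Granted the equivalence (iii)$\Leftrightarrow$(iv), the equivalences (iii)$\Leftrightarrow$(i) and (iii)$\Leftrightarrow$(ii) then follow from Thm.~\ref{t:SemigroupResolvent} and Thm.~\ref{t:ResolventGenerator}: for (ii)$\Rightarrow$(iii), applying $P_Y$ to the identity $f\in g+\lambda A(g)$ with $g=J_\lambda f$ and using commutation of $A$ gives $P_Y g=J_\lambda(P_Y f)$, and the reverse passes through the graph-limit $A=\Gph\text{-}\lim_{\lambda\downarrow 0}\tfrac{1}{\lambda}(\car_H-J_\lambda)$; for (iii)$\Leftrightarrow$(i) one uses the exponential formula $T_t=\lim_n (J_{t/n})^n$ and its counterpart.

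The implication (iv)$\Rightarrow$(iii) rests on the variational representation (Thm.~\ref{t:FormResolvent})
\[
J_\lambda(f)=\argmin_{g\in H}\tfrac{1}{2\lambda}\norm{f-g}^2+E(g)\fstop
\]
Decomposing $f=P_Y f+P_{Y^\complement}f$ and $g=P_Y g+P_{Y^\complement}g$ orthogonally, and combining $\norm{f-g}^2=\norm{P_Y(f-g)}^2+\norm{P_{Y^\complement}(f-g)}^2$ with the additivity in (iv), the minimization separates into two independent problems over $L^2(Y)$ and $L^2(Y^\complement)$. Uniqueness of the minimizer then forces $P_Y J_\lambda(f)=J_\lambda(P_Y f)\in L^2(Y)$, and symmetrically on~$Y^\complement$. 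Preservation of $\overline{\dom{E}}$ by $P_Y$ follows from $L^2$-continuity of $P_Y$ and the inclusion $P_Y(\dom{E})\subseteq \dom{E}$ granted by (iv).

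The converse (iii)$\Rightarrow$(iv) is the main obstacle and uses Moreau--Yosida regularization. The Yosida approximation $A_\lambda\eqdef\tfrac{1}{\lambda}(\car_H-J_\lambda)$ inherits commutation with $P_Y$ and $P_{Y^\complement}$, and the Moreau envelope $E_\lambda(f)\eqdef \inf_{g\in H}\tfrac{1}{2\lambda}\norm{f-g}^2+E(g)$ is Fréchet differentiable with $\nabla E_\lambda=A_\lambda$ and increases pointwise to $E$ as $\lambda\downarrow 0$. Normalizing $E(0)=0$ (automatic after recentering for a non-negative Dirichlet functional), we obtain $E_\lambda(0)=0$, and integration along the ray $s\mapsto sf$ yields
\[
E_\lambda(f)=\int_0^1\tscalar{A_\lambda(sf)}{f}\diff s\fstop
\]
Commutation of $A_\lambda$ with the projections gives $A_\lambda(sf)=A_\lambda(sP_Y f)+A_\lambda(sP_{Y^\complement}f)$ with summands of disjoint support, so the integrand splits into $\tscalar{A_\lambda(sP_Y f)}{P_Y f}+\tscalar{A_\lambda(sP_{Y^\complement}f)}{P_{Y^\complement}f}$, yielding $E_\lambda(f)=E_\lambda(P_Y f)+E_\lambda(P_{Y^\complement}f)$. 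Passing to the monotone limit $\lambda\downarrow 0$ establishes the additivity in (iv); for $f\in\dom{E}$ the bound $E_\lambda(P_Y f)\leq E_\lambda(f)\leq E(f)<\infty$ together with monotone convergence produces $P_Y f\in\dom{E}$, and symmetrically for $P_{Y^\complement}f$, completing the proof.
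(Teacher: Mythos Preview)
Your argument is correct. The route for (iv)$\Rightarrow$(iii) coincides with the paper's---split the $\argmin$ defining $J_\lambda$ along the orthogonal decomposition induced by $\car_Y$---and your sketch for (i)$\Leftrightarrow$(iii) and (ii)$\Leftrightarrow$(iii) is close to the paper's, which uses the exponential formula in one direction and, for (iii)$\Rightarrow$(ii), a direct algebraic manipulation of $(\car_H+\lambda A\circ \car_Y)\circ J_\lambda$ in place of your graph-limit passage. The substantive difference is (iii)$\Rightarrow$(iv): the paper works with the cyclical envelope $\cenv(A_\lambda)$ of Definition~\ref{d:CEnv}, shows by hand that the supremum defining it splits over the two orthogonal pieces, and then invokes Theorem~\ref{t:FormResolvent}\ref{i:t:FormResolvent:2} to pass to the limit. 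Your approach---integrate $\nabla E_\lambda=A_\lambda$ along the segment $s\mapsto sf$, split the integrand via commutation of $A_\lambda$ with $\car_Y$, and let $\lambda\downarrow 0$ monotonically---is more elementary and bypasses the cyclical-envelope machinery entirely, relying only on $E_\lambda\in C^{1,1}$ and $E_\lambda\uparrow E$. Both arguments tacitly use a normalization at~$0$: under (iii) one has $\car_Y J_\lambda(0)=J_\lambda(0)=\car_{Y^\complement}J_\lambda(0)$, forcing $J_\lambda(0)=0$ and hence that $0$ minimizes $E$; your ``recentering'' is exactly this, and the paper's choice of base point in the cyclical envelope plays the analogous role.
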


The above characterization extends to our general non-linear setting the standard characterizations in~\cite[\S{I.1.6}]{FukOshTak11}.
In particular, it provides the right definition of invariance for this extension to hold, which we call \emph{double invariance}.
This notion is strictly stronger than the more standard invariance for non-linear semigroups in~\cite{Bre73}, recently considered for convex functionals in~\cite[\S5]{SchZim25}, yet the two notions are equivalent in the linear setting.
See~\S\ref{s:Invariance} for a thorough comparison.

\subsubsection{Beyond Hilbert spaces}
Finally, let us point out that we confine our treatment to non-quadratic functionals on \emph{Hilbert} spaces, but this is mostly for simplicity.
Extensions of the results presented here for non-quadratic Dirichlet functionals on $L^2$-spaces to the setting of more general Banach lattices will be addressed in the future.

\subsection*{Acknowledgments}
GB gratefully acknowledges funding from the European Union’s Horizon 2020 research and innovation programme under the Marie Sk{\l}odowska-Curie grant agreement No.~101034413.

LDS gratefully acknowledges funding from the Austrian Science Fund (FWF) project \href{https://doi.org/10.55776/ESP208}{{10.55776/ESP208}}; and from PRIN Department of Excellence MatMod@TOV (CUP: E83C23000330006).

The authors are grateful to Michael Röckner, Daniel Lenz, Simon Puchert, and Marcel Schmidt for useful conversations on the subject of the manuscript.
GB is grateful to Ralph Chill for pointing out to him Example~\ref{e:Chill}.
Part of this research has been carried out while GB was a guest of the Mathematics Departments of the University of Bielefeld and of the Friedrich-Schiller-Universit\"at Jena.
He is grateful to his hosts Michael Röckner and Daniel Lenz and to the Mathematics Departments for their very kind hospitality.

\section{Non-linear semigroups, monotone operators, and convex energies}
For ease of reference, we collect here all the necessary definitions.
Let~$H$ be a Hilbert space with norm~$\norm{\emparg}$ and scalar product~$\scalar{\emparg}{\emparg}$.
We denote by~$\car_H$ the identity operator on~$H$.

Everywhere in the following,~$C\subset H$ is a non-empty closed convex set.
We denote by~$\cproj_C\colon H\to H$ the (\emph{convex}) \emph{projection} operator
\[
\cproj_C(f)\eqdef \argmin_{g\in C} \norm{f-g}^2 \comma
\]
mapping a point in~$H$ to the (unique) closest point in~$C$. For the properties of projections, see e.g.~\cite[\S4.3]{BauCom17}.

Let~$X$ be any set. For any subset~$Y$ of~$X$, we denote
\begin{itemize}
\item by~$\mbfI_Y$ the \emph{convex indicator} of~$Y$, i.e.\ the function
\[
\mbfI_Y(x)\eqdef \begin{cases} 0 &\text{if } x\in Y\comma \\ \infty & \text{if } x\notin Y\comma \end{cases} \qquad x\in X\semicolon
\]

\item by~$\car_Y$ the \emph{indicator} of~$Y$, i.e.\ the function
\[
\car_Y(x)\eqdef \begin{cases} 1 &\text{if } x\in Y\comma \\ 0 & \text{if } x\notin Y\comma \end{cases} \qquad x\in X\fstop
\]
\end{itemize}

\subsection{Semigroups}
We recall the notion of `semigroup' in the non-linear setting.

\begin{definition}[Semigroups]\label{d:Semigroup}
Let~$C$ be a non-empty closed convex subset of~$H$.
A \emph{strongly continuous non-expansive non-linear semigroup~$T_\bullet\eqdef\seq{T_t}_{t\geq 0}$ on~$C$} is a family of maps~$T_t\colon C\to H$ satisfying
\begin{enumerate}[$(T_1)$]
\item (\emph{identity property}) $T_0=\cproj_C$;
\item (\emph{semigroup property}) $T_{t+s}=T_t\circ T_s$ for every~$s,t\geq 0$;
\item (\emph{strong continuity}) $\lim_{t\to 0} \norm{T_t(f)-f}=0$ for every~$f\in C$;
\item\label{i:d:Semigroup:4} (\emph{non-expansiveness}) $\norm{T_t(f)-T_t(g)}\leq \norm{f-g}$ for every~$f,g\in H$ and every~$t\geq 0$.
\end{enumerate}
\end{definition}

\subsection{Generators}
A (\emph{multi-valued}) \emph{operator}~$A\colon H\rightrightarrows H$ is a map~$A\colon H\to 2^H$.

\begin{definition}[Generators]\label{d:Generator}
The domain of an operator~$A$ on~$H$ is the set
\[
\dom{A}\eqdef \set{f\in H : A f\neq \emp} \fstop
\]
An operator~$A$ is
\begin{enumerate}[$({A}_1)$]
\item \emph{densely defined} if~$\dom{A}$ is dense in~$H$;
\item \emph{monotone} if for every~$f_1,f_2\in \dom{A}$, for every~$g_i\in Af_i$ with~$i=1,2$,
\[
\scalar{g_1-g_2}{f_1-f_2}\geq 0 \semicolon
\]
\item \emph{maximal monotone} if it is monotone and maximal w.r.t.\ the inclusion of graphs;
\item \emph{cyclically monotone} if for every~$n\in\N$, for every~$f_1,\dotsc, f_n\in \dom{A}$, and for every~$g_i\in A f_i$ with~$i\leq n$,
\[
\sum_{i=1}^n \scalar{f_i-f_{i-1}}{g_i} \geq 0 \comma \qquad f_0\eqdef f_n\fstop
\]
\item \emph{odd} if $-f\in\dom{A}$ and~$A(-f)=-A(f)$ for every~$f\in\dom{A}$.
\end{enumerate}
\end{definition}

\begin{definition}[Doubling]
The \emph{double~$A^\opl{2}$} of an operator~$A\colon H\rightrightarrows H$ is the operator
\[
A^\opl{2}\colon H^\opl{2}\rightrightarrows H^\opl{2}\comma \qquad A^\opl{2}\colon (f,g) \mapsto A(f) \times A(g) \fstop
\]
\end{definition}

Since~$\emp\times C=C\times \emp=\emp$ for every~$C\subset H$, we have~$\dom{A^\opl{2}}=\dom{A}^\opl{2}$.

\paragraph{Minimal sections and cyclical envelopes}
Every maximal monotone operator~$A$ has closed convex values, thus the convex projection~$\cproj_{A(f)}$ onto~$A f$ is well-defined and single-valued, cf.~\cite[p.~28]{Bre73}.

\begin{definition}[Minimal section]
Let~$A$ be a maximal monotone operator.
The \emph{minimal section}~$A^\circ$ of~$A$ is the operator
\[
\dom{A^\circ}\eqdef \dom{A} \comma \qquad A^\circ (f) \eqdef \cproj_{A(f)}(0) \fstop
\]
\end{definition}

Minimal sections are \emph{principal sections} in the sense of~\cite[Dfn.~2.3, Prop.~2.7, p.~29]{Bre73}.
It turns out that the minimal section of a maximal monotone operator is a monotone operator, and that there is a bijective correspondence between maximal monotone operators and their minimal sections, see~\cite[Cor.~2.2, p.~29]{Bre73}.

\begin{definition}[Cyclical envelope]\label{d:CEnv}
For a cyclically monotone operator~$A$ and fixed~$(f_0,g_0)\in A$, we define the \emph{cyclical envelope of~$A$} (\emph{based at~$(f_0,g_0)$}) by
\[
\cenv(A)(f)\eqdef \sup \set{\sum_{i=0}^n \scalar{f_{i+1}-f_i}{g_i} : \begin{aligned} f_{n+1}\eqdef f \comma \quad f_1,\dotsc, f_n\in \dom{A}\comma\\  g_i\in A(f_i) \text{ for all } i\leq n\comma \quad n\in\N_0 \end{aligned}} \fstop
\]
\end{definition}
Note that~$\cenv(A)$ is convex and lower semicontinuous and satisfies~$\cenv(A)(f_0)=0$. (Cf.~\cite[Thm.~2.5, p.~38]{Bre73}.)

\subsection{Resolvents}
We recall the notion of `resolvent' in the non-linear setting.

\begin{definition}[Non-linear resolvent]
Let~$C$ be a non-empty closed convex subset of~$H$.
A \emph{strongly continuous non-expansive non-linear resolvent~$J_\bullet\eqdef\seq{J_\lambda}_{\lambda\geq 0}$ on~$C$} is a family of maps~$J_\lambda\colon H\to H$ satisfying
\begin{enumerate}[$(J_1)$]
\item\label{i:d:Resolvent:1} (\emph{identity property}) $J_0=\cproj_C$;
\item\label{i:d:Resolvent:2} (\emph{first resolvent identity}) $J_\lambda=J_\mu\tparen{\tfrac{\mu}{\lambda}\car_H + (1-\tfrac{\mu}{\lambda})J_\lambda}$ for every~$\lambda,\mu\geq 0$;
\item\label{i:d:Resolvent:3} (\emph{strong continuity}) $\lim_{\lambda\to 0} \norm{J_\lambda(f)-\cproj_C(f)}=0$ for every~$f\in H$;
\item\label{i:d:Resolvent:4} (\emph{non-expansivity}) $\norm{J_\lambda(f)-J_\lambda(g)}\leq \norm{f-g}$ for every~$f,g\in H$ and every~$\lambda\geq 0$.
\end{enumerate}
\end{definition}

\subsection{Energies}
Let us now turn to functionals on~$H$.

\begin{definition}[Properties of functionals]
The \emph{domain} of a functional~$E\colon H\to (-\infty,\infty]$ is the set
\[
\dom{E}\eqdef \set{f\in H : E(f)<\infty} \fstop
\]

A functional~$E\colon H\to (-\infty,\infty]$ is called
\begin{enumerate}[$(E_1)$]
\item\label{i:d:Form:1} \emph{proper} if~$\dom{E}\neq \emp$;
\item \emph{densely defined} if~$\dom{E}$ is dense in~$H$;
\item \emph{linearly defined} if~$\dom{E}$ is a linear subspace of~$H$;
\item\label{i:d:Form:4} \emph{even} if $-f\in\dom{E}$ and~$E(-f)\leq E(f)$ for every~$f\in\dom{E}$ (\footnote{Note that this forces the equality~$E(-f)=E(f)$.});
\item \emph{positively $p$-homogeneous} if $a f\in\dom{E}$ and~$E(a f)=a^p E(f)$ for every~$u\in\dom{E}$ and every~$a>0$ for some fixed~$p\geq 1$.
\end{enumerate}

A convex functional~$E\colon H\to [0,\infty]$ satisfies 
\begin{enumerate}[$(E_1)$, resume]
\item\label{i:d:Form:6} the \emph{parallelogram identity} if
\[
2E(f)+2E(g)= E(f+g)+E(f-g)\comma \qquad f,g\in\dom{E} \fstop
\]
\end{enumerate}
\end{definition}

\begin{remark}[On terminology]
\begin{enumerate*}[$(a)$]
\item We prefer the terminology of \emph{even} functional over that of \emph{symmetric} functional, used in, e.g.,~\cite{Cla21,Puc25, SchZim25}, since \emph{evenness} more properly denotes \emph{symmetry around the origin}.

\item We prefer the terminology of \emph{linearly defined} functionals over that of \emph{quasilinear} functional, used in~\cite{Cla21}; cf.~\cite[Dfn.~3.1, p.~30]{Cla21}, since we find it more explicit.
\end{enumerate*}
\end{remark}

Let us collect here the following observation about even functionals, which will allow us to restrict our attention to \emph{non-negative} even functionals.

\begin{remark}[On even functionals]\label{r:Even}
(Cf., e.g.,~\cite[Rmk~3.2, p.~30]{Cla21}.) For a proper \emph{convex} functional to be \emph{even} in the sense of~\ref{i:d:Form:4} imposes strong constraints on both the domain and the range of~$E$.
Indeed, it is readily seen from the convexity of~$E$ that~$0\in \dom{E}$ and that~$0$ is a global minimizer of~$E$, hence in particular that~$E$ is \emph{bounded below}.
Furthermore,~$\dom{E}$ is symmetric around the origin of~$H$.

The existence of a minimum of~$E$ further has important consequences on the long-time behaviour of the gradient flow of~$E$; see, e.g.~\cite{SieWoj24}.
\end{remark}

In the following, it will be often convenient to consider function(al)s up to an additive constant.
%
This observation motivates the next definition.

\begin{definition}[Grounding]
An extended-real valued function(al)~$\varphi$ is called \emph{grounded} if~$\varphi(0)=0$.
To each $\varphi$ as above additionally finite at~$0$ we may associate its \emph{grounding}~$\ground{\varphi}\eqdef \varphi-\varphi(0)$.
\end{definition}

Indeed, the zero set of a convex functional plays a distinguished role.

\begin{lemma}[Additivity along zero lines]\label{l:ZeroLines}
Let~$E\colon H \to (-\infty,\infty]$ be a proper convex lower semicontinuous functional.
Further assume that~$\alpha g\in\dom{E}$ and~$E(\alpha g)=0$ for every constant~$\alpha\in\R$ for some~$g\in H$.
Then,
\begin{equation}\label{eq:l:ZeroLines:0}
f+\alpha g\in\dom{E} \qquad \text{and}\qquad E(f+\alpha g)= E(f)\comma \qquad f \in\dom{E}\comma \alpha\in\R\fstop
\end{equation}
\end{lemma}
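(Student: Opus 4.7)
The plan is to establish the two inequalities $E(f+\alpha g)\leq E(f)$ and $E(f)\leq E(f+\alpha g)$ separately; the second follows at once from the first by a substitution, so the real content is the upper bound.

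The key observation is that, by hypothesis, the entire line $\R g$ lies in the zero set of $E$, so every point $\beta g$ is a free ``anchor'' for convexity. The natural attempt $f+\alpha g = \tfrac{1}{2}(2f) + \tfrac{1}{2}(2\alpha g)$ fails because we do not control $E(2f)$. Instead, I would use a one-parameter family of convex combinations: for $t\in(0,1]$, the point $(\alpha/t) g$ lies in $\dom{E}$ with $E((\alpha/t)g)=0$ by hypothesis, and convexity yields
\[
E\tparen{(1-t)f + \alpha g} = E\tparen{(1-t)f + t \cdot (\alpha/t)g} \leq (1-t)E(f) + t \cdot 0 = (1-t)E(f).
\]
In particular, $(1-t)f+\alpha g \in \dom{E}$.

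Now I would let $t\to 0^+$. Since $(1-t)f+\alpha g \to f+\alpha g$ in $H$, lower semicontinuity of $E$ gives
\[
E(f+\alpha g) \leq \liminf_{t\to 0^+} E\tparen{(1-t)f+\alpha g} \leq \lim_{t\to 0^+} (1-t)E(f) = E(f),
\]
where the passage to the limit on the right is legitimate because $E(f)\in\R$ is finite (as $f\in\dom{E}$ and $E$ takes values in $(-\infty,\infty]$). This simultaneously shows $f+\alpha g\in\dom{E}$ and the upper bound. For the reverse inequality, apply the just-proved statement with $f$ replaced by $f+\alpha g \in \dom{E}$ and $\alpha$ replaced by $-\alpha$, obtaining $E(f) = E\tparen{(f+\alpha g) + (-\alpha)g} \leq E(f+\alpha g)$.

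There is no genuine obstacle here; the only point that requires a moment's thought is the choice of convex combination, which must be arranged so that the target point $f+\alpha g$ is reached \emph{in the limit} $t\downarrow 0$ (invoking lower semicontinuity) rather than being written as an honest convex combination at a fixed parameter — the latter is impossible since the coefficient of $f$ in $f+\alpha g$ is already $1$.
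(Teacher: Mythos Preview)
Your proof is correct and follows essentially the same approach as the paper: write the target as a limit of convex combinations of~$f$ with a point on the zero line~$\R g$, apply convexity, pass to the limit via lower semicontinuity, and obtain the reverse inequality by substitution. The only cosmetic difference is the parameterization of the convex combination---the paper scales both~$f$ and~$\alpha g$ by~$\lambda$ and sends~$\lambda\to 1$, whereas you fix~$\alpha g$ and scale only~$f$ by~$1-t$, sending~$t\to 0$---but the argument is the same.
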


\begin{proof}
Fix~$\alpha\in\R$. For every~$\alpha'\in\R$ and every~$\lambda\in (0,1)$,
\[
E\tparen{\lambda f+ (1-\lambda)\alpha' g} \leq \lambda E(f)+ (1-\lambda) E(\alpha' g) = \lambda E(f) +0 \leq E(f) \fstop
\]
Choosing~$\alpha'\eqdef (1-\lambda)^{-1}\lambda\alpha$, we conclude that, for every~$\lambda\in (0,1)$,
\begin{equation}\label{eq:l:ZeroLines:1}
E\tparen{\lambda (f+\alpha g)}\leq E(f) \fstop
\end{equation}
By lower semicontinuity of~$E$, letting~$\lambda\to 1$ above we conclude that
\begin{equation}\label{eq:l:ZeroLines:2}
f+\alpha g\in\dom{E} \qquad \text{and} \qquad E(f+\alpha g)\leq E(f)\comma \qquad f\in\dom{E}\comma \alpha\in\R\fstop
\end{equation}
Applying~\eqref{eq:l:ZeroLines:1} with~$f-\alpha g$ in place of~$f$ gives
\begin{equation}\label{eq:l:ZeroLines:3}
E(f)\leq E(f-\alpha g)\comma \qquad f\in\dom{E}\comma \alpha\in\R\comma
\end{equation}
and combining~\eqref{eq:l:ZeroLines:2} and~\eqref{eq:l:ZeroLines:3} (with~$-\alpha$ in place of~$\alpha$) finally yields~\eqref{eq:l:ZeroLines:0}.
\end{proof}

\begin{definition}[Doubling]
The \emph{double}~$E^\opl{2}$ of a functional~$E\colon H\to (-\infty,\infty]$ is the functional
\[
E^\opl{2}\colon H^\opl{2} \to (-\infty,\infty] \comma \qquad E^\opl{2}\colon (f,g) \mapsto E(f) + E(g) \fstop
\]
\end{definition}

We note that if a functional~$E\colon H\to (-\infty,\infty]$ satisfies any of the properties~\ref{i:d:Form:1}--\ref{i:d:Form:6}, then so does its double.

\begin{definition}[Fréchet subdifferential]
For~$f\in H$, the (\emph{Fr\'echet}) \emph{subdifferential at~$f$} of a functional~$E\colon H \to (-\infty,\infty]$ is the set
\[
\partial E(f)\eqdef \set{v \in H : E(g)-E(f) \geq \scalar{v}{g-f} + o(\norm{g-f}) \text{ as } \dom{E}\ni g\to_H f} \fstop
\]
We further let
\[
\dom{\partial E}\eqdef \set{f\in\dom{E}: \partial E(f)\neq \emp}
\]
and denote by~$\partial E \colon \dom{\partial E} \rightrightarrows H$ the resulting correspondence.
\end{definition}

\begin{remark}[Rockafellar subdifferential]
If~$E\colon H\to (-\infty,\infty]$ is convex, then
\begin{equation}\label{eq:ConvexSubDiff}
\partial E(f)=\set{v \in H : E(g)-E(f) \geq \scalar{v}{g-f}\comma g\in \dom{E}}
\end{equation}
and~$\partial E(f)$ is a closed convex subset of~$H$ for every~$f\in H$.
\end{remark}

\section{Equivalence theorems}
We collect here many equivalence results relating:
\begin{enumerate}
\item[$(T)$] strongly continuous non-expansive non-linear semigroups,
\item[$(A)$] maximal monotone operators,
\item[$(J)$] strongly continuous non-expansive non-linear resolvents,
\item[$(E)$] proper convex lower semicontinuous functionals.
\end{enumerate}
These include the Br\'ezis--Pazy Theorem, the non-linear Hunt Theorem, and Crandall--Ligget's non-linear Hille--Yoshida Theorem.
The complete set of equivalences is summarized in Figure~\ref{f:Diagram1}.
Some of the implications ---for which we present complete proofs--- appear not to have been discussed elsewhere.
For the known implications we mostly refer to Br\'ezis' monograph~\cite{Bre73}.

\subsection{Equivalence of semigroups, generators, and resolvents}
The relations between generators and semigroups/resolvents may be expressed by taking limits of the latter ones as the parameter vanishes.
Since generators are multivalued, the suitable topology in which these limits are taken is one capturing the inclusion of accumulation points in the their graphs.

\paragraph{Painlevé--Kuratowski graph convergence of operators}
Let~$(X,\mssd)$ be a metric space, and~$\seq{C_n}_n\subset X$ be a sequence of subsets.
The \emph{outer} and~\emph{inner limits} of~$\seq{C_n}_n$ as~$n\to\infty$ are respectively defined as
\begin{align*}
\Limsup_n C_n &\eqdef \set{x\in X : \liminf_n \mssd(x,C_n) =0 } \comma
\\
\Liminf_n C_n &\eqdef \set{x\in X : \limsup_n \mssd(x,C_n) =0 } \fstop
\end{align*}
The sequence~$\seq{C_n}_n$ \emph{converges in the sense of Painlevé--Kuratowski} (shortly: \emph{PK-converges}) if the outer and inner limit agree, in which case they are denoted by~$\Lim_n C_n$.

\begin{definition}[Graph convergence of operators]
Let~$H$ be a Hilbert space and~$\seq{A_n}_n$ be a sequence of operators~$A_n \colon H\rightrightarrows H$.
We say that the sequence~$\seq{A_n}_n$ \emph{graph convergences} to~$A\colon H \rightrightarrows H$ if the corresponding sequence of graphs~$\Gph(A_n)$ PK-converges to~$\Gph(A)$ in~$H^\tym{2}$, in which case we write~$\Gph\text{-}\lim_n A_n =A$.
\end{definition}

We refer the reader to~\cite{AdlAttRoc23} for the account of some properties of the graph convergence of operators, especially in connection with maximal monotonicity.

%
%
%

\subsubsection{Equivalence of generators and resolvents}
Let~$A$ be a maximal monotone operator.
By~\cite[Prop.~2.2, p.~23]{Bre73}, for each~$g\in H$ and~$\lambda>0$ the problem
\begin{equation}\label{eq:YoshidaRegularization}
f+\lambda A(f) \ni g
\end{equation}
has a unique solution~$f\in \dom{A}$. (In fact, this is a characterization of maximality for monotone operators by Minty's Theorem.)

\begin{definition}[Resolvent]\label{d:Resolvent}
The \emph{non-linear resolvent} of a maximal monotone operator~$A$ is the operator~$J_\lambda\colon H \to \dom{A}$ defined by~$J_\lambda(g)=f$, where~$f$ is the unique solution to~\eqref{eq:YoshidaRegularization}.
\end{definition}

\begin{definition}[Yoshida regularization]\label{d:YoshidaRegGenerator}
The \emph{Yoshida} ($\lambda$-)\emph{regularization} of a maximal monotone operator~$A$ with non-linear resolvent~$J_\bullet$ is the operator~$A_\lambda\colon H \to H$ defined by
\begin{equation}
A_\lambda\eqdef \tfrac{1}{\lambda}(\car_H-J_\lambda)\colon H\longrightarrow H\fstop
\end{equation}
\end{definition}

\begin{lemma}[{\cite[Prop.~2.6, p.~28]{Bre73}}]\label{lem:al}
Let~$J_\bullet\colon H\to H$ be a strongly continuous non-expansive non-linear resolvent.
Then, 
\begin{enumerate}[$(i)$]
\item $(A_\lambda,H)$ is a single-valued maximal monotone operator, Lipschitz continuous with Lipschitz constant~$1/\lambda$;
\item\label{i:l:Alambda:2} $A_{\lambda+\mu}=(A_\lambda)_\mu$ for all $\lambda, \mu>0$.
\end{enumerate}

\begin{proof}
We only prove~\ref{i:l:Alambda:2}, since the proof is not shown in~\cite{Bre73}.
Since~$A_\lambda$ is maximal monotone, for all $\mu>0$, and all $f \in H$, there exists a unique $h\eqdef J^\lambda_\mu(f)$ such that 
\[
f= h + \mu A_\lambda (h) = \tfrac{\lambda + \mu}{\lambda} h + \paren{ 1- \tfrac{\lambda+\mu}{\lambda} h }\fstop
\]
Using \ref{i:d:Resolvent:2}, we have 
\[
J_{\lambda + \mu} (f) = J_\lambda (h)\comma
\]
that is,
\begin{equation}\label{eq:lm}
    J_{\lambda+\mu} (f) = J_\lambda \tparen{J^\lambda_\mu (f)} \fstop
\end{equation}
Now,
\begin{align*}
A_{\lambda+\mu}(f) &=\frac{f - J_{\lambda+\mu}(f)}{\lambda + \mu} = \frac{f - J_\lambda(h)}{\lambda + \mu} = \frac{h + \mu A_\lambda(h) - J_\lambda(h)}{\lambda + \mu} = A_\lambda(h) 
\\
&= (A_\lambda)_\mu (f) \fstop \qedhere
\end{align*}
\end{proof}
\end{lemma}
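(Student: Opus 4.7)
The plan is to extract firm non-expansiveness of $J_\lambda$ directly from the abstract resolvent axioms $(J_1)$--$(J_4)$, and then to deduce all the assertions in (i) from it. Part (ii) is already settled in the excerpt by the first resolvent identity alone, so the substance of the work lies in (i), for which I propose the following route.

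Single-valuedness of $A_\lambda=\tfrac{1}{\lambda}(\car_H-J_\lambda)$ is immediate from that of $J_\lambda$. For the Lipschitz bound $1/\lambda$, I would squeeze firm non-expansiveness of $J_\lambda$ out of $(J_2)$ and $(J_4)$ as follows. From $(J_2)$,
\[
J_\lambda = J_\mu\bigl(\tfrac{\mu}{\lambda}\car_H+(1-\tfrac{\mu}{\lambda})J_\lambda\bigr),
\]
and applying non-expansiveness of $J_\mu$ with $a\eqdef J_\lambda(f)-J_\lambda(g)$, $b\eqdef f-g$, and $t\eqdef\mu/\lambda\in(0,1]$ gives $\|a\|^2\leq\|tb+(1-t)a\|^2$. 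Expanding, dividing by $t>0$, and letting $t\downarrow 0$ leads to $\|a\|^2\leq\scalar{a}{b}$, which is precisely firm non-expansiveness. From this, $\|b-a\|^2=\|b\|^2-2\scalar{a}{b}+\|a\|^2\leq\|b\|^2-\|a\|^2\leq\|b\|^2$, so $\car_H-J_\lambda$ is non-expansive and $A_\lambda$ is $1/\lambda$-Lipschitz. Monotonicity is a direct consequence: $\lambda\scalar{A_\lambda(f)-A_\lambda(g)}{f-g}=\scalar{b-a}{b}\geq\|b-a\|^2\geq 0$ by the same inequality. Maximality then follows since a single-valued, everywhere-defined, hemicontinuous monotone operator on a Hilbert space is maximal monotone; concretely, one can verify Minty's criterion by solving $(\car_H+\tau A_\lambda)(g)=f$ for small $\tau>0$ via Banach fixed point (the map $g\mapsto f-\tau A_\lambda(g)$ is a contraction once $\tau<\lambda$) and extending to all $\tau>0$ by standard continuation.

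For part (ii), I would follow the argument already outlined in the excerpt: invert $(\car_H+\mu A_\lambda)$ to define the $A_\lambda$-resolvent $J^\lambda_\mu$, identify the resulting convex-combination input with the argument of $J_\mu$ appearing in $(J_2)$ so that $J_{\lambda+\mu}(f)=J_\lambda(J^\lambda_\mu(f))$, and then read off $A_{\lambda+\mu}(f)=\tfrac{1}{\lambda+\mu}(f-J_{\lambda+\mu}(f))=A_\lambda(J^\lambda_\mu(f))=(A_\lambda)_\mu(f)$ from the defining formula of the Yoshida regularization.

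The principal obstacle is the derivation of firm non-expansiveness from the abstract axioms: in the linear Hille--Yoshida setting this is immediate from $J_\lambda=(\car_H+\lambda A)^{-1}$, but here it must be wrung out of $(J_2)$ and $(J_4)$ by the limiting argument above. Once firm non-expansiveness is available, every remaining property of $A_\lambda$ follows by routine algebraic manipulations.
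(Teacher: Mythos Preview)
Your proposal is correct. For part~(ii) you follow the paper's argument, so there is nothing to add.

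For part~(i) the paper gives no proof and simply cites Br\'ezis. In Br\'ezis's setting, however, one starts from a given maximal monotone~$A$, writes $J_\lambda=(\car_H+\lambda A)^{-1}$, and derives firm non-expansiveness of~$J_\lambda$ from the monotonicity of~$A$. Here only the abstract axioms $(J_1)$--$(J_4)$ are available, and the identification of~$J_\bullet$ as the resolvent of some maximal monotone~$A$ is precisely the content of the subsequent Theorem~\ref{t:ResolventGenerator}, whose proof in turn relies on the present lemma. Your route sidesteps this potential circularity by extracting firm non-expansiveness directly from the first resolvent identity~$(J_2)$ together with non-expansiveness~$(J_4)$ via the limiting argument $\|a\|^2\leq\|tb+(1-t)a\|^2$, $t\downarrow 0$. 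This is a genuinely different and, in the abstract-resolvent framework of the paper, more self-contained argument; once firm non-expansiveness is in hand, the remaining claims in~(i) (the $1/\lambda$-Lipschitz bound, monotonicity, and maximality via Minty/Banach fixed point) follow exactly as you indicate.
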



\begin{theorem}[Generators and resolvents]\label{t:ResolventGenerator}
Let~$C$ be a non-empty closed convex subset of~$H$. Then, the following are equivalent:
\begin{enumerate}[$(i)$]
\item\label{i:t:ResolventGenerator:1} $J_\bullet\colon H\to H$ is a strongly continuous non-expansive non-linear resolvent;
\item\label{i:t:ResolventGenerator:2} $J_\lambda=(\car_H +\lambda A)^{-1}$,~$\lambda>0$, for the maximal monotone operator
\[
A\eqdef \Gph\text{-}\lim_{\lambda\downarrow 0} \tfrac{1}{\lambda}(\car_H-J_\lambda) \fstop
\]
\end{enumerate}
\end{theorem}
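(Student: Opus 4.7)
My plan is to treat the two implications separately, with the substantive work concentrated in $(i)\Rightarrow(ii)$, while $(ii)\Rightarrow(i)$ is a recollection of classical facts about the Minty/Yoshida construction.

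For $(ii)\Rightarrow(i)$, starting from a maximal monotone $A$ I would set $J_\lambda\eqdef(\car_H+\lambda A)^{-1}$ for $\lambda>0$, single-valued and everywhere defined on $H$ by Minty's theorem (cited above Dfn.~\ref{d:Resolvent}), and $J_0\eqdef\cproj_C$ for $C\eqdef\overline{\dom A}$. Non-expansivity~\ref{i:d:Resolvent:4} is immediate from monotonicity of $A$; the first resolvent identity~\ref{i:d:Resolvent:2} is a one-line algebraic manipulation starting from $f=u+\lambda v$ with $v\in Au$, whence $\tfrac{\mu}{\lambda}f+(1-\tfrac{\mu}{\lambda})J_\lambda f=u+\mu v$ and thus $J_\mu$ of this point equals $u=J_\lambda f$. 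Axioms~\ref{i:d:Resolvent:1} and~\ref{i:d:Resolvent:3} then reduce to the classical strong convergence $J_\lambda f\to\cproj_{\overline{\dom A}}(f)$ as $\lambda\downarrow 0$, which I would cite from~\cite{Bre73}.

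For $(i)\Rightarrow(ii)$, my idea is to first construct $A$ directly from $J_\bullet$, bypassing the graph-limit formula, and only afterwards identify it with that graph limit. Concretely, set
\[
A(u)\eqdef\set{v\in H : J_\lambda(u+\lambda v)=u\text{ for some }\lambda>0}\comma \qquad u\in H\fstop
\]
The linchpin observation is that, by applying~\ref{i:d:Resolvent:2} to $f\eqdef u+\lambda v$ and computing $\tfrac{\mu}{\lambda}f+(1-\tfrac{\mu}{\lambda})J_\lambda f=u+\mu v$, the clause `for some $\lambda>0$' is equivalent to `for every $\mu>0$'; this at once gives $J_\mu=(\car_H+\mu A)^{-1}$ for all $\mu>0$. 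Monotonicity of $A$ then follows from~\ref{i:d:Resolvent:4}: for $(u_i,v_i)\in A$, $i=1,2$, expanding $\norm{u_1-u_2}^2\leq \norm{u_1-u_2+\lambda(v_1-v_2)}^2$, dividing by $\lambda$, and sending $\lambda\downarrow 0$ yields $\scalar{u_1-u_2}{v_1-v_2}\geq 0$; maximality follows from Minty's theorem upon noting that, for any $f\in H$, the pair $u\eqdef J_1(f)$, $v\eqdef f-u$ lies in the graph of $A$ by construction, so $\ran(\car_H+A)=H$.

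The final task is the graph identification $A=\Gph\text{-}\lim_{\lambda\downarrow 0}A_\lambda$. For the inner-limit inclusion, given $(u,v)\in A$ the explicit choice $h_\lambda\eqdef u+\lambda v$ yields $J_\lambda h_\lambda=u$, hence $A_\lambda h_\lambda=v$ and $(h_\lambda,A_\lambda h_\lambda)\to(u,v)$. For the outer-limit inclusion, given $(h_{\lambda_n},A_{\lambda_n}h_{\lambda_n})\to(u,v)$ along $\lambda_n\downarrow 0$, I set $u_n\eqdef J_{\lambda_n}h_{\lambda_n}$ and $v_n\eqdef A_{\lambda_n}h_{\lambda_n}\in Au_n$; then $u_n=h_{\lambda_n}-\lambda_n v_n\to u$, and at any fixed $\lambda>0$ continuity of $J_\lambda$ allows passage to the limit in $u_n=J_\lambda(u_n+\lambda v_n)$ to conclude $(u,v)\in A$. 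I do not foresee a sharp obstacle; the one delicate point is arranging the argument so that the directly-defined $A$ is simultaneously easy to show maximal monotone and visibly the graph limit of~\ref{i:t:ResolventGenerator:2}, which is what the `for some iff for every' equivalence delivers.
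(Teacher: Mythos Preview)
Your proof is correct, and for the direction $(ii)\Rightarrow(i)$ it matches the paper's approach (collecting standard facts from~\cite{Bre73}). For $(i)\Rightarrow(ii)$, however, you take a genuinely different and more elementary route. The paper proceeds indirectly: it first establishes, via the identity $(A_\lambda)_\mu=A_{\lambda+\mu}$ of Lemma~\ref{lem:al}, that $(\car_H+\mu A_\lambda)^{-1}\to J_\mu$ strongly as $\lambda\downarrow 0$, and then invokes an external graph-convergence criterion~\cite[Thm.~4.1]{AdlAttRoc23} to conclude both that the $A_\lambda$ graph-converge and that the limit is a maximal monotone operator with resolvent~$J_\mu$. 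Your approach instead \emph{defines} $A$ directly from the resolvent family via the `for some $\lambda$ iff for every $\mu$' observation (a clean consequence of~\ref{i:d:Resolvent:2}), verifies maximal monotonicity by hand from~\ref{i:d:Resolvent:4} and Minty, and only afterwards checks the graph identification $A=\Gph\text{-}\lim_\lambda A_\lambda$ by explicit inner/outer-limit arguments. Your route is self-contained---it needs neither Lemma~\ref{lem:al} nor the result from~\cite{AdlAttRoc23}---whereas the paper's route has the advantage of fitting into the graph-convergence machinery reused elsewhere (Theorem~\ref{t:SemigroupGenerator}, Corollary~\ref{c:AdlAttRoc}).
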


\begin{proof}
\ref{i:t:ResolventGenerator:1}$\implies$\ref{i:t:ResolventGenerator:2}\quad
Let~$A_\lambda\eqdef \frac{1}{\lambda}(\car_H-J_\lambda)\colon H\to H$.
It suffices to apply~\cite[Thm.~4.1]{AdlAttRoc23}, the assumptions of which we now verify.
To this end, it is sufficient to prove that
\begin{equation}\label{eq:t:ResolventGenerator:1}
\lim_{\lambda\downarrow 0} (\car_H + \mu A_\lambda)^{-1} = J_\mu\comma \qquad \mu>0\comma
\end{equation}
in the strong operator topology.
In order to show this, fix $\mu>0$ and $f \in H$, and consider, for each~$\lambda>0$, the only element $h_\lambda \in H$ such that 
\[ 
h_\lambda + \mu A_\lambda (h_\lambda) = f\comma
\]
that is
\[ 
\tfrac{\lambda +\mu}{\lambda} h_\lambda + \tparen{ 1- \tfrac{\lambda+\mu}{\lambda} } \, J_\lambda (h_\lambda) = f \fstop
\]
Similarly to the proof of Lemma \ref{lem:al}, applying \ref{i:d:Resolvent:2}, we have 
\[
J_{\lambda+\mu}(f) = J_\lambda (h_\lambda) \fstop
\]
Note that $h_\lambda = J^\lambda_\mu f = (\car_H + \mu A_\lambda)^{-1} f.$
Using \eqref{eq:lm}, we have 
\[
\lim_{\lambda \to 0} J_\lambda (h_\lambda) = \lim_{\lambda \to 0} J_\lambda (J^\lambda_\mu (f)) = \lim_{\lambda \to 0} J_{\lambda+\mu}(f) = \lim_{\lambda \to 0} J_\mu (J^\mu_\lambda(f)) = J_\mu(f) \comma
\]
strongly in $H$, by continuity of $J_\mu$ and since $J^\mu_\lambda f \to f$ as $\lambda \to 0$, since $J^\mu_\bullet$ is the resolvent of the maximal monotone operator~$A_\mu$. 
On the other hand, since
\[
f = h_\lambda + \mu A_\lambda (h_\lambda)\comma
\]
then,
\[
h_\lambda = \tfrac{\lambda}{\lambda + \mu} f + \tfrac{\mu}{\lambda+\mu} J_\lambda (h_\lambda) \fstop 
\]
Letting~$\lambda \to 0$,
\[
\lim_{\lambda\downarrow 0}(\car_H + \mu A_\lambda)^{-1} (f) = \lim_{\lambda\downarrow 0} h_\lambda = J_\mu (f)
\]
as desired.

\ref{i:t:ResolventGenerator:2}$\implies$\ref{i:t:ResolventGenerator:1}\quad
\ref{i:d:Resolvent:1} holds by definition.
The first resolvent identity~\ref{i:d:Resolvent:2} is immediate, cf.~\cite[\S2.4, p.~27]{Bre73}.
By~\cite[Prop.~2.3, p.~26]{Bre73},~$A$ is densely defined, and the strong continuity~\ref{i:d:Resolvent:3} follows from~\cite[Thm.~2.2, p.~27]{Bre73}.
The non-expansiveness~\ref{i:d:Resolvent:4} follows by~\cite[Prop.~2.2 i)$\implies$iii), p.~23]{Bre73}.
\end{proof}



\begin{corollary}\label{c:AdlAttRoc}
Let~$A$ be a maximal monotone operator. Then,
\[
A=\Gph\text{-}\lim_{\lambda\to 0} A_\lambda \fstop
\]
\begin{proof}
In light of~\eqref{eq:t:ResolventGenerator:1} it suffices to apply~\cite[Prop.~2.3(ii)$\implies$(i)]{AdlAttRoc23}.
\end{proof}
\end{corollary}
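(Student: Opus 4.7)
The statement is essentially a repackaging of equation~\eqref{eq:t:ResolventGenerator:1}, established earlier in the proof of Theorem~\ref{t:ResolventGenerator}, combined with the general equivalence between graph convergence and pointwise resolvent convergence for maximal monotone operators supplied by~\cite{AdlAttRoc23}. My plan is simply to identify these two ingredients and chain them together.

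First, I would set up the objects on the left-hand side. Since $A$ is maximal monotone, Theorem~\ref{t:ResolventGenerator} gives a strongly continuous non-expansive non-linear resolvent $J_\bullet$ with $J_\lambda = (\car_H + \lambda A)^{-1}$, and the Yoshida regularizations $A_\lambda \eqdef \tfrac{1}{\lambda}(\car_H - J_\lambda)$ are themselves maximal monotone (in fact single-valued and $\tfrac{1}{\lambda}$-Lipschitz) by Lemma~\ref{lem:al}. In particular, for any fixed $\mu>0$, the resolvents $(\car_H + \mu A_\lambda)^{-1}$ of the net $\{A_\lambda\}_\lambda$ are well-defined on all of~$H$.

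Next, I would invoke equation~\eqref{eq:t:ResolventGenerator:1}, which asserts
\[
\lim_{\lambda\downarrow 0} (\car_H + \mu A_\lambda)^{-1} = J_\mu\comma \qquad \mu>0\comma
\]
in the strong operator topology of $H$. This is exactly the statement that the resolvents of the net $\{A_\lambda\}$ converge pointwise and strongly to the resolvent $J_\mu$ of $A$. Then \cite[Prop.~2.3(ii)$\implies$(i)]{AdlAttRoc23}, which establishes that for nets of maximal monotone operators strong pointwise convergence of resolvents (at a single, hence every, $\mu>0$) implies Painlev\'e--Kuratowski graph convergence, yields $A = \Gph\text{-}\lim_{\lambda\to 0} A_\lambda$.

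There is no substantive obstacle: the argument is a two-line citation chain. The only point requiring care is that~\eqref{eq:t:ResolventGenerator:1} was derived in the forward direction of Theorem~\ref{t:ResolventGenerator} starting from a resolvent $J_\bullet$, whereas here we start from a maximal monotone $A$; but the bijection in Theorem~\ref{t:ResolventGenerator} means we are in exactly the same picture, so the formula applies verbatim.
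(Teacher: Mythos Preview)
Your proposal is correct and follows exactly the same approach as the paper: invoke~\eqref{eq:t:ResolventGenerator:1} for the pointwise resolvent convergence and then apply~\cite[Prop.~2.3(ii)$\implies$(i)]{AdlAttRoc23} to upgrade to graph convergence. The paper's proof is the one-line version of what you wrote.
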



\subsubsection{Equivalence of semigroups and generators}
The semigroup~$T_\bullet$ of a maximal monotone operator~$A$ is the solution to a differential inclusion, in the following sense.

\begin{theorem}[Br\'ezis--Pazy, e.g.~{\cite[Thm.~3.1]{Bre73}}]\label{t:BrezisPazy}
Let~$A$ be a maximal monotone operator on~$H$. Then,
\begin{enumerate}[$(i)$]
\item for every~$f\in\dom{A}$ there exists a unique Lipschitz map~$u\colon [0,\infty)\to H$ with~$u(t)\in \dom{A}$ for~$t>0$,~$u(0)=f$ and
\begin{equation}\label{eq:BrezisPazy}
\tfrac{\diff}{\diff t} u(t) \in - A\tparen{u(t)} \quad \forallae{\diff \Leb^1} t>0\semicolon
\end{equation}
\item for every~$t\in [0,\infty)$ there exists~$\tfrac{\diff^+}{\diff t} u\in \dom{A}$;
\item for every~$t\in (0,\infty)$, the assignment~$f\to u(t)$ defines a non-expansive operator~$T_t\colon \dom{A}\to H$ which extends to the whole of~$\overline{\dom{A}}$, cf.~\cite[Rem.~(i), p.~570]{Eva10} and satisfies~$T_t \overline{\dom{A}}\subset \dom{A}$;
\item\label{i:t:BrezisPazy:4} the family~$T_\bullet\colon \overline{\dom{A}}\to H$ is a strongly continuous non-expansive non-linear semigroup.
\end{enumerate}
\end{theorem}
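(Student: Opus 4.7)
The plan is to realize the semigroup~$T_\bullet$ as the limit of solutions to the Lipschitz ODEs driven by the Yoshida regularizations~$A_\lambda$. Since~$A$ may be multi-valued and unbounded, Cauchy--Lipschitz does not apply directly to~\eqref{eq:BrezisPazy}; but by Lemma~\ref{lem:al} each~$A_\lambda$ is single-valued, globally defined on~$H$, and $1/\lambda$-Lipschitz, so for fixed~$f\in\dom{A}$ the approximate problem
\begin{equation*}
\tfrac{\diff}{\diff t} u_\lambda(t) = -A_\lambda\tparen{u_\lambda(t)} \comma \qquad u_\lambda(0)=f
\end{equation*}
admits a unique global Lipschitz solution~$u_\lambda \colon [0,\infty)\to H$ by the classical Cauchy--Lipschitz theorem.

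The core step is an a priori Cauchy-in-$\lambda$ estimate. Combining the decomposition $x = J_\lambda x + \lambda A_\lambda x$ with the fact that $A_\lambda x \in A(J_\lambda x)$, monotonicity of~$A$ applied to the pair~$\tparen{J_\lambda u_\lambda(t), J_\mu u_\mu(t)}$ yields
\begin{equation*}
\tfrac{\diff}{\diff t} \norm{u_\lambda(t)-u_\mu(t)}^2 \leq 2(\lambda+\mu)\norm{A^\circ f}^2 \comma
\end{equation*}
and integration shows that $\seq{u_\lambda}_\lambda$ is uniformly Cauchy on compact subsets of~$[0,\infty)$. Setting $u(t)\eqdef\lim_\lambda u_\lambda(t)$ and $T_t(f)\eqdef u(t)$ furnishes the candidate map, Lipschitz with constant~$\norm{A^\circ f}$ in view of the uniform bound~$\norm{A_\lambda u_\lambda(t)}\leq\norm{A^\circ f}$.

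To verify the differential inclusion~\eqref{eq:BrezisPazy}, I would combine this uniform bound with the graph convergence of Corollary~\ref{c:AdlAttRoc}: subsequential weak limits of~$A_\lambda u_\lambda(t)$ lie in~$A(u(t))$ because~$J_\lambda u_\lambda(t)\to u(t)$ strongly in~$H$, which follows from $\norm{J_\lambda x - x}=\lambda\norm{A_\lambda x}$. A one-sided refinement exploiting the monotonicity of $t\mapsto\norm{A^\circ u(t)}$ yields the right derivative $\tfrac{\diff^+}{\diff t}u(t)=-A^\circ(u(t))$ at every~$t\geq 0$, giving~$(ii)$ and ---after an approximation argument $f_n\in\dom{A}$ with $f_n\to f\in\overline{\dom{A}}$--- the smoothing property $T_t\overline{\dom{A}}\subset\dom{A}$ for $t>0$. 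Uniqueness and non-expansiveness follow from a single monotonicity computation: if $u,v$ solve~\eqref{eq:BrezisPazy} with data $f,g\in\dom{A}$, then $\tfrac{\diff}{\diff t}\norm{u(t)-v(t)}^2\leq 0$ by monotonicity of~$A$. Non-expansiveness lets~$T_t$ extend uniquely by density to~$\overline{\dom{A}}$; the semigroup law follows from uniqueness applied to the translate $s\mapsto u(s+t)$; and strong continuity is immediate from the global Lipschitz bound on~$u$.

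The main obstacle lies in the identification step: passing from weak convergence of~$A_\lambda u_\lambda(t)$ and strong convergence of~$J_\lambda u_\lambda(t)$ to the conclusion that the limit actually lies in~$A(u(t))$. This is a demiclosedness-type argument, and it is precisely where maximal monotonicity ---equivalently, Painlev\'e--Kuratowski graph convergence in the sense of Corollary~\ref{c:AdlAttRoc}--- is essential: for merely monotone operators, such weak/strong limits need not remain in the graph.
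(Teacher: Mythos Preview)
The paper does not prove this theorem: it is quoted verbatim from Br\'ezis~\cite[Thm.~3.1]{Bre73} as a black box, with no argument given. Your sketch via Yoshida approximations---solve $\dot u_\lambda=-A_\lambda u_\lambda$, derive the Cauchy-in-$\lambda$ estimate from monotonicity applied at the pair $(J_\lambda u_\lambda, J_\mu u_\mu)$, and pass to the limit using demiclosedness of the maximal monotone graph---is precisely the proof in the cited reference, so in that sense you are aligned with the paper's (implicit) source.

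One genuine gap: the smoothing claim $T_t\,\overline{\dom{A}}\subset\dom{A}$ in~\ref{i:t:BrezisPazy:4} does \emph{not} follow from the approximation argument you indicate. Taking $f_n\in\dom{A}$ with $f_n\to f$ gives $T_t f_n\to T_t f$ by non-expansiveness, but without a uniform bound on $\norm{A^\circ(T_t f_n)}$ independent of~$n$ you cannot conclude that the limit lies in~$\dom{A}$. In fact the claim is false for general maximal monotone~$A$: for the skew-adjoint operator $A=\tfrac{\diff}{\diff x}$ on $L^2(\R)$ with $\dom{A}=H^1(\R)$, the semigroup is the translation group, which does not regularize $L^2$ into~$H^1$. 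The smoothing effect is a separate result (Br\'ezis~\cite[Thm.~3.2]{Bre73}) requiring $A=\partial E$ for some proper convex lower semicontinuous~$E$, where the key extra ingredient is the energy estimate $\norm{A^\circ(T_t f)}\leq \tfrac{1}{t}\bigl(E(f)-\inf E\bigr)^{1/2}$ (or a variant thereof). The statement in the paper appears to conflate the two theorems; your proof plan is correct for~$(i)$,~$(ii)$,~$(iv)$ and the extension part of~$(iii)$, but the smoothing needs the subdifferential hypothesis and a different argument.
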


This justifies the following definition.

\begin{definition}[Generator of a semigroup]
Let~$C$ be a non-empty closed convex subset of~$H$ and~$T_\bullet\colon C\to H$ be a strongly continuous non-expansive non-linear semigroup.
We say that~$A$ is a \emph{generator} of~$T_\bullet$ if~$\overline{\dom{A}}=C$ and for every~$f\in\dom{A}$ the function~$t\mapsto T_t(f)$ satisfies~\eqref{eq:BrezisPazy}.
\end{definition}

Now, it is well-known that there is a bijective correspondence between strongly continuous non-expansive non-linear semigroup and maximal monotone operators.
\begin{theorem}[cf.~{\cite[Thm.~IV.1.2, p.~175]{Bar76}}]\label{t:Barbu}
Let~$C$ be a non-empty closed convex subset of~$H$ and~$T_\bullet\colon C\to H$ be a strongly continuous non-expansive non-linear semigroup.
Then, there exists a unique maximal monotone operator~$A$ such that~$A$ is the generator of~$T_\bullet$.

Vice versa, let~$A$ be a maximal monotone operator on~$H$.
Then, there exists a unique (non-expansive non-linear) semigroup~$T_\bullet$ on~$\overline{\dom{A}}$ such that~$A$ is the generator of~$T_\bullet$.
\end{theorem}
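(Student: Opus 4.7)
The plan is to prove the two directions separately, exploiting the Brézis--Pazy theorem for one and a resolvent-factorization argument for the other.

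For the direction \emph{maximal monotone operator~$\to$ semigroup}, I would invoke Theorem~\ref{t:BrezisPazy} directly: given a maximal monotone~$A$, the assignment~$T_t(f) := u(t;f)$, where $u(\emparg;f)$ is the unique Lipschitz solution to the differential inclusion $\tfrac{\diff}{\diff t} u \in -A(u)$ with $u(0)=f\in\dom{A}$, extends to a strongly continuous non-expansive non-linear semigroup on~$\overline{\dom{A}}$ by parts (iii)--(iv) of that theorem. Uniqueness of such a~$T_\bullet$ is immediate: any two semigroups with generator~$A$ agree pointwise on~$\dom{A}$ by the uniqueness clause of Brézis--Pazy, and their non-expansive extensions to~$\overline{\dom{A}}$ must coincide.

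For the direction \emph{semigroup~$\to$ maximal monotone operator}, I would factor through the resolvent. Define the candidate resolvent, for~$\lambda>0$, by the inverted implicit-Euler formula
\[
J_\lambda := \lim_{t\downarrow 0} \tparen{(1-\tfrac{\lambda}{t})\car_H + \tfrac{\lambda}{t} T_t}^{-1},
\]
as anticipated in the upper horizontal arrow of Figure~\ref{fig:Intro}. Once one shows that this limit exists in the strong operator topology and that~$J_\bullet$ is a strongly continuous non-expansive non-linear resolvent on~$C$, Theorem~\ref{t:ResolventGenerator} supplies a maximal monotone operator $A = \Gph\text{-}\lim_{\lambda\downarrow 0} \tfrac{1}{\lambda}(\car_H - J_\lambda)$ with $\overline{\dom{A}} = C$. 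To verify that this~$A$ generates the original~$T_\bullet$, let~$\tilde T_\bullet$ denote the semigroup generated by~$A$ via the previous direction; the Crandall--Liggett exponential formula~$\tilde T_t = \lim_n (J_{t/n})^n$ identifies~$\tilde T_\bullet$ with the time-continuum limit of the discretizations defining~$J_\bullet$, and this limit returns~$T_\bullet$ by construction. Uniqueness of~$A$ is then intrinsic to~$T_\bullet$, since the generator can be recovered as the graph limit $A = \Gph\text{-}\lim_{t\downarrow 0} \tfrac{1}{t}(\car_H - T_t)$.

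The main obstacle is the very first step of the second direction: the existence and non-expansiveness of~$J_\lambda$ defined by the implicit-Euler limit above. This is the heart of Komura's theorem and requires a nontrivial analysis --- either a Cauchy-type argument on the pre-limit net, or the compactness/maximality argument from Barbu's monograph exploiting monotonicity of~$\car_H - T_t$ for small~$t$. Once this step is secured, the results already established in the present section (notably Theorem~\ref{t:ResolventGenerator} and Theorem~\ref{t:BrezisPazy}) transport the statement to its final form, yielding the bijection between strongly continuous non-expansive non-linear semigroups on~$C$ and maximal monotone operators with $\overline{\dom{A}} = C$.
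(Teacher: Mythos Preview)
The paper does not prove this theorem; it cites Barbu's monograph and merely \emph{describes} Barbu's argument: one first defines the single-valued operator~$A^\circ$ by $A^\circ f \eqdef -\tfrac{\diff^+}{\diff t}\big|_{t=0} T_t(f)$ on the set where this derivative exists, checks that~$A^\circ$ is monotone, and then takes~$A$ to be a maximal monotone extension of~$A^\circ$ furnished by Zorn's lemma. The paper explicitly flags this as ``rather abstract'' and non-constructive, and \emph{subsequently} develops the constructive graph-limit characterizations (Theorems~\ref{t:SemigroupGenerator} and~\ref{t:SemigroupResolvent}) \emph{using} Theorem~\ref{t:Barbu} as an input.

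Your resolvent-factorization route is therefore genuinely different from the cited proof, and in spirit closer to what the paper does \emph{after} this theorem. But precisely for that reason you must be careful about circularity: in this paper the existence of the implicit-Euler limit~$J_\lambda = \lim_{t\downarrow 0}\bigl((1-\tfrac{\lambda}{t})\car_H + \tfrac{\lambda}{t}T_t\bigr)^{-1}$ is established in Theorem~\ref{t:SemigroupResolvent}\ref{i:t:SemigroupResolvent:1} by first invoking the generator~$A$ of~$T_\bullet$ --- whose existence is exactly Theorem~\ref{t:Barbu}. So you cannot appeal to that result, nor to ``the compactness/maximality argument from Barbu's monograph'' if by that you mean the very argument you are trying to replace. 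You would need a self-contained Cauchy estimate for the net~$t\mapsto J_\lambda^{(t)}$, which is essentially K\={o}mura's original contribution and is comparable in difficulty to the theorem itself.

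There is a second gap: the claim that $\tilde T_t = \lim_n (J_{t/n})^n$ ``returns~$T_\bullet$ by construction'' is not justified. Nothing in the construction of~$J_\lambda$ as an implicit-Euler limit immediately yields that iterating~$J_{t/n}$ reproduces~$T_t$; this is a Chernoff-type product formula and requires its own argument (or, again, prior knowledge that~$A$ generates~$T_\bullet$, which is what you are proving). Barbu's approach sidesteps this entirely by building~$A$ directly from the time-derivative of~$T_\bullet$, so that the generator relation~$\tfrac{\diff}{\diff t}T_t(f)\in -A(T_t(f))$ holds by definition.
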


The above correspondence is however rather abstract.
Indeed, the forward implication of Theorem~\ref{t:Barbu} firstly constructs a monotone operator~$A^\circ$ satisfying~$\tfrac{\diff}{\diff t} T_t u =A^\circ T_t u$ for every~$u\in H$, and subsequently defines~$A$ as the maximal monotone extension of (its minimal section)~$A^\circ$ in a non-constructive way by Zorn's lemma.
In the following, we will provide a constructive realization of the correspondence between semigroups and resolvents, relying on an approximation procedure in the sense of Painlevé--Kuratowski graph convergence.

\begin{lemma}\label{l:At}
Let~$C$ be a closed convex set in~$H$, and~$T_\bullet\colon C\to H$ be a strongly continuous non-expansive non-linear semigroup.
For every~$t>0$, define the operator
\begin{equation}\label{eq:ApproxSemigroup}
A^\sym{t}\eqdef \tfrac{1}{t} (\car_H-T_t) \colon C\longrar H\fstop
\end{equation}
Then, $(A^\sym{t},C)$ is a maximal monotone operator.
\begin{proof}
By~\ref{i:d:Semigroup:4}, for all $f,g \in C$,
\[
\begin{aligned}
\tscalar{A^\sym{t}(f) -  A^\sym{t}(g)}{f-g} &= \tfrac{1}{t} \tparen{\norm{f-g}^2 - \scalar{T_t(f) - T_t(g)}{f-g}}
\\
&\geq  \tfrac{1}{t} \tparen{ \norm{f-g}^2 - \norm{f-g} \norm{T_t(f) - T_t (g)} } \geq 0,
\end{aligned}
\]
thus~$(A^\sym{t},C)$ is monotone.
Furthermore, again by~\ref{i:d:Semigroup:4},
\begin{align*}
\tnorm{A^\sym{t}(f) -  A^\sym{t}(g)} &= \tfrac{1}{t} \norm{f - T_t(f) - g + T_t(g)}
\\
&\leq \tfrac{1}{t} \tparen{ \norm{f -g} +\norm{T_t(f) - T_t(g)} } \leq \tfrac{2}{t} \norm{f-g}
\end{align*}
thus~$(A^\sym{t}, C)$ is $\tfrac{2}{t}$-Lipschitz.
As a consequence, see~\cite[Proposition 2.4]{Bre73},~$A^\sym{t}$ is maximal monotone.
\end{proof}
\end{lemma}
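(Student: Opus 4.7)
The plan is to verify the two defining properties of maximal monotonicity in Definition~\ref{d:Generator} directly, exploiting non-expansiveness~\ref{i:d:Semigroup:4} of the semigroup twice: once via Cauchy--Schwarz to obtain monotonicity, and once via the triangle inequality to obtain Lipschitz continuity, from which maximality will follow by a standard criterion.

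For monotonicity, I would fix $f,g\in C$ and expand
\[
\tscalar{A^{\sym{t}}(f) - A^{\sym{t}}(g)}{f-g} = \tfrac{1}{t}\tparen{\norm{f-g}^2 - \scalar{T_t(f)-T_t(g)}{f-g}} \fstop
\]
Applying Cauchy--Schwarz to the second inner product, followed by~\ref{i:d:Semigroup:4}, yields $\scalar{T_t(f)-T_t(g)}{f-g}\leq \norm{T_t(f)-T_t(g)}\,\norm{f-g}\leq \norm{f-g}^2$, so the displayed quantity is non-negative. This is the entire argument for the monotonicity half.

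For the maximal half, the strategy is to show that $A^{\sym{t}}$, being a single-valued monotone map with closed convex domain, is in fact Lipschitz continuous; maximality then follows from the classical fact that a (hemi)continuous monotone operator on a closed convex set in a Hilbert space has a maximal graph, e.g.\ Brézis~\cite[Prop.~2.4, p.~27]{Bre73}. The Lipschitz bound is straightforward: by the triangle inequality and~\ref{i:d:Semigroup:4},
\[
\tnorm{A^{\sym{t}}(f)-A^{\sym{t}}(g)} \leq \tfrac{1}{t}\tparen{\norm{f-g}+\norm{T_t(f)-T_t(g)}} \leq \tfrac{2}{t}\norm{f-g} \fstop
\]

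There is no genuinely hard step here; the only subtle point is that $A^{\sym{t}}$ is defined on a possibly proper closed convex subset $C\subset H$ rather than all of $H$, so one must appeal to a version of the maximality criterion that applies in this restricted-domain setting (as does the cited Brézis proposition). Everything else reduces to two applications of non-expansiveness.
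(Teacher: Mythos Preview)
Your proposal is correct and matches the paper's proof essentially step for step: monotonicity via Cauchy--Schwarz and~\ref{i:d:Semigroup:4}, the $\tfrac{2}{t}$-Lipschitz bound via the triangle inequality and~\ref{i:d:Semigroup:4}, and maximality from~\cite[Prop.~2.4]{Bre73}. Even your remark about the restricted-domain subtlety aligns with the paper's appeal to that proposition.
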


\begin{theorem}[Semigroups and generators]\label{t:SemigroupGenerator}
Let~$C$ be a non-empty closed convex subset of~$H$. Then, the following are equivalent:
\begin{enumerate}[$(i)$]
\item\label{i:t:SemigroupGenerator:1} $T_\bullet\colon C\to H$ is a strongly continuous non-expansive non-linear semigroup;
\item\label{i:t:SemigroupGenerator:2} $T_t(f)$,~$t>0$, is the solution to~\eqref{eq:BrezisPazy} for the maximal monotone operator
\begin{equation}\label{eq:t:SemigroupGenerator:0}
\tilde A\eqdef \Gph\text{-}\lim_{t\downarrow 0} \tfrac{1}{t}(\car_H-T_t) \fstop
\end{equation}
\end{enumerate}

\begin{proof}
That~$T_\bullet$ uniquely corresponds to a maximal monotone operator~$A$ in the sense of Theorem~\ref{t:BrezisPazy} follows from Theorem~\ref{t:Barbu}.
Thus, it remains to prove that the operator~$\tilde A$ in~\eqref{eq:t:SemigroupGenerator:0} is well-defined and coincides with~$A$.
For~$\lambda>0$ let~$J_\lambda$ be the strongly continuous non-expansive non-linear resolvent of~$A$.

\paragraph{Claim: $\tilde A$~is well-defined and a maximal monotone operator}
Since~$A^\sym{t}$ is maximal monotone by Lemma~\ref{l:At}, for every~$\lambda>0$ there exists
\[
J^\sym{t}_\lambda\eqdef \tparen{(1-\tfrac{\lambda}{t})\car_H+\tfrac{\lambda}{t}T_t}^{-1}\colon H\to H
\]
the strongly continuous non-expansive non-linear resolvent of~$A^\sym{t}$. 
Now, note that:
\begin{enumerate*}[$(a)$]
\item $\dom{A}\subset C=\dom{A^\sym{t}}$ and~$C=\overline{\dom{A}}$ for every~$t>0$, thus~$\dom{A}\subset \dom{A^\sym{t}} \subset \overline{\dom{A}}$;
\item since~$\lim_{t\downarrow 0} A^\sym{t}= A^\circ$, for every~$f\in\dom{A}=\dom{A^\circ}$ there exists~$g_t = A^\sym{t} f$ such that~$\lim_{t\downarrow 0} g_t = A^\circ f$;
\item $A^\sym{t}$~is maximal monotone by Lemma~\ref{l:At}.
\end{enumerate*}
We have thus verified all the assumptions in~\cite[Prop.~2.8, p.~29]{Bre73} (with~$A^n=A^\sym{t_n}$ for some sequence~$t_n\downarrow 0$), from which we conclude that, for every~$f\in H$
\[
H\text{-}\lim_{t\downarrow 0} J^\sym{t}_\lambda(f) = J_\lambda(f)
\]
exists and coincides with the resolvent of~$A$.
We may now apply~\cite[Prop.~2.3(ii)$\implies$(i)]{AdlAttRoc23} (with~$A^n=A^\sym{t_n}$ for some sequence~$t_n\downarrow 0$), to conclude that~$A^\sym{t}$ graph-converges to some maximal monotone operator~$\tilde A$.

\paragraph{Claim:~$A=\tilde A$}
On the one hand, by~\cite[Thm.~3.1(5), p.~54]{Bre73} we have that~$A^\circ = \lim_{t\downarrow 0} \tfrac{1}{t}(\car_H-T_t)$ strongly on~$\overline{\dom{A}}$.
On the other hand, by definition of the Painlevé--Kuratowski graph limit,
\[
A^\circ = \lim_{t\downarrow 0} \tfrac{1}{t}(\car_H-T_t) \subset \Gph\text{-}\lim_{t\downarrow 0} \tfrac{1}{t}(\car_H-T_t)\defeq \tilde A\comma
\]
so that the operator~$\tilde A$ in~\eqref{eq:t:SemigroupGenerator:0} is an extension of~$A^\circ$.
Since~$\tilde A$ is maximal monotone, it coincides with~$A$ by uniqueness of the maximal monotone extension of the monotone operator~$A^\circ$.
\end{proof}
\end{theorem}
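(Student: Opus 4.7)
The plan is to use Barbu's Theorem~\ref{t:Barbu} to immediately obtain the abstract bijection between the semigroup $T_\bullet$ and a unique maximal monotone generator $A$ via the Brézis--Pazy inclusion~\eqref{eq:BrezisPazy}. What remains is to identify this $A$ with the concrete graph-theoretic limit $\tilde A \eqdef \Gph\text{-}\lim_{t\downarrow 0} \tfrac{1}{t}(\car_H-T_t)$, and in particular to prove that this limit exists. Setting $A^\sym{t}\eqdef \tfrac{1}{t}(\car_H-T_t)$, I would invoke Lemma~\ref{l:At} to know that each $A^\sym{t}$ is itself maximal monotone on $C$, so that the candidate for $\tilde A$ is a graph limit of maximal monotone operators.

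To show that this graph limit exists as a maximal monotone operator, I would apply \cite[Prop.~2.3]{AdlAttRoc23}, which reduces graph convergence of maximal monotone operators to strong-operator convergence of the associated resolvents. The concrete task is then to prove that the resolvents $J^\sym{t}_\lambda = ((1-\lambda/t)\car_H + \tfrac{\lambda}{t}T_t)^{-1}$ of $A^\sym{t}$ converge strongly as $t\downarrow 0$ to $J_\lambda$, the resolvent of~$A$. For this I would use \cite[Prop.~2.8, p.~29]{Bre73}, whose hypotheses demand the inclusions $\dom{A}\subset\dom{A^\sym{t}}\subset\overline{\dom{A}}$ (which hold because $\dom{A^\sym{t}}=C=\overline{\dom{A}}$) together with the pointwise strong convergence $A^\sym{t} f \to A^\circ f$ for $f\in\dom{A}=\dom{A^\circ}$. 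This last ingredient is the classical identity \cite[Thm.~3.1(5), p.~54]{Bre73} identifying the infinitesimal generator of $T_\bullet$ on $\dom{A}$ with the minimal section $A^\circ$.

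Once $\tilde A$ is known to exist and be maximal monotone, the identification $\tilde A = A$ is immediate: the pointwise strong convergence above gives $A^\circ \subset \tilde A$ as graphs; since $A$ is the unique maximal monotone extension of $A^\circ$ (again by Barbu's Theorem), and $\tilde A$ is itself maximal monotone, one concludes $\tilde A = A$.

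The main obstacle I anticipate is a book-keeping one: both \cite[Prop.~2.8]{Bre73} and \cite[Prop.~2.3]{AdlAttRoc23} are stated for countable sequences of operators, while the parameter $t$ varies continuously. This is handled by working with an arbitrary sequence $t_n\downarrow 0$, verifying the hypotheses for $A^\sym{t_n}$, and observing that the resulting limit $\tilde A = A$ is independent of the sequence, which upgrades sequential convergence to Painlevé--Kuratowski convergence as $t\downarrow 0$.
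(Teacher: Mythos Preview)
Your proposal is correct and follows essentially the same argument as the paper's proof: invoke Theorem~\ref{t:Barbu} for the abstract correspondence, use Lemma~\ref{l:At} and \cite[Prop.~2.8]{Bre73} to get resolvent convergence $J^\sym{t}_\lambda\to J_\lambda$, apply \cite[Prop.~2.3]{AdlAttRoc23} to upgrade this to graph convergence of the $A^\sym{t}$ to a maximal monotone $\tilde A$, and conclude $\tilde A=A$ from $A^\circ\subset\tilde A$ together with uniqueness of the maximal monotone extension. Your remark on passing from sequences $t_n\downarrow 0$ to the continuous limit is also how the paper handles it.
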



\subsubsection{Equivalence of semigroups and resolvents}
We conclude this section with the equivalence of semigroups and resolvents.


\begin{theorem}[Semigroups and resolvents]\label{t:SemigroupResolvent}
Let~$C$ be a non-empty closed convex subset of~$H$.
\begin{enumerate}[$(i)$, wide]
\item\label{i:t:SemigroupResolvent:1} Let~$T_\bullet\colon C\to H$ be a strongly continuous non-expansive non-linear semigroup. Then, setting
\begin{equation}\label{eq:t:SemigroupResolvent:1}
\begin{aligned}
J_\lambda(f)\eqdef& \lim_{t\downarrow 0} \tparen{(1-\tfrac{\lambda}{t})\car_H +\tfrac{\lambda}{t} T_t }^{-1}(f) \quad \text{for all }\lambda> 0\comma
\\
J_0(f)\eqdef& \cproj_C(f)\comma
\end{aligned}
\qquad f\in H\comma
\end{equation}
defines~$J_\bullet\colon H\to H$ a strongly continuous non-expansive non-linear resolvent.

\item\label{i:t:SemigroupResolvent:2} Vice versa, let~$J_\bullet\colon H\to H$ be a strongly continuous non-expansive non-linear resolvent satisfying~$J_0=\cproj_C$. Then,
\begin{equation}\label{eq:p:SemigroupResolvent:2}
T_t(f)\eqdef \lim_{n\to\infty} (J_{t/n})^n(f) \comma \qquad t>0\comma
\end{equation}
defines a strongly continuous non-expansive non-linear semigroup~$T_\bullet\colon C\to H$.
\end{enumerate}
\begin{proof}
\ref{i:t:SemigroupResolvent:1} Let~$A$ be the generator of~$T_\bullet$,~$A^\sym{t}$ be defined as in~\eqref{eq:ApproxSemigroup}, and note that~$\lim_{t\downarrow 0} A^\sym{t}= A^\circ$.
In particular, for every~$f\in\dom{A}=\dom{A^\circ}$ there exists~$g_t = A^\sym{t} f$ such that~$\lim_{t\downarrow 0} g_t = A^\circ f$.
Furthermore,~$\dom{A}\subset C=\dom{A^\sym{t}}=C=\overline{\dom{A}}$ for every~$t>0$.
Thus, since~$A^\sym{t}$ is maximal monotone by Lemma~\ref{l:At}, we may apply~\cite[Prop.~2.8, p.~29]{Bre73} (with~$A^n=A^\sym{t_n}$ for some sequence~$t_n\downarrow 0$) to conclude that~$J_\lambda$ in~\eqref{eq:t:SemigroupResolvent:1} exists and coincides with the resolvent of~$A$.

\ref{i:t:SemigroupResolvent:2} 
Let~$A$ be the generator of~$J_\bullet$, and~$T_\bullet$ be the semigroup generated by~$A$.
The conclusion follows from~\cite[Cor.~4.4, p.~126]{Bre73} since~$(J_{t/n})^n = \tparen{\car_H+\tfrac{t}{n}A}^{-n}$ on~$C=\overline{\dom{A}}$.
\end{proof}
\end{theorem}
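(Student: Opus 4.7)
The plan is to reduce both directions of the equivalence to the correspondences between maximal monotone operators and their associated semigroups/resolvents, already established as Theorems~\ref{t:ResolventGenerator} and~\ref{t:SemigroupGenerator}, together with the Br\'ezis--Pazy Theorem~\ref{t:BrezisPazy} and Barbu's Theorem~\ref{t:Barbu}. In both statements, the key is to identify the explicit limit on the right-hand side with the resolvent (respectively, the semigroup) of a common maximal monotone generator.

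For part~\ref{i:t:SemigroupResolvent:1}, I would first let $A$ be the (unique) maximal monotone generator of $T_\bullet$ given by Theorem~\ref{t:Barbu}, and recognize the expression inside the limit as the resolvent of the maximal monotone approximation $A^\sym{t}\eqdef \tfrac{1}{t}(\car_H-T_t)$ from Lemma~\ref{l:At}:
\[
\tparen{(1-\tfrac{\lambda}{t})\car_H+\tfrac{\lambda}{t}T_t}^{-1} = (\car_H+\lambda A^\sym{t})^{-1}\fstop
\]
I would then verify the hypotheses of~\cite[Prop.~2.8, p.~29]{Bre73}: namely, that $\dom{A}\subset \dom{A^\sym{t}}=C=\overline{\dom{A}}$, that $A^\sym{t}$ is maximal monotone (Lemma~\ref{l:At}), and, crucially, that for each $f\in\dom{A}$ we have $A^\sym{t}f \to A^\circ f$ strongly as $t\downarrow 0$, which is~\cite[Thm.~3.1(5), p.~54]{Bre73}. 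The cited proposition then yields the pointwise convergence of the resolvents to the resolvent $J_\lambda$ of $A$. By Theorem~\ref{t:ResolventGenerator}, $J_\bullet$ is then automatically a strongly continuous non-expansive non-linear resolvent, and the identity property $J_0=\cproj_C$ is imposed by definition.

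For part~\ref{i:t:SemigroupResolvent:2}, I would let $A$ be the maximal monotone generator of $J_\bullet$ (Theorem~\ref{t:ResolventGenerator}), so that $J_\lambda=(\car_H+\lambda A)^{-1}$ and hence
\[
(J_{t/n})^n = \tparen{\car_H+\tfrac{t}{n}A}^{-n}\fstop
\]
This places us exactly in the setting of the Crandall--Ligget exponential formula, and I would invoke~\cite[Cor.~4.4, p.~126]{Bre73} to obtain pointwise convergence on $\overline{\dom{A}}=C$ to the semigroup $T_\bullet$ associated with $A$ via Barbu's Theorem~\ref{t:Barbu}. The resulting $T_\bullet$ is then a strongly continuous non-expansive non-linear semigroup by Theorem~\ref{t:BrezisPazy}\ref{i:t:BrezisPazy:4}.

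The main obstacle is bookkeeping in part~\ref{i:t:SemigroupResolvent:1}: one must check that the approximating family $\seq{A^\sym{t_n}}_n$ meets all the assumptions of~\cite[Prop.~2.8]{Bre73} along an arbitrary sequence $t_n\downarrow 0$, and in particular that the candidate limit generator coincides with $A$. This is essentially the same graph-convergence argument used in the proof of Theorem~\ref{t:SemigroupGenerator}, and the fact that $A^\sym{t}\to A^\circ$ pointwise on $\dom{A}$ is what ties the approximation to the \emph{minimal section}, which by maximal monotone extension uniquely determines $A$. Once this is in place, part~\ref{i:t:SemigroupResolvent:2} is essentially a direct application of Crandall--Ligget with no genuine difficulty.
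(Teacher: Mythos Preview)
Your proposal is correct and follows essentially the same approach as the paper: for~\ref{i:t:SemigroupResolvent:1} you identify the inner expression as the resolvent of~$A^\sym{t}$, verify the hypotheses of~\cite[Prop.~2.8, p.~29]{Bre73} (maximal monotonicity from Lemma~\ref{l:At}, the domain inclusions, and the pointwise convergence~$A^\sym{t}f\to A^\circ f$), and conclude that the limit is the resolvent of~$A$; for~\ref{i:t:SemigroupResolvent:2} you invoke the Crandall--Ligget formula~\cite[Cor.~4.4, p.~126]{Bre73} after writing~$(J_{t/n})^n=(\car_H+\tfrac{t}{n}A)^{-n}$. This is exactly the paper's argument, with your version spelling out a few more of the cross-references.
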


\begin{remark}
The implication~\ref{i:t:SemigroupResolvent:1} appeared with a different proof in~\cite[Lem.~18.9, p.~242]{BenCraPaz91} (unpublished).
\end{remark}

\begin{figure}[htb!]
\begin{adjustbox}{center}
{\small
\begin{tikzcd}[row sep=5cm, column sep=4cm]
\parbox{5cm}{\centering $T_\bullet$ strongly continuous\\non-expansive\\non-linear semigroup}
\arrow[r, shift left=2, "\parbox{5cm}{\centering Thm.~\ref{t:SemigroupResolvent}\ref{i:t:SemigroupResolvent:1}\\$\displaystyle{J_\lambda\eqdef \lim_{t\downarrow 0} \tparen{(1-\tfrac{\lambda}{t})\car_H + \tfrac{\lambda}{t} T_t}^{-1}}$}"]
\arrow[d, shift left=2, "\parbox{3cm}{\centering Thm.~\ref{t:SemigroupGenerator}\\$\displaystyle{A=\Gph\text{-}\lim_{t \downarrow 0} \tfrac{1}{t}(\car_H-T_t)}$}",sloped]
& \parbox{5cm}{\centering $J_\bullet$ strongly continuous\\non-expansive\\non-linear resolvent}
\arrow[l, shift left=2, "\parbox{5cm}{\centering Thm.~\ref{t:SemigroupResolvent}\ref{i:t:SemigroupResolvent:2}\\$\displaystyle{T_t\eqdef \lim_{n\to\infty} (J_{t/n})^n}$}"] \arrow[dl,shorten >=15, sloped, yshift=-2ex, labels=below, "\parbox{3cm}{\centering Thm.~\ref{t:ResolventGenerator}\\$\displaystyle{A=\Gph\text{-}\lim_{\lambda\downarrow 0} \tfrac{1}{\lambda}(\car_H-J_\lambda)}$}"]
\\
\parbox{5cm}{\centering $A$ maximal\\monotone operator} 
\arrow[u,shift left=2,"\parbox{5cm}{\centering Br\'ezis--Pazy Thm.~\ref{t:BrezisPazy}\\$\tfrac{\diff}{\diff t}T_t (f)\in -A(T_t(f))$}",sloped]
\arrow{ur}[shift left=2, sloped]{\parbox{5cm}{\centering Thm.~\ref{t:ResolventGenerator}\\$J_\lambda\eqdef (\car +\lambda A)^{-1}$}}
\end{tikzcd}
}
\end{adjustbox}
\caption{Equivalences between semigroups, generators, and resolvents: \emph{maximal monotonicity}.}\label{fig:Diagram2}
\label{f:Diagram1}
\end{figure}
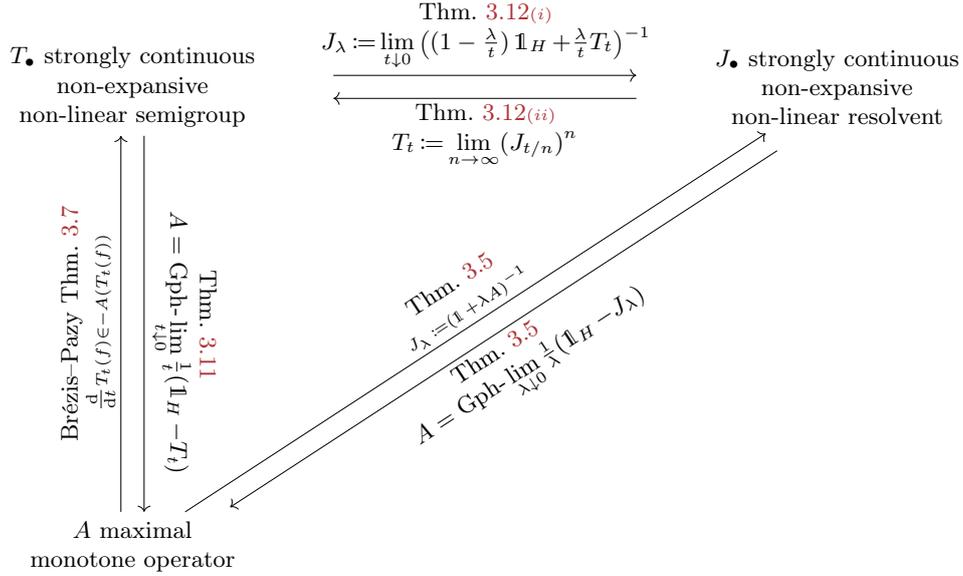

\subsection{Equivalences with energies: cyclical monotonicity}
The relation between generators and convex functionals relies on the additional property of cyclical monotonicity for the generators.
Here, we recall the standard characterization of maximal monotone cyclically monotone operators as subdifferentials of convex lower semicontinuous functionals, and we further characterize equivalent notions for semigroups and resolvents.

\subsubsection{Equivalence of generators and energies}
Let us now turn to the relation between energies and generators.

\begin{theorem}[Generators and energies]\label{t:MaxMonSubDiff}
The following assertions hold:

\begin{enumerate}[$(i)$]
\item\label{i:t:MaxMonSubDiff:1} If~$E\colon H\to (-\infty,\infty]$ is convex and proper, then~$\partial E$ is a monotone operator, e.g.~\cite[Ex.~2.1.4]{Bre73}, additionally maximal if~$E\colon H\to (-\infty,\infty]$ is additionally lower semicontinuous,~\cite[Ex.~2.3.4]{Bre73};

\item\label{i:t:MaxMonSubDiff:2} $A$ is cyclically monotone if and only if there exists a proper convex lower semicontinuous~$E\colon H\to (-\infty,\infty]$ such that~$A\subset \partial E$ with equality~$A=\partial E$ if and only if~$A$ is additionally maximal monotone,~\cite[Thm.~2.5]{Bre73}.
In both cases, we can choose~$E=\cenv(A)$;

\item\label{i:t:MaxMonSubDiff:3} $A$ in~\ref{i:t:MaxMonSubDiff:2} is additionally densely defined if and only if~$E$ in~\ref{i:t:MaxMonSubDiff:2} is additionally densely defined,~\cite[Prop.~2.11]{Bre73}.
\end{enumerate}
\end{theorem}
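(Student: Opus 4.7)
The plan is to assemble the three items of the theorem from standard convex-analytic facts, most of which are referenced from~\cite{Bre73} in the statement itself.

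For~\ref{i:t:MaxMonSubDiff:1}, monotonicity of~$\partial E$ is the two-line calculation obtained by adding the subgradient inequality~\eqref{eq:ConvexSubDiff} applied to the pairs~$(f_1,v_1), (f_2,v_2) \in \partial E$, which yields~$\scalar{v_1 - v_2}{f_1 - f_2} \geq 0$. Maximality under lower semicontinuity is the deeper Rockafellar--Minty-type statement, which I would simply cite from~\cite[Ex.~2.3.4]{Bre73}; its proof realizes~$(\car_H + \partial E)^{-1}(f)$ as the unique minimizer of the coercive, strictly convex, lower semicontinuous functional~$g \mapsto \tfrac{1}{2}\norm{f-g}^2 + E(g)$ and invokes Minty's surjectivity criterion.

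For~\ref{i:t:MaxMonSubDiff:2}, I would verify in the forward direction that~$E \eqdef \cenv(A)$ of Definition~\ref{d:CEnv} is proper, convex, lower semicontinuous, and satisfies~$A \subset \partial E$. Convexity and lower semicontinuity are automatic, since~$\cenv(A)$ is a pointwise supremum of functions affine in the base point. Propriety follows from the identity~$\cenv(A)(f_0) = 0$: the degenerate chain with~$n = 0$ and~$f_1 = f_0$ contributes~$\scalar{f_0 - f_0}{g_0} = 0$, while the cyclical monotonicity hypothesis (after a reindexing to match the endpoint convention of Definition~\ref{d:Generator}) upper-bounds every admissible chain closed by~$f_{n+1} = f_0$ by~$0$. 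The inclusion~$A \subset \partial \cenv(A)$ is a chain-extension argument: given~$(f,g) \in A$, appending the step $(f,g,h)$ to any admissible chain for~$\cenv(A)(f)$ produces an admissible chain for~$\cenv(A)(h)$ whose value is larger by~$\scalar{g}{h-f}$, so that
\[
\cenv(A)(h) \geq \cenv(A)(f) + \scalar{g}{h - f} \comma \qquad h \in H\comma
\]
which is~\eqref{eq:ConvexSubDiff} for~$E = \cenv(A)$. The converse direction is immediate: telescoping~\eqref{eq:ConvexSubDiff} along a cycle forces~$A$ to be cyclically monotone. The equality~$A = \partial \cenv(A)$ under the additional hypothesis that~$A$ is maximal monotone then follows from~\ref{i:t:MaxMonSubDiff:1}, since~$\partial \cenv(A)$ is a monotone extension of the maximal monotone~$A$ and hence must coincide with it.

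For~\ref{i:t:MaxMonSubDiff:3}, with the identification~$A = \partial E$ from~\ref{i:t:MaxMonSubDiff:2} in hand, the equivalence reduces to~\cite[Prop.~2.11]{Bre73}, which asserts that~$\dom{\partial E}$ is dense in~$\dom{E}$ for every proper convex lower semicontinuous~$E$: thus~$\dom{A}$ and~$\dom{E}$ share the same closure in~$H$. The main obstacle is the maximality part of~\ref{i:t:MaxMonSubDiff:1} and the density statement~\cite[Prop.~2.11]{Bre73}, both of which I would cite rather than reprove; the remaining content is a rearrangement of~\eqref{eq:ConvexSubDiff} along chains in the graph of~$A$.
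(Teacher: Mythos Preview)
The paper does not supply a proof of this theorem: it is stated as a compilation of known results, with each item carrying an explicit citation to~\cite{Bre73} (Ex.~2.1.4, Ex.~2.3.4, Thm.~2.5, Prop.~2.11) and no \texttt{proof} environment following. Your proposal is therefore strictly more than what the paper offers, and it correctly reconstructs the standard arguments behind those citations: the monotonicity of~$\partial E$ by adding two instances of~\eqref{eq:ConvexSubDiff}, maximality via Minty's criterion and the resolvent minimization, the construction~$E=\cenv(A)$ as a supremum of affine functions together with the chain-extension argument for~$A\subset\partial\cenv(A)$, and the density statement~$\overline{\dom{\partial E}}=\overline{\dom{E}}$ from~\cite[Prop.~2.11]{Bre73}. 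Nothing is missing; your sketch aligns with the classical proofs in~\cite{Bre73} to which the paper defers.
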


\begin{remark}\label{r:AdditiveConstant}
We note that, given a maximal monotone operator~$A$, the proper convex lower semicontinuous functional~$E$ given by Theorem~\ref{t:MaxMonSubDiff}\ref{i:t:MaxMonSubDiff:2} is uniquely determined only up to an additive constant.

In particular, if~$E$ is bounded below (e.g., if it is even, cf.~Rmk~\ref{r:Even}), then there is no substantial difference in assuming that~$E$ is non-negative.
\end{remark}

\begin{remark}[Linear case]
If a proper convex lower semicontinuous functional~$E\colon H\to [0,\infty]$ satisfies the parallelogram identity~\ref{i:d:Form:6}, then the bivariate functional~$\mcE\colon H^\tym{2} \to [0,\infty]$ defined as
\[
\dom{\mcE}\eqdef \dom{E}\comma \qquad \mcE(u,v)\eqdef \tfrac{1}{4}\tparen{E(u+v)-E(u-v)}\comma
\]
is a proper quadratic form on~$H$.
Vice versa, if~$E$ is a proper quadratic form on~$H$, then the quadratic functional~$E\colon H\to [0,\infty]$ defined as
\[
\dom{E}\eqdef\dom{\mcE}\comma \qquad E(u)\eqdef \mcE(u,u)\comma
\]
is a proper lower semicontinuous functional satisfying the parallelogram identity~\ref{i:d:Form:6}, i.e.\ it is \emph{quadratic} (in particular: convex and $2$-homogeneous).
\end{remark}

\begin{definition}[Yoshida regularization]\label{d:YoshidaRegForm}
Let~$E\colon H \to (-\infty,\infty]$ be convex and proper.
We call \emph{Yoshida} ($\lambda$-)\emph{regularization} of~$E$ the functional~$E_\lambda\colon H\to (-\infty,\infty]$ defined as
\[
E_\lambda \eqdef \min_{g\in H} \braket{\tfrac{1}{2\lambda} \norm{\emparg-g}^2 + E(g)} \fstop
\]
\end{definition}

Let~$A$ be a maximal monotone operator. For every~$\lambda>0$, the Yoshida regularization~$A_\lambda$ of~$A$ and the Yoshida regularization~$E_\lambda$ of~$E$ are related by, see~\cite[Prop.~2.11]{Bre73},
\begin{equation}\label{eq:YoshidaForm}
A_\lambda = \partial E_\lambda \comma \qquad \lambda>0 \fstop
\end{equation}
(In particular,~$E_\lambda$ is $C^{1,1}$ and~$A_\lambda=\nabla E_\lambda$ is the Fr\'echet differential of $E_\lambda$.)

\begin{proposition}\label{p:CyclicalTAJ}
Let~$C$ be a non-empty closed convex subset of~$H$ and~$T_\bullet\colon C\to H$ be a strongly continuous non-expansive non-linear semigroup with maximal monotone generator~$A$ and strongly continuous non-expansive non-linear resolvent~$J_\bullet$.
Then, the following are equivalent:
\begin{enumerate}[$(i)$]
\item\label{i:p:CyclicalTAJ:1} $\car_H -T_t$ is cyclically monotone for every~$t>0$;
\item\label{i:p:CyclicalTAJ:2} $A^\circ$ is cyclically monotone;
\item\label{i:p:CyclicalTAJ:3} $A$ is cyclically monotone;
\item\label{i:p:CyclicalTAJ:4} $\car_H -J_\lambda$ is cyclically monotone for every~$\lambda>0$.
\end{enumerate}
If any of the above holds, then there exists a proper convex lower semicontinuous~$E\colon H\to(-\infty,\infty]$ such that~$A=\partial E$.
\end{proposition}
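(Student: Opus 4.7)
I plan to prove the equivalence by taking (iii) as the hub and showing that each of (i), (ii), (iv) is equivalent to (iii). The final assertion follows from Theorem~\ref{t:MaxMonSubDiff}\ref{i:t:MaxMonSubDiff:2} applied to~(iii).

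\textbf{Step 1: (iii) $\Leftrightarrow$ (ii).} The forward implication is immediate since $\Gph(A^\circ)\subset\Gph(A)$, and cyclical monotonicity is inherited by subsets. For the converse, I would apply Theorem~\ref{t:MaxMonSubDiff}\ref{i:t:MaxMonSubDiff:2} to $A^\circ$ to obtain $A^\circ\subset \partial E$ for $E\eqdef \cenv(A^\circ)$ proper convex lsc, and then invoke Brezis' bijective correspondence between maximal monotone operators and their minimal sections (Cor.~2.2 in~\cite{Bre73}) to identify $A=\partial E$; hence $A$ is cyclically monotone.

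\textbf{Step 2: (iii) $\Leftrightarrow$ (iv).} For (iii) $\Rightarrow$ (iv), given a cycle $f_1,\dots,f_n$ (with $f_0\eqdef f_n$) for the single-valued maximal monotone $A_\lambda=\tfrac{1}{\lambda}(\car_H-J_\lambda)$, set $h_i\eqdef J_\lambda(f_i)$ and $g_i\eqdef A_\lambda(f_i)\in A(h_i)$. Since $J_\lambda$ is single-valued, $(h_i)_i$ is a cycle, and $f_i-f_{i-1}=(h_i-h_{i-1})+\lambda(g_i-g_{i-1})$ yields
\[
\sum_{i=1}^n \langle f_i-f_{i-1},g_i\rangle \;=\; \sum_{i=1}^n \langle h_i-h_{i-1},g_i\rangle \;+\; \lambda \sum_{i=1}^n \langle g_i-g_{i-1},g_i\rangle.
\]
The first sum is $\geq 0$ by (iii) applied to the cycle $(h_i,g_i)_i\subset \Gph(A)$; the second equals $\tfrac{\lambda}{2}\sum_i\|g_i-g_{i-1}\|^2\geq 0$ by polarization and telescoping (using $g_0=g_n$ since $A_\lambda$ is single-valued and $f_0=f_n$). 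For the converse (iv) $\Rightarrow$ (iii), I invoke Corollary~\ref{c:AdlAttRoc}: since $A=\Gph\text{-}\lim_{\lambda\downarrow 0}A_\lambda$, given any cycle $(f_i,g_i)_{i=1}^n\subset \Gph(A)$ the inner-limit property of PK-convergence produces sequences $(f_i^{(\lambda)},g_i^{(\lambda)})\in\Gph(A_\lambda)$ converging to $(f_i,g_i)$; setting $f_0^{(\lambda)}\eqdef f_n^{(\lambda)}$, the cyclical inequality for $A_\lambda$ passes to the limit by continuity of the inner product.

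\textbf{Step 3: (iii) $\Leftrightarrow$ (i).} The implication (i) $\Rightarrow$ (iii) is entirely analogous to (iv) $\Rightarrow$ (iii), now using $A=\Gph\text{-}\lim_{t\downarrow 0}A^\sym{t}$ from Theorem~\ref{t:SemigroupGenerator}. For (iii) $\Rightarrow$ (i), I would combine the (iii) $\Rightarrow$ (iv) implication already established with the Crandall--Liggett formula $T_t(f)=\lim_n (J_{t/n})^n(f)$ from Theorem~\ref{t:SemigroupResolvent}\ref{i:t:SemigroupResolvent:2}. The strategy is to show by induction on $n$ that $\car_H-J_{t/n}^n$ is cyclically monotone, adapting the telescoping decomposition of Step~2 iteratively: writing $f_i-J_\lambda^{n+1}(f_i)=(f_i-J_\lambda^n(f_i))+(\car_H-J_\lambda)(J_\lambda^n(f_i))$ and exploiting that $J_\lambda^n(f_i)$ forms a cycle allows one to reduce the inductive step, up to cross-terms controlled by the monotonicity of $A$, to the $n=1$ case. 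Passing to the pointwise limit yields cyclical monotonicity of $\car_H-T_t$.

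The main obstacle I expect is precisely the inductive step in Step~3: the cross-terms arising between distinct iterations of the resolvent do not telescope as cleanly as in Step~2, and controlling them requires a careful summation-by-parts on the cyclic index together with the monotonicity (rather than just cyclical monotonicity) of $A$. An alternative more robust route, if the induction proves stubborn, is to reduce to the Lipschitz case by replacing $A$ with its Yoshida regularization $A_\lambda=\partial E_\lambda$, for which the semigroup of $A_\lambda$ is smoother and $\car_H-T^\lambda_t$ can be treated by direct ODE arguments, then to take $\lambda\downarrow 0$ via graph convergence as in Step~2.
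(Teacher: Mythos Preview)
Your proposal is sound and largely parallel to the paper's argument, but organized as a hub-and-spoke around~(iii) rather than the paper's cycle $(i)\Rightarrow(ii)\Rightarrow(iii)\Rightarrow(iv)\Rightarrow(i)$. Two of your arguments are genuinely different from, and arguably cleaner than, the paper's: your direct proof of $(iii)\Rightarrow(iv)$ via the decomposition $f_i-f_{i-1}=(h_i-h_{i-1})+\lambda(g_i-g_{i-1})$ and the identity $\sum_i\langle g_i-g_{i-1},g_i\rangle=\tfrac12\sum_i\|g_i-g_{i-1}\|^2$ avoids the paper's detour through $A_\lambda=\partial E_\lambda$; and your use of PK-graph convergence (Corollary~\ref{c:AdlAttRoc}, Theorem~\ref{t:SemigroupGenerator}) for $(iv)\Rightarrow(iii)$ and $(i)\Rightarrow(iii)$ replaces the paper's pointwise strong-limit argument for $(i)\Rightarrow(ii)$.

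The genuine difficulty is exactly where you locate it: the implication $(iii)\Rightarrow(i)$, equivalently the claim that $\car_H-J_\lambda^{\,n}$ is cyclically monotone for every~$n$. The paper's corresponding step $(iv)\Rightarrow(i)$ is dispatched in one line (``similarly to the proof of the first implication by~\eqref{eq:p:SemigroupResolvent:2}''), which tacitly relies on the very same lemma, so you are not missing a trick the paper exploits. Your diagnosis is accurate: in the inductive decomposition
\[
\sum_i\bigl\langle f_i-f_{i-1},(\car_H-J_\lambda^{\,n+1})f_i\bigr\rangle
=\sum_i\bigl\langle h_i-h_{i-1},(\car_H-J_\lambda^{\,n})h_i\bigr\rangle
+\lambda\sum_i\bigl\langle g_i-g_{i-1},(\car_H-J_\lambda^{\,n})h_i\bigr\rangle
+\bigl(\text{$n{=}1$ term}\bigr),
\]
the cross-term does not telescope, and mere monotonicity of~$A$ is not enough to give it a sign. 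Your Yoshida fallback does not obviously help either, since the same issue reappears for the smooth flow~$T^\mu_\bullet$ (the integrand $\sum_i\langle f_i-f_{i-1},\nabla E_\mu(T^\mu_s f_i)\rangle$ need not be pointwise non-negative). One way to push the induction through is to iterate your own Step~2 decomposition inside the cross-term: writing $h_i-J_\lambda^{\,n}h_i=\lambda\sum_{k=0}^{n-1}A_\lambda(J_\lambda^{\,k}h_i)$ and repeatedly substituting $f_i^{(k)}-f_{i-1}^{(k)}=(h_i^{(k)}-h_{i-1}^{(k)})+\lambda(g_i^{(k)}-g_{i-1}^{(k)})$ produces a triangular array of sums, each of which is controlled either by cyclical monotonicity of~$A_\lambda$ or by the $\tfrac12\sum\|g_i^{(k)}-g_{i-1}^{(k)}\|^2$ identity you already used; the bookkeeping is what makes this step non-trivial.
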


\begin{proof}
We prove that~\ref{i:p:CyclicalTAJ:1}$\implies$\ref{i:p:CyclicalTAJ:2}$\implies$\ref{i:p:CyclicalTAJ:3}$\implies$\ref{i:p:CyclicalTAJ:4}$\implies$\ref{i:p:CyclicalTAJ:1}.

Assume~\ref{i:p:CyclicalTAJ:1}. Then, for every~$f_1,\dotsc, f_n\in \dom{A^\circ}$,
\begin{align*}
\sum_{i=1}^n \scalar{f_i-f_{i-1}}{A^\circ f_i} &= \sum_{i=1}^n \scalar{f_i-f_{i-1}}{\lim_{t\downarrow 0} \tfrac{1}{t}\tparen{f_i-T_t(f_i)}} 
\\
&= \lim_{t\downarrow 0} \frac{1}{t} \sum_{i=1}^n \scalar{f_i-f_{i-1}}{\tparen{f_i-T_t(f_i)}} \geq 0
\end{align*}
by assumption. This shows~\ref{i:p:CyclicalTAJ:2}.

Assume~\ref{i:p:CyclicalTAJ:2}. Assertion~\ref{i:p:CyclicalTAJ:3} follows from Theorem~\ref{t:Barbu} and~\cite[Cor.~2.8, p.~39]{Bre73}.

Assume~\ref{i:p:CyclicalTAJ:3}. Then,~$A=\partial E$.
By~\eqref{eq:YoshidaForm} together with Theorem~\ref{t:MaxMonSubDiff}\ref{i:t:MaxMonSubDiff:2}, the Yoshida regularization~$A_\lambda$ of~$A$ is cyclically monotone.
Thus,
\begin{align*}
\sum_{i=1}^n \scalar{f_i-f_{i-1}}{f_i -J_\lambda(f_i)} &= \sum_{i=1}^n \scalar{f_i-f_{i-1}}{\lambda A_\lambda(f_i)} 
\\
&= \lambda \sum_{i=1}^n \scalar{f_i-f_{i-1}}{ A_\lambda(f_i)} \geq 0
\end{align*}
by assumption. This shows~\ref{i:p:CyclicalTAJ:4}.

Assume~\ref{i:p:CyclicalTAJ:4}. Assertion~\ref{i:p:CyclicalTAJ:1} follows similarly to the proof of the first implication by~\eqref{eq:p:SemigroupResolvent:2}.
\end{proof}

\subsubsection{Equivalence of energies and resolvents}
Let us now turn to the relation between energies and resolvents.
Let~$J_\bullet\colon H\to H$ be a strongly continuous non-expansive non-linear resolvent.
Fix~$f_0\in H$ such that~$\limsup_{\lambda\downarrow 0} \norm{\tfrac{1}{\lambda}(\car_H-J_\lambda)(f_0)}<\infty$ (i.e.,~$f_0\in \dom{A}$, where~$A$ is the maximal monotone generator of~$J_\lambda$).
For every~$\lambda>0$, define now a functional~$E^\lambda \colon H\to (-\infty,\infty]$ by setting
\begin{equation}\label{eq:ELambda}
E^\lambda \eqdef \cenv\paren{\tfrac{1}{\lambda}(\car_H-J_\lambda)} \comma
\end{equation}
where the cyclical envelope is based at~$(f_0, A_\lambda(f_0))$ for every~$\lambda>0$.

By~\eqref{eq:YoshidaForm} we have, for some real constant~$c_\lambda$,
\begin{equation}\label{eq:ElambdaElambda}
E_\lambda=E^\lambda + c_\lambda \fstop
\end{equation}

\begin{theorem}\label{t:FormResolvent}
\begin{enumerate}[$(i)$, wide]
\item\label{i:t:FormResolvent:1} Let~$E\colon H\to (-\infty,\infty]$ be convex proper and lower semicontinuous. 
Then, setting
\[
J_\lambda(\emparg) \eqdef \argmin_{g\in H}  \braket{\tfrac{1}{2\lambda}\norm{\emparg-g}^2+E(g)} \quad \text{for all } \lambda>0 \quad \text{and} \quad J_0\eqdef \cproj_{\overline{\dom{E}}}\comma
\]
defines~$J_\bullet\colon H\to H$ a strongly continuous non-expansive non-linear resolvent on~$\overline{\dom{E}}$, generated by $\partial E$ in the sense of Theorem~\ref{t:ResolventGenerator} and such that~$\car_H-J_\bullet$ is cyclically monotone.

\item\label{i:t:FormResolvent:2} Vice versa, let~$C$ be a non-empty closed convex subset of~$H$, and~$J_\bullet\colon H\to H$ be a strongly continuous non-expansive non-linear resolvent such that~$\car_H-J_\bullet$ is cyclically monotone.
Then,
\begin{equation}\label{eq:t:formResolvent:0}
\begin{gathered}
\dom{E}\eqdef \set{f\in C : \exists \lim_{\lambda\to 0} E^\lambda(f) < \infty}\comma
\\
E(f)\eqdef \lim_{\lambda\to 0} E^\lambda(f)\comma
\end{gathered}
\end{equation}
with~$E^\lambda$ as in~\eqref{eq:ELambda}
is well-defined and a proper convex lower semicontinuous functional~$E\colon H\to (-\infty,\infty]$, and~$J_\bullet$ is generated by $\partial E$ in the sense of Theorem~\ref{t:ResolventGenerator}.
\end{enumerate}
\begin{proof}
\ref{i:t:FormResolvent:1} is a consequence of~\cite[Lem.~2.1, p.~25]{Bre73}, also cf.~\cite[Prop.~2.11, p.~39]{Bre73}. The cyclical monotonicity of~$\car_H-J_\bullet$ is a consequence of Proposition~\ref{p:CyclicalTAJ} and Theorem~\ref{t:MaxMonSubDiff}\ref{i:t:MaxMonSubDiff:2}.

\ref{i:t:FormResolvent:2} We first prove the following.

\paragraph{Claim: $\lambda\mapsto E^\lambda$ is monotone non-decreasing}
Since~$E^\lambda$ is the cyclical envelope based at~$(f_0,A_\lambda(f_0))$ for every~$\lambda>0$, we have~$E^\lambda(f_0)=0$ for every~$\lambda>0$ and therefore the constant in~\eqref{eq:ElambdaElambda} satisfies~$c_\lambda=-E_\lambda(f_0)$.

In light of~\eqref{eq:ElambdaElambda} it suffices to show that~$\lambda\mapsto E_\lambda$ is monotone non-decreasing.
Let~$\lambda\leq \mu$ and fix~$f\in H$. Further let~$g_\lambda=g_\lambda(f)$ be such that~$E_\lambda(f)= \tfrac{1}{2\lambda} \norm{g_\lambda-f}^2 + E(g_\lambda)$. Then,
\begin{align*}
E_\lambda(f) &= \tfrac{1}{2\lambda} \norm{g_\lambda-f}^2 + E(g_\lambda) \geq \tfrac{1}{2\mu} \norm{g_\lambda-f}^2 + E(g_\lambda) 
\\
\end{align*}
This proves the claim.

\medskip

By the claim,
\[
E(f)=\sup_{\lambda>0} E^\lambda(f)\comma \qquad f\in H\comma 
\]
is a supremum of bounded linear functionals on~$H$, and therefore it is convex and lower semicontinuous.
It remains to show that~$\partial E$ is the generator~$A$ of~$J_\bullet$ in the sense of Theorem~\ref{t:ResolventGenerator}.
Since~$\car_H-J_\bullet$ is cyclically monotone by assumption, $A$~is cyclically monotone by Proposition~\ref{p:CyclicalTAJ}. 
Thus there exists a proper convex lower semicontinuous functional~$\tparen{\tilde E,\dom{\tilde E}}$ with~$A=\partial \tilde E$ by Theorem~\ref{t:MaxMonSubDiff}.
Define~$\tilde E_\lambda\colon H\to(-\infty,\infty]$ as the Yoshida regularization (Dfn.~\ref{d:YoshidaRegForm}) of~$\tilde E$ and note that~$\dom{\tilde E_\lambda}=H$.

Further note that~$E^\lambda$ is convex proper and lower semicontinuous with~$\dom{E^\lambda}=H$ for every~$\lambda>0$.
By definition of cyclical envelope,~$(\partial E^\lambda)(f)= \tfrac{1}{\lambda}(f-J_\lambda(f))$ is defined on~$H$ and coincides with the Yoshida regularization~$A_\lambda$ of~$A$ (Dfn.~\ref{d:YoshidaRegGenerator}).

By~\eqref{eq:YoshidaForm} we have that~$\tparen{\tilde E_\lambda,\dom{\tilde E_\lambda}}$ and $\tparen{E^\lambda,\dom{E^\lambda}}$ coincide up to an additive constant.
Letting~$\lambda\to 0$, it follows from properties of the Yoshida regularization for the left-hand side and from the definition of~$E$ for the right-hand side, that~$\tparen{\tilde E,\dom{\tilde E}}$ and~$E$ coincide up to an additive constant.
This is sufficent to show the assertion in light of Remark~\ref{r:AdditiveConstant}.
\end{proof}
\end{theorem}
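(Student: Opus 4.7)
For part~\ref{i:t:FormResolvent:1}, the plan is direct from Moreau--Yoshida theory. The map $g\mapsto \tfrac{1}{2\lambda}\norm{f-g}^2+E(g)$ is strictly convex, coercive, and lower semicontinuous on~$H$, so it admits a unique minimizer, which I take as $J_\lambda(f)$. The first-order optimality condition reads $f-J_\lambda(f)\in\lambda\,\partial E(J_\lambda(f))$, giving $J_\lambda=(\car_H+\lambda\partial E)^{-1}$. Since $\partial E$ is maximal monotone by Theorem~\ref{t:MaxMonSubDiff}\ref{i:t:MaxMonSubDiff:1}, Theorem~\ref{t:ResolventGenerator} identifies $J_\bullet$ as a strongly continuous non-expansive non-linear resolvent generated by~$\partial E$. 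Cyclical monotonicity of $\car_H-J_\bullet$ then follows from Proposition~\ref{p:CyclicalTAJ}, since $\partial E$ is cyclically monotone (Theorem~\ref{t:MaxMonSubDiff}\ref{i:t:MaxMonSubDiff:2}).

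For part~\ref{i:t:FormResolvent:2}, I would proceed as follows. The cyclical monotonicity of $\car_H-J_\lambda$ transfers to $A_\lambda=\tfrac{1}{\lambda}(\car_H-J_\lambda)$, so by Theorem~\ref{t:MaxMonSubDiff}\ref{i:t:MaxMonSubDiff:2} the cyclical envelope $E^\lambda=\cenv(A_\lambda)$ based at $(f_0,A_\lambda(f_0))$ is a proper convex lower semicontinuous functional with $\partial E^\lambda=A_\lambda$ and normalization $E^\lambda(f_0)=0$. On the other hand, Proposition~\ref{p:CyclicalTAJ} ensures that the generator~$A$ of $J_\bullet$ is itself cyclically monotone, hence by Theorem~\ref{t:MaxMonSubDiff}\ref{i:t:MaxMonSubDiff:2} there exists a proper convex lower semicontinuous $\tilde E$ with $A=\partial\tilde E$; denote its Moreau--Yoshida regularization (Definition~\ref{d:YoshidaRegForm}) by $\tilde E_\lambda$.

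The identity \eqref{eq:YoshidaForm} gives $\partial\tilde E_\lambda=A_\lambda=\partial E^\lambda$. Since both $\tilde E_\lambda$ and $E^\lambda$ are convex with full domain~$H$, they differ only by an additive constant, which is pinned down by $E^\lambda(f_0)=0$ to give $E^\lambda=\tilde E_\lambda-\tilde E_\lambda(f_0)$. Combined with the standard monotone pointwise convergence $\tilde E_\lambda\nearrow \tilde E$ as $\lambda\downarrow 0$, this shows that $E^\lambda(f)\to \tilde E(f)-\tilde E(f_0)$ for every $f\in H$, so the limit defining $E$ in \eqref{eq:t:formResolvent:0} exists and is finite precisely on $\dom{\tilde E}$. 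The resulting $E$ is proper convex and lower semicontinuous (as the pointwise monotone limit of such functionals), with $\partial E=\partial\tilde E=A$, which by Theorem~\ref{t:ResolventGenerator} is exactly the generator of~$J_\bullet$; the inessential additive constant is absorbed by Remark~\ref{r:AdditiveConstant}.

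The main obstacle I anticipate is the coordination between the intrinsic construction $E=\lim_\lambda\cenv(A_\lambda)$ and the auxiliary functional $\tilde E$ produced abstractly from Theorem~\ref{t:MaxMonSubDiff}. The cyclical envelope is only determined up to a choice of base point, and the monotone convergence of Yoshida regularizations only transports to $E^\lambda\to E$ if the normalization $E^\lambda(f_0)=0$ is imposed coherently across all~$\lambda$. Verifying that this normalization is indeed compatible with pointwise monotone convergence of $\tilde E_\lambda$ and does not destroy lower semicontinuity in the limit is the delicate technical point, and essentially the reason one has to single out a base point $(f_0,A^\circ(f_0))\in A$ at the outset rather than working with an arbitrary representative.
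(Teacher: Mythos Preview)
Your proposal is correct and follows essentially the same route as the paper: for~\ref{i:t:FormResolvent:1} both invoke the Moreau--Yoshida first-order condition to identify $J_\lambda=(\car_H+\lambda\partial E)^{-1}$ and then appeal to Proposition~\ref{p:CyclicalTAJ}; for~\ref{i:t:FormResolvent:2} both pass through an auxiliary $\tilde E$ with $A=\partial\tilde E$, identify $E^\lambda$ with $\tilde E_\lambda$ up to the normalizing constant $-\tilde E_\lambda(f_0)$ via $\partial E^\lambda=A_\lambda=\partial\tilde E_\lambda$, and let $\lambda\downarrow 0$. The only organizational difference is that the paper first isolates the monotonicity of $\lambda\mapsto E^\lambda$ so as to write $E=\sup_{\lambda} E^\lambda$, whereas you go directly to $E=\tilde E-\tilde E(f_0)$; note that your parenthetical ``as the pointwise monotone limit'' is not actually established (monotonicity of $\tilde E_\lambda$ does not immediately give monotonicity of $\tilde E_\lambda-\tilde E_\lambda(f_0)$), but it is also not needed, since convexity and lower semicontinuity of $E$ follow directly from those of $\tilde E$.
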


\subsubsection{Equivalence of energies and semigroups}
Let us now turn to the relation between energies and semigroups.
In order to state the next result, let us first recall the definition of \emph{Mosco convergence} of functionals.
\begin{definition}
Let~$F_n\colon H\to(-\infty,\infty]$, with~$n\in \N$, and~$F\colon H\to (-\infty,\infty]$ be functionals on~$H$.
We say that~$\seq{F_n}_n$ \emph{Mosco converges} to~$F$ if
\begin{itemize}
\item for each sequence~$\seq{h_n}_n\subset H$ weakly convergent to~$h\in H$ we have
\[
\liminf_n F_n(h_n)\geq F(h)\semicolon
\]
\item for each~$h\in H$ there exists a sequence~$\seq{h_n}_n\subset H$, strongly convergent to~$h$ and such that
\[
\limsup_n F_n(h_n)\leq F(h) \fstop
\]
\end{itemize}
\end{definition}

For a strongly continuous non-expansive non-linear semigroup~$T_\bullet\colon C\to H$ define, for every~$t>0$, a functional~$E^t\colon C\to (-\infty,\infty]$ by setting
\begin{equation}\label{eq:Et}
E^t\eqdef \cenv\paren{\tfrac{1}{t}(\car_H-T_t)} \fstop
\end{equation}

\begin{proposition}\label{p:FormSemigroup}
%
Let~$C$ be a non-empty closed convex subset of~$H$, and~$T_\bullet\colon C\to H$ be a strongly continuous non-expansive non-linear semigroup such that~$\car_H-T_\bullet$ is cyclically monotone.
Further let~$A$ be the maximal monotone cyclically monotone generator of~$T_t$, and fix~$h_0\in \dom{A}$.
Then,
\begin{gather*}
\dom{E}\eqdef \set{f\in C : \exists \lim_{t\to 0} E^t(f) < \infty}\comma
\\
E(f)\eqdef \Mosco\text{-}\lim_{t\to 0} \tparen{E^t(f) - E^t(h_0)}\comma
\end{gather*}
with~$E^t$ as in~\eqref{eq:Et}, defines a convex proper and lower semicontinuous functional~$E\colon H\to (-\infty,\infty]$, and~$T_\bullet$ is generated by $\partial E$ in the sense of Theorem~\ref{t:BrezisPazy}.

\begin{proof}
%
%
Note that~$\partial E^t = A^\sym{t}$ as in~\eqref{eq:ApproxSemigroup}.
Since~$\Gph\text{-}\lim_{t\downarrow 0} A^\sym{t}=A$ the generator of~$T_\bullet$ by Theorem~\ref{t:SemigroupGenerator}, the conclusion follows from~\cite[Thm.~3.66, p.~373]{Att84} as soon as we verify the normalization condition.
The latter is trivially satisfied by~$\seq{h_0,A^\sym{t}(h_0),0}_t$ converging to~$(h_0, A^\circ(h_0),0)$ with~$A^\circ(h_0)\in A(h_0)$.
\end{proof}
\end{proposition}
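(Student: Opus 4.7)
The plan is to identify $E$ as the Mosco limit of the cyclical envelopes $E^t$ of the approximating operators $A^\sym{t}=\tfrac{1}{t}(\car_H-T_t)$, and then to read off that $\partial E = A$ from the Mosco stability of subdifferentials of convex lower semicontinuous functionals.

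First, I would verify that each $E^t$ is a proper convex lower semicontinuous functional on $H$ whose Fréchet subdifferential coincides with $A^\sym{t}$. By Lemma~\ref{l:At}, $A^\sym{t}$ is maximal monotone, and from the hypothesis that $\car_H-T_\bullet$ is cyclically monotone together with the identity $A^\sym{t}=\tfrac{1}{t}(\car_H-T_t)$, it is also cyclically monotone. Taking the cyclical envelope based at $(h_0,A^\sym{t}(h_0))$ and applying Theorem~\ref{t:MaxMonSubDiff}\ref{i:t:MaxMonSubDiff:2} then gives $\partial E^t = A^\sym{t}$ and, usefully, the normalization $E^t(h_0)=0$ for every $t>0$.

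Next, I would invoke Theorem~\ref{t:SemigroupGenerator} to obtain the graph convergence $\Gph\text{-}\lim_{t\downarrow 0} A^\sym{t} = A$ of the subdifferentials to the maximal monotone cyclically monotone generator. This is the crucial analytic input. A classical result of Attouch on variational convergence then lifts graph convergence of subdifferentials to Mosco convergence of the associated convex functionals: if $\partial E^{t_n}$ graph-converges to $\partial \tilde E$ for some proper convex lower semicontinuous $\tilde E$ given by Theorem~\ref{t:MaxMonSubDiff}, and if one can produce a normalization triple $(h_n,v_n,E^{t_n}(h_n))\to (h,v,\tilde E(h))$ with $v_n\in\partial E^{t_n}(h_n)$ and $v\in \partial \tilde E(h)$, then $E^{t_n}$ Mosco-converges to $\tilde E$ up to an additive constant. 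Stability of subdifferentials under Mosco convergence would then yield $\partial E = A$, and since $A$ generates $T_\bullet$ by Theorem~\ref{t:BrezisPazy}, the proof would conclude.

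The main obstacle is verifying the normalization condition required by Attouch's theorem in our non-linear setting. In our framework this turns out to be essentially free thanks to the uniform choice of base point: the constant sequence $h_0$ works, because $A^\sym{t}(h_0)\to A^\circ(h_0)\in A(h_0)$ strongly in $H$ by the identity $A^\circ=\lim_{t\downarrow 0}\tfrac{1}{t}(\car_H-T_t)$ on $\overline{\dom{A}}$ used in the proof of Theorem~\ref{t:SemigroupGenerator}, while $E^t(h_0)=0$ by construction. The Mosco limit thus coincides with $E$ as defined in the statement, and the resulting identity $\partial E = A$ together with Theorem~\ref{t:BrezisPazy} identifies $T_\bullet$ as the semigroup generated by $\partial E$.
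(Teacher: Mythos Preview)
Your proposal is correct and follows essentially the same route as the paper: identify $\partial E^t=A^\sym{t}$, use Theorem~\ref{t:SemigroupGenerator} for the graph convergence $A^\sym{t}\to A$, and then invoke Attouch's theorem relating graph convergence of subdifferentials to Mosco convergence of the functionals, with the normalization triple $(h_0,A^\sym{t}(h_0),0)\to(h_0,A^\circ(h_0),0)$. The paper's proof is terser but structurally identical.
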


\begin{remark}[On the converse to Proposition~\ref{p:FormSemigroup}]
Let us briefly compare the assertions of Theorems~\ref{t:FormResolvent} and Proposition~\ref{p:FormSemigroup} by noting that the limit in~\eqref{eq:ELambda} in the assertion of Theorem~\ref{t:FormResolvent} is \emph{monotone}, and therefore it is automatically a Mosco limit by e.g.~\cite[Thm.~3.20, p.~298]{Att84}.
\end{remark}


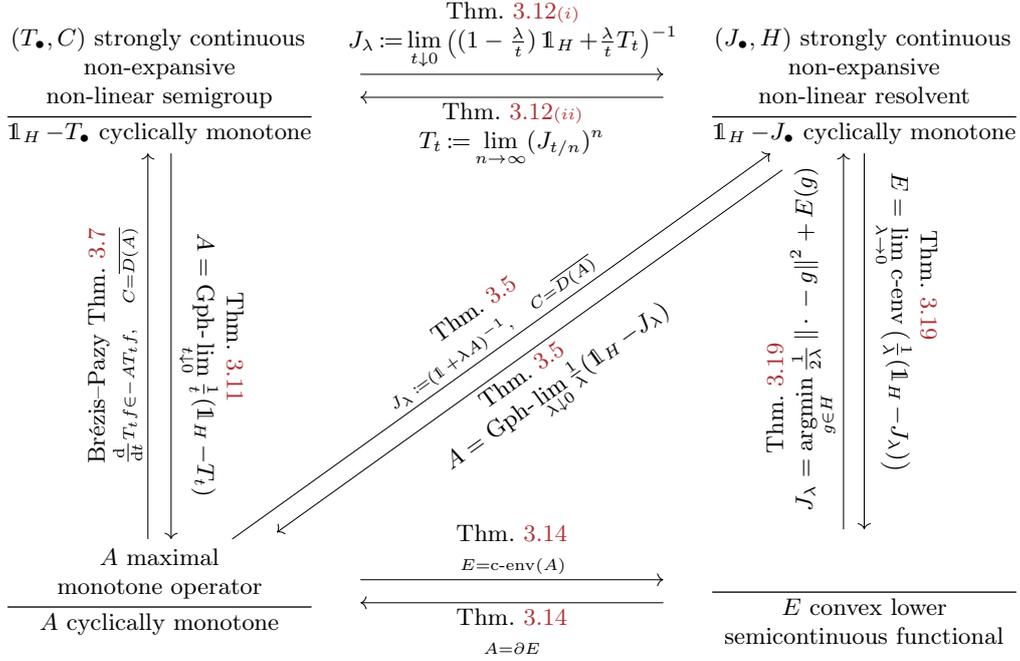
\begin{figure}[htb!]
\begin{adjustbox}{center}
{\small
\begin{tikzcd}[row sep=5cm, column sep=4cm]
\parbox{5cm}{\centering $(T_\bullet,C)$ strongly continuous\\non-expansive\\non-linear semigroup\\ \rule[1.5ex]{4cm}{.5pt}\vspace{-.3cm}\\ $\car_H-T_\bullet$ cyclically monotone}
\arrow[r, shift left=2, "\parbox{5cm}{\centering Thm.~\ref{t:SemigroupResolvent}\ref{i:t:SemigroupResolvent:1}\\$\displaystyle{J_\lambda\eqdef \lim_{t\downarrow 0} \tparen{(1-\tfrac{\lambda}{t})\car_H + \tfrac{\lambda}{t} T_t}^{-1}}$}"]
\arrow[d, shift left=2, "\parbox{3cm}{\centering Thm.~\ref{t:SemigroupGenerator}\\$\displaystyle{A=\Gph\text{-}\lim_{t \downarrow 0} \tfrac{1}{t}(\car_H-T_t)}$}",sloped]
& \parbox{5cm}{\centering $(J_\bullet,H)$ strongly continuous\\non-expansive\\non-linear resolvent\\ \rule[1.5ex]{4cm}{.5pt}\vspace{-.3cm}\\ $\car_H-J_\bullet$ cyclically monotone}
\arrow[l, shift left=2, "\parbox{5cm}{\centering Thm.~\ref{t:SemigroupResolvent}\ref{i:t:SemigroupResolvent:2}\\$\displaystyle{T_t\eqdef \lim_{n\to\infty} (J_{t/n})^n}$}"] \arrow[dl,shorten >=15, sloped, yshift=-2ex, labels=below, "\parbox{3cm}{\centering Thm.~\ref{t:ResolventGenerator}\\$\displaystyle{A=\Gph\text{-}\lim_{\lambda\downarrow 0} \tfrac{1}{\lambda}(\car_H-J_\lambda)}$}"]
\arrow{d}[anchor=center, rotate=-90, yshift=4ex, pos=.35]{\parbox{3cm}{\centering Thm.~\ref{t:FormResolvent}\\ $\displaystyle{E=\lim_{\lambda\to 0} \cenv\tparen{\tfrac{1}{\lambda}(\car_H-J_\lambda)}}$}}
\\
\parbox{5cm}{\centering $A$ maximal\\monotone operator\\ \rule[1.5ex]{4cm}{.5pt}\vspace{-.3cm}\\$A$ cyclically monotone} 
\arrow[u,shift left=2,"\parbox{5cm}{\centering Br\'ezis--Pazy Thm.~\ref{t:BrezisPazy}\\$\tfrac{\diff}{\diff t}T_t f\in -AT_tf, \quad C=\overline{\dom{A}}$}",sloped]
\arrow{ur}[shift left=2, sloped]{\parbox{5cm}{\centering Thm.~\ref{t:ResolventGenerator}\\$J_\lambda\eqdef (\car +\lambda A)^{-1}, \quad C=\overline{\dom{A}}$}}
\arrow[r, shift left=2, "\parbox{3cm}{\centering Thm.~\ref{t:MaxMonSubDiff}\\$E=\cenv(A)$}"]
& \parbox{5cm}{\centering \vspace{.7cm}\rule[1.5ex]{4cm}{.5pt}\vspace{-.3cm}\\$E$ convex lower\\semicontinuous functional}
\arrow[l, shift left=2, "\parbox{3cm}{\centering Thm.~\ref{t:MaxMonSubDiff}\\$A=\partial E$}"]
\arrow[u,anchor=center, yshift=2ex, pos=.35, "\parbox{3cm}{\centering Thm.~\ref{t:FormResolvent}\\ $\displaystyle{J_\lambda=\argmin_{g\in H} \tfrac{1}{2\lambda}\norm{\emparg-g}^2+E(g)}$}", sloped]
\end{tikzcd}
}
\end{adjustbox}
\caption{Equivalences between semigroups, operators, resolvents, and functionals: \emph{cyclical monotonicity}.}\label{fig:DiagramMonotone}
\end{figure}

\subsubsection{Equivalent characterizations of evenness, preservation of convexity}
Let us collect here the following additional equivalence, which is not difficult to show.

\begin{proposition}\label{p:EvenTAJE}
Let~$A$ be a cyclically monotone maximal monotone operator on~$H$, with associated semigroup~$T_\bullet\colon \overline{\dom{A}}\to H$, associated resolvent~$J_\bullet\colon H\to H$, and associated proper convex lower semi-continuous functional~$E\colon H\to (-\infty,\infty]$.
Then, the following are equivalent:
\begin{enumerate}[$(i)$]
\item\label{i:p:EvenTAJE:1} $T_t$ is odd for every~$t\geq 0$;
\item\label{i:p:EvenTAJE:2} $A$ is odd;
\item\label{i:p:EvenTAJE:3} $J_\lambda$ is odd for every~$\lambda\geq 0$.
\item\label{i:p:EvenTAJE:4} $E$ is even.
\end{enumerate}
\end{proposition}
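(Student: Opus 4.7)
The plan is to take (ii) as the pivot and transfer the odd/even symmetry through each of the correspondences in Figure~\ref{fig:DiagramMonotone}. All three nonlinear objects attached to $A$ have explicit realisations in terms of $A$, and in each case oddness of $A$ propagates essentially algebraically, with the single (more delicate) exception of passing from $A$ odd to $E$ even.

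\emph{(ii) $\Leftrightarrow$ (iii).} Assuming $A$ odd, write $g = J_\lambda(f)$, so $f \in g + \lambda A(g)$ by Theorem~\ref{t:ResolventGenerator}; applying oddness yields $-f \in -g + \lambda A(-g)$ and hence $J_\lambda(-f) = -J_\lambda(f)$. Conversely, $A = \Gph\text{-}\lim_{\lambda \downarrow 0} \tfrac{1}{\lambda}(\car_H - J_\lambda)$ transfers oddness to $A$, because each $\tfrac{1}{\lambda}(\car_H - J_\lambda)$ is odd and the class of graphs invariant under the involution $(f,g) \mapsto (-f,-g)$ is closed under Painlev\'e--Kuratowski limits.

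\emph{(ii) $\Leftrightarrow$ (i).} Assume $A$ odd. For $f \in \dom{A}$, the curve $u(t) \eqdef T_t(-f)$ is the unique Lipschitz solution in Theorem~\ref{t:BrezisPazy} of $\tfrac{\diff}{\diff t} u(t) \in -A(u(t))$ with $u(0) = -f$. Then $w(t) \eqdef -u(t)$ satisfies $w(0) = f$ and $\tfrac{\diff}{\diff t} w(t) \in A(u(t)) = A(-w(t)) = -A(w(t))$ a.e.\ by oddness, so uniqueness forces $w(t) = T_t(f)$, i.e.~$T_t(-f) = -T_t(f)$; the identity extends to $\overline{\dom{A}} = C$ by non-expansiveness. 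The converse is obtained exactly as in (iii)$\Rightarrow$(ii), now using $A = \Gph\text{-}\lim_{t \downarrow 0} \tfrac{1}{t}(\car_H - T_t)$ from Theorem~\ref{t:SemigroupGenerator}.

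\emph{(ii) $\Leftrightarrow$ (iv); main obstacle.} If $E$ is even and $v \in \partial E(f)$, the substitution $g \mapsto -g$ in~\eqref{eq:ConvexSubDiff} together with $E(-g) = E(g)$ and $E(-f) = E(f)$ gives $-v \in \partial E(-f)$, whence $A = \partial E$ is odd. The genuinely subtle direction is the converse, because $A$ determines $E$ only up to an additive constant (Theorem~\ref{t:MaxMonSubDiff}\ref{i:t:MaxMonSubDiff:2} and Remark~\ref{r:AdditiveConstant}). Set $F(x) \eqdef E(-x)$; then $F$ is proper, convex, and lower semicontinuous, and~\eqref{eq:ConvexSubDiff} yields $\partial F(x) = -\partial E(-x) = -A(-x) = A(x)$ by oddness. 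Uniqueness of the antiderivative then gives $F = E + c$ for some $c \in \R$, i.e.~$E(-x) = E(x) + c$ for all $x \in H$; substituting $x \mapsto -x$ and comparing forces $E(x) = E(x) + 2c$, whence $c = 0$ and $E$ is even. This self-duality argument is the crux of the proof: oddness of $A$ pins down not only the shape of $E$ but also its additive normalisation, which elsewhere would only be determined up to a constant.
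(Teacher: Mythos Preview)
Your proof is correct and fills in precisely the natural details; the paper's own proof is merely the single word ``Straightforward.'' Your observation that the implication \ref{i:p:EvenTAJE:2}$\Rightarrow$\ref{i:p:EvenTAJE:4} requires the self-duality argument $F(x)\eqdef E(-x)$ to pin down the additive constant is the only point of any substance, and you handle it cleanly.
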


\begin{proof}
Straightforward.
\end{proof}

\paragraph{Preservation of convex sets}
We summarize known equivalence results relating the preservation of convex sets by generators, semigroups, resolvents, and functionals.

\begin{proposition}\label{p:EquivConvex}
Let~$A$ be a cyclically monotone maximal monotone operator on~$H$, with associated semigroup~$T_\bullet\colon \overline{\dom{A}}\to H$, associated resolvent~$J_\bullet$, and associated proper convex lower semi-continuous functional~$E\colon H\to (-\infty,\infty]$.
Further let~$C\subset H$ be any closed convex set.
Then, the following are equivalent:
\begin{enumerate}[$(i)$]
\item\label{i:p:EquivConvex:1} $J_\lambda(C)\subset C$ for every~$\lambda>0$;
\item\label{i:p:EquivConvex:2} $E\tparen{\cproj_C(f)}\leq E(f)$ for every~$f\in H$.
\end{enumerate}

Furthermore, if~$C$ is additionally such that
\begin{equation}\label{eq:p:EquivConvex:0}
\cproj_{\overline{\dom{A}}} C\subset C \comma
\end{equation}
then, either of~\ref{i:p:EquivConvex:1}-\ref{i:p:EquivConvex:2} is equivalent to each of the following:
\begin{enumerate}[$(i)$]\setcounter{enumi}{2}
\item\label{i:p:EquivConvex:3} $\scalar{A^\circ f}{f-\cproj_C(f)}\geq 0$ for every~$f\in \dom{A}$;
\item\label{i:p:EquivConvex:4} $\dist\tparen{T_t(f), C}\leq \dist(f,C)$ for every~$f\in\overline{\dom{A}}$ and every~$t\geq 0$;
\item\label{i:p:EquivConvex:5} $A+\partial \mbfI_C$ is maximal monotone, $\overline{\dom{A}\cap C}=\overline{\dom{A}}\cap C$, and
\[
(A+\partial \mbfI_C)^\circ(f)= A^\circ(f)\comma \quad f\in\dom{A}\cap C \semicolon
\]
\item\label{i:p:EquivConvex:6} $T_t\tparen{\overline{\dom{A}}\cap C}\subset C$ for every~$t\geq 0$;
\item\label{i:p:EquivConvex:7} $E_\lambda\tparen{\cproj_C(f)}\leq E_\lambda(f)$ for every~$f\in H$ and every~$\lambda>0$;
\item\label{i:p:EquivConvex:8} $\scalar{A_\lambda(f)}{f-\cproj_C(f)}\geq 0$ for every~$f\in H$;
\end{enumerate}
In particular, if~\eqref{eq:p:EquivConvex:0} holds, then all of~\ref{i:p:EquivConvex:1}-\ref{i:p:EquivConvex:8} are equivalent to each other.
\end{proposition}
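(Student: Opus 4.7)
I would prove the full equivalence in three stages, following the classical scheme for invariance characterizations of convex sets under gradient flows of maximal monotone operators.

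\emph{Stage~1 (\ref{i:p:EquivConvex:1}$\iff$\ref{i:p:EquivConvex:2}, unconditionally).} The direction \ref{i:p:EquivConvex:2}$\Rightarrow$\ref{i:p:EquivConvex:1} is variational: for $f\in C$, set $u\eqdef J_\lambda(f)$ and $v\eqdef \cproj_C(u)$; the $1$-Lipschitzness of $\cproj_C$ (together with $\cproj_C(f)=f$) gives $\norm{f-v}\leq\norm{f-u}$, while~\ref{i:p:EquivConvex:2} gives $E(v)\leq E(u)$, so uniqueness of the minimizer in Theorem~\ref{t:FormResolvent}\ref{i:t:FormResolvent:1} forces $v=u\in C$. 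For \ref{i:p:EquivConvex:1}$\Rightarrow$\ref{i:p:EquivConvex:2} I would pass through the Yoshida regularization: for $f\in H$, set $g_\lambda\eqdef J_\lambda(\cproj_C(f))$, which lies in $C$ by~\ref{i:p:EquivConvex:1}; decomposing $\norm{f-g_\lambda}^2 = \norm{f-\cproj_C(f)}^2 + \norm{\cproj_C(f)-g_\lambda}^2 + 2\scalar{f-\cproj_C(f)}{\cproj_C(f)-g_\lambda}$ and invoking the projection obtuse-angle inequality $\scalar{f-\cproj_C(f)}{\cproj_C(f)-g_\lambda}\geq 0$, one obtains $E_\lambda(\cproj_C(f))\leq E_\lambda(f)$, whence~\ref{i:p:EquivConvex:2} by letting $\lambda\downarrow 0$ and using $E_\lambda\uparrow E$.

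\emph{Stage~2 (cycle under~\eqref{eq:p:EquivConvex:0}).} I would close \ref{i:p:EquivConvex:1}$\Rightarrow$\ref{i:p:EquivConvex:6}$\Rightarrow$\ref{i:p:EquivConvex:4}$\Rightarrow$\ref{i:p:EquivConvex:3}$\Rightarrow$\ref{i:p:EquivConvex:5}$\Rightarrow$\ref{i:p:EquivConvex:6}. The step \ref{i:p:EquivConvex:1}$\Rightarrow$\ref{i:p:EquivConvex:6} follows from the exponential formula of Theorem~\ref{t:SemigroupResolvent}\ref{i:t:SemigroupResolvent:2}: iterating $J_{t/n}$ from $f\in\overline{\dom{A}}\cap C$ remains in $C\cap\dom{A}$, and the closed limit lies in $C\cap\overline{\dom{A}}$. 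The step \ref{i:p:EquivConvex:6}$\Rightarrow$\ref{i:p:EquivConvex:4} is non-expansiveness of $T_t$ combined with the identity $\dist(\emparg,C)=\dist(\emparg,C\cap\overline{\dom{A}})$ on $\overline{\dom{A}}$, which is exactly~\eqref{eq:p:EquivConvex:0} in disguise (via $\cproj_{\overline{\dom{A}}}(h)\in C\cap\overline{\dom{A}}$ for any $h\in C$). For \ref{i:p:EquivConvex:4}$\Rightarrow$\ref{i:p:EquivConvex:3}, differentiate $t\mapsto\tfrac{1}{2}\dist(T_t(f),C)^2$ at $t=0^+$ using Fréchet differentiability of the squared distance and the identity $\tfrac{\diff^+}{\diff t}T_t(f)=-A^\circ(f)$ at $t=0$ from Theorem~\ref{t:BrezisPazy}: this right derivative equals $\scalar{-A^\circ(f)}{f-\cproj_C(f)}$, which is $\leq 0$ by~\ref{i:p:EquivConvex:4}. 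The step \ref{i:p:EquivConvex:3}$\Rightarrow$\ref{i:p:EquivConvex:5} I would derive from~\cite[Prop.~2.17, p.~46]{Bre73}. Finally \ref{i:p:EquivConvex:5}$\Rightarrow$\ref{i:p:EquivConvex:6} uses Theorem~\ref{t:Barbu}: the semigroup generated by $A+\partial\mbfI_C$ preserves $C$ (the indicator term precludes exit), and by equality of the minimal sections it coincides with $T_\bullet$ on $\overline{\dom{A}}\cap C$.

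\emph{Stage~3 (Yoshida-regularized equivalences~\ref{i:p:EquivConvex:7},~\ref{i:p:EquivConvex:8}).} Since $A_\lambda$ is everywhere-defined and Lipschitz with $A_\lambda=\partial E_\lambda$, the pair $(E_\lambda,A_\lambda)$ trivially satisfies~\eqref{eq:p:EquivConvex:0} (as $\overline{\dom{A_\lambda}}=H$). Applying Stages~1--2 to $(E_\lambda,A_\lambda)$ yields \ref{i:p:EquivConvex:7}$\iff$\ref{i:p:EquivConvex:8}. The implication \ref{i:p:EquivConvex:7}$\Rightarrow$\ref{i:p:EquivConvex:2} follows from $E_\lambda\uparrow E$ as $\lambda\downarrow 0$, and \ref{i:p:EquivConvex:2}$\Rightarrow$\ref{i:p:EquivConvex:7} from the same projection-angle expansion used in Stage~1.

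The main obstacle is \ref{i:p:EquivConvex:3}$\Rightarrow$\ref{i:p:EquivConvex:5}. Sums of maximal monotone operators are not generically maximal monotone, and here one must show both maximal monotonicity of $A+\partial\mbfI_C$ and the identity of minimal sections. The condition~\ref{i:p:EquivConvex:3} supplies the obtuse-angle compatibility between $A^\circ$ and the normal cone $\partial\mbfI_C$, while~\eqref{eq:p:EquivConvex:0} supplies the domain compatibility $\overline{\dom{A}\cap C}=\overline{\dom{A}}\cap C$. Together these are exactly the hypotheses of the Brézis sum theorem, and their careful verification is the most technical part of the proof.
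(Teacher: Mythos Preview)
Your overall architecture matches the paper's: establish \ref{i:p:EquivConvex:1}$\iff$\ref{i:p:EquivConvex:2} unconditionally, then chain in the remaining conditions under~\eqref{eq:p:EquivConvex:0}, with \ref{i:p:EquivConvex:7}--\ref{i:p:EquivConvex:8} handled via Yoshida regularization. The paper outsources most of Stage~2 to \cite[Prop.~4.5, p.~131]{Bre73} (not Prop.~2.17) and \cite[Thm.~3.4]{CipGri03}, whereas you spell out explicit implications; that is fine in principle, but two concrete problems arise.

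\textbf{The Stage~1 implication \ref{i:p:EquivConvex:1}$\Rightarrow$\ref{i:p:EquivConvex:2} is flawed as written.} Your decomposition of $\norm{f-g_\lambda}^2$ together with the obtuse-angle inequality yields $\norm{\cproj_C(f)-g_\lambda}^2\leq \norm{f-g_\lambda}^2$, hence
\[
E_\lambda\tparen{\cproj_C(f)}=\tfrac{1}{2\lambda}\norm{\cproj_C(f)-g_\lambda}^2+E(g_\lambda)\leq \tfrac{1}{2\lambda}\norm{f-g_\lambda}^2+E(g_\lambda)\fstop
\]
But the right-hand side is an \emph{upper} bound for $E_\lambda(f)$, not a lower one, since $g_\lambda=J_\lambda(\cproj_C(f))$ is not the minimizer for $f$. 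The inequality $E_\lambda(\cproj_C(f))\leq E_\lambda(f)$ does not follow. The correct route (which is what the paper invokes from~\cite{CipGri03}) uses convexity of $E_\lambda$ at the point $\cproj_C(f)$:
\[
E_\lambda(f)\geq E_\lambda\tparen{\cproj_C(f)}+\tscalar{A_\lambda(\cproj_C(f))}{f-\cproj_C(f)}\comma
\]
and the scalar product equals $\tfrac{1}{\lambda}\tscalar{\cproj_C(f)-g_\lambda}{f-\cproj_C(f)}\geq 0$ by the obtuse-angle inequality, since $g_\lambda\in C$ by~\ref{i:p:EquivConvex:1}. This is the missing ingredient. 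Note that your Stage~3 reference to ``the same projection-angle expansion'' for \ref{i:p:EquivConvex:2}$\Rightarrow$\ref{i:p:EquivConvex:7} is also off: the paper's argument there uses non-expansiveness of $\cproj_C$ together with~\ref{i:p:EquivConvex:2} applied to $J_\lambda(f)$, not the obtuse-angle inequality.

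\textbf{Your Stage~2 chain does not close.} You write \ref{i:p:EquivConvex:1}$\Rightarrow$\ref{i:p:EquivConvex:6}$\Rightarrow$\ref{i:p:EquivConvex:4}$\Rightarrow$\ref{i:p:EquivConvex:3}$\Rightarrow$\ref{i:p:EquivConvex:5}$\Rightarrow$\ref{i:p:EquivConvex:6}, which proves only that \ref{i:p:EquivConvex:1} implies the equivalence of \ref{i:p:EquivConvex:3}--\ref{i:p:EquivConvex:6}, with no arrow back to \ref{i:p:EquivConvex:1} or \ref{i:p:EquivConvex:2}. You still need, e.g., \ref{i:p:EquivConvex:3}$\Rightarrow$\ref{i:p:EquivConvex:1} (part of \cite[Prop.~4.5]{Bre73}) or \ref{i:p:EquivConvex:8}$\Rightarrow$\ref{i:p:EquivConvex:3} combined with a link from \ref{i:p:EquivConvex:3} to one of \ref{i:p:EquivConvex:1},\ref{i:p:EquivConvex:2},\ref{i:p:EquivConvex:7},\ref{i:p:EquivConvex:8}. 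The paper closes this by citing the Brézis equivalences (i)--(v) as a block and then proving \ref{i:p:EquivConvex:2}$\Rightarrow$\ref{i:p:EquivConvex:7}$\Rightarrow$\ref{i:p:EquivConvex:8}$\Rightarrow$\ref{i:p:EquivConvex:3}.
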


\begin{remark}
\begin{enumerate*}[$(a)$]
\item The equivalence for generators, semigroups, and resolvents is proved in~\cite{Bre73} and cyclical monotonicity is in fact not necessary to its validity.
The equivalence between the assertions for semigroups and functionals is an adaptation of~\cite{CipGri03}.
\item We state~\ref{i:p:EquivConvex:1} for~$\lambda>0$ in order to stress that~\eqref{eq:p:EquivConvex:0} is used in the implication~\ref{i:p:EquivConvex:6}$\implies$\ref{i:p:EquivConvex:1}.
However,~\eqref{eq:p:EquivConvex:0} coincides with~\ref{i:p:EquivConvex:1} for~$\lambda=0$.
\item Further note that~\ref{i:p:EquivConvex:1} can be equivalently stated by replacing~$\overline{\dom{A}}$ with either~$\overline{\dom{E}}$ or the domain of the semigroup~$T_\bullet$ since these three sets in fact coincide.
\end{enumerate*}
\end{remark}

\begin{proof}[Proof of Proposition~\ref{p:EquivConvex}]
Under the assumption of~\eqref{eq:p:EquivConvex:0}, the equivalences of~\ref{i:p:EquivConvex:1}-\ref{i:p:EquivConvex:5} can be found in~\cite[Prop.~4.5, p.~131]{Bre73}.
The equivalence of~\ref{i:p:EquivConvex:2} and~\ref{i:p:EquivConvex:6} is shown in~\cite[Thm.~3.4, p.~210]{CipGri03}.
Whereas not explicitly stated, it is apparent from the proof in~\cite{CipGri03} ---in particular from the application there of the equivalence~\ref{i:p:EquivConvex:1}-\ref{i:p:EquivConvex:6} in~\cite{Bre73}--- that the authors assume~$\overline{\dom{A}}=H$, so that~\eqref{eq:p:EquivConvex:0} is immediately satisfied.

However, the first part of the proof of~\cite[Thm.~3.4, p.~210]{CipGri03}, which shows the implication~\ref{i:p:EquivConvex:2}$\implies$\ref{i:p:EquivConvex:1}, goes through also in the case~$\overline{\dom{A}}\subsetneq H$ and without assuming~\eqref{eq:p:EquivConvex:0}.
As for the converse implication, again arguing as in the proof of the converse implication in~\cite[Thm.~3.4, p.~211]{CipGri03}, we conclude, again without assuming~\eqref{eq:p:EquivConvex:0}, that~$E_\lambda\tparen{\cproj_C(u)}\leq E_\lambda(f)$ for every~$\lambda>0$ and every~$f\in H$, and the conclusion follows letting~$\lambda\downarrow 0$ and applying~\cite[Prop.~2.11]{Bre73}.

Assume now that~\eqref{eq:p:EquivConvex:0} holds.
We will complete the proof showing the chain of implications
\[
\text{\ref{i:p:EquivConvex:2}} \implies \text{\ref{i:p:EquivConvex:7}} \implies \text{\ref{i:p:EquivConvex:8}} 
\implies \text{\ref{i:p:EquivConvex:3}} \fstop
\]
Indeed, assume~\ref{i:p:EquivConvex:2}.
Respectively by definition of the Yoshida regularization~$E_\lambda\colon H\to (-\infty,\infty]$, by restriction of the minimization to the range of~$\cproj_C$, by non-expansiveness of the convex projection~$\cproj_C$ and by~\ref{i:p:EquivConvex:2}, 
\begin{align*}
E_\lambda\tparen{\cproj_C (f)} &= \min_{g\in H} \tfrac{1}{2\lambda} \norm{\cproj_C(f)-g}^2 + E(g)
\\
&\leq \min_{\substack{g= \cproj_C(h)\\ h\in H}} \tfrac{1}{2\lambda} \norm{\cproj_C(f)-\cproj_C(h)}^2 + E(\cproj_C(h))
\\
&\leq \min_{\substack{g= \cproj_C(h)\\ h\in H}} \tfrac{1}{2\lambda} \norm{f-h}^2 + E(h)
\\
&= \min_{\substack{h\in H}} \tfrac{1}{2\lambda} \norm{f-h}^2 + E(h) \defeq E_\lambda(f) \comma
\end{align*}
which shows~\ref{i:p:EquivConvex:7}.
The implication~\ref{i:p:EquivConvex:7}$\implies$\ref{i:p:EquivConvex:8} holds applying~\ref{i:p:EquivConvex:2}$\implies$\ref{i:p:EquivConvex:4} to the functional~$E_\lambda$ for each~$\lambda>0$.
The implication~\ref{i:p:EquivConvex:8}$\implies
$\ref{i:p:EquivConvex:3} follows from the strong convergence of~$A_\lambda$ to~$A^\circ$ as~$\lambda\to 0$, see~\cite[Prop.~2.6(iii), p.~28]{Bre73}.
\end{proof}


\section{Sub-Markovianity}\label{s:SubMarkov}
Let~$(X,\mfA,\mssm)$ be a measure space, and denote by~$L^p_\mssm$ the usual \emph{Lebesgue space} of~$\mssm$ with exponent~$p\in [1,\infty]$.
For ease of exposition, we will henceforth assume that~$(X,\mfA,\mssm)$ be $\sigma$-finite countably generated.
This assumption is equivalent to the \emph{separability} of~$L^p_\mssm$ for some (hence every)~$p\in [1,\infty)$.

We are interested in properties of a strongly continuous non-expansive non-linear semigroup~$T_\bullet\colon C\to L^2_\mssm$ with respect to the natural partial order on~$L^2_\mssm$.
In particular, given~$u,v\in C$ with~$u\leq v$ we need to compare~$T_t u$ with~$T_t v$.

\paragraph{Stripes and projections}
In the following, we will apply the equivalence results in Proposition~\ref{p:EquivConvex} to the double of functionals, semigroups, etc., to convex sets of a particular form.

%
%
%

We shall often consider the following subsets of~$L^2_\mssm \oplus L^2_\mssm$: 
\begin{align*}
C_\tleq \eqdef \set{(u,v) \in L^2_\mssm \oplus L^2_\mssm : u \leq v \as{\mssm}}\comma
\end{align*}
and, for all~$\alpha \in [0,\infty]$,
\begin{align*}
C_\alpha^+ &\eqdef \set{(u,v) \in L^2_\mssm \oplus L^2_\mssm : u-v \leq \alpha \as{\mssm}}\comma
\\
C_\alpha^- &\eqdef \set{(u,v) \in L^2_\mssm \oplus L^2_\mssm : u-v \geq -\alpha \as{\mssm}}\comma
\\
C_\alpha &\eqdef \set{ (u,v) \in L^2_\mssm \oplus L^2_\mssm : \abs{u-v} \leq \alpha \as{\mssm} } \fstop
\end{align*}
A simple yet key observation is that
\[
C_\alpha= C_\alpha^+\cap C_\alpha^- \fstop
\]

Further note that~$C_\tleq$,~$C_\alpha$, and~$C_\alpha^\pm$ are convex and closed in $L^2_\mssm \oplus L^2_\mssm$, for each~$\alpha \geq 0$.
Further note that, for every~$\alpha\geq 0$, for every~$(u,v)\in C_\alpha$, we have~$u-v\in L^\infty_\mssm$ and, if either~$u$ or~$v\in L^\infty_\mssm$, then~$(u,v)\in L^\infty_\mssm \oplus L^\infty_\mssm$.
As a consequence,
\begin{equation}\label{eq:CAlphaRmk}
\inf\set{\alpha\geq 0 : (u,v)\in C_\alpha} = \norm{u-v}_{L^\infty_\mssm} \fstop
\end{equation}

By~\cite[Lem.~3.3]{CipGri03}, the projection~$\cproj_\tleq$ onto~$C_\tleq$, resp.~$\cproj_\alpha$ onto~$C_\alpha$, have the following explicit \emph{pointwise} expression. For each~$(u,v)\in L^2_\mssm\oplus L^2_\mssm$,
\begin{align}\label{eq:StripeProjections}
\begin{aligned}
\cproj_\tleq(u,v) =& \tparen{u-\tfrac{1}{2}(u-v)_+, v+\tfrac{1}{2}(u-v)_+}\comma
\\
\cproj_\alpha(u,v) =& \begin{cases}\tfrac{1}{2}(u+v-\alpha,u+v+\alpha) &\text{if } u-v<-\alpha\comma \\ (u,v) & \text{if } \abs{u-v}\leq \alpha\comma \\ \tfrac{1}{2}(u+v+\alpha,u+v-\alpha) &\text{if } u-v>\alpha\fstop \end{cases}
\end{aligned}
\end{align}
Analogously, the projections~$\cproj_\alpha^\pm$ onto~$C_\alpha^\pm$ have the explicit \emph{pointwise} expression
\begin{equation}\label{eq:StripeProjections2}
\cproj_\alpha^\pm(u,v) = \begin{cases} (u,v) &\text{if } u-v \lesseqgtr \pm \alpha\comma \\ \tfrac{1}{2}(u+v\pm\alpha,u+v\mp\alpha) &\text{if } u-v \gtrless \pm \alpha\comma \end{cases}
\end{equation}
satisfying, for all~$u,v\in L^2_\mssm$,
\begin{align}\label{eq:OneSidedProjIdentity}
P_\alpha^+(u,v)=-P_\alpha^-(-u,-v) \comma \qquad \alpha\in [0,\infty] \fstop
\end{align}

The convex projections on~$C_\tleq$ and~$C_\alpha$ are further idempotent in the following sense.
Let~$C_*$ be either~$C_\tleq$ or~$C_\alpha$ for some~$\alpha\geq 0$, and set~$\cproj_*=\cproj_{C_*}$. Then,
\begin{equation*}
\cproj_* \tparen{\lambda w+ (1-\lambda) \cproj_* (w)}= \cproj_*(w) \comma \qquad w\in C_*\comma \qquad \lambda\in [0,1] \fstop
\end{equation*}

\begin{lemma}\label{l:CompositionPM}
For every~$\alpha\in [0,\infty]$,
\[
\cproj_\alpha^+ \circ \cproj_\alpha^- = \cproj_\alpha =  \cproj_\alpha^- \circ  \cproj_\alpha^+ \fstop
\]
\begin{proof}
Straightforward from~\eqref{eq:StripeProjections} and~\eqref{eq:StripeProjections2}.
\end{proof}
\end{lemma}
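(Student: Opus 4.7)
The plan is to verify both equalities pointwise $\mssm$-a.e.\ by direct case analysis on the value of $u(x)-v(x)$ relative to $\pm\alpha$, using the explicit formulas~\eqref{eq:StripeProjections} and~\eqref{eq:StripeProjections2}. The case $\alpha=\infty$ is immediate, since $C_\infty^\pm=L^2_\mssm\oplus L^2_\mssm=C_\infty$ and all three projections reduce to the identity; so I would assume throughout that $\alpha\in[0,\infty)$.

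First, I would reduce the two composition identities to a single one. Direct inspection of~\eqref{eq:StripeProjections} yields $\cproj_\alpha(-u,-v)=-\cproj_\alpha(u,v)$, so once I have $\cproj_\alpha^-\circ\cproj_\alpha^+=\cproj_\alpha$, applying this to $(-u,-v)$ and invoking the symmetry~\eqref{eq:OneSidedProjIdentity} between $\cproj_\alpha^+$ and $\cproj_\alpha^-$ twice immediately gives $\cproj_\alpha^+\circ\cproj_\alpha^-=\cproj_\alpha$.

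For the remaining identity, I would fix $x\in X$, set $s\eqdef u(x)-v(x)$ and $t\eqdef u(x)+v(x)$, and check three cases. If $s>\alpha$, then~\eqref{eq:StripeProjections2} gives $\cproj_\alpha^+(u,v)(x)=\tfrac12(t+\alpha,t-\alpha)$, a pair whose coordinate difference equals $\alpha\geq-\alpha$; hence $\cproj_\alpha^-$ acts as the identity on it, and the result matches the third branch of~\eqref{eq:StripeProjections}. If $|s|\leq\alpha$, both $\cproj_\alpha^+$ and $\cproj_\alpha^-$ fix $(u(x),v(x))$, which recovers the middle branch. Finally, if $s<-\alpha$, then $\cproj_\alpha^+$ still fixes $(u(x),v(x))$ (since $s<\alpha$), after which $\cproj_\alpha^-$ maps it to $\tfrac12(t-\alpha,t+\alpha)$, matching the first branch.

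I do not expect any serious obstacle, as the argument is an exhaustive pointwise case check. The only minor care needed is to track that, after applying $\cproj_\alpha^+$, the image lies in a region where the branch of $\cproj_\alpha^-$ is unambiguous; but this is automatic, since the output of $\cproj_\alpha^+$ always satisfies $u'-v'\leq\alpha$, and is either already in $C_\alpha^-$ or saturates at $u'-v'=\alpha\geq-\alpha$, so $\cproj_\alpha^-$ is never invoked on a point in its nontrivial branch unless $\cproj_\alpha^+$ has itself acted trivially.
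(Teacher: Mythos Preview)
Your proposal is correct and is precisely the direct pointwise verification that the paper's one-line proof (``Straightforward from~\eqref{eq:StripeProjections} and~\eqref{eq:StripeProjections2}'') is pointing to. The symmetry reduction via~\eqref{eq:OneSidedProjIdentity} is a nice touch that halves the work but is not essential; either way the argument is the intended exhaustive case check.
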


\subsection{Sub-Markovian operators}
Let us now introduce the notion of \emph{sub-Markov\-ianity} for non-linear operators.

\begin{definition}[Sub-Markovianity]
An operator~$B\colon D\to L^2_\mssm$ is
\begin{enumerate}[$(M_1)$]
\item\label{i:d:Semigroups:5} \emph{order-preserving} if
\[
u\leq v \implies B(u) \leq B(v)\comma\qquad u,v\in D \fstop
\]
\item\label{i:d:Semigroups:6} \emph{$L^\infty$-non-expansive} if
\begin{equation}\label{eq:i:d:Semigroups:6}
\norm{B(u)-B(v)}_{L^\infty_\mssm}\leq \norm{u-v}_{L^\infty_\mssm} \comma \qquad u,v\in D\fstop
\end{equation}
\item\label{i:d:Semigroups:7} \emph{sub-Markovian} if it is both order-preserving and $L^\infty$-non-expansive.
\end{enumerate}

Further say that a family of operators~$\set{B_j}_{j\in J}$ is \emph{order-preserving}, resp.\ \emph{$L^\infty_\mssm$-contractive}, \emph{sub-Markovian}, if~$B_j$ is order-preserving, resp.\ $L^\infty_\mssm$-contractive, sub-Markovian, for every~$j\in J$.
\end{definition}


\begin{remark}
Our definition extends those in~\cite{CipGri03} and~\cite{BriHar22} in that we allow for the convex set~$D$ to be smaller than the whole space~$L^2_\mssm$.
This will allow us to consider the semigroup associated to the form in Example~\ref{e:FormBall} below as part of our framework.
\end{remark}

In the following, it will be convenient to rephrase properties~\ref{i:d:Semigroups:5}-\ref{i:d:Semigroups:7} in terms of the action of the double operator~$(B^\opl{2},D^\opl{2})$ on some convex set~$C\subset L^2_\mssm \oplus L^2_\mssm$.

For any convex~$D\subset L^2_\mssm$ we write
\[
D^\opl{2}\eqdef D\oplus D \fstop
\]

The next result is a minor extension of the analogous statement for everywhere-defined semigroups (i.e.~$T_\bullet\colon L^2_\mssm\to L^2_\mssm$) proved in~\cite[Lem.~2.6]{CipGri03}.

\begin{proposition}\label{p:SubMarkovianConvex}
For an operator~$B\colon D\to L^2_\mssm$,
\begin{enumerate}[$(i)$, wide]
\item\label{i:p:SubMarkovianConvex:1} $B$ is order-preserving~\ref{i:d:Semigroups:5} if and only if
\begin{equation}\tag*{$(M_1')$}\label{i:d:Semigroups:5'}
B^\opl{2}(D^\opl{2}\cap C_\tleq)\subset C_\tleq \semicolon
\end{equation}
\item\label{i:p:SubMarkovianConvex:2} $B$ is $L^\infty$-non-expansive~\ref{i:d:Semigroups:6} if and only if
\begin{equation}\tag*{$(M_2')$}\label{i:d:Semigroups:6'}
B^\opl{2}(D^\opl{2}\cap C_\alpha)\subset C_\alpha\comma \qquad \alpha \geq 0 \semicolon
\end{equation}
\item $B$ is sub-Markovian~\ref{i:d:Semigroups:7} if and only if
\begin{equation}\tag*{$(M_3')$}\label{i:d:Semigroups:7'}
\text{both~\ref{i:d:Semigroups:5'} and~\ref{i:d:Semigroups:6'} hold.}
\end{equation}
\end{enumerate}
\end{proposition}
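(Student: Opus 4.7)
The plan is to prove each of the three equivalences by a direct unpacking of the definitions, exploiting the fact that the convex sets $C_\tleq$ and $C_\alpha$ encode exactly the pointwise conditions defining order preservation and $L^\infty$-non-expansiveness, and that $B^\opl{2}$ acts componentwise on $D^\opl{2}$.

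For~\ref{i:p:SubMarkovianConvex:1}, the key observation is that membership of $(u,v)\in L^2_\mssm\oplus L^2_\mssm$ in $C_\tleq$ is by definition equivalent to $u\leq v$ $\mssm$-a.e. Thus $(u,v)\in D^\opl{2}\cap C_\tleq$ means precisely $u,v\in D$ with $u\leq v$, and $B^\opl{2}(u,v)=(B(u),B(v))\in C_\tleq$ means precisely $B(u)\leq B(v)$. Hence \ref{i:d:Semigroups:5'} is just a restatement of \ref{i:d:Semigroups:5} quantified over all such pairs. Both directions are immediate.

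For~\ref{i:p:SubMarkovianConvex:2}, the crucial ingredient is the identity~\eqref{eq:CAlphaRmk}, namely $\inf\set{\alpha\geq 0: (u,v)\in C_\alpha}=\norm{u-v}_{L^\infty_\mssm}$, which is equivalent to saying $(u,v)\in C_\alpha$ iff $\norm{u-v}_{L^\infty_\mssm}\leq\alpha$. For the forward direction, assume~\ref{i:d:Semigroups:6}: if $(u,v)\in D^\opl{2}\cap C_\alpha$ then $\norm{u-v}_{L^\infty_\mssm}\leq\alpha$, whence $\norm{B(u)-B(v)}_{L^\infty_\mssm}\leq\norm{u-v}_{L^\infty_\mssm}\leq\alpha$, so $(B(u),B(v))\in C_\alpha$. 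For the converse, given $u,v\in D$ set $\alpha\eqdef\norm{u-v}_{L^\infty_\mssm}$; if $\alpha=\infty$ the contraction inequality is vacuous, otherwise $(u,v)\in C_\alpha$ so by assumption $(B(u),B(v))\in C_\alpha$, i.e.\ $\norm{B(u)-B(v)}_{L^\infty_\mssm}\leq\alpha=\norm{u-v}_{L^\infty_\mssm}$.

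Finally,~\ref{i:p:SubMarkovianConvex:2} is by definition the conjunction of \ref{i:d:Semigroups:5} and \ref{i:d:Semigroups:6}, and \ref{i:d:Semigroups:7'} is by construction the conjunction of \ref{i:d:Semigroups:5'} and \ref{i:d:Semigroups:6'}, so the third equivalence follows trivially from the first two. There is no substantive obstacle here; the whole argument is a dictionary check between the pointwise/functional definitions and the geometric description via the ``stripes'' $C_\tleq, C_\alpha$. The only mild care required is the edge case $\norm{u-v}_{L^\infty_\mssm}=\infty$ in~\ref{i:p:SubMarkovianConvex:2}, which is handled by triviality rather than by invoking any $C_\alpha$ with $\alpha=\infty$.
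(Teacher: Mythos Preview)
Your proof is correct and follows essentially the same approach as the paper's own proof: both arguments unpack the definitions of $C_\tleq$ and $C_\alpha$ directly, use \eqref{eq:CAlphaRmk} to translate between membership in $C_\alpha$ and the $L^\infty$-bound, and handle the edge case $\norm{u-v}_{L^\infty_\mssm}=\infty$ by triviality. One trivial slip: in your final paragraph you write ``\ref{i:p:SubMarkovianConvex:2} is by definition the conjunction of \ref{i:d:Semigroups:5} and \ref{i:d:Semigroups:6}'' where you mean \ref{i:d:Semigroups:7}, but this is a label typo, not a mathematical issue.
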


\begin{proof}

Assume~\ref{i:d:Semigroups:5} holds, and let~$(u,v)\in D^\opl{2}\cap C_\tleq$.
Then,~$B(u)\leq B(v)$ by~\ref{i:d:Semigroups:5}, and therefore~$\tparen{B(u), B(v)}\in C_\tleq$, which shows~\ref{i:d:Semigroups:5'}.

Vice versa, assume~\ref{i:d:Semigroups:5'} holds, and let~$u,v\in D$ be so that~$u\leq v$, that is, $(u,v)\in D^\opl{2}\cap C_\tleq$.
Then,~$\tparen{B(u), B(v)}\in C_\tleq$ by~\ref{i:d:Semigroups:5'}, and therefore~$B(u)\leq B(v)$ by definition of~$C_\tleq$, which shows~\ref{i:d:Semigroups:5}.

Assume now~\ref{i:d:Semigroups:6} holds, and let~$(u,v)\in D^\opl{2}\cap C_\alpha$ with~$\alpha\geq \norm{u-v}_{L^\infty}$ by~\eqref{eq:CAlphaRmk}.
Since~$C_\infty=L^2_\mssm\oplus L^2_\mssm$, if~$\alpha=\infty$ the conclusion follows.
If~$\alpha<\infty$, by~\ref{i:d:Semigroups:6},
\[
\norm{B(u)- B(v)}_{L^\infty_\mssm}\leq \norm{u-v}_{L^\infty_\mssm}\leq\alpha\comma
\]
and therefore~$\tparen{B(u), B (v)}\in C_\alpha$.

Vice versa, assume~\ref{i:d:Semigroups:6'} holds, let~$u,v\in D$ and set~$\alpha\eqdef \norm{u-v}_{L^\infty}$, whence $(u,v)\in D^\opl{2}\cap C_\alpha$.
Then,~$\tparen{B(u), B(v)}\in C_\alpha$ by~\ref{i:d:Semigroups:6'}, and therefore
\[
\norm{B(u)- B(v)}_{L^\infty_\mssm}\leq \alpha=\norm{u-v}_{L^\infty_\mssm}
\]
by definition of~$C_\alpha$, which shows~\ref{i:d:Semigroups:6}.

The equivalence~\ref{i:d:Semigroups:7}$\iff$\ref{i:d:Semigroups:7'} follows combining the previous ones.
\end{proof}

\begin{remark}[Linear case]\label{r:SubMarkovUnivariate}
The terminology `sub-Markovianity' is motivated by analogy with the linear case.
We recall that a \emph{linear} operator $B\colon L\to L^2_\mssm$, defined on a linear subspace~$L\subset L^2_\mssm$, is sub-Markovian if and only if (by definition)
\[
0\leq u\leq 1 \as{\mssm} \quad \implies \quad 0\leq B u \leq 1 \as{\mssm} \fstop
\]
In particular, in the linear case, sub-Markovianity is a \emph{uni}variate condition.
It is worth noting that, in the \emph{non-linear} case, this is no longer true. 
Indeed, in light of Proposition~\ref{p:SubMarkovianConvex}, the sub-Markovianity condition for non-linear operators is \emph{bi}variate.

For the same reason, the univariate sub-Markovianity condition considered in~\cite[\S3.2.1, p.~34]{vBe94} is (\emph{much}) weaker than the one ---namely~\ref{i:d:Semigroups:7}--- considered here.
Additionally, let us note that the condition in~\cite[\S3.2.1, p.~34]{vBe94} is given only on a positive stripe~$0\leq u \leq k$ and is not suitable for functionals that are not even.
\end{remark}

\subsection{Dirichlet generators}
Let us now rephrase sub-Markovianity in terms of the generators.

\begin{definition}[Dirichlet generators]
A maximal monotone operator~$A$ on~$L^2_\mssm$ is called a \emph{Dirichlet operator} if it satisfies both
\begin{enumerate}[$(\protect{A}_{\arabic*}')$]\setcounter{enumi}{6}
\item\label{i:d:Generator:7'} 
%
$\scalar{{A^\circ}^\opl{2} w}{w-\cproj_\tleq(w)}_{L_\mssm^2\oplus L^2_\mssm}\geq 0$ for all~$w\in \dom{A^\opl{2}}$;

\item\label{i:d:Generator:8'} $\scalar{{A^\circ}^\opl{2} w}{w-\cproj_\alpha(w)}_{L_\mssm^2\oplus L_\mssm^2}\geq 0$ for all~$w\in \dom{A^\opl{2}}$ for every~$\alpha\geq 0$.
\end{enumerate}
\end{definition}

It will be clear from Theorem~\ref{t:Main} below that properties~\ref{i:d:Generator:7'} and~\ref{i:d:Generator:8'} ought to be understood as the analogues for generators of properties~\ref{i:d:Semigroups:5'} and~\ref{i:d:Semigroups:6'} for the corresponding semigroups (as opposed to:~\ref{i:d:Semigroups:5} and~\ref{i:d:Semigroups:6}).
In particular ---as hinted to by the notation--- this breaks the symmetry of our presentation in that we did not introduce properties~\ref{i:d:Generator:7} and~$(A_8)$.

Our choice is motivated by the fact that these properties are seemingly somewhat less informative when compared with~\ref{i:d:Semigroups:5} and~\ref{i:d:Semigroups:6}.
For instance, by substituting the expression~\eqref{eq:StripeProjections} for~$\cproj_\tleq$ in~\ref{i:d:Generator:7'}, it is not difficult to show that~\ref{i:d:Generator:7'} is equivalent to
\begin{equation}\tag*{$(A_7)$}\label{i:d:Generator:7}
\begin{cases}
\scalar{A(u)-A(v)}{(u-v)_+} \geq 0 
\\
\scalar{A(u)-A(v)}{(u-v)_-}= 0 
\end{cases}
\fstop
\end{equation}

\begin{remark}[Linear case]
The terminology `Dirichlet operator' is motivated by analogy with the linear case.
We recall that a closed densely defined \emph{linear} operator $A$ on~$L^2_\mssm$ is a Dirichlet operator if (by definition)
\[
\ttscalar{Au}{(u-\car)_+}\geq 0 \comma \qquad u\in\dom{A} \fstop
\]
As already noted for sub-Markovianity in Remark~\ref{r:SubMarkovUnivariate}, being a Dirichlet operator is, in the linear case, a \emph{uni}variate condition.
In the \emph{non-linear} case, this is no longer true, and the Dirichlet condition for a (non-linear) maximal monotone operator is \emph{bi}variate.
\end{remark}


\subsection{Dirichlet functionals}
In this section, we introduce and study the Markov property for functionals.
For reasons that will be apparent in the following, it is convenient to address Markovianity separately from convexity and lower semicontinuity.

\subsubsection{Some special contractions}
In the following we will make use of different types of Lipschitz contractions.
Among all possible contractions, the following one-parameter families of bivariate Lipschitz contractions plays a special role.

For every~$\alpha\geq 0$, let~$H_\alpha\colon \R^\tym{2}\to\R$ be defined by
\[
H_\alpha(s,t)\eqdef (t-\alpha) \vee s \wedge (t+\alpha) =\begin{cases} t-\alpha &\text{if } s-t <-\alpha \\ s &\text{if } -\alpha\leq s-t\leq \alpha \\
t+\alpha &\text{if } s-t>\alpha\end{cases}\fstop
\]

\begin{figure}[htb!]
\centering
\includegraphics[scale=.6]{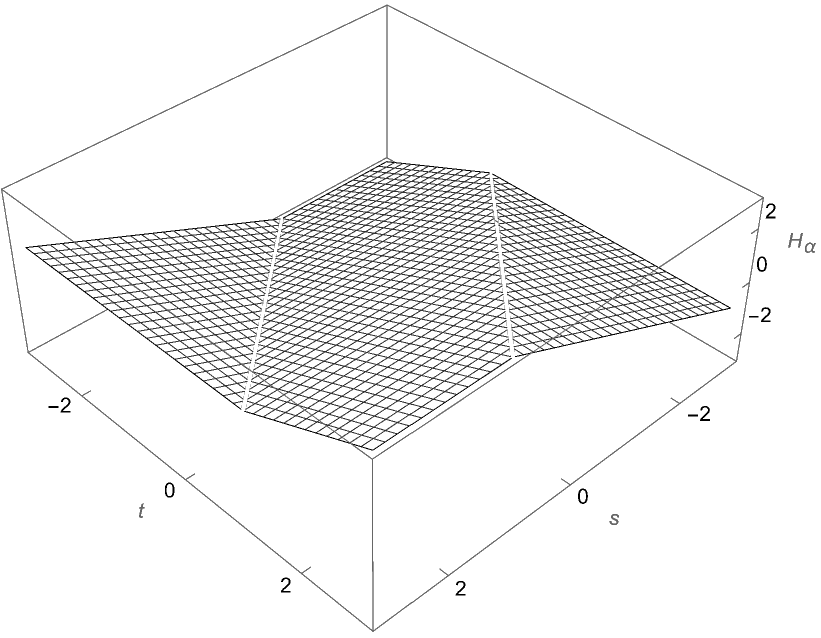}
\caption{The function~$H_\alpha$ for~$\alpha=2$.}
\end{figure}

For every~$\alpha\geq 0$, let~$h_\alpha^\pm, k_\alpha^\pm\colon \R^\tym{2}\to\R$ be defined by
\begin{align*}
h_\alpha^-(s,t)&\eqdef s \vee (t-\alpha) \comma & k_\alpha^-(s,t)&\eqdef t\wedge (s+\alpha)\comma
\\
h_\alpha^+(s,t)&\eqdef s \wedge (t+\alpha)\comma & k_\alpha^+(s,t)&\eqdef t \vee (s-\alpha)\comma
\end{align*}
so that
\begin{equation}\label{eq:HKpm}
h_\alpha^\pm(s,t)= \begin{cases} t\pm \alpha &\text{if } s-t \gtrless \pm \alpha \\ s &\text{if } s-t \lesseqgtr \pm\alpha\end{cases} \qquad \text{and}\qquad k_\alpha^\pm (s,t) = h_\alpha^\mp(t,s)\fstop
\end{equation}

\subsubsection{Dirichlet functionals}
We are now ready to introduce several notions pertaining to the one of \emph{Dirichlet functional}.
It is worth stating the definitions assuming \emph{neither} lower semicontinuity \emph{nor} convexity.
We start with properties of the double~$E^\opl{2}$ of a functional~$E\colon L^2_\mssm\to (-\infty,\infty]$.

\begin{definition}[Properties of doubles]\label{d:Dirichlet}
A functional~$E\colon L^2_\mssm\to (-\infty,\infty]$ is
\begin{enumerate}[$(E_{\arabic*})$]\setcounter{enumi}{6}
\item\label{i:d:Form:7} \emph{order-preserving} if its double~$E^\opl{2}\colon L^2_\mssm\oplus L^2_\mssm\to (-\infty,\infty]$ satisfies
\begin{equation}\label{eq:i:d:Form:10}
E^\opl{2}(\cproj_\tleq w) \leq E^\opl{2}(w)\comma \qquad w \in L^2_\mssm\oplus L^2_\mssm \semicolon
\end{equation}

\item\label{i:d:Form:8} \emph{$L^\infty$-non-expansive} if its double~$E^\opl{2}\colon L^2_\mssm\oplus L^2_\mssm\to (-\infty,\infty]$ satisfies
\begin{equation}\label{eq:i:d:Form:8}
E^\opl{2}(\cproj_\alpha w) \leq E^\opl{2}(w)\comma \qquad w \in L^2_\mssm\oplus L^2_\mssm \comma \qquad \alpha\in [0,\infty] \semicolon
\end{equation}

\item\label{i:d:Form:9} \emph{Markovian} if it satisfies both~\ref{i:d:Form:7} and~\ref{i:d:Form:8};
\end{enumerate}
\end{definition}

The properties of the double functional listed above have the following counterparts on pairs.

\begin{definition}[Properties of pairs]
A functional~$E\colon L^2_\mssm\to (-\infty,\infty]$
\begin{enumerate}[$(E_{\arabic*}')$]\setcounter{enumi}{6}
\item\label{i:d:Form:7'} has the \emph{lattice contraction property} if for every~$u,v\in \dom{E}$,
\[
u\vee v\comma u\wedge v \in\dom{E} \qquad\text{and}\qquad E(u\vee v) + E(u\wedge v) \leq E(u)+E(v) \semicolon
\]
\item\label{i:d:Form:8'} has the \emph{strong unit-contraction property} if for every~$u,v\in \dom{E}$ and every~$\alpha> 0$,
\[
H_\alpha(u,v)\comma H_\alpha(v,u)\in\dom{E} \qquad \text{and}\qquad E\tparen{H_\alpha(u,v)}+E\tparen{H_\alpha(v,u)} \leq E(u)+E(v) \fstop
\]
\item\label{i:d:Form:9'} is a \emph{pre-Dirichlet} functional if it satisfies both~\ref{i:d:Form:7'} and~\ref{i:d:Form:8'}.
\end{enumerate}
\end{definition}

\begin{remark}[Terminology, cf.~{\cite[Rmk.~2.2(b)]{SchZim25}}]
The order-preserving property~\ref{i:d:Form:7} is considered in~\cite{CipGri03} with the name of `semi-Dirichlet property'.
We prefer not to use this terminology to avoid confusion with the unrelated concept of semi-Dirichlet form in the linear case.
\end{remark}

\begin{remark}\label{r:CipGri}
\begin{enumerate}[$(a)$, wide]
\item\label{i:r:CipGri:1} A (proper) functional satisfying~\ref{i:d:Form:8} is \emph{midpoint convex}, i.e.
\[
E\tparen{\tfrac{1}{2}u+\tfrac{1}{2}v}\leq \tfrac{1}{2}E(u)+ \tfrac{1}{2}E(v) \comma \qquad u,v\in L^2_\mssm \fstop
\]
In particular, a proper lower semicontinuous functional satisfying~\ref{i:d:Form:8} is in fact automatically convex, see~\cite[Rmk.~3.2]{CipGri03}.
The same assertion is true when replacing~\ref{i:d:Form:8} with~\ref{i:d:Form:8'}, cf.~\cite[Lem.~8]{Puc25}.

\item The validity of~\eqref{eq:i:d:Form:10} implicitly implies that~$\cproj_\tleq \dom{E}\subset \dom{E}$ and analogously that~\eqref{eq:i:d:Form:8} implicitly implies that~$\cproj_\alpha \dom{E}\subset \dom{E}$.
\end{enumerate}
\end{remark}

Under the assumption of convexity and lower semicontinuity, several equivalences among the above notions are available, together with further characterizations.
\begin{proposition}[Barth\'elemy]\label{p:Barthelemy}
Let~$E\colon L^2_\mssm\to (-\infty,\infty]$ be proper convex lower semicontinuous.
Then,
\begin{enumerate}[$(i)$]
\item $E$ satisfies~\ref{i:d:Form:7'} if and only if it satisfies~\ref{i:d:Form:7};
\item $E$ satisfies~\ref{i:d:Form:8'} if and only if it satisfies~\ref{i:d:Form:8}.
\end{enumerate}
In particular,
\begin{enumerate}[$(i)$]\setcounter{enumi}{2}
\item $E$ is pre-Dirichlet~\ref{i:d:Form:9'} if and only if it is Markovian~\ref{i:d:Form:9}.
\end{enumerate}

\begin{proof}
The equivalence \ref{i:d:Form:7'}$\iff$\ref{i:d:Form:7} is a consequence of the more general statement in~\cite[Prop.~2.5]{Bar96} with~$C=C_\tleq$.
Also cf.~\cite[Thm.~3.8]{CipGri03} for a simpler proof of the implication \ref{i:d:Form:7'}$\implies$\ref{i:d:Form:7}.

The equivalence \ref{i:d:Form:8'}$\iff$\ref{i:d:Form:8} is a consequence of the more general statement in~\cite[Prop.~2.5]{Bar96} with~$C=C_\alpha$ for~$\alpha\geq 0$, cf.~\cite[Thm.~2.3]{BriHar22}.
\end{proof}
\end{proposition}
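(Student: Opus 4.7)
The plan is to obtain both non-trivial equivalences as instances of Barth\'elemy's general principle~\cite[Prop.~2.5]{Bar96} for proper convex lower semicontinuous functionals on a Hilbert space, applied to the double~$F\eqdef E^\opl{2}$ on~$L^2_\mssm\oplus L^2_\mssm$ with~$C\eqdef C_\tleq$ for the first equivalence and~$C\eqdef C_\alpha$ (for every~$\alpha\geq 0$) for the second. The bridge between Barth\'elemy's statement ---phrased in terms of the reflection~$\sigma_C\eqdef 2\cproj_C-\car_H$--- and the explicit pair-wise contractions is the midpoint identity
\begin{align*}
\cproj_\tleq(u,v) &= \tfrac{1}{2}(u,v) + \tfrac{1}{2}(u\wedge v, u\vee v)\comma
\\
\cproj_\alpha(u,v) &= \tfrac{1}{2}(u,v) + \tfrac{1}{2}\tparen{H_\alpha(u,v), H_\alpha(v,u)}\comma
\end{align*}
which I would verify from the pointwise formulas~\eqref{eq:StripeProjections} upon rewriting~$u-\tfrac{1}{2}(u-v)_+=\tfrac{1}{2}(u+u\wedge v)$, with an analogous short case analysis for~$\cproj_\alpha$. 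It follows that~$\sigma_\tleq(u,v)=(u\wedge v, u\vee v)$ and~$\sigma_\alpha(u,v)=\tparen{H_\alpha(u,v), H_\alpha(v,u)}$, so the pair-wise reflection contraction~$E^\opl{2}(\sigma_C w)\leq E^\opl{2}(w)$ is \emph{verbatim} the strong lattice property~\ref{i:d:Form:7'}, respectively the strong unit contraction~\ref{i:d:Form:8'}.

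The easy direction~\ref{i:d:Form:7'}$\implies$\ref{i:d:Form:7} and~\ref{i:d:Form:8'}$\implies$\ref{i:d:Form:8} I would prove directly from the midpoint identity: since~$\cproj_C w=\tfrac{1}{2}(w+\sigma_C w)$, convexity of~$E^\opl{2}$ yields
\begin{equation*}
E^\opl{2}(\cproj_C w)\leq \tfrac{1}{2}E^\opl{2}(w)+\tfrac{1}{2}E^\opl{2}(\sigma_C w) \leq E^\opl{2}(w)\comma
\end{equation*}
the final inequality using~\ref{i:d:Form:7'} or~\ref{i:d:Form:8'}, respectively. This is essentially the simpler argument of~\cite[Thm.~3.8]{CipGri03} and~\cite[Thm.~2.3]{BriHar22}.

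The converse implications~\ref{i:d:Form:7}$\implies$\ref{i:d:Form:7'} and~\ref{i:d:Form:8}$\implies$\ref{i:d:Form:8'} are the content of Barth\'elemy's principle and the main obstacle: convexity alone does not promote~$F(\cproj_C w)\leq F(w)$ to~$F(\sigma_C w)\leq F(w)$ for general~$F$, as one-dimensional examples show, so the doubled structure~$F=E^\opl{2}$ must enter in a crucial way. My plan is to route the argument through the resolvent, following Barth\'elemy: Proposition~\ref{p:EquivConvex} applied to the convex lsc functional~$E^\opl{2}$ and the closed convex set~$C$ identifies the projection contraction with the invariance~$J^{\opl{2}}_\lambda(C)\subset C$ of the proximal resolvent for every~$\lambda>0$. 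Combined with the orthogonality~$w-\cproj_C w\perp C-\cproj_C w$ in the variational characterization of~$J^{\opl{2}}_\lambda$ as the unique minimizer of~$z\mapsto \tfrac{1}{2\lambda}\norm{w-z}^2+E^\opl{2}(z)$, this invariance upgrades to a reflection contraction for the~$C^{1,1}$ Yoshida regularization~$(E^\opl{2})_\lambda$; passing to the monotone limit~$\lambda\downarrow 0$ (cf.~\eqref{eq:ElambdaElambda}) transfers the reflection contraction to~$E^\opl{2}$ itself, yielding~\ref{i:d:Form:7'}, respectively~\ref{i:d:Form:8'}.

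Finally, the equivalence~\ref{i:d:Form:9}$\iff$\ref{i:d:Form:9'} follows at once, since both sides are the conjunction of the two corresponding halves.
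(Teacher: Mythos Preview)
Your reduction is correct and matches the paper's approach: both route through Barth\'elemy's principle~\cite[Prop.~2.5]{Bar96} applied to~$E^\opl{2}$ with~$C=C_\tleq$ and~$C=C_\alpha$. Your verification of the reflection identities~$\sigma_\tleq(u,v)=(u\wedge v,u\vee v)$ and~$\sigma_\alpha(u,v)=\tparen{H_\alpha(u,v),H_\alpha(v,u)}$ is correct, and your proof of the easy direction via the midpoint identity is exactly the Cipriani--Grillo/Brigati--Hartarsky argument.

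Where your sketch is thin is the hard direction. You claim that resolvent invariance~$J^{\opl{2}}_\lambda(C)\subset C$ combined with the generic orthogonality~$w-\cproj_C w\perp C-\cproj_C w$ ``upgrades'' to a reflection contraction for~$(E^\opl{2})_\lambda$; but that orthogonality holds for \emph{every} closed convex~$C$, whereas the reflection contraction certainly does not, so some structural property of~$C_\tleq$ and~$C_\alpha$ beyond convexity must be invoked. The actual mechanism in Barth\'elemy (and in~\cite[Thm.~2.3]{BriHar22}) is an interpolation identity for the reflection ---what the paper calls condition~(15) and verifies explicitly in the analogous Proposition~\ref{p:HKpm} for~$C_\alpha^\pm$--- expressing that~$\sigma_C$ applied along the segments~$[w,\cproj_C w]$ reproduces those same segments in a precise way. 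This is what allows the differentiation argument along~$t\mapsto (1-t)w+t\,\sigma_C w$ to close. Since the paper itself only cites the result, your proposal is not less complete than the paper's proof; but if you want to actually carry out Barth\'elemy's argument rather than invoke it, you need to replace the orthogonality heuristic with the verification of that interpolation property for~$(\wedge,\vee)$ and~$(H_\alpha(\cdot,\cdot),H_\alpha(\cdot,\cdot)^\top)$.
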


Proposition~\ref{p:Barthelemy} motivates the following main definition.

\begin{definition}[Dirichlet functional]
A functional~$E\colon L^2_\mssm\to (-\infty,\infty]$ is called a \emph{Dirichlet} functional if
\begin{enumerate}[$(E_{\arabic*})$]\setcounter{enumi}{9}
\item\label{i:d:Form:10} $E$ is proper convex lower semicontinuous and Markovian.
\end{enumerate}
(Equivalently: if it is proper convex lower semicontinuous and pre-Dirichlet.)
\end{definition}

\begin{example}\label{e:FormBall}
Choose~$(X,\mfA,\mssm)$ as the standard unit interval endowed with (the restriction of) the Lebesgue measure~$\Leb^1$, let~$B_1\eqdef B_1^{L^\infty}$ be the closed unit ball in~$L^\infty$, and consider the functional~$E\colon L^2\to [0,\infty]$ defined as
\[
E(u)\eqdef \begin{cases} \displaystyle\int_0^1 \abs{u'}^2\diff\Leb^1 & \text{if } u\in H^1\cap B_1
\\
+\infty & \text{otherwise}
\end{cases} \fstop
\]

Note that~$H^1\cap B_1$ is convex and even, and its $L^2$-closure coincides with~$B_1$.
Clearly,~$E$ is convex, proper, and even.
In order to show that it is lower semicontinuous, let~$\seq{u_n}_n\subset H^1\cap B_1$ be any sequence $L^2$-converging to~$u\in B_1$.
If~$\liminf_n E(u_n)=\infty$, there is nothing to prove. If otherwise, suppose ---with no loss of generality, possibly up to passing to a non-relabeled subsequence--- that there exists~$\lim_n E(u_n)<\infty$.
Then,~$\seq{u_n}_n$ is $H^1$-bounded, and therefore ---again with no loss of generality, possibly up to passing to a non-relabeled subsequence---, it converges $H^1$-weakly to~$u\in H^1$.
By lower semicontinuity of the $H^1$-seminorm along $H^1$-weakly converging sequences,~$\liminf_n E(u_n)\geq E(u)$.
\end{example}

\subsubsection{General contractions}
Bilinear Dirichlet forms are conveniently characterized by their behaviour in relation with post-composition with contractions.
Similar characterizations in the non-linear case were understood in~\cite{BenPic79}, proved in~\cite{Cla21} and further elaborated in~\cite{Puc25}.

Define the following sets of \emph{Lipschitz contractions}, i.e.\ convex sets of real-valued non-expansive maps:
\begin{align*}
\Phi&\eqdef\set{\varphi\colon \R\to\R : \abs{\varphi(s)-\varphi(t)}\leq \abs{s-t}\comma \quad s,t\in\R}\comma
\\
\ground{\Phi}&\eqdef\set{\varphi\in\Phi : \varphi(0)=0} \comma
\\
\ground{\Phi}^\tleq&\eqdef\set{\varphi\in\ground{\Phi} : \varphi \ \text{non-decreasing}} \fstop
\end{align*}

\begin{lemma}\label{l:LipschitzContractionsSplits}
Let~$\Phi_*$ be either of~$\ground{\Phi},\ground{\Phi}^\tleq$, and suppose that~$\varphi,\psi\in \Phi_*$ satisfy~$\supp\varphi\cap\supp\psi=\emp$.
Then,~$\varphi+\psi\in\Phi_*$.

\begin{proof}
Since the sum of grounded functions is grounded and the sum of non-decreasing functions is non-decreasing, it suffices to show that~$\varphi+\psi\in\Phi$.
If~$s,t\in\supp \varphi$, or~$s,t\in\supp\psi$, then the Lipschitz condition is readily verified from that of~$\varphi$, or~$\psi$, respectively.
Thus, assume, with no loss of generality up to relabelling, that~$s\in \supp\varphi$ and~$t\in\supp\psi$. Then, since both~$\varphi,\psi\in\ground{\Phi}$,
\begin{align*}
(\varphi+\psi)(s) - (\varphi+\psi)(t) &= \varphi(s)+0 - 0 - \psi(t) = \varphi(s) - \varphi(0) + \psi(0) - \psi(t)
\\
&\leq s-0 + 0-t = s-t \comma
\end{align*}
which proves the assertion that~$\varphi+\psi\in\Phi$.
\end{proof}
\end{lemma}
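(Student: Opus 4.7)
The plan is to verify separately the three conditions defining $\Phi_*$. The grounding condition $(\varphi+\psi)(0)=0$ and, when $\Phi_*=\ground{\Phi}^\tleq$, monotonicity, are both manifestly preserved by pointwise addition, so these reduce to immediate observations. The substance of the lemma lies in the $1$-Lipschitz estimate
\[
\abs{(\varphi+\psi)(s)-(\varphi+\psi)(t)}\leq \abs{s-t}\quad\text{for all } s,t\in\R,
\]
which I would establish by a case analysis on the position of $s$ and $t$ relative to $\supp\varphi$ and $\supp\psi$.

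The \emph{easy cases} are those in which each of $s,t$ lies either in the same support or outside both supports. If $s,t\in\supp\varphi$, then disjointness of supports forces $\psi(s)=\psi(t)=0$ (the complement of the closed set $\supp\psi$ is a $\psi$-null open neighbourhood of both $s$ and $t$), so the estimate reduces to the Lipschitz property of $\varphi$; symmetrically for $s,t\in\supp\psi$; and if either of $s,t$ lies outside both supports, the corresponding evaluations of $\varphi+\psi$ collapse to a single nonzero summand, whence the single-function Lipschitz bound suffices.

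The essential case is the \emph{cross case} $s\in\supp\varphi$ and $t\in\supp\psi$ (up to relabelling, assume $s<t$), in which $\varphi(t)=0$ and $\psi(s)=0$. Here I would exploit the disjointness of supports as follows. The sets $\supp\varphi\cap[s,t]$ and $\supp\psi\cap[s,t]$ are disjoint closed subsets of the connected interval $[s,t]$ containing $s$ and $t$ respectively, so by connectedness they cannot cover $[s,t]$. Choosing any $r\in(s,t)\setminus\tparen{\supp\varphi\cup\supp\psi}$, both $\varphi$ and $\psi$ vanish on an open neighbourhood of $r$, hence in particular $\varphi(r)=\psi(r)=0$. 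Inserting these zeros and applying the triangle inequality together with the Lipschitz property of each summand,
\[
\abs{(\varphi+\psi)(s)-(\varphi+\psi)(t)}=\abs{\tparen{\varphi(s)-\varphi(r)}+\tparen{\psi(r)-\psi(t)}}\leq (r-s)+(t-r)=t-s,
\]
where the ordering $s<r<t$ is used to drop the absolute values on the right.

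The main obstacle is precisely this cross case: the naïve estimate $\abs{\varphi(s)}+\abs{\psi(t)}\leq \abs{s}+\abs{t}$ available from the grounding alone does not yield $\abs{s-t}$ when $s$ and $t$ share a sign, so the disjointness of supports must enter essentially, via the intermediate zero $r$ produced by connectedness. Without disjointness the conclusion fails outright, as witnessed by $\varphi(x)=\psi(x)=x$, whose sum is only $2$-Lipschitz.
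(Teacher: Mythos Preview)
Your proof is correct and, in the cross case, actually more robust than the paper's. Both arguments reduce matters to the $1$-Lipschitz estimate and dispose of the same-support cases in the same way. The difference lies in the cross case $s\in\supp\varphi$, $t\in\supp\psi$: the paper inserts the common zero at the origin, writing $\varphi(s)-\psi(t)=(\varphi(s)-\varphi(0))+(\psi(0)-\psi(t))$ and claiming this is $\leq (s-0)+(0-t)=s-t$. But the Lipschitz bound only yields $|s|+|t|$, which equals $|s-t|$ precisely when $0$ lies between $s$ and $t$; when $s$ and $t$ have the same sign (e.g.\ $\varphi$ a tent supported on $[1,2]$ and $\psi$ a tent supported on $[3,4]$) the paper's estimate overshoots. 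Your connectedness argument produces an intermediate zero $r\in(s,t)$ lying outside both supports, so that the telescoped bound $|s-r|+|r-t|=|s-t|$ holds regardless of the position of $0$. In effect you are manufacturing the correct replacement for the origin in the paper's argument, and this is exactly what the situation requires.
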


\begin{proposition}[Claus, Puchert]\label{p:CP}
Let~$E\colon L^2_\mssm\to (-\infty,\infty]$ be proper convex lower semicontinuous.
Then, the following are equivalent:
\begin{enumerate}[$(i)$]
\item\label{i:p:CP:1} $E$ is Markovian;
\item\label{i:p:CP:2} \emph{Claus~\cite[Thm.~2.39]{Cla21})} $E$ satisfies, for every~$\psi\in \ground{\Phi}^\tleq$,
\begin{equation}\label{eq:Claus}
E\tparen{u-\psi\circ(u-v) } + E\tparen{v+\psi\circ(u-v)} \leq E(u) + E(v) \semicolon
\end{equation}

\item\label{i:p:CP:3} \emph{(Puchert,~\cite[Prop.~3]{Puc25})} $E$ satisfies, for every~$\varphi\in \ground{\Phi}$,
\begin{equation}\label{eq:Puchert}
E(u-\varphi \circ v) + E(u+\varphi\circ v) \leq E(u+v) + E(u-v) \fstop
\end{equation}
\end{enumerate}
\end{proposition}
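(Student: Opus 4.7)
The plan is to split the proof into three parts: an algebraic equivalence (ii)$\iff$(iii), a specialization argument for (ii)$\implies$(i), and an approximation argument for (i)$\implies$(ii).

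First, I would establish (ii)$\iff$(iii) by a direct change of variables. Given $\varphi\in\ground{\Phi}$, setting $\psi(s)\eqdef s/2+\varphi(s/2)$ gives $\psi(0)=0$ and $\psi'(s)\in[0,1]$ a.e., so $\psi\in\ground{\Phi}^\tleq$; conversely $\psi\in\ground{\Phi}^\tleq$ yields $\varphi(s)\eqdef\psi(2s)-s\in\ground{\Phi}$, and these assignments are mutually inverse. Introducing $\tilde u\eqdef u+v$, $\tilde v\eqdef u-v$, a short computation shows
\[
\tilde u-\psi(\tilde u-\tilde v)=u-\varphi\circ v \qquad \text{and} \qquad \tilde v+\psi(\tilde u-\tilde v)=u+\varphi\circ v,
\]
so~\eqref{eq:Claus} for $(\tilde u,\tilde v)$ and~$\psi$ coincides with~\eqref{eq:Puchert} for $(u,v)$ and~$\varphi$. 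Since $(u,v)\leftrightarrow(\tilde u,\tilde v)$ is a linear bijection on $L^2_\mssm\oplus L^2_\mssm$, the two conditions are equivalent.

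For (ii)$\implies$(i), I plan to specialize $\psi$ in~\eqref{eq:Claus} to read off the two pair-wise conditions constituting Markovianity. Taking $\psi(s)\eqdef\tfrac{1}{2}s_+\in\ground{\Phi}^\tleq$, the pair $(u-\psi\circ(u-v),\, v+\psi\circ(u-v))$ coincides with $\cproj_\tleq(u,v)$ by~\eqref{eq:StripeProjections}, so~\eqref{eq:Claus} reduces to property~\ref{i:d:Form:7}. For $\alpha\geq 0$, the function $\psi_\alpha(s)\eqdef\tfrac{1}{2}[(s-\alpha)_+-(-s-\alpha)_+]$ also lies in $\ground{\Phi}^\tleq$, and inspecting the three cases $u-v<-\alpha$, $|u-v|\leq\alpha$, $u-v>\alpha$ shows the associated pair equals $\cproj_\alpha(u,v)$, so~\eqref{eq:Claus} gives~\ref{i:d:Form:8}. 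Combining yields Markovianity~\ref{i:d:Form:9}.

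For the remaining direction (i)$\implies$(ii), the plan is to use Proposition~\ref{p:Barthelemy} to recast Markovianity as the pair-wise lattice~\ref{i:d:Form:7'} and strong unit~\ref{i:d:Form:8'} contractions, and then approximate. I would approximate $\psi\in\ground{\Phi}^\tleq$ uniformly on compact sets by a sequence of piecewise affine $\psi_n\in\ground{\Phi}^\tleq$ with nodes on a refining mesh and slopes in $\{0,1\}$. Each such $\psi_n$ can be built by iteration from elementary blocks of the form $s\mapsto(s-a)_+$ and $s\mapsto-(-s-a)_+$, whose pair-wise action corresponds to the one-sided projections onto $C_\alpha^\pm$ in~\eqref{eq:StripeProjections2}; iterating~\ref{i:d:Form:7'} and~\ref{i:d:Form:8'} along the decomposition yields~\eqref{eq:Claus} for each~$\psi_n$. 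Finally, since $\psi_n\to\psi$ uniformly on bounded sets and $u-v\in L^2_\mssm$ whenever $u,v\in\dom{E}$, the approximants $u\mp\psi_n\circ(u-v)$ converge strongly in $L^2_\mssm$, and lower semicontinuity of $E$ then transfers~\eqref{eq:Claus} to~$\psi$. The main obstacle is the decomposition-and-iteration step: each elementary block must correspond cleanly to a pair-wise transformation controlled by~\ref{i:d:Form:7'} or~\ref{i:d:Form:8'}, and when chaining such transformations one must verify that the composite pair-wise map is still of the form $(u-\psi_n(u-v),\, v+\psi_n(u-v))$ required by~\eqref{eq:Claus}. This reduces to a careful accounting of how compositions of one-sided projections onto $C_\alpha^\pm$ act on the difference~$u-v$ along the mesh.
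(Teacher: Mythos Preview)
The paper itself does not prove this proposition; it attributes (ii) and (iii) to Claus and Puchert respectively and defers to those references, so there is no ``paper's proof'' to compare against. Your change-of-variables argument for (ii)$\iff$(iii) is correct, and the specialisation for (ii)$\implies$(i) is also correct: the two chosen~$\psi$'s indeed recover~$\cproj_\tleq$ and~$\cproj_\alpha$.

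For (i)$\implies$(ii) there are two concrete problems with the plan as stated. First, the identification is wrong: the pair-wise action of $\psi(s)=(s-a)_+$ in~\eqref{eq:Claus} is $(u,v)\mapsto\bigl(h_a^+(u,v),\,k_a^+(u,v)\bigr)=(u\wedge(v+a),\,v\vee(u-a))$, i.e.\ exactly the operation in~$(E_8'^+)$, \emph{not} the orthogonal projection~$\cproj_a^+$ of~\eqref{eq:StripeProjections2} (that projection corresponds to $\psi(s)=\tfrac12(s-a)_+$). Second, in the paper's logical order both~$(E_8^\pm)$ and~$(E_8'^\pm)$ are derived \emph{from} Proposition~\ref{p:CP} via Propositions~\ref{p:MarkovianPM} and~\ref{p:HKpm}, so invoking them here would be circular. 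What Markovianity gives you directly through Proposition~\ref{p:Barthelemy} is only~\ref{i:d:Form:7'} and~\ref{i:d:Form:8'}, i.e.\ the building blocks $\psi(s)=s_+$ (and by symmetry $-s_-$) and $\psi_\alpha(s)=\sgn(s)(|s|-\alpha)_+$. The generation problem you flag is therefore harder than you suggest: you must reach arbitrary slope-$\{0,1\}$ functions from these \emph{symmetric} blocks via the composition law $\Psi=\psi_1+\psi_2\circ(\id-2\psi_1)$, where the available difference maps $\chi=\id-2\psi$ are only $s\mapsto-|s|$, $s\mapsto|s|$, and the two-sided reflections about~$\pm\alpha$; producing the one-sided reflection $s\mapsto\min(s,2a-s)$ from these is not obvious. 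One ingredient that would help considerably and that you do not exploit: the set of~$\psi\in\ground{\Phi}^\tleq$ satisfying~\eqref{eq:Claus} is \emph{convex} (immediate from convexity of~$E$), which together with closure under composition and lower semicontinuity reduces the task to a density statement for the generated set.
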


As noted in~\cite{SchZim25}, which uses it as a definition, Equation~\eqref{eq:Puchert} is the correct non-linear generalization of the \emph{second Beurling--Deny criterion}, cf.~\cite[Dfn.~2.1]{SchZim25}.
It may thus be regarded as the correct bivariate generalization of the following univariate Lipschitz-contraction property, which is well-known for bilinear Dirichlet forms.

\begin{remark}[Comparison with~{\cite{BenPic79}}]
\eqref{eq:Claus} was originally considered by Bénilan and Picard in~\cite[Thm.~2.1(iii), p.~22]{BenPic79}.
The implication~\ref{i:p:CP:1}$\iff$\ref{i:p:CP:2} may be derived from~\cite[Thm.~2.1(i)$\iff$(iii), p.~22]{BenPic79} relating~\eqref{eq:Claus} to a property of the generator of~$E$.
For the definition of property~$\Pi(\mrmi)$ in~\cite{BenPic79}, see~\cite[Dfn.~1.1, p.~4]{BenPic79}.
Note that, in the notation of~\cite{BenPic79}, $\mrmi$ is the identity operator on~$L^2_\mssm$, and thus~$\Pi(i)$ reads 
\[
\mssm\text{-}\esssup(f_1-f_2) = \mssm\text{-}\esssup \tparen{ (f_1-f_2)\car_{\set{g_1-g_2\geq 0}}} \comma \qquad f_i\in\dom{A}, g_i\in A(f_i)\comma \quad i=1,2 \fstop
\]
\end{remark}

\begin{definition}
A functional~$E\colon L^2_\mssm\to (-\infty,\infty]$ has
\begin{enumerate}[$(E_{\arabic*})$]\setcounter{enumi}{10}
\item\label{i:d:Form:11} the \emph{Lipschitz-contraction property} if for every~$u\in\dom{E}$ and every~$\varphi\in \ground{\Phi}$,
\[
\varphi\circ u \in \dom{E} \qquad\text{and}\qquad E(\varphi\circ u) \leq E(u) \fstop
\]
\end{enumerate}
\end{definition}

Whereas the Lipschitz-contraction property equivalently characterizes bi-linear Dirichlet forms, this is ought not to be expected for non-bilinear functionals ---in light of Remark~\ref{r:SubMarkovUnivariate}.
Indeed, here is an example of a (proper convex lower semicontinuous) functional satisfying the Lipschitz-contraction property yet not the Dirichlet property.

\begin{example}[Lipschitz contraction property vs.\ Dirichlet ]\label{e:Chill}
Let~$E$ be any non-bilinear Dirichlet functional.
By~\cite[Lem.~3.2]{SchZim25} the associated \emph{Luxembourg norm}
\[
\norm{f}_{\mathsc{l}, E}\eqdef \inf\set{\lambda>0 : E(\lambda^{-1} f) \leq 1}
\]
satisfies the Lipschitz-contraction property~\ref{i:d:Form:11}.

Now, let~$E\eqdef\norm{\emparg}_2^2$ on~$L^2_\mssm([-1,1])$ where~$\mssm$ is the Lebesgue measure~$\Leb^1$ on~$[-1,1]$ and note that it is a (quadratic) Dirichlet form.
The associated Luxembourg norm is the $L^2$-norm~$\norm{\emparg}_{L^2}$.
Take~$f\eqdef \car_{[-1,0]}-\car_{(0,1]}$ and~$g=0$.
We have~$f\vee g= f_+$ and~$f\wedge g= -f_-$.
Thus,
\[
2= \norm{f_+}_{L^2_\mssm} + \norm{f_-}_{L^2_\mssm} \not\leq \norm{f}_{L^2_\mssm} + \norm{g}_{L^2_\mssm} = \sqrt{2} \comma
\]
which shows that~$\norm{\emparg}_{L^2_\mssm}$ does not satisfy the lattice contraction property~\ref{i:d:Form:7'}.
\end{example}

It is nonetheless desirable for Dirichlet functionals to satisfy the Lipschitz-contraction property. This turns out to be the case, yet only under the assumption that the functionals be additionally even.

\begin{proposition}[Brigati--Hartarsky, {\cite[Thm.~1.2]{BriHar22}}]\label{p:BH}
A Dirichlet functional has the Lipschitz-contraction property~\ref{i:d:Form:11} if and only if it is even~\ref{i:d:Form:4}.
\end{proposition}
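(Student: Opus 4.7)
The plan is to handle the two directions separately. The forward implication (Lipschitz-contraction $\Rightarrow$ even) is immediate: applying property~\ref{i:d:Form:11} to the contraction $\varphi(s)\eqdef -s$, which plainly lies in~$\ground{\Phi}$, gives $-u\in\dom{E}$ and $E(-u)\leq E(u)$ for every $u\in\dom{E}$; swapping $u\leftrightarrow -u$ yields equality, which is exactly~\ref{i:d:Form:4}.

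The reverse implication (even Dirichlet $\Rightarrow$ Lipschitz-contraction) is the main content, and I would leverage Puchert's bivariate characterization of Markovianity~\eqref{eq:Puchert} at the degenerate pair $(0,u)$. First, I would normalize: evenness combined with convexity forces $0\in\dom{E}$ to be a global minimizer of~$E$ (cf.~Remark~\ref{r:Even}), and by Remark~\ref{r:AdditiveConstant} I may assume $E(0)=0$ and $E\geq 0$. Since $E$ is Dirichlet, Proposition~\ref{p:CP}\ref{i:p:CP:3} makes~\eqref{eq:Puchert} available for every $\varphi\in\ground{\Phi}$. Substituting $u\rightsquigarrow 0$ and $v\rightsquigarrow u$ into~\eqref{eq:Puchert}, and using evenness to rewrite $E(-u)=E(u)$ on the right side, yields
\[
E(-\varphi\circ u) + E(\varphi\circ u) \leq E(u) + E(-u) = 2E(u) \fstop
\]
Finiteness of the right-hand side forces both $\pm\varphi\circ u\in\dom{E}$, so a further application of evenness gives $E(-\varphi\circ u)=E(\varphi\circ u)$, and the inequality collapses to $E(\varphi\circ u)\leq E(u)$, which is precisely~\ref{i:d:Form:11}.

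The only non-bookkeeping step is the choice of the degenerate substitution $(0,u)$ in Puchert's inequality: it linearizes the bivariate criterion into a univariate one precisely because the involution $\varphi\leftrightarrow -\varphi$ acting on the left side is matched by the involution $v\leftrightarrow -v$ acting on the right, and evenness is exactly what aligns these two symmetries so that both sides double. I do not expect any genuine obstacle beyond identifying this substitution; once~\eqref{eq:Puchert} is in hand the argument is purely algebraic, and domain membership propagates automatically from the finiteness of the right-hand side together with Remark~\ref{r:CipGri}\ref{i:r:CipGri:1}.
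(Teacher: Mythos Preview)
The paper does not supply its own proof of this proposition; it is quoted verbatim from~\cite[Thm.~1.2]{BriHar22}. Your argument is correct. The forward direction is the obvious one-line observation, and for the reverse direction the substitution $(u,v)\rightsquigarrow(0,u)$ in Puchert's inequality~\eqref{eq:Puchert} does exactly what you claim: evenness doubles both sides and the bivariate criterion collapses to~\ref{i:d:Form:11}. Since the paper treats Proposition~\ref{p:CP} as an independent input (cited to~\cite{Cla21} and~\cite{Puc25}), there is no circularity. One chronological remark: the original proof in~\cite{BriHar22} predates Puchert's characterization~\cite{Puc25} and so necessarily proceeds by a different route; your argument via~\eqref{eq:Puchert} is, however, the cleanest derivation once that tool is on the table.
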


\begin{example}[A Dirichlet functional which is not even]
Let~$\mbbI$ be the standard unit interval endowed with the Lebesgue measure~$\Leb^1$. It is readily verified that the functional~$E\colon L^2(\mbbI)\to [0,\infty]$ defined by
\[
E\colon u\longmapsto \int_0^1 (u')_+ \diff\Leb^1
\]
is a Dirichlet functional. However, note that~$E$ is not even, since~$E(u)>0$ for every monotone strictly increasing~$u\in H^1(\mbbI)$, yet~$E(-u)=0$ for every such~$u$.
\end{example}

\subsubsection{Characterization via one-sided contractions}
Let us show here that $L^\infty$-non-expansiveness~\ref{i:d:Form:8} may be replaced by its one-sided counterparts.

\begin{proposition}\label{p:MarkovianPM}
Let~$E\colon L^2_\mssm\to (-\infty,\infty]$ be any functional.
\begin{enumerate}[$(i)$]
\item\label{i:p:MarkovianPM:1} If its double satisfies both
\begin{equation}\tag{$E_8^\pm$}\label{eq:i:d:Form:8pm}
E^\opl{2}(\cproj_\alpha^\pm w) \leq E^\opl{2}(w) \comma \qquad w \in L^2_\mssm\oplus L^2_\mssm \comma \qquad \alpha\in [0,\infty]\comma
\end{equation}
then it satisfies~\ref{i:d:Form:8};
\item\label{i:p:MarkovianPM:2} if $E$ is Dirichlet, then it satisfies both~\eqref{eq:i:d:Form:8pm}.
\end{enumerate}
\end{proposition}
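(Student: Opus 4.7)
The argument naturally splits along the two assertions.

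For part~\ref{i:p:MarkovianPM:1}, the plan is to exploit Lemma~\ref{l:CompositionPM}, which yields the factorization $\cproj_\alpha = \cproj_\alpha^+ \circ \cproj_\alpha^-$ for every $\alpha \in [0,\infty]$. A twofold application of the assumed one-sided inequality~\eqref{eq:i:d:Form:8pm}, first to $\cproj_\alpha^- w$ and then to $w$, gives
\[
E^\opl{2}(\cproj_\alpha w) = E^\opl{2}\bigl(\cproj_\alpha^+(\cproj_\alpha^- w)\bigr) \leq E^\opl{2}(\cproj_\alpha^- w) \leq E^\opl{2}(w) \comma
\]
which is exactly~\ref{i:d:Form:8}.

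For part~\ref{i:p:MarkovianPM:2}, I will invoke the Claus characterization of Markovianity, Proposition~\ref{p:CP}\ref{i:p:CP:2}, at our disposal since Dirichlet functionals are Markovian by definition. The crucial step is to exhibit, for each $\alpha \in [0,\infty)$, scalar contractions $\psi_\alpha^\pm \in \ground{\Phi}^\tleq$ (that is, grounded, non-decreasing, $1$-Lipschitz) such that, pointwise $\mssm$-a.e.,
\[
\cproj_\alpha^\pm(u,v) = \bigl(u - \psi_\alpha^\pm \circ (u-v)\comma\ v + \psi_\alpha^\pm \circ (u-v)\bigr) \fstop
\]
Inspection of the pointwise formula~\eqref{eq:StripeProjections2} suggests the natural candidates
\[
\psi_\alpha^+(s) \eqdef \tfrac{1}{2}(s-\alpha)_+ \comma \qquad \psi_\alpha^-(s) \eqdef \tfrac{1}{2}\bigl((s+\alpha)\wedge 0\bigr) \comma \qquad s\in \R\fstop
\]
Each of these is grounded at $0$, non-decreasing, and $\tfrac{1}{2}$-Lipschitz, hence a fortiori $1$-Lipschitz, so both lie in $\ground{\Phi}^\tleq$. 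A routine case analysis on the sign of $(u-v) \mp \alpha$ confirms the required identity at the pointwise level.

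With the projections written in Claus's form, applying the inequality~\eqref{eq:Claus} with $\psi = \psi_\alpha^\pm$ directly yields
\[
E^\opl{2}(\cproj_\alpha^\pm w) = E\bigl(u - \psi_\alpha^\pm \circ (u-v)\bigr) + E\bigl(v + \psi_\alpha^\pm \circ (u-v)\bigr) \leq E(u) + E(v) = E^\opl{2}(w) \comma
\]
establishing~\eqref{eq:i:d:Form:8pm} for $\alpha \in [0,\infty)$. The case $\alpha=\infty$ is trivial since $C_\infty^\pm = L^2_\mssm \oplus L^2_\mssm$ and $\cproj_\infty^\pm$ is the identity. The only genuine work is the construction and verification of the contractions $\psi_\alpha^\pm$; no further hypothesis on $E$ (such as evenness) is needed.
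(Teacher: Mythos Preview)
Your proof is correct and follows essentially the same approach as the paper's. For~\ref{i:p:MarkovianPM:1} both you and the paper apply Lemma~\ref{l:CompositionPM} and chain the two one-sided inequalities; for~\ref{i:p:MarkovianPM:2} both identify the one-sided projections~$\cproj_\alpha^\pm$ as having the form $(u-\psi(u-v),\,v+\psi(u-v))$ for a suitable $\psi\in\ground{\Phi}^\tleq$ and then invoke the Claus characterization, Proposition~\ref{p:CP}\ref{i:p:CP:2}.
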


\begin{remark}
If~$E\colon L^2_\mssm\to (-\infty,\infty]$ is an even functional, it is clear from~\eqref{eq:OneSidedProjIdentity} and exchanging~$u$ with~$-u$ and~$v$ with~$-v$ that~$(E_8^+)$ and~$(E_8^-)$ are mutually equivalent.
In fact, it turns out that they are equivalent whenever~$E$ is proper convex lower semicontinuous; combine Remark~\ref{r:HKpm} and Proposition~\ref{p:HKpm} below.
\end{remark}

\begin{proof}[Proof of Proposition~\ref{p:MarkovianPM}]
\paragraph{\ref{i:p:MarkovianPM:1}$\implies$\ref{i:p:MarkovianPM:2}} Apply both~$(E_8^+)$ and~$(E_8^-)$ together with Lemma~\ref{l:CompositionPM}.

\paragraph{\ref{i:p:MarkovianPM:2}$\implies$\ref{i:p:MarkovianPM:1}}
For~$\alpha\geq 0$ define functions~$\varphi_\alpha^\pm,\psi^\pm_\alpha\colon \R\to\R$ by
\[
\varphi_\alpha^\pm \colon t\mapsto \tfrac{1}{2}(t\pm\alpha)_+\comma \qquad \psi^\pm_\alpha \colon t\mapsto -\tfrac{1}{2}(t\pm\alpha)_-\comma
\]
and note that~$\varphi_\alpha^\pm (t) = -\psi_\alpha^\mp (-t)$ and that~$\psi_\alpha^\pm\in \ground{\Phi}^\tleq$ for every~$\alpha\geq 0$.
Thus, we have
\begin{equation}\label{eq:p:MarkovianPM:1}
\begin{aligned}
\cproj_\alpha^\pm(u,v) &= \tparen{u+\varphi_\alpha^\pm(v-u), v- \varphi_\alpha^\pm(v-u)} 
\\
&= \tparen{u-\psi_\alpha^\pm(u-v), v+ \psi_\alpha^\pm(u-v)}\fstop
\end{aligned}
\end{equation}
Thus, using~\eqref{eq:p:MarkovianPM:1} and Proposition~\ref{p:CP}\ref{i:p:CP:2}, for every~$\alpha\geq 0$,
\begin{align*}
E^\opl{2}\tparen{\cproj^\pm_\alpha(u,v)} &= E(u-\psi_\alpha^\pm(u-v)) + E(v+\psi_\alpha^\pm(u-v)
\\
&\leq E(u)+E(v) =E^\opl{2}\tparen{(u,v)} \fstop \qedhere
\end{align*}
\end{proof}

We now turn to the corresponding properties on pairs.

\begin{definition}
A functional~$E\colon L^2_\mssm\to (-\infty,\infty]$ has
\begin{enumerate}[$(E_{\arabic*}'^+)$]\setcounter{enumi}{7}
\item\label{i:d:Form:8'p} has the \emph{strong upper unit-contraction property} if, for every~$u,v\in \dom{E}$ and every~$\alpha> 0$,
\[
h_\alpha^+(u,v)\comma k_\alpha^+(u,v)\in\dom{E} \quad \text{and}\quad E\tparen{h_\alpha^+(u,v)}+E\tparen{k_\alpha^+(u,v)} \leq E(u)+E(v) \semicolon
\]
\end{enumerate}
\begin{enumerate}[$(E_{\arabic*}'^-)$]\setcounter{enumi}{7}
\item\label{i:d:Form:8'm} has the \emph{strong lower unit-contraction property} if, for every~$u,v\in \dom{E}$ and every~$\alpha> 0$,
\[
h_\alpha^-(u,v)\comma k_\alpha^-(u,v)\in\dom{E} \quad \text{and}\quad E\tparen{h_\alpha^-(u,v)}+E\tparen{k_\alpha^-(u,v)} \leq E(u)+E(v) \fstop
\]
\end{enumerate}
\end{definition}

\begin{remark}\label{r:HKpm}
If~$E\colon L^2_\mssm\to (-\infty,\infty]$ is any functional,~\ref{i:d:Form:8'p} and~\ref{i:d:Form:8'm} are in fact mutually equivalent in light of the second equality in~\eqref{eq:HKpm} and exchanging~$u$ and~$v$.
\end{remark}

However, in the following it will be often more convenient to work with both conditions simultaneously.

\begin{proposition}\label{p:HKpm}
Let~$E\colon L^2_\mssm\to (-\infty,\infty]$ be proper convex lower semicontinuous.
Then, the following are equivalent:
\begin{enumerate}[$(i)$]
\item\label{i:p:HKpm:1} $E$ satisfies either of~$(E_8'^\pm)$;
\item\label{i:p:HKpm:2} $E$ satisfies either of~\eqref{eq:i:d:Form:8pm}.
\end{enumerate}
If~$E$ additionally satisfies either~\ref{i:d:Form:7} or~\ref{i:d:Form:7'}, then both~\ref{i:p:HKpm:1} and~\ref{i:p:HKpm:2} are additionally equivalent to
\begin{enumerate}[$(i)$]\setcounter{enumi}{2}
\item\label{i:p:HKpm:3} $E$ is a Dirichlet functional.
\end{enumerate}

\begin{proof}
\paragraph{\ref{i:p:HKpm:1}$\iff$\ref{i:p:HKpm:2}}
In order to show the equivalence between either condition in~\ref{i:p:HKpm:1} and the corresponding condition in~\ref{i:p:HKpm:2}, it suffices to apply~\cite[Thm.~2.3]{BriHar22} with~$C=C_\alpha^\pm$, of which we now verify the assumptions.
We prove the assertion for~$C_\alpha^-$. A proof for~$C_\alpha^+$ is analogous and therefore it is omitted.
Fix~$\alpha\geq 0$ and~$u,v\in L^2_\mssm$.
We need to verify that~$h_\alpha^-,k_\alpha^-$ satisfy~\cite[Eqn.~(15)]{BriHar22}, that is
\begin{equation}\label{eq:p:TwoSidedDirichlet:1}
h_\alpha^-(u_t,v_s) = u_{1-s} \qquad \text{and} \qquad k_\alpha^-(u_t,v_s)=v_{1-t}\comma \qquad s,t\in [0,1]\comma
\end{equation}
where
\[
u_t\eqdef(1-t) u + t\, h_\alpha^-(u,v) \qquad \text{and} \qquad v_s\eqdef (1-s) v + s\, k_\alpha^-(u,v) \comma \qquad s,t\in [0,1] \fstop
\]

When~$u-v\geq -\alpha$, we have~$h_\alpha^-(u,v)=u$ and~$k_\alpha^-(u,v)=v$, thus~$u_t=u$ and~$v_s=v$ for every~$s,t\in [0,1]$, and therefore
\begin{equation}\label{eq:p:TwoSidedDirichlet:2}
\begin{aligned}h_\alpha^-(u_t,v_s)=h_\alpha^-(u,v) = u = u_{1-s}\comma
\\
k_\alpha^-(u_t,v_s)=k_\alpha^-(u,v) = v = v_{1-t} \comma 
\end{aligned}
\qquad s,t\in [0,1] \fstop
\end{equation}
When~$u-v\leq -\alpha$, we have~$h_\alpha^-(u,v)=v-\alpha$ and~$k_\alpha^-(u,v)=u+\alpha$, thus~$u_t=(1-t)u+t(v-\alpha)$ and~$v_s=(1-s)v+s(u+\alpha)$ for every~$s,t\in [0,1]$, and therefore, since~$u-v\leq -\alpha$,
\begin{equation}\label{eq:p:TwoSidedDirichlet:3}
\begin{aligned}
u_t-v_s&= (1-t-s)(u-v)-(t+s)\alpha 
\\
&\leq -\alpha \comma 
\end{aligned}
\qquad s,t\in [0,1] \fstop
\end{equation}

Thus,
\begin{equation}\label{eq:p:TwoSidedDirichlet:4}
\begin{aligned}h_\alpha^-(u_t,v_s)=v_s-\alpha = s\, u + (1-s)(v-\alpha) = u_{1-s}\comma
\\
k_\alpha^-(u_t,v_s)= u_t+\alpha = t\, v + (1-t)(u+\alpha) = v_{1-t}\comma 
\end{aligned}
\qquad s,t\in [0,1] \fstop
\end{equation}

Combining~\eqref{eq:p:TwoSidedDirichlet:2} and~\eqref{eq:p:TwoSidedDirichlet:4} proves~\eqref{eq:p:TwoSidedDirichlet:1}, and the assertion follows.

\paragraph{\ref{i:p:HKpm:2}$\iff$\ref{i:p:HKpm:3}}
In light of the equivalence between~$(E_8'^+)$ and~$(E_8'^-)$ and the equivalence~\ref{i:p:HKpm:1}$\iff$\ref{i:p:HKpm:2}, the equivalence~\ref{i:p:HKpm:2}$\iff$\ref{i:p:HKpm:3} follows from Proposition~\ref{p:MarkovianPM}.
\end{proof}
\end{proposition}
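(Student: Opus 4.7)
The plan is to prove the two equivalences in turn. For~\ref{i:p:HKpm:1}$\iff$\ref{i:p:HKpm:2}, I follow the strategy of Barthélemy in Proposition~\ref{p:Barthelemy}, applying~\cite[Thm.~2.3]{BriHar22} to the closed convex set $C_\alpha^\pm$ with the bivariate retraction $(u,v)\mapsto\tparen{h_\alpha^\pm(u,v),k_\alpha^\pm(u,v)}$. A key conceptual point is that this retraction is \emph{not} the orthogonal projection $\cproj_\alpha^\pm$ (they agree only on the boundary $u-v=\pm\alpha$), so the extension from a pair-wise inequality to a projection inequality requires genuine input from convexity and lower semicontinuity of~$E$ and is not a mere reformulation.

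To invoke~\cite[Thm.~2.3]{BriHar22}, I verify the geodesic-compatibility hypothesis: setting
\[
u_t\eqdef (1-t)u+t\,h_\alpha^\pm(u,v)\comma \qquad v_s\eqdef (1-s)v+s\,k_\alpha^\pm(u,v)\comma \qquad s,t\in [0,1]\comma
\]
one must show $h_\alpha^\pm(u_t,v_s)=u_{1-s}$ and $k_\alpha^\pm(u_t,v_s)=v_{1-t}$. I would split into cases according to the sign of $u-v\mp\alpha$. When $(u,v)\in C_\alpha^\pm$, the pairs $(u_t,v_s)$ and the retraction values all reduce to $(u,v)$, so the identities are immediate. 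When $(u,v)\notin C_\alpha^\pm$, the formulas for $h_\alpha^\pm, k_\alpha^\pm$ collapse to the affine expressions $u\mapsto v\pm\alpha$ and $v\mapsto u\pm\alpha$; expressing $u_t-v_s$ as an affine function of $(s,t)\in [0,1]^2$ then shows it retains the same strict sign relative to $\pm\alpha$, so the same affine formulas apply at $(u_t,v_s)$ and yield the claimed identities.

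For the equivalence with~\ref{i:p:HKpm:3} under the extra assumption of order-preservation, I combine the first part with Proposition~\ref{p:MarkovianPM} and Remark~\ref{r:HKpm}. Remark~\ref{r:HKpm} shows that $(E_8'^+)$ and $(E_8'^-)$ are mutually equivalent for any functional, hence by~\ref{i:p:HKpm:1}$\iff$\ref{i:p:HKpm:2} the doubled conditions~\eqref{eq:i:d:Form:8pm} are also mutually equivalent in the present setting; Proposition~\ref{p:MarkovianPM}\ref{i:p:MarkovianPM:1} then promotes either one-sided condition to the two-sided condition~\ref{i:d:Form:8}. Joining this with~\ref{i:d:Form:7} or~\ref{i:d:Form:7'} (equivalent under the standing hypotheses by Proposition~\ref{p:Barthelemy}) yields Markovianity~\ref{i:d:Form:9}, hence the Dirichlet property~\ref{i:d:Form:10}; the reverse implication uses Proposition~\ref{p:MarkovianPM}\ref{i:p:MarkovianPM:2} together with~\ref{i:p:HKpm:1}$\iff$\ref{i:p:HKpm:2} once more.

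The main obstacle is the geodesic-compatibility verification when $(u,v)\notin C_\alpha^\pm$: one must show that the entire square $\tset{(u_t,v_s):s,t\in [0,1]}$ remains on the same side of the hyperplane $u-v=\pm\alpha$ as $(u,v)$. This is slightly subtle because, unlike the orthogonal projection, the retraction $\tparen{h_\alpha^\pm(u,v),k_\alpha^\pm(u,v)}$ lands strictly in the interior of $C_\alpha^\pm$ rather than on its boundary; nevertheless, the explicit affine parameterization of $u_t-v_s$ in $(s,t)$ reduces the check to a routine sign inequality.
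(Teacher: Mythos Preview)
Your proposal is correct and follows essentially the same route as the paper: apply~\cite[Thm.~2.3]{BriHar22} to the half-spaces~$C_\alpha^\pm$ with the bivariate retraction~$(h_\alpha^\pm,k_\alpha^\pm)$, verify the geodesic-compatibility identities~$h_\alpha^\pm(u_t,v_s)=u_{1-s}$, $k_\alpha^\pm(u_t,v_s)=v_{1-t}$ by the same two-case split on the sign of~$u-v\mp\alpha$, and then deduce~\ref{i:p:HKpm:2}$\iff$\ref{i:p:HKpm:3} from Remark~\ref{r:HKpm} and Proposition~\ref{p:MarkovianPM}. Your additional remark that the retraction $(h_\alpha^\pm,k_\alpha^\pm)$ differs from the orthogonal projection~$\cproj_\alpha^\pm$ (and lands in the interior of~$C_\alpha^\pm$ rather than on its boundary when $(u,v)\notin C_\alpha^\pm$) is a correct and clarifying observation not made explicit in the paper, but it does not alter the argument.
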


\subsubsection{Lower semicontinuous relaxations}
For a (proper) functional~$E\colon L^2_\mssm\to (-\infty,\infty]$, we denote by~$\check E\colon L^2_\mssm\to (-\infty,\infty]$ its lower semicontinuous envelope.

\begin{proposition}[Lower semicontinuous relaxation, cf.~{\cite[Cor.~4.9, 4.10]{CipGri03}}]\label{p:LSCEnvelope}
Let~$E\colon L^2_\mssm\to (-\infty,\infty]$ be proper. If~$E$ satisfies~\ref{i:d:Form:7'}, resp.~\ref{i:d:Form:8'}, then so does~$\check E$.
In particular, if~$E$ is a pre-Dirichlet functional then~$\check E$ is a Dirichlet functional.
\end{proposition}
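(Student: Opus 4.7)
The plan is to use the variational (recovery-sequence) characterization of the lower semicontinuous envelope together with the joint $L^2$-continuity of the Lipschitz operations appearing in~\ref{i:d:Form:7'} and~\ref{i:d:Form:8'}. Recall that
\[
\check E(u) = \inf\set{\liminf_n E(u_n) : u_n\to u \text{ in } L^2_\mssm}\comma
\]
and for every $u\in\dom{\check E}$ one can select a recovery sequence $\seq{u_n}_n\subset \dom{E}$ with $u_n\to u$ in $L^2_\mssm$ and $E(u_n)\to \check E(u)$.

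\textbf{Lattice contraction.} Fix $u,v\in\dom{\check E}$ and recovery sequences $u_n\to u$, $v_n\to v$ with $E(u_n)\to\check E(u)$, $E(v_n)\to\check E(v)$. Since the lattice operations $(a,b)\mapsto a\vee b$ and $(a,b)\mapsto a\wedge b$ are $1$-Lipschitz on $\R^\tym{2}$, they induce continuous maps $L^2_\mssm\oplus L^2_\mssm\to L^2_\mssm$, so $u_n\vee v_n\to u\vee v$ and $u_n\wedge v_n\to u\wedge v$ in $L^2_\mssm$. Applying~\ref{i:d:Form:7'} at each $n$ and passing to the inferior limit,
\begin{align*}
\check E(u\vee v)+\check E(u\wedge v)
&\leq \liminf_n E(u_n\vee v_n) + \liminf_n E(u_n\wedge v_n) \\
&\leq \liminf_n \tbraket{E(u_n\vee v_n)+E(u_n\wedge v_n)} \\
&\leq \lim_n \tbraket{E(u_n)+E(v_n)} = \check E(u)+\check E(v)\comma
\end{align*}
which gives~\ref{i:d:Form:7'} for $\check E$ (in particular, finiteness yields $u\vee v, u\wedge v\in\dom{\check E}$).

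\textbf{Strong unit contraction.} The argument is identical with $H_\alpha(\emparg,\emparg)$ in place of $\vee,\wedge$: the map $H_\alpha\colon \R^\tym{2}\to\R$ is jointly $1$-Lipschitz, so $H_\alpha(u_n,v_n)\to H_\alpha(u,v)$ and $H_\alpha(v_n,u_n)\to H_\alpha(v,u)$ in $L^2_\mssm$, and applying~\ref{i:d:Form:8'} to the recovery sequences and passing to the inferior limit using $\liminf a_n + \liminf b_n\leq \liminf(a_n+b_n)$ produces~\ref{i:d:Form:8'} for $\check E$.

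\textbf{Pre-Dirichlet $\implies$ Dirichlet for $\check E$.} By construction $\check E$ is lower semicontinuous, and it is proper whenever $E$ is (since $\check E\leq E$). It remains to verify convexity. By Remark~\ref{r:CipGri}\ref{i:r:CipGri:1}, any functional satisfying~\ref{i:d:Form:8'} is midpoint convex; the same recovery-sequence trick ---using the $L^2$-continuity of $(u,v)\mapsto \tfrac12(u+v)$--- transfers midpoint convexity from $E$ to $\check E$. A lower semicontinuous midpoint-convex functional on a Banach space is convex, and combined with the two steps above this gives $\check E$ proper, convex, lower semicontinuous, and Markovian, i.e.\ a Dirichlet functional. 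The only mild subtlety is the asymmetry $\liminf(a_n+b_n)\geq \liminf a_n+\liminf b_n$: it points in exactly the direction we need, so no choice of common subsequence is required.
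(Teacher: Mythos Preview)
Your proof is correct and follows essentially the same route as the paper's: choose recovery sequences for~$u$ and~$v$, use the $L^2$-continuity of the Lipschitz operations~$\vee,\wedge,H_\alpha$, apply~\ref{i:d:Form:7'} resp.~\ref{i:d:Form:8'} termwise, and pass to the liminf using superadditivity. The only minor redundancy is in the convexity step: once you have shown that~$\check E$ is proper, lower semicontinuous, and satisfies~\ref{i:d:Form:8'}, Remark~\ref{r:CipGri}\ref{i:r:CipGri:1} already gives convexity of~$\check E$ directly---there is no need to transfer midpoint convexity from~$E$ first.
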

\begin{proof}
Fix~$\eps>0$ and~$u,v\in\dom{E}$. By definition of lower semicontinuous envelope there exist sequences~$\seq{u_n}_n,\seq{v_n}_n\subset L^2_\mssm$ such that
\begin{subequations}
\begin{align}
\label{eq:p:LSCEnvelope:1A}
L^2_\mssm\text{-}\lim_n u_n&=u\comma & L^2_\mssm\text{-}\lim_n v_n&=v
\\
\label{eq:p:LSCEnvelope:1B}
\exists \lim_n E(u_n) &\leq \check{E}(u)-\eps\comma & \exists \lim_n E(v_n) &\leq \check{E}(v)-\eps \fstop
\end{align}
By~\eqref{eq:p:LSCEnvelope:1A} we also have
\begin{align}\label{eq:p:LSCEnvelope:1C}
L^2_\mssm\text{-}\lim_n u_n\wedge v_n&=u\wedge v \comma & L^2_\mssm\text{-}\lim_n u_n\vee v_n&=u\vee v \fstop
\end{align}
\end{subequations}

Respectively: by definition of lower semicontinuous envelope and~\eqref{eq:p:LSCEnvelope:1C}, by superadditivity of the limit inferior, by~\ref{i:d:Form:7'}, and finally by~\eqref{eq:p:LSCEnvelope:1B},
\begin{align*}
\check E(u\wedge v) + \check E(u\vee v) &\leq \liminf_n E(u_n\wedge v_n) + \liminf_n E(u_n\vee v_n)
\\
&\leq \liminf_n \tparen{E(u_n\wedge v_n) +  E(u_n\vee v_n) }
\\
&\leq \liminf_n \tparen{E(u_n)+ E(v_n)}
\\
&\leq \check E(u) + \check E(v)- 2\eps\fstop
\end{align*}
thus,~$\check E$ satisfies~\ref{i:d:Form:7'} by arbitrariness of~$\eps$.
A similar argument shows that~$\check E$ satisfies as well~\ref{i:d:Form:8'}.
Since~$\check E$ is lower semicontinuous, it is convex in light of Remark~\ref{r:CipGri}, thus a Dirichlet functional.
\end{proof}

\subsubsection{Continuity properties}
Even Dirichlet functionals display good continuity properties along converging sequences of Lipschitz contractions.
We collect here some of these properties, extending the results in~\cite[\S3.6]{Cla21} to the case of Dirichlet functionals which are possibly not linearly defined.

\begin{proposition}[Continuity along post-compositions]\label{p:Continuity2}
Let $E\colon L^2_\mssm\to(-\infty,\infty]$ be an even Dirichlet functional.
Further let~$\seq{\psi_n}_n\subset \ground{\Phi}^\tleq$ be a sequence of \emph{non-decreasing} functions satisfying~$\lim_{n\to\infty} \psi_n(t)=t$ for every~$t\in\R$.
Then, there exist
\[
\lim_{n\to\infty} E\paren{\psi_n\circ u}= E(u) \qquad \text{and} \qquad \lim_{n\to\infty} E\paren{u-\psi_n\circ u}= E(0) \comma \qquad u\in\dom{E} \fstop
\]

\begin{proof}
Fix~$u\in\dom{E}$.
Combining Propositions~\ref{p:Barthelemy} and~\ref{p:BH} (since~$E$ is even), and since~$\psi_n\in \ground{\Phi}$, we see that
\begin{equation}\label{eq:p:Continuity2:1}
E(\psi_n\circ u)\leq E(u) \comma \qquad n\in \N\fstop
\end{equation}

Furthermore, again since~$\psi_n\in\ground{\Phi}$, we have~$\abs{\psi_n(t)}\leq \abs{t}$ for every~$t\in\R$ and thus~$\abs{\psi_n\circ u}\leq \abs{u}$ for every~$u\in \N$. 
It follows that $L^2_\mssm$-$\lim_{n\to\infty} \psi_n\circ u=u$ by Dominated Convergence in~$L^2_\mssm$ with dominating function~$u$. By lower semicontinuity of~$E$, and by~\eqref{eq:p:Continuity2:1},
\[
E(u)\leq \liminf_{n\to\infty} E(\psi_n\circ u) \leq \liminf_{n\to\infty} E(u)=E(u)\comma
\]
which proves the existence of the first limit.

Combining Propositions~\ref{p:Barthelemy} and~\ref{p:CP}\ref{i:p:CP:2}, and since~$\psi_n\in \ground{\Phi}^\tleq$, we have, choosing~$v=0$ in~\eqref{eq:Claus},
\begin{equation}\label{eq:p:Continuity:2}
E(u-\psi_n\circ u) \leq E(u)+E(0) - E(\psi_n\circ u) \fstop
\end{equation}
Since~$L^2_\mssm$-$\lim_{n\to\infty} u-\psi_n\circ u$ by Dominated Convergence in~$L^2_\mssm$ with dominating function~$2\abs{u}$, taking limit superiors on both sides of~\eqref{eq:p:Continuity:2}, and by lower semicontinuity of~$E$, we obtain
\begin{align*}
E(0)\leq& \liminf_{n\to\infty} E(u-\psi_n\circ u)\leq \limsup_{n\to\infty} E(u-\psi_n\circ u)
\\
\leq& \limsup_{n\to\infty} \tparen{E(u)+E(0) - E(\psi_n\circ u)}
\end{align*}
and the existence of the second limit follows from that of the first one.
\end{proof}
\end{proposition}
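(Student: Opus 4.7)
The plan is to sandwich each limit between a lower bound coming from the lower semicontinuity of $E$ and an upper bound extracted from the structural inequalities that an even Dirichlet functional satisfies. The common technical hinge is that both sequences converge to their targets in $L^2_\mssm$ by dominated convergence, with $\abs{u}$ or $2\abs{u}$ serving as dominating functions.

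For the first limit $\lim_n E(\psi_n\circ u)=E(u)$, I would first invoke Proposition~\ref{p:BH} (applicable because $E$ is an even Dirichlet functional) to upgrade $E$ to the Lipschitz-contraction property~\ref{i:d:Form:11}. Since each $\psi_n$ belongs to $\ground{\Phi}^\tleq\subset\ground{\Phi}$, this produces the uniform bound $E(\psi_n\circ u)\leq E(u)$, giving membership in $\dom{E}$ and the upper bound $\limsup_n E(\psi_n\circ u)\leq E(u)$. For the matching lower bound, the grounded non-expansiveness forces $\abs{\psi_n(t)}\leq\abs{t}$, so $\abs{u}$ is a legitimate dominating function and $\psi_n\circ u\to u$ in $L^2_\mssm$; lower semicontinuity of $E$ then delivers $E(u)\leq\liminf_n E(\psi_n\circ u)$.

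For the second limit $\lim_n E(u-\psi_n\circ u)=E(0)$, the Lipschitz-contraction property no longer suffices because the argument $u-\psi_n\circ u$ is not itself of the form $\varphi\circ u$ with $\varphi\in\ground{\Phi}$. Instead I would call on the Claus characterization in Proposition~\ref{p:CP}\ref{i:p:CP:2}, which applied with $v=0$ and with $\psi_n\in\ground{\Phi}^\tleq$ yields
\begin{equation*}
E(u-\psi_n\circ u)+E(\psi_n\circ u)\leq E(u)+E(0)\fstop
\end{equation*}
Combining this with the already-established first limit produces $\limsup_n E(u-\psi_n\circ u)\leq E(0)$. Meanwhile $u-\psi_n\circ u\to 0$ in $L^2_\mssm$ by dominated convergence with dominating function $2\abs{u}$, so lower semicontinuity of $E$ supplies the matching $E(0)\leq\liminf_n E(u-\psi_n\circ u)$.

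The only non-obvious step is selecting the right tool for each limit: the first requires the \emph{univariate} Lipschitz-contraction property, which is not automatic for Dirichlet functionals and available here only because of evenness (Proposition~\ref{p:BH}); the second requires the \emph{bivariate} Claus inequality, which encodes the interaction between $u$ and its contraction $\psi_n\circ u$ that no univariate statement can capture. Once both tools are recognized, the rest is a standard pairing of dominated convergence in $L^2_\mssm$ with lower semicontinuity of $E$.
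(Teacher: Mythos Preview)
Your proposal is correct and follows essentially the same route as the paper's proof: the Lipschitz-contraction property (via Proposition~\ref{p:BH}) plus lower semicontinuity for the first limit, and the Claus inequality~\eqref{eq:Claus} with $v=0$ plus the first limit plus lower semicontinuity for the second. One small inaccuracy in your commentary: $u-\psi_n\circ u$ \emph{is} of the form $\varphi\circ u$ with $\varphi=\id-\psi_n\in\ground{\Phi}$ (indeed $\ground{\Phi}^\tleq$), so the real reason the Lipschitz-contraction property alone fails for the second limit is not that the argument has the wrong shape, but that it only yields $E(u-\psi_n\circ u)\leq E(u)$ rather than an upper bound tending to $E(0)$.
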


By the very same proof of Proposition~\ref{p:Continuity2} we also have:

\begin{corollary}[cf.~{\cite[Lem.~3.54]{Cla21}}]\label{c:Claus2}
Let $E\colon L^2_\mssm\to(-\infty,\infty]$ be an even Dirichlet functional.
Fix~$u\in\dom{E}$ and let~$\seq{\psi_n}_n\subset \ground{\Phi}^\tleq$ be a sequence of \emph{non-decreasing} functions satisfying~$L^2_\mssm\text{-}\lim_{n\to\infty} \psi_n\circ u=u$.
Then there exist
\[
\lim_{n\to\infty} E\paren{\psi_n\circ u}= E(u) \qquad \text{and} \qquad \lim_{n\to\infty} E\paren{u-\psi_n\circ u}= E(0) \fstop
\]
\end{corollary}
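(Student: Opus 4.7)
The plan is to mimic the proof of Proposition~\ref{p:Continuity2} almost verbatim, the only change being that the $L^2_\mssm$-convergence of $\psi_n\circ u$ to~$u$, which in Proposition~\ref{p:Continuity2} was derived via Dominated Convergence from the pointwise convergence $\psi_n(t)\to t$, is now assumed directly. So the argument has three ingredients, used in the same order as before.

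First, since $E$ is even and Dirichlet, Propositions~\ref{p:Barthelemy} and~\ref{p:BH} together give the Lipschitz-contraction property~\ref{i:d:Form:11}; applied to $\psi_n\in\ground{\Phi}\subset\ground{\Phi}^\tleq$, this yields $E(\psi_n\circ u)\leq E(u)$ for every $n$. Combining this upper bound with the lower semicontinuity of~$E$ along the assumed $L^2_\mssm$-convergent sequence $\psi_n\circ u\to u$ gives
\[
E(u)\leq \liminf_{n\to\infty} E(\psi_n\circ u)\leq \limsup_{n\to\infty} E(\psi_n\circ u)\leq E(u) \comma
\]
which establishes the first limit.

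Second, for the second limit we invoke the Claus characterization (Proposition~\ref{p:CP}\ref{i:p:CP:2}) with $v=0$ in~\eqref{eq:Claus}. Since $\psi_n\in\ground{\Phi}^\tleq$ and $E$ is a (proper, convex, lower semicontinuous) Dirichlet functional, we obtain
\[
E(u-\psi_n\circ u)\leq E(u)+E(0) - E(\psi_n\circ u) \fstop
\]
Since $L^2_\mssm\text{-}\lim_n \psi_n\circ u=u$ by hypothesis, we also have $L^2_\mssm\text{-}\lim_n (u-\psi_n\circ u)=0$. Lower semicontinuity of~$E$ at~$0$, together with taking the $\limsup$ as $n\to\infty$ in the displayed inequality and using the first limit, yields
\[
E(0)\leq \liminf_{n\to\infty} E(u-\psi_n\circ u)\leq \limsup_{n\to\infty} E(u-\psi_n\circ u)\leq E(u)+E(0)-E(u)=E(0) \comma
\]
which gives the second limit.

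There is essentially no obstacle: the only subtlety worth noting is that the proof relies on~$E$ being \emph{even}, since this is what promotes the strong unit-contraction property (which a priori only involves the bivariate contractions $h_\alpha^\pm, k_\alpha^\pm$) to the full Lipschitz-contraction property~\ref{i:d:Form:11} used in the first step via Proposition~\ref{p:BH}. Without evenness the first upper bound would fail in general (cf.\ Example~\ref{e:Chill} and the remark preceding Proposition~\ref{p:BH}).
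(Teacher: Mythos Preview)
Your proof is correct and follows exactly the approach the paper intends: the paper itself states that Corollary~\ref{c:Claus2} holds ``by the very same proof of Proposition~\ref{p:Continuity2}'', with the $L^2_\mssm$-convergence now taken as a hypothesis rather than derived via Dominated Convergence. One minor slip: you wrote $\psi_n\in\ground{\Phi}\subset\ground{\Phi}^\tleq$, but the inclusion goes the other way ($\ground{\Phi}^\tleq\subset\ground{\Phi}$); this does not affect the argument, since what you need is precisely $\psi_n\in\ground{\Phi}$ to invoke~\ref{i:d:Form:11}.
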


The next corollary extends the results in~\cite[Thm.~3.55(i)-(ii)]{Cla21} to Dirichlet functionals that are not linearly defined.

\begin{corollary}[Continuity along~$H_\alpha$]\label{c:Continuity}
Let~$E\colon L^2_\mssm\to(-\infty,\infty]$ be an even Dirichlet functional.
Then, for every~$u\in\dom{E}$ and every~$\alpha_0\geq 0$ there exist
\[
\lim_{\alpha\to \alpha_0} E\tparen{H_\alpha(u,0)}= E(H_{\alpha_0}(u,0))\qquad \text{and} \qquad \lim_{\alpha\to \alpha_0} E\tparen{H_\alpha(0,u)} = E(H_{\alpha_0}(0,u)) \fstop
\]

\begin{proof}
Note that~$H_\alpha(0,\emparg)$ is non-decreasing and in~$\ground{\Phi}$, and that~$H_\alpha(t, 0)=t-H_\alpha(0,t)$.
The conclusion follows applying Corollary~\ref{c:Claus2} with~$H_{\alpha_0}(u,0)$ in place of~$u$, and~$\psi_n\eqdef H_{\alpha_n}(\emparg, 0)$ for any sequence~$\seq{\alpha_n}_n$ with~$\lim_n \alpha_n=\alpha_0$.
%
%
\end{proof}
\end{corollary}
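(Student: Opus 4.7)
The plan is to reduce both continuity statements to Corollary~\ref{c:Claus2}, which provides continuity of $E$ along post-compositions $\psi_n\circ u$ with $\psi_n\in\ground{\Phi}^\tleq$ converging to the identity in $L^2_\mssm$-sense on $u$. The key is to exploit absorption identities of the truncation family $H_\alpha$ so that a single application of Corollary~\ref{c:Claus2} recovers the values $H_{\alpha_n}(u,0)$ or $H_{\alpha_n}(0,u)$ from a fixed base point.

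First I record the structural facts. Both $H_\alpha(\cdot,0)$ (symmetric truncation to $[-\alpha,\alpha]$) and $H_\alpha(0,\cdot)$ (signed soft-thresholding above level $\alpha$) are non-decreasing, grounded and non-expansive, hence lie in $\ground{\Phi}^\tleq$. They satisfy the decomposition $H_\alpha(t,0)+H_\alpha(0,t)=t$ and the absorption laws
\begin{equation*}
H_\alpha\tparen{H_{\alpha_0}(t,0),0}=H_{\alpha\wedge\alpha_0}(t,0),\qquad H_\alpha(0,t)=H_{\alpha-\alpha_0}\tparen{0,H_{\alpha_0}(0,t)}\text{ for }\alpha\geq\alpha_0.
\end{equation*}
For any sequence $\alpha_n\to\alpha_0$, Dominated Convergence with majorant $\abs{u}$ gives $H_{\alpha_n}(u,0)\to H_{\alpha_0}(u,0)$ and $H_{\alpha_n}(0,u)\to H_{\alpha_0}(0,u)$ in $L^2_\mssm$.

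For the first claim, I set $u'\defeq H_{\alpha_0}(u,0)$, which lies in $\dom E$ by the Lipschitz-contraction property (Proposition~\ref{p:BH}), and $\psi_n(t)\defeq H_{\alpha_n}(t,0)\in\ground{\Phi}^\tleq$. Since $\abs{u'}\leq\alpha_0$ pointwise, Dominated Convergence yields $\psi_n\circ u'\to u'$ in $L^2_\mssm$, and Corollary~\ref{c:Claus2} simultaneously delivers $E(\psi_n\circ u')\to E(u')$ and $E(u'-\psi_n\circ u')\to E(0)$. The first absorption identifies $\psi_n\circ u'=H_{\alpha_n\wedge\alpha_0}(u,0)$, yielding $E(H_{\alpha_n}(u,0))\to E(H_{\alpha_0}(u,0))$ whenever $\alpha_n\leq\alpha_0$. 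The dual substitution $u^*\defeq H_{\alpha_0}(0,u)$ with $\psi_n(t)\defeq H_{(\alpha_n-\alpha_0)_+}(0,t)$, in combination with the second absorption identity, gives $\psi_n\circ u^*=H_{\alpha_n}(0,u)\to u^*$ in $L^2_\mssm$, so Corollary~\ref{c:Claus2} yields $E(H_{\alpha_n}(0,u))\to E(H_{\alpha_0}(0,u))$ whenever $\alpha_n\geq\alpha_0$.

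The remaining one-sided limits ($\alpha_n\downarrow\alpha_0$ for the first claim and $\alpha_n\uparrow\alpha_0$ for the second) combine monotonicity with the second conclusion of Corollary~\ref{c:Claus2}: by the Lipschitz-contraction property, $\alpha\mapsto E(H_\alpha(u,0))$ is non-decreasing and $\alpha\mapsto E(H_\alpha(0,u))$ non-increasing; both are lower semi-continuous by lower semi-continuity of $E$ and the $L^2_\mssm$-continuity of their arguments. Monotonicity plus lower semi-continuity directly yields left-continuity of the former and right-continuity of the latter; the opposite one-sided limits are then extracted from $E(u'-\psi_n\circ u')\to E(0)$ (respectively its dual), using $H_\alpha(u,0)+H_\alpha(0,u)=u$ to transfer the difference control between the two functionals. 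The main obstacle is precisely this transfer: Corollary~\ref{c:Claus2} controls $E$ of the \emph{difference} $H_{\alpha_n}(u,0)-H_{\alpha_0}(u,0)$ rather than $E$ of either term, so upgrading to continuity of each functional individually requires combining the convexity and evenness of $E$ (which ensure $E(0)=\min E$) with the fact that both substitutions can be deployed at the same $\alpha_0$.
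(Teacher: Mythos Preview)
Your approach is the same as the paper's: reduce to Corollary~\ref{c:Claus2} via the absorption identities of the family $H_\alpha$. You correctly handle the two ``easy'' one-sided limits (left-continuity of $\alpha\mapsto E(H_\alpha(u,0))$ via $u'=H_{\alpha_0}(u,0)$, $\psi_n=H_{\alpha_n}(\cdot,0)$; right-continuity of $\alpha\mapsto E(H_\alpha(0,u))$ via $u^*=H_{\alpha_0}(0,u)$, $\psi_n=H_{(\alpha_n-\alpha_0)_+}(0,\cdot)$), and your absorption identities are accurate.

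The gap is in your final paragraph, where you attempt the remaining one-sided limits. Your monotonicity-plus-lower-semicontinuity argument only \emph{re-proves} the two directions you already have: a non-decreasing lower semicontinuous function is left-continuous, a non-increasing one is right-continuous. These are precisely the easy directions, not the missing ones. For the hard directions (right-continuity of the first, left-continuity of the second), the second conclusion of Corollary~\ref{c:Claus2} indeed only yields
\[
E\tparen{H_{\alpha_n}(u,0)-H_{\alpha_0}(u,0)}\longrightarrow E(0)\comma
\]
and you do not show how to pass from this difference control to $E(H_{\alpha_n}(u,0))\to E(H_{\alpha_0}(u,0))$. Convexity and evenness alone do not give this: from $E(d_n)\to E(0)$ and $a_n=b+d_n$ one cannot conclude $E(a_n)\to E(b)$ for a general convex even functional, since there is no triangle-type inequality available. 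Your closing sentence (``requires combining the convexity and evenness of $E$\ldots with the fact that both substitutions can be deployed at the same $\alpha_0$'') names ingredients but does not assemble them into an argument; as written it is an acknowledgement of the obstacle rather than its resolution. The paper's one-line proof is admittedly terse on the same point, but your proposal, which explicitly separates the cases, must actually close this case to be complete.
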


\begin{corollary}[Continuity at~$0$]\label{c:Continuity2}
Let~$E\colon L^2_\mssm\to(-\infty,\infty]$ be an even Dirichlet functional.
Then, there exist
\[
\lim_{\alpha\downarrow 0} E(u_+\wedge \alpha) = E(0) \comma \qquad u\in\dom{E} \fstop
\]
\begin{proof}
Replace~$u$ with~$u_+$ in Corollary~\ref{c:Continuity}.
\end{proof}
\end{corollary}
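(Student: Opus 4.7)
The plan is to apply Corollary~\ref{c:Continuity} directly to $u_+$ in place of $u$, choosing $\alpha_0=0$, after identifying $H_\alpha(u_+,0)$ with the truncation $u_+\wedge\alpha$.

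First, I would check that the substitution is legitimate, i.e.\ that $u_+\in\dom{E}$ whenever $u\in\dom{E}$. Since $E$ is an even Dirichlet functional, Remark~\ref{r:Even} gives $0\in\dom{E}$; the lattice contraction property~\ref{i:d:Form:7'}, which holds for Dirichlet functionals by Proposition~\ref{p:Barthelemy}, then yields $u_+=u\vee 0\in\dom{E}$.

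Next, I would unwind the definition
\[
H_\alpha(s,t)=(t-\alpha)\vee s\wedge(t+\alpha).
\]
For $s\geq 0$ and $t=0$ one has $(-\alpha)\vee s\wedge\alpha=s\wedge\alpha$, so pointwise $H_\alpha(u_+,0)=u_+\wedge\alpha$. In particular $H_0(u_+,0)=0$.

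Finally, Corollary~\ref{c:Continuity} applied with $u_+\in\dom{E}$ and $\alpha_0=0$ gives
\[
\lim_{\alpha\downarrow 0}E(u_+\wedge\alpha)=\lim_{\alpha\downarrow 0}E\tparen{H_\alpha(u_+,0)}=E\tparen{H_0(u_+,0)}=E(0),
\]
which is the desired conclusion. There is no real obstacle here: the statement is a clean specialization of Corollary~\ref{c:Continuity}, the only points to verify being membership $u_+\in\dom{E}$ (handled by the lattice contraction property) and the identity $H_\alpha(u_+,0)=u_+\wedge\alpha$ (immediate from the sign of $u_+$).
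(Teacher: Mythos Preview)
Your argument is correct and is precisely the paper's approach: the paper's proof is the one-line instruction ``Replace~$u$ with~$u_+$ in Corollary~\ref{c:Continuity},'' and you have simply unpacked that instruction by checking $u_+\in\dom{E}$ via the lattice contraction property and identifying $H_\alpha(u_+,0)=u_+\wedge\alpha$.
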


\begin{remark}
\begin{enumerate*}[$(a)$]
\item In the linear case, Corollary~\ref{c:Continuity2} is a well-known continuity property of Dirichlet forms, see e.g.~\cite[Lem.~5.1.3.1, pp.~29-30]{BouHir91}.

\item Proposition~\ref{p:Continuity2} ought to be compared with~\cite[Lem.~3.3]{SchZim25}.
\end{enumerate*}
\end{remark}

\subsection{Equivalences}
Combining the various equivalence results stated above, we obtain the general equivalence theorem already summarized in Figure~\ref{fig:Intro}.

\begin{theorem}\label{t:Main}
Let~$E\colon L^2_\mssm \to (-\infty,\infty]$ be proper convex lower semicontinuous, with associated strongly continuous non-expansive non-linear semigroup $T_\bullet\colon \overline{\dom{E}}\to L^2_\mssm$, strongly continuous non-expansive non-linear resolvent~$J_\bullet\colon L^2_\mssm\to L^2_\mssm$, and generator~$A$.
Further assume that
\begin{equation}\label{eq:t:Main:0}
\cproj_{\overline{\dom{A^\opl{2}}}} C_\tleq \subset C_\tleq \qquad \text{and} \qquad \cproj_{\overline{\dom{A^\opl{2}}}} C_\alpha \subset C_\alpha \comma \quad \alpha\geq 0 \fstop
\end{equation}

Then, the following are equivalent:
\begin{itemize}
\item $T_\bullet$ satisfies either~\ref{i:d:Semigroups:5} or~\ref{i:d:Semigroups:5'};

\item $A$ satisfies~\ref{i:d:Generator:7'};

\item $J_\bullet$ satisfies either~\ref{i:d:Semigroups:5} or~\ref{i:d:Semigroups:5'};

\item $E$ satisfies either~\ref{i:d:Form:7} or~\ref{i:d:Form:7'}.
\end{itemize}

Furthermore, the following are equivalent:
\begin{itemize}
\item $T_\bullet$ satisfies either~\ref{i:d:Semigroups:6} or~\ref{i:d:Semigroups:6'};

\item $A$ satisfies~\ref{i:d:Generator:8'};

\item $J_\bullet$ satisfies either~\ref{i:d:Semigroups:6} or~\ref{i:d:Semigroups:6'};

\item $E$ satisfies either~\ref{i:d:Form:8} or~\ref{i:d:Form:8'}.
\end{itemize}

In particular, the following are equivalent:
\begin{enumerate}[$(i)$]
\item $T_\bullet$ is sub-Markovian;

\item\label{i:t:Main:2} $A$ is a Dirichlet operator;

\item $J_\bullet$ is sub-Markovian;

\item\label{i:t:Main:4} $E$ is a Dirichlet functional.
\end{enumerate}
\end{theorem}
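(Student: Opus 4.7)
The plan is to reduce Theorem~\ref{t:Main} to an application of Proposition~\ref{p:EquivConvex} on the doubled Hilbert space $L^2_\mssm \oplus L^2_\mssm$, using the doubled TAJE-tuple $(T_\bullet^\opl{2}, A^\opl{2}, J_\bullet^\opl{2}, E^\opl{2})$ and taking for the convex set $C$ successively $C_\tleq$ (yielding the order-preservation block) and $C_\alpha$ for every $\alpha \geq 0$ (yielding the $L^\infty$-non-expansiveness block). I would then translate the resulting bivariate conclusions on $T_\bullet$ and $J_\bullet$ back to their univariate counterparts~\ref{i:d:Semigroups:5}--\ref{i:d:Semigroups:6} via Proposition~\ref{p:SubMarkovianConvex}.

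I would first verify the preliminary claim that $(T_\bullet^\opl{2}, A^\opl{2}, J_\bullet^\opl{2}, E^\opl{2})$ is a TAJE-tuple on $L^2_\mssm \oplus L^2_\mssm$ whose generator $A^\opl{2}$ is maximal monotone and cyclically monotone. All ingredients decompose coordinatewise: $A^\opl{2}$ inherits maximal monotonicity and cyclical monotonicity from $A$; $E^\opl{2}$ is proper convex lower semicontinuous with $\partial E^\opl{2} = A^\opl{2}$; the resolvent identity $J_\lambda^\opl{2} = (\car_{H^\opl{2}} + \lambda A^\opl{2})^{-1}$ holds factorwise; the Br\'ezis--Pazy inclusion is solved by $T_t^\opl{2}(f,g) = (T_t f, T_t g)$; and $(A^\opl{2})^\circ = (A^\circ)^\opl{2}$ because the convex projection onto a product of closed convex sets factors. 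In particular, $\overline{\dom{A^\opl{2}}} = \overline{\dom{A}}^\opl{2}$.

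With this in hand, I would apply Proposition~\ref{p:EquivConvex} to the doubled tuple with $C = C_\tleq$: the hypothesis~\eqref{eq:p:EquivConvex:0} for the doubled system is precisely the first inclusion in~\eqref{eq:t:Main:0}, so all its items become equivalent. Item~\ref{i:p:EquivConvex:1} reads $J_\lambda^\opl{2}(C_\tleq) \subset C_\tleq$, which by Proposition~\ref{p:SubMarkovianConvex} is~\ref{i:d:Semigroups:5'} (equivalently~\ref{i:d:Semigroups:5}) for $J_\bullet$; item~\ref{i:p:EquivConvex:6} reads $T_t^\opl{2}(\overline{\dom{A}}^\opl{2} \cap C_\tleq) \subset C_\tleq$, which is~\ref{i:d:Semigroups:5'} (equivalently~\ref{i:d:Semigroups:5}) for $T_\bullet$; item~\ref{i:p:EquivConvex:3} becomes~\ref{i:d:Generator:7'}; and item~\ref{i:p:EquivConvex:2} becomes~\ref{i:d:Form:7}, equivalent to~\ref{i:d:Form:7'} by Proposition~\ref{p:Barthelemy}. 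Repeating the argument verbatim with $C = C_\alpha$ for each $\alpha \geq 0$---whose projection hypothesis is the second inclusion in~\eqref{eq:t:Main:0}---yields the analogous block involving~\ref{i:d:Semigroups:6}, \ref{i:d:Semigroups:6'}, \ref{i:d:Generator:8'}, \ref{i:d:Form:8}, and~\ref{i:d:Form:8'}. Combining both blocks gives the equivalence of $(i)$--$(iv)$, since sub-Markovianity and the Dirichlet property decouple into their order-preserving and $L^\infty$-non-expansive components. The main (essentially bookkeeping) obstacle lies in the preliminary step: every equivalence of Section~3 must be shown to lift faithfully from $H$ to $H^\opl{2}$, which throughout relies on the compatibility of subdifferentials, minimal sections, domain closures, and Br\'ezis--Pazy semigroup flows with the direct-sum structure.
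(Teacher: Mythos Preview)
Your proposal is correct and follows essentially the same approach as the paper: apply Proposition~\ref{p:EquivConvex} to the doubled tuple with $C=C_\tleq$ and $C=C_\alpha$, then invoke Propositions~\ref{p:SubMarkovianConvex} and~\ref{p:Barthelemy} to pass between the primed and unprimed formulations. The paper's proof is a one-line citation of exactly these three results; your write-up simply makes explicit the bookkeeping (that the doubled objects again form a TAJE-tuple, that $(A^\opl{2})^\circ=(A^\circ)^\opl{2}$, and which item of Proposition~\ref{p:EquivConvex} yields which conclusion) that the paper leaves to the reader.
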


\begin{proof}
Combine Propositions~\ref{p:Barthelemy} and~\ref{p:SubMarkovianConvex} with Proposition~\ref{p:EquivConvex} applied to the double of operators/functionals with~$C$ either~$C_\tleq$, or~$C_\alpha$, $\alpha\geq 0$.
\end{proof}

\begin{remark}
Let us note in passing that, for a maximal monotone operator~$A$ on~$L^2_\mssm$, \ref{i:d:Generator:7'} and~\ref{i:d:Generator:8'} together are equivalent to the \emph{$T$-accretivity} of~$A$ on~$L^1_\mssm$.
See~\cite[Prop.~1.2(ii), p.~17]{BenPic79} for the definition of $T$-accretivity; for the proof, combine~\cite[Thm.~2.1(iii)$\iff$(ii), p.~22]{BenPic79} with Theorem~\ref{t:Main}\ref{i:t:Main:4}$\iff$\ref{i:t:Main:2} and Proposition~\ref{p:CP}\ref{i:p:CP:1}$\iff$\ref{i:p:CP:2}.
\end{remark}

\section{Locality}\label{s:Locality}
The goal of this section is to extend to Dirichlet functionals the well-known characterization of locality of bilinear Dirichlet forms in~\cite[\S5]{BouHir91}.

Let~$E\colon L^2_\mssm\to(-\infty,\infty]$ be a functional with~$0\in\dom{E}$.
In the next results it is occasionally convenient to switch between~$E$ and its grounding~$\ground{E}$.
It is clear from Definition~\ref{d:Dirichlet} that~$\ground{E}$ is a Dirichlet functional if and only if~$E$ is.

We start by recalling the standard definition of a `local' functional; cf., e.g.~\cite[Dfn.~6.1, p.~87]{Cla21}.
We prefer the terminology of \emph{weakly local} functional, in order to better distinguish it from that of \emph{strongly local} functional introduced below.

\begin{definition}[Weak locality]
A functional~$E\colon L^2_\mssm\to (-\infty,\infty]$ with~$0\in\dom{E}$ is called \emph{weakly local} if
for every~$u,v\in\dom{\ground{E}}$ with~$uv= 0$ and~$u+v\in \dom{\ground{E}}$,
we have
\begin{equation}\label{eq:p:WLocality:1}
\ground{E}(u+v)=\ground{E}(u)+\ground{E}(v) \fstop
\end{equation}
\end{definition}

For weak locality we have the following complete characterization, matching the well-known one in the case of bilinear Dirichlet forms.

\begin{proposition}[Characterization of weak locality]\label{p:WLocality}
Let~$E\colon L^2_\mssm\to (-\infty,\infty]$ be an even Dirichlet functional. Then, the following are equivalent:
\begin{enumerate}[$(\ell_1)$]\setcounter{enumi}{-1}
\item\label{i:p:WLocality:0} for every~$u\in\dom{E}$,
\begin{equation}\label{eq:p:WLocality:0}
E(\abs{u})=E(u) \semicolon
\end{equation}

\item\label{i:p:WLocality:1} $E$ is weakly local;

\item\label{i:p:WLocality:2} for every~$u,v\in\dom{E}$ with~$uv= 0$,
we have
\[
E(u\vee v)+E(u\wedge v) = E(u) + E(v) \semicolon
\]

\item\label{i:p:WLocality:3}  for every~$\varphi,\psi\in\ground{\Phi}$ with~$\supp\varphi\cap\supp\psi= \emp$,
we have
\begin{equation}\label{eq:p:WLocality:3}
\ground{E}\tparen{(\varphi+\psi)\circ u} = \ground{E}(\varphi\circ u) + \ground{E}(\psi\circ u) \fstop
\end{equation}
\end{enumerate}
\end{proposition}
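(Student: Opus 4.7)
The strategy is to make $(\ell_1)$ the hub, establishing $(\ell_1)\Rightarrow(\ell_0),(\ell_2),(\ell_3)$ as elementary consequences and then closing the loop through $(\ell_0)$. For $(\ell_1)\Rightarrow(\ell_0)$, apply weak locality to the disjoint-support pairs $(u_+,-u_-)$ and $(u_+,u_-)$ (the latter has $\abs u\in\dom{\ground E}$ by Proposition~\ref{p:BH}) and combine the two resulting equalities via $\ground E(-u_-)=\ground E(u_-)$ to get $\ground E(u)=\ground E(u_+)+\ground E(u_-)=\ground E(\abs u)$. For $(\ell_1)\Rightarrow(\ell_2)$, decompose $u\vee v=u_++v_+$ and $u\wedge v=-u_--v_-$ into four mutually disjoint non-negative pieces (using $uv=0$) and sum four instances of weak locality. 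For $(\ell_1)\Rightarrow(\ell_3)$, note that $\varphi\circ u,\psi\circ u\in\dom{\ground E}$ by Proposition~\ref{p:BH} with vanishing pointwise product, so $(\ell_3)$ is a direct instance of $(\ell_1)$.

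The return arrows $(\ell_2),(\ell_3)\Rightarrow(\ell_1)$ both factor through $(\ell_0)$. For $(\ell_2)\Rightarrow(\ell_0)$, apply $(\ell_2)$ to $(u,0)$ and to $(u_+,u_-)$, yielding $E(u_+)+E(-u_-)=E(u)+E(0)$ and $E(\abs u)+E(0)=E(u_+)+E(u_-)$, which subtract with evenness to give $E(u)=E(\abs u)$. For $(\ell_2)\Rightarrow(\ell_1)$ with $u_1u_2=0$, the identity $\abs{u_1+u_2}=\abs{u_1}+\abs{u_2}$ and $(\ell_0)$ yield $\ground E(u_1+u_2)=\ground E(\abs{u_1}+\abs{u_2})$, and $(\ell_2)$ applied to the non-negative orthogonal pair $(\abs{u_1},\abs{u_2})$---for which $\abs{u_1}\vee\abs{u_2}=\abs{u_1}+\abs{u_2}$ and $\abs{u_1}\wedge\abs{u_2}=0$---concludes. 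For $(\ell_3)\Rightarrow(\ell_0)$, test $(\ell_3)$ with $\varphi_\eps(t)=(t-\eps)_+$ and $\psi_\eps(t)=(-t-\eps)_+$, both in $\ground\Phi$ with disjoint supports $[\eps,\infty)$ and $(-\infty,-\eps]$; since $(\varphi_\eps+\psi_\eps)\circ u=(\abs u-\eps)_+$, $\varphi_\eps\circ u=(u-\eps)_+$, $\psi_\eps\circ u=(-u-\eps)_+$ converge respectively to $\abs u,u_+,u_-$ in $L^2$, Corollary~\ref{c:Claus2} yields $\ground E(\abs u)=\ground E(u_+)+\ground E(u_-)$ in the limit. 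A parallel application with $\psi_\eps$ replaced by $-(-t-\eps)_+$---so that $\varphi_\eps+\tilde\psi_\eps\in\ground\Phi^\tleq$ converges pointwise to the identity and the corresponding composition converges to $u$ in $L^2$---together with Proposition~\ref{p:Continuity2} and evenness gives $\ground E(u)=\ground E(u_+)+\ground E(u_-)$, hence $(\ell_0)$. The implication $(\ell_3)\Rightarrow(\ell_1)$ then follows by first treating the opposite-signs case ($u_1\geq 0\geq u_2$, $u_1u_2=0$) via the same approximation applied to $w=u_1+u_2$, and reducing the general case by sign-splitting $u_i=(u_i)_+-(u_i)_-$ and invoking $(\ell_0)$.

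The last and hardest step is to close the loop by deriving one of $(\ell_1),(\ell_2),(\ell_3)$ from $(\ell_0)$ alone; I target $(\ell_3)$. For $\varphi,\psi\in\ground\Phi$ with disjoint supports and $u\in\dom E$, set $f=\varphi\circ u$, $g=\psi\circ u$, so that $fg=0$. By $(\ell_0)$, $E(f-g)=E(\abs{f-g})=E(\abs f+\abs g)=E(\abs{f+g})=E(f+g)$, which saturates the right-hand side of Puchert's inequality (Proposition~\ref{p:CP}): $E(f-\phi\circ g)+E(f+\phi\circ g)\leq 2E(f+g)$ for every $\phi\in\ground\Phi$, automatically tight at $\phi=\pm\id$. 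The plan is to exploit this saturation together with the strong upper and lower unit-contraction properties and the continuity of $E$ along one-parameter families of Lipschitz contractions (Proposition~\ref{p:Continuity2}, Corollary~\ref{c:Claus2}) to force the exact equality $\ground E(f+g)=\ground E(f)+\ground E(g)$, most likely by passing to the limit in an appropriately chosen parametric inequality. I expect this extraction of exact additivity from the saturated Puchert inequality, under the sole additional hypothesis $(\ell_0)$, to be the principal technical obstacle of the proof.
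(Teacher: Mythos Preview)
Your outward implications $(\ell_1)\Rightarrow(\ell_0),(\ell_2),(\ell_3)$ and the returns $(\ell_2),(\ell_3)\Rightarrow(\ell_0)$ are correct, and your $(\ell_2)\Rightarrow(\ell_1)$ (via $(\ell_0)$ plus one further use of $(\ell_2)$ on the pair $(\abs{u_1},\abs{u_2})$) closes that loop cleanly. The genuine gap is the return from $(\ell_0)$ \emph{alone}, which your Puchert-saturation sketch does not fill: saturation of $E(f-\phi\circ g)+E(f+\phi\circ g)\leq 2E(f+g)$ at $\phi=\pm\id$ is a tautology, and $\phi=0$ yields only $E(f)\leq E(f+g)$ and symmetrically $E(g)\leq E(f+g)$, which cannot be summed to $E(f)+E(g)\leq E(f+g)$. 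There is no mechanism in that one-parameter family of inequalities to extract exact additivity from $(\ell_0)$.

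The paper proves $(\ell_0)\Rightarrow(\ell_1)$ directly and elementarily, using only the lattice-contraction property~\ref{i:d:Form:7'}---no Puchert, no limits. With $uv=0$, $u+v\in\dom{E}$, and $E$ grounded: since $\abs{u}\wedge\abs{v}=0$ and $\abs{u}\vee\abs{v}=\abs{u+v}$, lattice contraction on the pair $(\abs{u},\abs{v})$ together with $(\ell_0)$ gives $E(u+v)=E(\abs{u+v})+E(0)\leq E(\abs{u})+E(\abs{v})=E(u)+E(v)$. For the reverse inequality, disjoint supports give $\abs{u}=(\abs{u}-\abs{v})_+$ and $-\abs{v}=(\abs{u}-\abs{v})\wedge 0$; lattice contraction on the pair $(\abs{u}-\abs{v},0)$ together with evenness yields $E(\abs{u})+E(\abs{v})\leq E(\abs{u}-\abs{v})$, and a final application of $(\ell_0)$ gives $E(\abs{u}-\abs{v})=E\tparen{\tabs{\abs{u}-\abs{v}}}=E(\abs{u+v})=E(u+v)$. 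That is the missing idea: two applications of~\ref{i:d:Form:7'} and repeated use of $(\ell_0)$ with evenness. (Incidentally, your $\eps$-regularization for $(\ell_3)\Rightarrow(\ell_0)$ is arguably more careful than the paper's direct choice $\varphi(t)=t_+$, $\psi(t)=t_-$, whose closed supports meet at~$\{0\}$.)
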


\begin{remark}\label{r:Grounding}
\begin{enumerate*}[$(a)$]
\item\label{i:r:Grounding:1} Conditions~\ref{i:p:WLocality:0} and~\ref{i:p:WLocality:2} are invariant when replacing~$E$ with~$E+c$ for any constant~$c\in\R$.
\item Suppose~$E$ is a proper convex functional with~$0\in\dom{E}$ and such that, for every~$u,v\in\dom{E}$ with~$uv= 0$ and~$u+v\in \dom{E}$,
\end{enumerate*}
\begin{equation}\label{eq:r:WLocality:1}
E(u+v)=E(u)+E(v)\fstop
\end{equation}
Computing~\eqref{eq:r:WLocality:1} at~$u=v\equiv 0$ shows that~$E(0)=0$.
In particular, in stating~\ref{i:p:WLocality:1} we may not replace~\eqref{eq:p:WLocality:1} with~\eqref{eq:r:WLocality:1}.
\end{remark}

\begin{proof}[Proof of Proposition~\ref{p:WLocality}]
In light of Remark~\ref{r:Grounding}\ref{i:r:Grounding:1}, we may and will assume with no loss of generality that~$E$ is grounded.

\paragraph{\ref{i:p:WLocality:0}$\implies$\ref{i:p:WLocality:1}}
Let~$u,v\in\dom{E}$ and assume that~$u+v\in\dom{E}$ and~$uv=0$.
Since~$uv=0$, we have
\begin{equation}\label{eq:p:WLocality:4}
\abs{u}\wedge \abs{v}=0 \quad \text{and} \quad \abs{u}\vee \abs{v}=\abs{u}+\abs{v} = \abs{u+v}\fstop
\end{equation}
Respectively: by the assumption~\ref{i:p:WLocality:0} applied to~$u+v$, since~$E(0)=0$, by~\eqref{eq:p:WLocality:4}, and by the lattice contraction property~\ref{i:d:Form:7'} (cf.\ Prop.~\ref{p:Barthelemy}),
\begin{align}
\nonumber
E(u+v) & =E(\abs{u+v}) = E(0)+E(\abs{u+v}) = E(\abs{u}\wedge \abs{v}) + E(\abs{u}\vee \abs{v})
\\
\label{eq:p:WLocality:5}
&\leq E(\abs{u})+ E(\abs{v})\fstop
\end{align}

Furthermore, again since~$uv=0$, we have
\begin{equation}\label{eq:p:WLocality:6}
\abs{u}=(\abs{u}-\abs{v})_+ \quad \text{and} \quad \abs{v}=-(\abs{u}-\abs{v})_- \fstop
\end{equation}
Respectively: by~\eqref{eq:p:WLocality:6}, by the lattice contraction property~\ref{i:d:Form:7'} applied to the pair~$\abs{u}-\abs{v}, 0$, and since~$E(0)=0$, by the assumption~\ref{i:p:WLocality:0} applied to~$\abs{u}-\abs{v}$, since~$\tabs{\abs{u}-\abs{v}}=\abs{u+v}$ because~$uv=0$, and by the assumption~\ref{i:p:WLocality:0} applied to~$u+v$,
\begin{align}
\nonumber
E(\abs{u})+E(\abs{v}) &= E\tparen{(\abs{u}-\abs{v})_+} + E\tparen{-(\abs{u}-\abs{v})_-} 
\\
\nonumber
&\leq E\tparen{\abs{u}-\abs{v}} + E(0) = E\tparen{\abs{u}-\abs{v}} = E\tparen{\tabs{\abs{u}-\abs{v}}}
\\
\label{eq:p:WLocality:7}
&= E(\abs{u+v}) = E(u+v)\fstop
\end{align}

Finally, combining~\eqref{eq:p:WLocality:5} and~\eqref{eq:p:WLocality:7} and applying the assumption~\ref{i:p:WLocality:0} to~$u$ and to~$v$,
\[
E(u+v)= E(\abs{u})+ E(\abs{v})= E(u)+E(v) \fstop
\]

\paragraph{\ref{i:p:WLocality:1}$\implies$\ref{i:p:WLocality:2}}
Let~$u,v\in\dom{E}$ and assume that~$u+v\in\dom{E}$ and~$uv=0$.
Since~$uv=0$, we have
\begin{equation}\label{eq:p:WLocality:8}
u\wedge v=-u_- - v_-\quad \text{and} \quad u\vee v=u_+ + v_+ \fstop
\end{equation}
Furthermore,~$u_-v_-=u_+v_+=u_-u_+=v_-v_+=0$.
Respectively: by~\eqref{eq:p:WLocality:8}, by the assumption~\ref{i:p:WLocality:1} applied to the pairs~$-u_-,-v_-$ and~$u_+,v_+$, by the assumption~\ref{i:p:WLocality:1} applied to the pairs~$-u_-,u_+$ and~$-v_-,v_+$
\begin{align*}
E(u\wedge v)+ E(u\vee v)&=E(-u_- - v_-)+ E(u_+ + v_+)
\\
&= E(-u_-)+ E(-v_-) + E(u_+) + E(v_+)
\\
&= E(-u_-+u_+)+ E(-v_-+v_+)=E(u)+E(v) \fstop
\end{align*}

\paragraph{\ref{i:p:WLocality:2}$\implies$\ref{i:p:WLocality:0}}
Respectively: since~$\abs{u}=u_+\vee u_-$ because~$u_+u_-=0$, since~$E(0)=0$, by the assumption~\ref{i:p:WLocality:2}, since~$E$ is even, by the assumption~\ref{i:p:WLocality:2} (with~$v=0$)
\begin{align*}
E(\abs{u})&=E(u_+\vee u_-)=E(u_+\vee u_-)+E(0)
\\
&=E(u_+\vee u_-)+E(u_+\wedge u_-) = E(u_+)+E(u_-) 
\\
&= E(u_+)+E(-u_-)= E(u_+-u_-)
\\
&=E(u) \fstop
\end{align*}

\paragraph{\ref{i:p:WLocality:3}$\implies$\ref{i:p:WLocality:0}}
Fix~$u\in\dom{E}$ and note that~$u_\pm\in\dom{E}$.
Respectively: since~$\abs{t}=t_++t_-$, by~\ref{i:p:WLocality:3} with~$\varphi(t)\eqdef t_+$ and~$\psi(t)\eqdef t_-$, since~$E$ is even, by~\ref{i:p:WLocality:3} with~$\varphi(t)\eqdef t_+$ and~$\psi(t)\eqdef -t_-$, and finally since~$t=t_+-t_-$,
\[
E(\abs{u})=E(u_++ u_-)=E(u_+)+E(u_-)= E(u_+)+E(-u_-) = E(u_+-u_-)=E(u)\comma
\]
which proves the assertion.

\paragraph{\ref{i:p:WLocality:1}$\implies$\ref{i:p:WLocality:3}}
Let~$\varphi,\psi\in\ground{\Phi}$ with~$\supp\varphi\cap\supp\psi=\emp$.
Note that~$\varphi+\psi \in\ground{\Phi}$ by Lemma~\ref{l:LipschitzContractionsSplits} and thus~$(\varphi+\psi)\circ u\in\dom{E}$ for every~$u\in\dom{E}$ by~\ref{i:d:Form:11}.
Then,
\[
(\varphi\circ u)\cdot (\psi\circ u) = (\varphi\cdot \psi)\circ u=0
\]
since~$\supp\varphi\cap\supp\psi=\emp$, and therefore~\eqref{eq:p:WLocality:3} follows replacing~$u$ with~$\ground{\varphi}\circ u$ and~$v$ with~$\ground{\psi}\circ u$ in~\eqref{eq:p:WLocality:1}.
\end{proof}

\begin{definition}[Strong locality]
A functional~$E\colon L^2_\mssm\to (-\infty,\infty]$ with~$0\in\dom{E}$ is called \emph{strongly local} if
for every~$u,v\in\dom{\ground{E}}$ with~$(u-c)v=0$ for some~$c\in\R$ and~$u+v\in\dom{\ground{E}}$, we have
\begin{equation}\label{eq:t:Locality:1}
\ground{E}(u+v)=\ground{E}(u)+\ground{E}(v) \fstop
\end{equation}
\end{definition}

Recall that, for every~$\varphi\colon\R\to\R$, we define~$\ground{\varphi}\colon\R\to\R$ as~$\ground{\varphi}\eqdef \varphi-\varphi(0)$.
Further recall the definition~\eqref{eq:HKpm} of the functions~$h^\pm_\alpha$ and~$k^\pm_\alpha$.
We are now ready to state the main result of this section, a complete characterization of strong locality for even Dirichlet functionals, matching the well-known one in the case of bilinear Dirichlet forms, see~\cite[\S{I.5}]{BouHir91}.

\begin{theorem}[Characterization of strong locality]\label{t:Locality}
Let~$E\colon L^2_\mssm\to (-\infty,\infty]$ be an \emph{even} Dirichlet functional. Then, the following are equivalent:
\begin{enumerate}[$({L}_1)$]\setcounter{enumi}{-1}
\item\label{i:t:Locality:0} for every~$u\in\dom{E}$,
\begin{equation}\label{eq:t:Locality:0}
E\tparen{\abs{u+\alpha}-\alpha}=E(u) \comma \qquad \alpha\geq 0 \semicolon
\end{equation}
\item\label{i:t:Locality:1} $E$ is strongly local;

\item\label{i:t:Locality:2} $E$ satisfies~\ref{i:p:WLocality:2} and, for every~$u,v\in\dom{E}$, with~$(u-c)v=0$ for some~$c\in\R$,
\begin{equation}\label{eq:t:Locality:2}
E\tparen{h_{\abs{c}}^\pm(u,v)} + E\tparen{k_{\abs{c}}^\pm(u,v)} = E(u)+E(v)\semicolon
\end{equation}

\item\label{i:t:Locality:3} for every~$u\in\dom{\ground{E}}=\dom{E}$, for every~$\varphi,\psi\in\Phi$ with~$\supp\varphi\cap \supp\psi=\emp$, we have~$(\ground{\varphi}+\ground{\psi})\circ u\in\dom{\ground{E}}$ and
\begin{equation}\label{eq:t:Locality:3}
\ground{E}\tparen{(\ground{\varphi}+\ground{\psi})\circ u} = \ground{E}(\ground{\varphi}\circ u) + \ground{E}(\ground{\psi}\circ u) \fstop
\end{equation}
\end{enumerate}

\begin{remark}\label{r:HKpmLocality}
Analogously to what is noted for the Dirichlet property in Remark~\ref{r:HKpm}, the `upper' condition (with~$h_\alpha^+,k_\alpha^+$) and the `lower' condition (with~$h_\alpha^-,k_\alpha^-$) in~\eqref{eq:t:Locality:2} turn out to be mutually equivalent.
In this instance however, this equivalence cannot be concluded by exchanging~$u$ with~$v$ (and thus~$h^+_\alpha$ with~$k_\alpha^-$) since the condition~$(u-c)v=0$ is not symmetry w.r.t.\ exchanging~$u$ and~$v$.
The equivalence is thus rather a consequence of the proof, since we will show that \emph{either} of the conditions in~\eqref{eq:t:Locality:2} is equivalent to~\ref{i:t:Locality:0}.
Thus, in the definition of~\ref{i:t:Locality:2} we may equivalently consider either condition in place of both.
\end{remark}

\begin{proof}[Proof of Theorem~\ref{t:Locality}]
In light of Remark~\ref{r:Grounding}\ref{i:r:Grounding:1}, we may and will assume with no loss of generality that~$E$ is grounded.
For~$\alpha\geq 0$ define functions~$\varphi_\alpha,\psi_\alpha\colon \R\to\R$ by
\[
\varphi_\alpha\colon t\mapsto \abs{t+\alpha}-\alpha\qquad \text{and} \qquad \psi_\alpha\colon t\mapsto \abs{t-\alpha}-\alpha\fstop
\]

\nparagraph{Step 0: \ref{i:t:Locality:0} holds if and only if, for every~$u\in\dom{E}$,}
\begin{equation}\label{eq:t:Locality:5}
E(\psi_\alpha\circ u)=E(u)\comma \qquad \alpha\geq 0 \fstop
\end{equation}
Note that
\begin{equation}\label{eq:t:Locality:21}
\abs{u-\alpha}=\abs{(-u)+\alpha}\comma \qquad u\in\dom{E}\comma \qquad \alpha\geq 0 \fstop
\end{equation}

Assume~\ref{i:t:Locality:0}. By~\eqref{eq:t:Locality:21}, by~\ref{i:t:Locality:0}, and since~$E$ is even,
\[
E(\psi_\alpha\circ u) = E\tparen{\varphi_\alpha\circ(-u)} = E(-u)= E(u) \comma
\]
which proves~\eqref{eq:t:Locality:5}.

Vice versa, assume~\eqref{eq:t:Locality:5}. By~\eqref{eq:t:Locality:5} with~$-u$ in place of~$u$, by~\eqref{eq:t:Locality:5}, and since~$E$ is even,
\[
E\tparen{\varphi_\alpha \circ u} = E\tparen{\psi_\alpha\circ(-u)} = E(-u)=E(u) \comma
\]
which proves~\ref{i:t:Locality:0}

\paragraph{\ref{i:t:Locality:0}$\implies$\ref{i:t:Locality:1}}
Let~$u,v\in\dom{E}$ with~$u+v\in\dom{E}$ and~$(u-c)v=0$ for some~$c\in\R$.

\paragraph{Claim: Without loss of generality we may assume~$c\leq 0$}
Indeed suppose we have shown~\eqref{eq:t:Locality:1} whenever~$u,v\in\dom{E}$ satisfy~$u+v\in\dom{E}$ and~$(u-c)v=0$ for some~$c\leq 0$.
Let~$u',v'\in\dom{E}$ with~$u'+v'\in\dom{E}$ and~$(u'-c')v'=0$ for some~$c'>0$.
Since~$E$ is even,~$u\eqdef -u'$ and~$v\eqdef -v'$ satisfy~$u,v\in\dom{E}$,~$u+v=-(u'+v')\in\dom{E}$ and, setting~$c\eqdef -c'<0$, we have
\[
(u-c)v=-(u'-c')(-v')=(u'-c')v'=0 \fstop
\]
Thus, applying~\eqref{eq:t:Locality:1} and since~$E$ is even,
\begin{align*}
E(u'+v') &= E\tparen{-(u+v)}=E(u+v)=E(u)+E(v)=E(-u)+E(-v)
\\
&=E(u')+E(v')\comma
\end{align*}
which is the desired conclusion also in the case~$c'>0$, and thus proves the claim.

\medskip

In light of the claim, in the following we will assume that~$u,v\in\dom{E}$ satisfy~$u+v\in\dom{E}$ and~$(u-c)v=0$ for some~$c\leq 0$.
Fix~$\alpha\eqdef -c/2$.
Since~$(u-c)v=0$, it is readily verified that
\begin{subequations}\label{eq:t:Locality:6}
\begin{gather}
\label{eq:t:Locality:6A}
\varphi_\alpha \circ (u+v) = \varphi_\alpha\circ u + \psi_\alpha\circ v\comma
\\
\label{eq:t:Locality:6B}
(\varphi_\alpha\circ u) \cdot (\psi_\alpha\circ v) = 0\fstop
\end{gather}
\end{subequations}
Furthermore,~$\varphi_\alpha,\psi_\alpha\in\ground{\Phi}$ and thus~$\varphi_\alpha\circ u,\psi_\alpha\circ v\in\dom{E}$ by~\ref{i:d:Form:11}.

Since~\ref{i:p:WLocality:0} is~\ref{i:t:Locality:0} for~$\alpha=0$, we may conclude that~\ref{i:p:WLocality:1} holds by Proposition~\ref{p:WLocality}.
Respectively: by~\ref{i:t:Locality:0}, by~\eqref{eq:t:Locality:6A}, by~\eqref{eq:t:Locality:6B} and~\ref{i:p:WLocality:1}, by~\ref{i:t:Locality:0} and~\eqref{eq:t:Locality:5},
\begin{align*}
E(u+v) &= E\tparen{\varphi_\alpha\circ (u+v)} = E(\varphi_\alpha\circ u + \psi_\alpha\circ v) = E(\varphi_\alpha\circ u) + E(\psi_\alpha\circ v)
\\
&=E(u)+E(v) \comma
\end{align*}
which concludes~\ref{i:t:Locality:1}.

\ref{i:t:Locality:1}$\implies$\ref{i:t:Locality:0}.
For~$\alpha\geq 0$ define functions~$\varphi_\alpha^+,\varphi_\alpha^-\colon \R\to\R$ by
\begin{equation}\label{eq:t:Locality:8}
\varphi_\alpha^+\colon t\mapsto (t+\alpha)_+-\alpha\qquad \text{and} \qquad \varphi_\alpha^- \colon t\mapsto (t+\alpha)_- \fstop
\end{equation}
They are all Lipschitz contractions, and we have 
\begin{subequations}\label{eq:t:Locality:7}
\begin{align}
\label{eq:t:Locality:7A}
\varphi_\alpha^+-\varphi_\alpha^- &= \id_{\R} \comma
\\
\label{eq:t:Locality:7B}
\varphi_\alpha^++\varphi_\alpha^- &= \varphi_\alpha\comma
\\
\label{eq:t:Locality:7C}
(\varphi_\alpha^+ +\alpha)\varphi_\alpha^- &= 0 \fstop
\end{align}
\end{subequations}
Furthermore,~$\varphi_\alpha^\pm\in\ground{\Phi}$, and thus~$\varphi_\alpha^\pm\circ u\in\dom{E}$ for every~$u\in\dom{E}$ by~\ref{i:d:Form:11}.
Respectively: by~\eqref{eq:t:Locality:7B}, by~\eqref{eq:t:Locality:7C} and~\ref{i:t:Locality:1}, since~$E$ is even, again by~\eqref{eq:t:Locality:7C} and~\ref{i:t:Locality:1}, and finally by~\eqref{eq:t:Locality:7A},
\begin{align}
\nonumber
E(\varphi_\alpha\circ u) &=  E(\varphi_\alpha^+\circ u + \varphi_\alpha^-\circ u) = E(\varphi_\alpha^+\circ u) + E(\varphi_\alpha^-\circ u)
\\
\nonumber
&= E(\varphi_\alpha^+\circ u) + E(-\varphi_\alpha^-\circ u) = E(\varphi_\alpha^+\circ u -\varphi_\alpha^-\circ u)
\\
\label{eq:t:Locality:9}
&= E(u)\comma
\end{align}
which proves~\ref{i:t:Locality:0}.

\paragraph{\ref{i:t:Locality:1}$\implies$\ref{i:t:Locality:3}}
Let~$\varphi,\psi\in \Phi$ with $\supp\varphi\cap\supp\psi=\emp$.
Note that~$\ground{\varphi}+\ground{\psi}\in\ground{\Phi}$ by Lemma~\ref{l:LipschitzContractionsSplits} and thus~$(\ground{\varphi}+\ground{\psi})\circ u\in\dom{E}$ for every~$u\in\dom{E}$ by~\ref{i:d:Form:11}.

Since~$\supp\varphi\cap\supp\psi=\emp$ we have either~$0\in\supp\varphi$ or $0\in\supp\psi$.
Thus, without loss of generality, up to exchanging $\varphi,\psi$, we may and will assume that~$\psi(0)=0$, so that~$\psi=\ground{\psi}$.
Then,
\[
\paren{\tparen{\ground{\varphi}+\varphi(0)}\circ u}\cdot (\ground{\psi}\circ u) = (\varphi\cdot \psi)\circ u=0
\]
since~$\supp\varphi\cap\supp\psi=\emp$, and therefore~\eqref{eq:t:Locality:3} follows replacing~$u$ with~$\ground{\varphi}\circ u$ and~$v$ with~$\ground{\psi}\circ u$ in~\eqref{eq:t:Locality:1}.

\paragraph{\ref{i:t:Locality:3}$\implies$\ref{i:t:Locality:0}}
Let~$\varphi_\alpha^\pm$ be as in~\eqref{eq:t:Locality:8} and set~$\varphi\eqdef\varphi_\alpha^++\alpha$ and~$\psi\eqdef \varphi_\alpha^-$.
Note that~$\supp \varphi\cap\supp\psi=\emp$ and that~$\ground{\varphi}=\varphi_\alpha^+$ and~$\ground{\psi}=\psi$.
The conclusion follows applying~\eqref{eq:t:Locality:3} and reproducing the steps leading to~\eqref{eq:t:Locality:9}, replacing the application of~\ref{i:t:Locality:1} with that of~\ref{i:t:Locality:3}.


\paragraph{\ref{i:t:Locality:2}$\implies$\ref{i:t:Locality:0}}
We assume~\eqref{eq:t:Locality:2} with~$h^-$ and~$k^-$.
A proof of~\ref{i:t:Locality:0} assuming~\eqref{eq:t:Locality:2} with~$h^+$ and~$k^+$ is analogous, and therefore it is omitted.
 
Fix~$z\in\dom{E}$ and~$c\geq 0$.
Respectively: since~$E$ is grounded; by the assumption~\eqref{eq:t:Locality:2} with~$u=z$ and~$v= 0$; and since~$E$ is even,
\begin{equation}\label{eq:t:Locality:10}
\begin{aligned}
E(z)&=E(z)+E(0)=E\tparen{h_c^-(z,0)}+ E\tparen{k_c^-(z,0)}
\\
&=E\tparen{h_c^-(z,0)}+ E\tparen{-k_c^-(z,0)}\fstop
\end{aligned}
\end{equation}

Now, set~$u\eqdef h_c^-(z,0)\in\dom{E}$ and~$v\eqdef -k_c^-(z,0)\in\dom{E}$, and note that~$(u+c)v=0$, and
\[
h_c(u,v)=\abs{z+\tfrac{c}{2}}-\tfrac{c}{2}\qquad \text{and} \qquad k_c^-(u,v)= 0 \fstop
\]
Again by the assumption, with~$u,v$ as above and~$-c$ in place of~$c$,
\begin{equation}\label{eq:t:Locality:11}
\begin{aligned}
E(u)+E(v)&=E\tparen{h_\alpha^-(u,v)}+ E\tparen{k_\alpha^-(u,v)} = E\tparen{\abs{z+\tfrac{c}{2}}-\tfrac{c}{2}} + E(0) 
\\
&= E\tparen{\abs{z+\tfrac{c}{2}}-\tfrac{c}{2}}\comma
\end{aligned}
\end{equation}
where we used that~$E$ is grounded.

Finally, since~$c\geq 0$ was arbitrary, combining~\eqref{eq:t:Locality:10} with~\eqref{eq:t:Locality:11} and choosing~$c\eqdef 2\alpha$ yields the assertion.

\paragraph{\ref{i:t:Locality:0}$\implies$\ref{i:t:Locality:2}}
Recall that~\eqref{eq:t:Locality:5} holds in light of Step~0.
Further note that~\ref{i:t:Locality:0} with~$\alpha=0$ is~\ref{i:p:WLocality:0}, therefore~\ref{i:p:WLocality:1} holds by Proposition~\ref{p:WLocality}.

Let~$u,v\in\dom{E}$ with~$(u-c)v=0$. 
We show~\eqref{eq:t:Locality:2} with~$h^+$ and~$k^+$.
A proof that~\ref{i:t:Locality:0} implies~\eqref{eq:t:Locality:2} with~$h^-$ and~$k^-$ is analogous, and therefore it is omitted.
Furthermore, since~$E$ is even, we may assume with no loss of generality, up to replacing~$u$ with~$-u$ and~$c$ with~$-c$, that~$c\geq 0$.

By definition of the functions involved for the first pair of equalities, and since~$(u-c)v=0$ for the second pair of equalities, we have (cf.~Fig.~\ref{fig:Locality1})
\begin{subequations}
\begin{align}
\label{eq:t:Locality:12A}
\psi_{c/2}\tparen{h^+_c(u,v)} &= \begin{cases} \psi_{c/2}\circ u & \text{if } u-v\leq c \\ \varphi_{c/2}\circ v& \text{if } u-v>c \end{cases} 
=(\psi_{c/2}\circ u)\car_{\set{u\leq c}}+ (\varphi_{c/2}\circ v)\car_{\set{v<0}}\comma
\\
\label{eq:t:Locality:12B}
\varphi_{c/2}\tparen{k^+_c(u,v)} &= \begin{cases} \varphi_{c/2}\circ v & \text{if } u-v\leq c \\ \psi_{c/2}\circ u& \text{if } u-v>c \end{cases} 
=(\psi_{c/2}\circ u) \car_{\set{u>c}} + (\varphi_{c/2}\circ v) \car_{\set{v\geq 0}}\fstop
\end{align}
\end{subequations}

\captionsetup{singlelinecheck=off}
\begin{figure}[htb!]
\includegraphics[scale=.5]{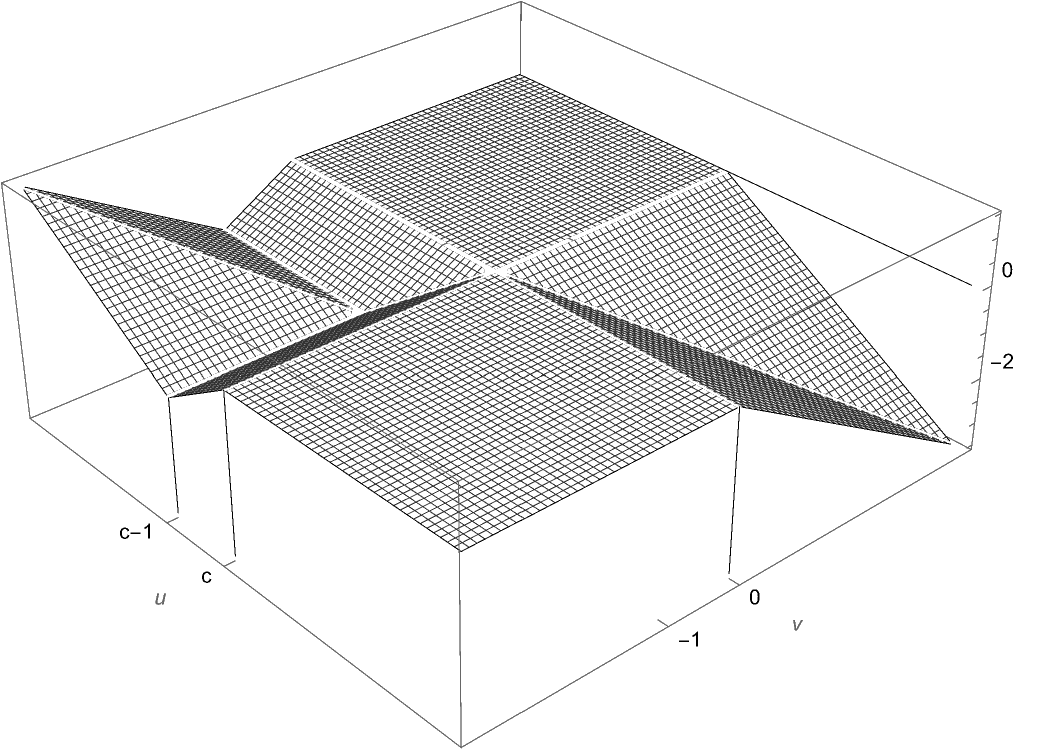}
\caption[]{A visual proof of the second equality in~\eqref{eq:t:Locality:12A} via the graphical representation of (the values of) the function
\[
 (\psi_{c/2}\circ u)\car_{\set{u\leq c}}+ (\varphi_{c/2}\circ v)\car_{\set{v<0}} - \begin{cases} \psi_{c/2}\circ u & \text{if } u-v\leq c \\ \varphi_{c/2}\circ v& \text{if } u-v>c \end{cases} \fstop
\]
Note that~$(u-c)=v$ along the coordinates lines~$u=c$ and~$v=0$.
}
\label{fig:Locality1}
\end{figure}

Further note that, again since~$(u-c)v=0$,
\begin{subequations}
\begin{align}
\label{eq:t:Locality:13A}
(\psi_{c/2}\circ u)\car_{\set{u\leq c}} \cdot (\varphi_{c/2}\circ v)\car_{\set{v<0}} &= 0\comma
\\
\label{eq:t:Locality:13B}
(\psi_{c/2}\circ u) \car_{\set{u>c}} \cdot (\varphi_{c/2}\circ v) \car_{\set{v\geq 0}} &= 0 \fstop
\end{align}
\end{subequations}

\captionsetup{singlelinecheck=off}
\begin{figure}[b!]
\includegraphics[scale=.5]{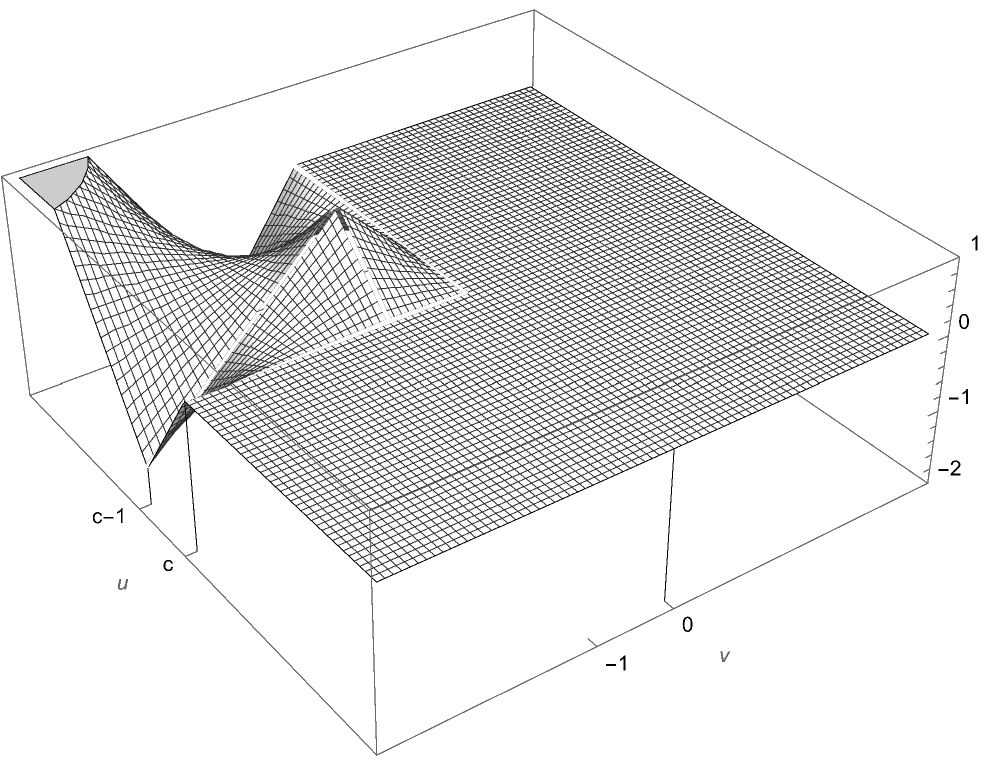}
\caption[]{A visual proof~\eqref{eq:t:Locality:13A} via the graphical representation of (the values of) the function
\[
 (\psi_{c/2}\circ u)\car_{\set{u\leq c}}\cdot (\varphi_{c/2}\circ v)\car_{\set{v<0}} \fstop
\]
Note that~$(u-c)=v$ along the coordinates lines~$u=c$ and~$v=0$.
}
\label{fig:Locality2}
\end{figure}

Now, respectively by:~\ref{i:t:Locality:0} and~\eqref{eq:t:Locality:5}, by~\eqref{eq:t:Locality:12A}, and by~\eqref{eq:t:Locality:13A} and~\ref{i:p:WLocality:1},
\begin{subequations}
\begin{align}
\nonumber
E\tparen{h^+_c(u,v)}&=E\tparen{\psi_{c/2}\tparen{h^+_c(u,v)}} = E\tparen{(\psi_{c/2}\circ u)\car_{\set{u\leq c}} + (\varphi_{c/2}\circ v)\car_{\set{v<0}}}
\\
\label{eq:t:Locality:14A}
&= E\tparen{(\psi_{c/2}\circ u)\car_{\set{u\leq c}}}+ E\tparen{(\varphi_{c/2}\circ v)\car_{\set{v<0}}} \comma
\end{align}
and analogously by:~\ref{i:t:Locality:0}, by~\eqref{eq:t:Locality:12B}, and by~\eqref{eq:t:Locality:13B} and~\ref{i:p:WLocality:1},
\begin{align}
\nonumber
E\tparen{k^+_c(u,v)}&=E\tparen{\varphi_{c/2}\tparen{k^+_c(u,v)}} = E\tparen{(\psi_{c/2}\circ u)\car_{\set{u\leq c}} + (\varphi_{c/2}\circ v)\car_{\set{v<0}}}
\\
\label{eq:t:Locality:14B}
&= E\tparen{(\psi_{c/2}\circ u)\car_{\set{u> c}}} + E\tparen{(\varphi_{c/2}\circ v)\car_{\set{v\geq 0}}} \fstop
\end{align}
\end{subequations}
summing up~\eqref{eq:t:Locality:14A} and~\ref{eq:t:Locality:14B} and rearranging the terms, we have
\begin{align*}
E\tparen{h^+_c(u,v)} + E\tparen{k^+_c(u,v)} &= E\tparen{(\psi_{c/2}\circ u)\car_{\set{u\leq c}}} + E\tparen{(\psi_{c/2}\circ u)\car_{\set{u> c}}}
\\
& \qquad + E\tparen{(\varphi_{c/2}\circ v)\car_{\set{v<0}}}  + E\tparen{(\varphi_{c/2}\circ v)\car_{\set{v\geq 0}}} \comma
\end{align*}
whence, finally, again by~\ref{i:p:WLocality:1}, and by~\eqref{eq:t:Locality:5} and~\ref{i:t:Locality:0},
\begin{align*}
E\tparen{h^+_c(u,v)} + E\tparen{k^+_c(u,v)} &= E\tparen{(\psi_{c/2}\circ u)} + E\tparen{(\varphi_{c/2}\circ v)} = E(u)+E(v) \fstop \qedhere
\end{align*}
%
%
\end{proof}
\end{theorem}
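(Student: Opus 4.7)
The plan is to prove the four-way equivalence by establishing a chain centered on condition~\ref{i:t:Locality:0}, which is the most tractable to manipulate. A preliminary reduction using Remark~\ref{r:Grounding}\ref{i:r:Grounding:1} lets me assume without loss of generality that~$E$ is grounded. Setting~$\varphi_\alpha(t)\eqdef \abs{t+\alpha}-\alpha$ and~$\psi_\alpha(t)\eqdef \abs{t-\alpha}-\alpha$, I record two preparatory observations: since~$E$ is even and~$\abs{u-\alpha}=\abs{(-u)+\alpha}$, condition~\ref{i:t:Locality:0} is equivalent to its dual form~$E(\psi_\alpha\circ u)=E(u)$ for all~$\alpha\geq 0$; and~\ref{i:t:Locality:0} at~$\alpha=0$ is exactly~\ref{i:p:WLocality:0}, so Proposition~\ref{p:WLocality} makes~$E$ weakly local, allowing me to freely invoke~\ref{i:p:WLocality:1} throughout.

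For~\ref{i:t:Locality:0}$\implies$\ref{i:t:Locality:1} I will first reduce to the case~$c\leq 0$ by evenness of~$E$, then set~$\alpha\eqdef -c/2$ and exploit the pointwise identities~$\varphi_\alpha\circ(u+v)=\varphi_\alpha\circ u+\psi_\alpha\circ v$ and~$(\varphi_\alpha\circ u)(\psi_\alpha\circ v)=0$, both of which follow immediately from~$(u-c)v=0$. Applying~\ref{i:t:Locality:0}, weak locality, and the dual form of~\ref{i:t:Locality:0} in succession yields~$E(u+v)=E(u)+E(v)$. For the converse~\ref{i:t:Locality:1}$\implies$\ref{i:t:Locality:0} I will introduce~$\varphi_\alpha^+(t)\eqdef (t+\alpha)_+-\alpha$ and~$\varphi_\alpha^-(t)\eqdef (t+\alpha)_-$, which satisfy~$\varphi_\alpha^++\varphi_\alpha^-=\varphi_\alpha$, $\varphi_\alpha^+-\varphi_\alpha^-=\id$, and the key orthogonality~$(\varphi_\alpha^++\alpha)\varphi_\alpha^-=0$; two successive applications of~\ref{i:t:Locality:1} (separating, then recombining after a sign flip permitted by evenness) give~$E(\varphi_\alpha\circ u)=E(u)$.

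The equivalence~\ref{i:t:Locality:1}$\iff$\ref{i:t:Locality:3} will mirror the analogous step from Proposition~\ref{p:WLocality}. In the forward direction, given~$\varphi,\psi\in\Phi$ with disjoint supports, at least one vanishes at~$0$, so I may take~$\psi=\ground{\psi}$; the pair~$(\ground{\varphi}+\varphi(0))\circ u$ and~$\ground{\psi}\circ u$ then satisfies the hypothesis of strong locality with~$c\eqdef -\varphi(0)$, and Lemma~\ref{l:LipschitzContractionsSplits} combined with the Lipschitz contraction property~\ref{i:d:Form:11} (available via Proposition~\ref{p:BH}) keeps every relevant function in~$\dom{E}$. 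The converse~\ref{i:t:Locality:3}$\implies$\ref{i:t:Locality:0} will follow by specializing~\eqref{eq:t:Locality:3} to~$\varphi\eqdef \varphi_\alpha^++\alpha$ and~$\psi\eqdef \varphi_\alpha^-$, whose supports are disjoint, and repeating the algebraic maneuver from~\ref{i:t:Locality:1}$\implies$\ref{i:t:Locality:0}.

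The hard part will be the equivalence~\ref{i:t:Locality:0}$\iff$\ref{i:t:Locality:2}. The direction~\ref{i:t:Locality:2}$\implies$\ref{i:t:Locality:0} I will handle by a two-step telescoping: apply~\eqref{eq:t:Locality:2} first with~$v=0$ to split~$E(z)$ as~$E(h_c^-(z,0))+E(-k_c^-(z,0))$ (using evenness), then again with the new pair~$u\eqdef h_c^-(z,0)$ and~$v\eqdef -k_c^-(z,0)$ (which satisfies~$(u+c)v=0$) to collapse this to~$E(\abs{z+c/2}-c/2)+E(0)$; choosing~$c\eqdef 2\alpha$ gives~\ref{i:t:Locality:0}. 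The remaining direction~\ref{i:t:Locality:0}$\implies$\ref{i:t:Locality:2} is where the real work lies and will require the delicate pointwise decompositions
\begin{align*}
\psi_{c/2}\tparen{h_c^+(u,v)} &= (\psi_{c/2}\circ u)\car_{\set{u\leq c}}+(\varphi_{c/2}\circ v)\car_{\set{v<0}}\comma\\
\varphi_{c/2}\tparen{k_c^+(u,v)} &= (\psi_{c/2}\circ u)\car_{\set{u>c}}+(\varphi_{c/2}\circ v)\car_{\set{v\geq 0}}\comma
\end{align*}
which hold precisely because~$(u-c)v=0$ forces~$u\leq c$ on~$\set{v\geq 0}$ and~$v<0$ on~$\set{u>c}$. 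The products of the two summands on each line vanish, so weak locality splits each energy, and~\ref{i:t:Locality:0} together with its dual form then recombines the pieces to~$E(u)+E(v)$. Verifying those two identities cleanly demands a careful case analysis on the sign of~$u-v-c$ under the constraint~$(u-c)v=0$, and this combinatorial bookkeeping is the main technical obstacle in the whole proof.
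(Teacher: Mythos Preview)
Your proposal is correct and follows essentially the same approach as the paper's own proof: the same grounding reduction, the same ``Step~0'' dual form via evenness, the same auxiliary functions~$\varphi_\alpha^\pm$ for the~\ref{i:t:Locality:1}$\Leftrightarrow$\ref{i:t:Locality:0} cycle, the same reduction of~\ref{i:t:Locality:3} to a specialization of strong locality, and---most importantly---the identical two-step telescoping for~\ref{i:t:Locality:2}$\Rightarrow$\ref{i:t:Locality:0} and the same pointwise decompositions of~$\psi_{c/2}\circ h_c^+(u,v)$ and~$\varphi_{c/2}\circ k_c^+(u,v)$ for the hard direction~\ref{i:t:Locality:0}$\Rightarrow$\ref{i:t:Locality:2}. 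Your heuristic ``$(u-c)v=0$ forces $u\leq c$ on~$\{v\geq 0\}$'' is slightly imprecise (on~$\{v=0\}$ the value of~$u$ is unrestricted), but you correctly flag that the actual verification requires the full case split on the sign of~$u-v-c$, which is exactly what the paper does.
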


In the case of finite measure spaces, strong locality may be further characterized in the following way. (Cf.~\cite[Cor.~I.5.1.4, p.~31]{BouHir91} for the case of (bilinear) Dirichlet forms.)

\begin{corollary}\label{c:Locality}
Let~$E\colon L^2_\mssm\to (-\infty,\infty]$ be an even Dirichlet functional.
Further assume that~$\mssm$ is a finite measure and that~$\alpha\in\dom{\ground{E}}$ for every constant~$\alpha\in\R$.
Then, $E$~is strongly local if and only if either of the following conditions hold:
\begin{enumerate}[$({L}_1')$]\setcounter{enumi}{-1}
\item\label{i:c:Locality:0} $\ground{E}(\alpha)=0$ for every~$\alpha\in\R$ and~\ref{i:p:WLocality:0} holds;
\item\label{i:c:Locality:1} $\ground{E}(\alpha)=0$ for every~$\alpha\in\R$ and~\ref{i:p:WLocality:1} holds;
\item\label{i:c:Locality:2} $\ground{E}(\alpha)=0$ for every~$\alpha\in\R$ and~\ref{i:p:WLocality:2} holds;
\item\label{i:c:Locality:3} $\ground{E}(\alpha)=0$ for every~$\alpha\in\R$ and~\ref{i:p:WLocality:3} holds.
\end{enumerate}
\end{corollary}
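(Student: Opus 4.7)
The plan is to first reduce the four conditions $(L_1')$--$(L_4')$ to one another, and then to identify strong locality with the conjunction of weak locality and the hypothesis that constants have zero grounded energy. My first observation is that the equivalences $(L_1')\iff(L_2')\iff(L_3')\iff(L_4')$ are immediate from Proposition~\ref{p:WLocality}: each of these conditions is obtained by augmenting the corresponding equivalent assertion $(\ell_0)$--$(\ell_3)$ with the same common clause $\ground{E}(\alpha)=0$ for all $\alpha\in\R$. So the task reduces to proving that $E$ is strongly local if and only if $(L_2')$ holds.

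For the forward direction, assuming $E$ is strongly local, weak locality $(\ell_1)$ is the special case $c=0$ of the strong locality relation, so it holds immediately. To extract $\ground{E}(\alpha)=0$, I would apply strong locality with the constant functions $u\equiv\alpha$ and $v\equiv-\alpha$ (both in $\dom{\ground{E}}$ by hypothesis), and with $c\eqdef\alpha$: then $(u-c)v=0\cdot(-\alpha)=0$ and $u+v=0\in\dom{\ground{E}}$, so
\[
0=\ground{E}(0)=\ground{E}(\alpha)+\ground{E}(-\alpha)=2\,\ground{E}(\alpha),
\]
where the last equality invokes that $E$, hence $\ground{E}$, is even.

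For the converse, the key step is to promote weak locality to strong locality via shift invariance. Since $\mssm$ is finite, the constant function $g\equiv 1$ lies in $L^2_\mssm$, and the hypothesis $\ground{E}(\alpha g)=0$ with $\alpha g\in\dom{\ground{E}}$ for all $\alpha\in\R$ matches the premises of Lemma~\ref{l:ZeroLines} (noting $\ground{E}$ is proper convex lower semicontinuous since $E$ is). The lemma then yields shift invariance:
\[
\ground{E}(f+\alpha)=\ground{E}(f)\comma \qquad f\in\dom{\ground{E}}\comma \quad \alpha\in\R\fstop
\]
Now given $u,v\in\dom{\ground{E}}$ with $u+v\in\dom{\ground{E}}$ and $(u-c)v=0$ for some $c\in\R$, I set $u'\eqdef u-c$, so that $u'v=0$ and $u'$, $u'+v$ lie in $\dom{\ground{E}}$ by shift invariance. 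Weak locality applied to the pair $(u',v)$ combined with shift invariance then gives
\[
\ground{E}(u+v)=\ground{E}(u'+v)=\ground{E}(u')+\ground{E}(v)=\ground{E}(u)+\ground{E}(v),
\]
which is strong locality.

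The only subtle point is the invocation of Lemma~\ref{l:ZeroLines}, which demands finiteness of $\mssm$ (so that constants are square-integrable) and the domain assumption $\R\subset\dom{\ground{E}}$ stated in the corollary; everything else is a direct reduction to Proposition~\ref{p:WLocality} and the defining relation of strong locality.
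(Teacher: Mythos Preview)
Your argument is correct. A minor labeling slip: the four conditions in the corollary are $(L_0')$--$(L_3')$, not $(L_1')$--$(L_4')$, so the one you actually treat (weak locality plus vanishing on constants) is $(L_1')$, not $(L_2')$. This does not affect the mathematics.

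Your converse takes a slightly different route from the paper's. Both arguments first extract shift invariance $\ground{E}(f+\alpha)=\ground{E}(f)$ from Lemma~\ref{l:ZeroLines}. The paper then combines shift invariance with~\ref{i:p:WLocality:0} to verify the characterization~\ref{i:t:Locality:0} of Theorem~\ref{t:Locality}, namely $E(\abs{u+\alpha}-\alpha)=E(u)$, and only then concludes strong locality via that theorem. You instead work directly with~\ref{i:p:WLocality:1}: the substitution $u'\eqdef u-c$ reduces the strong-locality identity to the weak one, with shift invariance absorbing the constant. Your route is more self-contained in that it does not appeal back to Theorem~\ref{t:Locality}; the paper's route has the virtue of exhibiting~\ref{i:t:Locality:0} explicitly, which is the form most parallel to the bilinear case in~\cite[\S{I.5}]{BouHir91}.
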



In order to prove Corollary~\ref{c:Locality} we need the next preparatory statement, an immediate consequence of Lemma~\ref{l:ZeroLines}.

%

\begin{corollary}\label{c:ZeroLines}
Let~$E\colon L^2_\mssm\to (-\infty,\infty]$ be a proper convex lower semicontinuous functional.
Further assume that~$\mssm$ is a finite measure and that~$\alpha\in\dom{E}$ and~$E(\alpha)=0$ for every constant~$\alpha\in\R$.
Then,
\begin{equation}\label{eq:c:ZeroLines:0}
u+\alpha\in\dom{E} \qquad \text{and}\qquad E(u+\alpha)= E(u)\comma \qquad u\in\dom{E}\comma \alpha\in\R\fstop
\end{equation}
\end{corollary}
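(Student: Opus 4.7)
The plan is to deduce this corollary directly from Lemma~\ref{l:ZeroLines} by choosing the distinguished element $g$ to be the constant function $\uno \equiv 1$. Since $\mssm$ is finite, the constant function $\uno$ belongs to $L^2_\mssm$, and more generally $\alpha \uno \equiv \alpha \in L^2_\mssm$ for every $\alpha \in \R$. By the standing hypothesis of the corollary, we have $\alpha \uno \in \dom{E}$ and $E(\alpha \uno) = 0$ for every $\alpha \in \R$, so $g \eqdef \uno$ satisfies all the requirements of Lemma~\ref{l:ZeroLines}.

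Applying Lemma~\ref{l:ZeroLines} then yields $u + \alpha \uno \in \dom{E}$ and $E(u + \alpha \uno) = E(u)$ for every $u \in \dom{E}$ and every $\alpha \in \R$. Since $u + \alpha \uno$ is precisely $u + \alpha$ in the convention of identifying constants with constant functions, this is exactly~\eqref{eq:c:ZeroLines:0}.

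There is no real obstacle: the only point to verify is that the finiteness of $\mssm$ guarantees $\uno \in L^2_\mssm$ (otherwise constant functions would not belong to the Hilbert space and the hypothesis $\alpha \in \dom{E}$ would be vacuous or ill-posed). This is the sole role played by the finiteness assumption in the statement.
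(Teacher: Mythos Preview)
Your proof is correct and is exactly the approach the paper takes: it states that Corollary~\ref{c:ZeroLines} is an immediate consequence of Lemma~\ref{l:ZeroLines}, which is precisely your application with~$g=\uno$.
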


%

\begin{proof}[Proof of Corollary~\ref{c:Locality}]
In light of Remark~\ref{r:Grounding}\ref{i:r:Grounding:1}, we may and will assume with no loss of generality that~$E$ is grounded.

The equivalence of~\ref{i:c:Locality:0}--\ref{i:c:Locality:3} follows immediately from Proposition~\ref{p:WLocality}.
Assume first that~$E$ is local. For every~$\alpha\in \R$, since~$\alpha\in\dom{E}$, we may choose~$u\equiv \alpha$, $v\equiv -\alpha$ and~$c=-\alpha$ in~\ref{i:t:Locality:1}, which readily implies that~$E(\alpha)=0$.
Clearly, \ref{i:t:Locality:1}~implies~\ref{i:p:WLocality:1}, and therefore~\ref{i:c:Locality:1} follows.

Vice versa, assume~\ref{i:c:Locality:0} and fix~$\alpha\geq 0$ and~$u\in\dom{E}$.
Respectively: by Corollary~\ref{c:ZeroLines}, by~\ref{i:p:WLocality:0}, by Corollary~\ref{c:ZeroLines} with~$\abs{u+\alpha}$ in place of~$u$ and~$-\alpha$ in place of~$\alpha$, we have
\[
E(u)=E(u+\alpha)=E(\abs{u+\alpha}) = E (\varphi_\alpha\circ u)\comma
\]
which is~\ref{i:t:Locality:0}.
\end{proof}

%


\section{Invariance}\label{s:Invariance}
Let~$H$ be a Hilbert space and~$T_\bullet$ be a strongly continuous non-expansive non-linear semigroup.
In this section we discuss several notions of \emph{$T_\bullet$-invariant} subsets of~$H$.
In the case of bi-linear Dirichlet forms, this notion reduces to the classical notion of invariance of a set under the semigroup action.

Let~$B\colon D\to H$ be an operator, and recall that a closed convex subset~$K\subset H$ is called $B$-\emph{invariant} if
\[
B(D\cap K)\subset K \fstop
\]

\begin{definition}[Invariant subsets, subspaces]
Let~$H$ be a Hilbert space and~$T_\bullet$ be a strongly continuous non-expansive non-linear semigroup on~$C$.
A closed convex subset~$K$ of~$H$ is called
\begin{itemize}
\item \emph{$T_\bullet$-invariant} if it is $T_t$-invariant for every~$t>0$, see, e.g.~\cite[\S{IV.4}, p.~130]{Bre73};
\end{itemize}
A closed sub\emph{space}~$V$ of $H$ is called
\begin{itemize}
\item \emph{strongly $T_\bullet$-invariant} if it is $T_\bullet$-invariant and
\begin{equation}\label{eq:d:Invariance:2}
\cproj_V\circ T_t= T_t\circ \cproj_V \quad \text{on}\quad C\comma \qquad t\geq 0 \semicolon
\end{equation}
\item \emph{doubly $T_\bullet$-invariant} if both~$V$ and~$V^\tperp$ are strongly $T_\bullet$-invariant.
\end{itemize}
\end{definition}

In the following, it will be more convenient to express (double) $T_\bullet$-invariance in terms of the orthogonal projection~$\cproj_V$ onto~$V$, rather than in terms of~$V$ itself.
Thus, for an orthogonal projector~$\cproj\colon H\to H$, we set
\[
\cproj^\tperp\eqdef \car_H-\cproj\comma \qquad H_\cproj\eqdef \im \cproj  \qquad \text{and}\qquad H_{\cproj^\tperp}= \im \cproj^\tperp \fstop
\]
Further say that an orthogonal projection is (resp.\ \emph{doubly}) \emph{$T_\bullet$-invariant} if so is its range.

\begin{remark}
In the case of linear semigroups, strong invariance and double invariance are equivalent, since for every orthogonal projection~$\cproj$ we have~$\cproj^\tperp= \car_H-\cproj$, and therefore
\[
T_t \cproj^\tperp = T_t(\car_H- \cproj) = \car_H T_t - \cproj T_t = \cproj^\tperp T_t \fstop
\]
In the non-linear case this is no longer true in general, and ---as shown by the next theorem--- it is more convenient to directly work with double invariance.
\end{remark}

\begin{theorem}\label{t:Invariance}
Let~$A$ be a maximal monotone operator on~$H$, with associated semigroup~$T_\bullet$ and resolvent~$J_\bullet$.
Further let~$V$ be a closed subspace of~$H$, and~$\cproj$ be the orthogonal projection onto~$V$.
Then, the following are equivalent:
\begin{enumerate}[$(i)$]
\item\label{i:t:Invariance:1} 
$V$ is doubly $T_\bullet$-invariant, i.e.~$\cproj f, \cproj^\tperp f\in\overline{\dom{A}}$ for every~$f\in\overline{\dom{A}}$, and, for every~$t\geq 0$,
\[
\cproj T_t(f)=T_t(\cproj f)\qquad \text{and} \qquad \cproj^\tperp T_t(f)=T_t(\cproj^\tperp f) \semicolon
\]

\item\label{i:t:Invariance:2} $\cproj f, \cproj^\tperp f\in\dom{A}$ for every~$f\in\dom{A}$, and
\[
\cproj A(f)=A(\cproj f) \qquad \text{and} \qquad \cproj^\tperp A(f)= A(\cproj^\tperp f) \semicolon
\]

\item\label{i:t:Invariance:3} $\cproj f, \cproj^\tperp f\in\overline{\dom{A}}$ for every~$f \in\overline{\dom{A}}$, and, for every~$\lambda>0$,
\[
\cproj J_\lambda(f) = J_\lambda(\cproj f)\qquad \text{and}\qquad \cproj^\tperp  J_\lambda(f) = J_\lambda(\cproj^\tperp f) \fstop
\]
\end{enumerate}

Furthermore, if~$A$ is additionally cyclically monotone with associated proper convex lower semicontinuous functional~$E\colon H\to (-\infty,\infty]$, then each of~\ref{i:t:Invariance:1},~\ref{i:t:Invariance:2},~\ref{i:t:Invariance:3} is equivalent to 
\begin{enumerate}[$(i)$]\setcounter{enumi}{3}
\item\label{i:t:Invariance:4} $\cproj f, \cproj^\tperp f\in\dom{E}$ for every~$f\in\dom{E}$, and
\[
E(f)=E(\cproj f) + E(\cproj^\tperp f) \fstop
\]
\end{enumerate}
\end{theorem}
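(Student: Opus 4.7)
\emph{Strategy.} I would split the proof into two stages. First, the equivalences $(i) \iff (ii) \iff (iii)$ among $T_\bullet$, $A$, $J_\bullet$ follow from the TAJE conversion theorems (\ref{t:SemigroupGenerator}, \ref{t:ResolventGenerator}, \ref{t:SemigroupResolvent}) by transferring the commutation with $\cproj$ through the conversion formulas, using that $\cproj$ is a bounded linear operator. Second, the equivalence with the energy condition $(iv)$ is the new content and is proved via the variational characterization of $J_\lambda$ (Thm.~\ref{t:FormResolvent}) together with the Yoshida regularization. Throughout, the analogous statements for $\cproj^\tperp$ follow from those for $\cproj$ by linearity, since $\cproj^\tperp = \car_H - \cproj$.

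\emph{Equivalences among $(i), (ii), (iii)$.} For $(ii) \Rightarrow (iii)$ I would use Minty's identity: $v \in A(f)$ iff $J_\lambda(f + \lambda v) = f$; applying $\cproj$ and using $\cproj v \in A(\cproj f)$ then yields $J_\lambda(\cproj(f + \lambda v)) = \cproj f = \cproj J_\lambda(f + \lambda v)$, i.e.~$J_\lambda \circ \cproj = \cproj \circ J_\lambda$ on the range $\car_H + \lambda A = H$. Conversely, the graph convergence $A = \Gph\text{-}\lim_\lambda \tfrac{1}{\lambda}(\car_H - J_\lambda)$ (Thm.~\ref{t:ResolventGenerator}) transfers commutation back to $A$, as continuity of $\cproj$ preserves the PK-limit. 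The equivalence $(iii) \iff (i)$ follows from the exponential formula $T_t = \lim_n (J_{t/n})^n$ (Thm.~\ref{t:SemigroupResolvent}\ref{i:t:SemigroupResolvent:2}) and its inverse (\ref{i:t:SemigroupResolvent:1}), each preserving commutation with the continuous $\cproj$ by iteration and passage to the limit.

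\emph{Equivalence with $(iv)$.} Assuming $A = \partial E$ cyclically monotone, I would use $A_\lambda = \nabla E_\lambda$ from~\eqref{eq:YoshidaForm}, where $E_\lambda$ is $C^{1,1}$. For $(iii) \Rightarrow (iv)$, commutation of $J_\lambda$ with $\cproj, \cproj^\tperp$ lifts to commutation of $\nabla E_\lambda$ with $\cproj, \cproj^\tperp$. Then for $u \in V$, $v \in V^\tperp$, integrating along $\gamma(t) \eqdef tu + tv$,
\[
\tfrac{\diff}{\diff t} E_\lambda(\gamma(t)) = \tscalar{\cproj \nabla E_\lambda(tu+tv)}{u} + \tscalar{\cproj^\tperp \nabla E_\lambda(tu+tv)}{v} = \tfrac{\diff}{\diff t}E_\lambda(tu) + \tfrac{\diff}{\diff t}E_\lambda(tv)\comma
\]
which integrates to $E_\lambda(u+v) - E_\lambda(0) = (E_\lambda(u) - E_\lambda(0)) + (E_\lambda(v) - E_\lambda(0))$. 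Letting $\lambda \downarrow 0$, using pointwise convergence $E_\lambda \to E$, and normalizing $E(0) = 0$ (legitimate by Rmk.~\ref{r:AdditiveConstant}, noting that $(ii)$ forces $0 = \cproj \cproj^\tperp f \in \dom{A}\subset\dom{E}$), yields $(iv)$. For $(iv) \Rightarrow (iii)$, I would use the variational formula $J_\lambda(f) = \argmin_g \tfrac{1}{2\lambda}\norm{f-g}^2 + E(g)$: decomposing $g = g_1 + g_2 \in V \oplus V^\tperp$, Pythagoras splits the quadratic term while $(iv)$ splits $E$, so that the minimization decouples into two independent problems on $V$ and on $V^\tperp$, from which $\cproj J_\lambda(f) = J_\lambda(\cproj f)$ and $\cproj^\tperp J_\lambda(f) = J_\lambda(\cproj^\tperp f)$ follow.

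\emph{Main obstacle.} The delicate point lies in the direction $(iv) \Rightarrow (iii)$: for the decoupled $V^\tperp$-minimization with zero input (arising in $J_\lambda(\cproj f)$) to return $0$ — so that $J_\lambda(\cproj f)$ actually lies in $V$ — one needs $\tfrac{1}{2\lambda}\norm{g_2}^2 + E(g_2) \geq E(0)$ for every $g_2 \in V^\tperp$. After normalizing $E(0) = 0$, this amounts to a lower bound $E \geq 0$, which is automatic for all functionals covered by the companion Thm.~\ref{t:InvarianceIntro} (where $E\colon L^2_\mssm \to [0,\infty]$ is assumed), and more generally whenever $E$ is bounded below, e.g.~for even Dirichlet functionals (cf.~Rmk.~\ref{r:Even}).
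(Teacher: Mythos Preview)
Your overall skeleton is close to the paper's, and your variational splitting for $(iv)\Rightarrow(iii)$ is essentially identical to theirs (the paper also invokes $E\geq 0$ at exactly the point you flag). Your path-integration argument for $(iii)\Rightarrow(iv)$ via $A_\lambda=\nabla E_\lambda$ is a genuinely different and cleaner alternative to the paper's route, which instead splits the supremum defining the cyclical envelope $\cenv(A_\lambda)$ over $V\oplus V^\tperp$ and passes to the limit via Theorem~\ref{t:FormResolvent}\ref{i:t:FormResolvent:2}.

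There is, however, one conceptual slip and one technical gap. The claim that ``the analogous statements for $\cproj^\tperp$ follow from those for $\cproj$ by linearity, since $\cproj^\tperp=\car_H-\cproj$'' is precisely what fails in the non-linear setting and is the whole reason the theorem is stated with \emph{both} $\cproj$ and $\cproj^\tperp$ in every item: from $\cproj T_t=T_t\cproj$ one gets $\cproj^\tperp T_t=T_t-T_t\cproj$, but this is \emph{not} $T_t\cproj^\tperp$ unless $T_t$ is additive. The paper makes this explicit in the remark preceding the theorem and in Example~\ref{ex:StrongDoubleInv}. In practice your arguments do use both hypotheses where needed, so the error is in the framing rather than in the body of the proof, but it should not appear as a guiding principle.

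The technical gap is in $(iii)\Rightarrow(ii)$ via graph convergence. From $\cproj A_\lambda=A_\lambda\cproj$ and $A=\Gph\text{-}\lim_\lambda A_\lambda$ you correctly obtain that for every $(f,g)\in\Gph(A)$ also $(\cproj f,\cproj g)\in\Gph(A)$, i.e.\ the inclusion $\cproj A(f)\subset A(\cproj f)$. But the statement of~\ref{i:t:Invariance:2} is a \emph{set equality}, and the reverse inclusion $A(\cproj f)\subset \cproj A(f)$ does not drop out of the PK-limit: given $h\in A(\cproj f)$ you would need to produce $g\in A(f)$ with $\cproj g=h$, and nothing in the graph-limit picture hands you such a $g$. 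The paper closes this by an algebraic computation on $\dom{A}$: it first shows $\cproj\dom{A}\subset\dom{A}$ via $A^\circ$, then manipulates $(\car_H+\lambda A\circ\cproj)\circ J_\lambda$ using $\cproj J_\lambda=J_\lambda\cproj$ to arrive directly at $A\circ\cproj=\cproj\circ A$ as multivalued operators. Your Minty argument for the reverse direction $(ii)\Rightarrow(iii)$ is fine.
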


\begin{proof}
Throughout the proof,~$\aproj$ is a placeholder for either~$\cproj$ or~$\cproj^\tperp$.

\paragraph{Preliminaries: if any of~\ref{i:t:Invariance:1},~\ref{i:t:Invariance:2},~\ref{i:t:Invariance:3} or~\ref{i:t:Invariance:4} holds, then~$\cproj_{\overline{\dom{A}}} H_\aproj\subset H_\aproj$}
It follows from each assumption and from the inclusion~$\dom{A}\subset\dom{E}\subset \overline{\dom{E}}$ that~$\aproj f\in\overline{\dom{A}}$ for every~$f\in\dom{A}$.
Since~$\aproj$ is a bounded (linear) operator, it follows that~$\aproj \overline{\dom{A}}\subset \overline{\dom{A}}$.

On the one hand, for every~$f\in H_\aproj$ we have~$\aproj f=f$ and thus
\begin{align}
\nonumber
\tnorm{\aproj \cproj_{\overline{\dom{A}}}(f)-f} &= \tnorm{\aproj \tparen{\cproj_{\overline{\dom{A}}}(f)-f}} \leq \norm{\aproj }_{H\to H} \tnorm{\cproj_{\overline{\dom{A}}}(f)-f}
\\
\label{eq:t:Invariance:1}
&\leq \tnorm{\cproj_{\overline{\dom{A}}}(f)-f} \fstop
\end{align}
On the other hand, by definition of Hilbert projection,~$\cproj_{\overline{\dom{A}}}$ satisfies
\begin{equation}\label{eq:t:Invariance:2}
\tnorm{\cproj_{\overline{\dom{A}}}(f)-f} \leq \norm{g-f} \comma \qquad g\in \overline{\dom{A}}\fstop
\end{equation}
Since~$\aproj \overline{\dom{A}}\subset \overline{\dom{A}}$, we have $\aproj \cproj_{\overline{\dom{A}}}(f)\in \overline{\dom{A}}$ for every~$f\in\overline{\dom{A}}$. Choosing~$v=\aproj \cproj_{\overline{\dom{A}}}$ in~\eqref{eq:t:Invariance:2} and combining it with~\eqref{eq:t:Invariance:1} shows that~$\aproj \cproj_{\overline{\dom{A}}}=\cproj_{\overline{\dom{A}}}$ by uniqueness of the Hilbert projection.

Note that the claim proves~\eqref{eq:p:EquivConvex:0} with~$C=H_\aproj$.

\medskip

We now prove the implications~\ref{i:t:Invariance:2}$\iff$\ref{i:t:Invariance:3} and~\ref{i:t:Invariance:1}$\iff$\ref{i:t:Invariance:3}.

\paragraph{\ref{i:t:Invariance:3}$\implies$\ref{i:t:Invariance:2}}
We divide the proof into steps.

\paragraph{Claim: $\aproj \dom{A}\subset\dom{A}$}
By the assumption, for every~$\lambda>0$,
\[
A_\lambda \circ \aproj = \tfrac{1}{\lambda}(\car_H-J_\lambda) \circ \aproj = \aproj \circ \tfrac{1}{\lambda}(\car_H-J_\lambda) = \aproj \circ A_\lambda \fstop
\]
Since~$\aproj$ is a bounded (linear) operator, letting~$\lambda\downarrow 0$ above and applying~\cite[Prop.~2.6(iii), p.~28]{Bre73} we conclude that, for every~$f\in\dom{A}$,
\[
\exists \lim_{\lambda\downarrow 0} A_\lambda (\aproj f) = \aproj A^\circ(f)\comma
\]
thus that
\begin{equation}\label{eq:t:Invariance:5}
\aproj \dom{A^\circ}\subset \dom{A^\circ} \quad \text{and} \quad A^\circ(\aproj f)=\aproj A^\circ(f)\fstop
\end{equation}
Since~$\dom{A^\circ}=\dom{A}$, we have~$\aproj \dom{A}\subset \dom{A}$.

\paragraph{Conclusion}
By the claim, and since~$J_\lambda(H)\subset \dom{A}$, for every~$\lambda>0$ we may compute~$(\car_H + \lambda A\circ \aproj)\circ J_\lambda$ everywhere on~$H$.
Applying the assumption twice,
\begin{align*}
(\car_H + \lambda A\circ \aproj)\circ J_\lambda &= J_\lambda + \lambda (A\circ\aproj\circ J_\lambda)
\\
&= (\aproj^\tperp+\aproj)J_\lambda + \lambda (A \circ J_\lambda \circ\aproj)
\\
&= \aproj^\tperp\circ J_\lambda + \aproj\circ J_\lambda + (\car_H - J_\lambda) \circ\aproj
\\
&= \aproj^\tperp\circ J_\lambda + \aproj\circ J_\lambda + \aproj\circ (\car_H - J_\lambda)
\\
&= \aproj^\tperp \circ J_\lambda + \aproj \fstop
\end{align*}
Pre-composing with~$(\car_H+\lambda A)$ and cancelling~$\lambda>0$, we then have, everywhere on~$\dom{A}$,
\begin{align*}
\car_H + \lambda A\circ \aproj &= \aproj^\tperp + \aproj\circ (\car_H + \lambda A)
\\
\lambda A\circ \aproj &= -\aproj + \aproj + \lambda\aproj\circ A
\\
A\circ \aproj &= \aproj\circ A \fstop
\end{align*}

\paragraph{\ref{i:t:Invariance:2}$\implies$\ref{i:t:Invariance:3}}
By the assumption, everywhere on~$\dom{A}$ we may compute
\[
\aproj^\tperp+ (\car_H+\lambda A)\circ \aproj =\car_H+\lambda A\circ \aproj = \car_H +\lambda (\aproj\circ A) = \aproj^\tperp+ \aproj\circ (\car_H+\lambda A) \fstop
\]
The conclusion follows subtracting~$\aproj^\tperp$ and pre- and post-composing with~$J_\lambda$ on both sides.

\paragraph{\ref{i:t:Invariance:1}$\implies$\ref{i:t:Invariance:3}}
Let~$A^\sym{t}\colon C\to H$ be defined as in~\eqref{eq:ApproxSemigroup}, recall that it is a maximal monotone operator, and denote its resolvent by~$J^t_\lambda$.
Note that~$\lim_{t\downarrow 0} J^t_\lambda=J_\lambda$ in the strong operator topology by Theorem~\ref{t:SemigroupResolvent}\ref{i:t:SemigroupResolvent:1}.

By the assumption and by definition of~$A^\sym{t}$, we have that~\ref{i:t:Invariance:2} holds with~$A^\sym{t}$ in place of~$A$ for every~$t>0$.
By the implication~\ref{i:t:Invariance:2}$\implies$\ref{i:t:Invariance:3} with~$A^\sym{t}$ in place of~$A$ (hence~$J_\lambda^t$ in place of~$J_\lambda$) we conclude that
\[
\aproj J^t_\lambda(f) = J^t_\lambda(\aproj f)\comma \qquad \lambda,t>0\comma \qquad f\in H \fstop
\]
Thus, letting~$t\to 0$ and since~$\aproj$ is a bounded linear operator,
\[
\aproj J_\lambda(f) = J_\lambda(\aproj f)\comma \qquad \lambda>0\comma \qquad f\in H \comma
\]
which concludes the proof of~\ref{i:t:Invariance:3}.

\paragraph{\ref{i:t:Invariance:3}$\implies$\ref{i:t:Invariance:1}}
Respectively: by Theorem~\ref{t:SemigroupResolvent}\ref{i:t:SemigroupResolvent:2}, since~$\aproj $ is idempotent, by the commutation of~$\aproj $ and~$J_\lambda$ for every~$\lambda> 0$,
\begin{equation}\label{eq:t:Invariance:7}
\begin{aligned}
T_t(\aproj  f) &=\lim_{n\to\infty} (J_{t/n})^n(\aproj  f) = \lim_{n\to\infty} (J_{t/n}\circ \aproj )^n (f) =  \lim_{n\to\infty} (\aproj \circ J_{t/n})^n (f)
\\
&= \lim_{n\to\infty} (\aproj )^n (J_{t/n})^n (f) = \aproj  \lim_{n\to\infty} (J_{t/n})^n (f) = \aproj  T_t(f) \comma
\end{aligned}
\end{equation}
which is the sought conclusion.

\bigskip

\paragraph{Equivalences with functionals}
We now prove the implications~\ref{i:t:Invariance:4}$\implies$\ref{i:t:Invariance:3} and~\ref{i:t:Invariance:3}$\implies$\ref{i:t:Invariance:4} under the additional assumption of cyclical monotonicity.

\paragraph{\ref{i:t:Invariance:4}$\implies$\ref{i:t:Invariance:3}}
Respectively: by Theorem~\ref{t:FormResolvent}, by orthogonality of the decomposition~$H_\cproj\oplus^\tperp H_{\cproj^\tperp}$ and by the assumption,
\begin{align}
\nonumber
J_\lambda(f)&=\argmin_{g\in H} \tfrac{1}{2\lambda}\norm{f-g}^2+E(g)
\\
\nonumber
&= \argmin_{g\in H} \tfrac{1}{2\lambda}\norm{\cproj (f-g)+\cproj^\tperp (f-g)}^2+E(\cproj g+\cproj^\tperp g)
\\
\nonumber
&= \argmin_{g\in H} \tfrac{1}{2\lambda}\norm{\cproj (f-g)}^2+E(\cproj g)+\tfrac{1}{2\lambda}\norm{\cproj^\tperp (f-g)}^2 E(\cproj^\tperp g)
\\
\nonumber
&= \argmin_{\substack{g,g'\in H\\ g=\cproj g\comma g'=\cproj^\tperp g'}} \tfrac{1}{2\lambda}\norm{\cproj f-g}^2+E(g)+\tfrac{1}{2\lambda}\norm{\cproj^\tperp f-g'}^2 E(g')
\\
\label{eq:t:Invariance:3}
&= J_\lambda(\cproj f)+J_\lambda(\cproj^\tperp  f) \fstop
\end{align}

Since~$E\geq 0$, we have~$E(\cproj f)\leq E(\cproj f)+E(\cproj^\tperp f)=E(f)$. 
Thus, by the equivalence between \ref{i:p:EquivConvex:1}~and~\ref{i:p:EquivConvex:2} in Proposition~\ref{p:EquivConvex} with~$C=H_\aproj$, we conclude that
\begin{equation}\label{eq:t:Invariance:4}
\aproj  J_\lambda(g) = J_\lambda(g)\comma \qquad g\in H_\aproj\fstop
\end{equation}

Applying~$\aproj $ on both sides of~\eqref{eq:t:Invariance:3} and using~\eqref{eq:t:Invariance:4} we may compute
\begin{align*}
\aproj J_\lambda(f) &= \aproj J_\lambda(\aproj f) + \aproj  J_\lambda(\aproj^\tperp  f) = \aproj  J_\lambda(\aproj f) + \aproj  \aproj^\tperp J_\lambda(\aproj^\tperp  f)= \aproj  J_\lambda(\aproj f)
\\
&=J_\lambda(\aproj f)\comma
\end{align*}
as desired.

\paragraph{\ref{i:t:Invariance:3}$\implies$\ref{i:t:Invariance:4}}
By the assumption,~$J_\lambda(H_\aproj)\subset H_\aproj$ for every~$\lambda>0$.
Thus,~$\aproj f\in\dom{E}$ for every~$f\in\dom{E}$ and~$E(\aproj f)\leq E(f)$ by Proposition~\ref{p:EquivConvex}.
Then, by Theorem~\ref{t:FormResolvent}\ref{i:t:FormResolvent:2} there exists
\begin{equation}\label{eq:t:Invariance:6}
\lim_{\lambda\downarrow 0}\cenv\tparen{A_\lambda(\aproj f)} = E(\aproj f) \fstop
\end{equation}

Again by the assumption, for any~$(f_0,g_0)\in A$, we have
\begin{align*}
A_\lambda(f)=\tfrac{1}{\lambda} \tparen{f-J_\lambda(f)} &= \tfrac{1}{\lambda} \tparen{\cproj f-\cproj J_\lambda(f)} + \tfrac{1}{\lambda} \tparen{\cproj^\tperp f-\cproj^\tperp J_\lambda(f)}
\\
&= \tfrac{1}{\lambda} \tparen{\cproj f- J_\lambda(\cproj f)} + \tfrac{1}{\lambda} \tparen{\cproj^\tperp f- J_\lambda(\cproj^\tperp f)} 
\\
&= \cproj A_\lambda(f) + \cproj^\tperp A_\lambda(f) \fstop
\end{align*}
Thus,
\begin{align*}
\cenv&(A_\lambda)(f) \eqdef \sup \set{\sum_{i=0}^n \scalar{f_i-f_{i-1}}{g_i} : \begin{aligned} f_{n+1} \eqdef f \comma \quad f_1,\dotsc, f_n\in \dom{A_\lambda}\comma\\  g_i= A_\lambda (f_i) \text{ for all } i\leq n\comma \quad n\in\N_0 \end{aligned}} 
\\
&= \sup \set{\begin{gathered}\sum_{i=0}^n \scalar{\cproj f_i-\cproj f_{i-1}}{\cproj g_i} + \sum_{i=1}^n \scalar{\cproj^\tperp f_i-\cproj^\tperp f_{i-1}}{\cproj^\tperp g_i} :
\\
\begin{aligned} f_{n+1}\eqdef f_{n+1}'+f_{n+1}''\comma f_{n+1}' \eqdef \cproj f\comma f_{n+1}''\eqdef \cproj^\tperp f \comma f_i\eqdef f_i'+f_i'' \in \dom{A_\lambda}\comma\\  g_i = \cproj g_i + \cproj^\tperp g_i = A_\lambda(f_i) \text{ for all } i\leq n\comma \quad n\in\N_0 \end{aligned}
\end{gathered}
} 
\\
&= \sup \set{\begin{gathered}\sum_{i=0}^n \scalar{f_i'-f_{i-1}'}{\cproj g_i'} + \sum_{i=0}^n \scalar{f_i''- f_{i-1}''}{\cproj^\tperp g_i''} :
\\
\begin{aligned} f_{n+1}' \eqdef \cproj f\comma f_{n+1}''\eqdef \cproj^\tperp f \comma f_i'=\cproj f_i' \in\dom{A_\lambda}\comma f_i''=\cproj^\tperp f_i''\in \dom{A_\lambda}\comma\\ g_i' = A_\lambda(f_i')\comma g_i'' = A_\lambda(f_i'') \text{ for all } i\leq n\comma \quad n\in\N_0 \end{aligned}
\end{gathered}
}
\\
&= \cenv (A_\lambda)(\cproj f) + \cenv (A_\lambda)(\cproj^\tperp f) \fstop
\end{align*}

Finally, respectively: by Theorem~\ref{t:FormResolvent}\ref{i:t:FormResolvent:2}, by the above chain of equalities, and again by~\eqref{eq:t:Invariance:6},
\begin{align*}
E(f) &= \lim_{\lambda\downarrow 0}\cenv\tparen{A_\lambda(f)}
\\
&= \lim_{\lambda\downarrow 0}\cenv\tparen{A_\lambda(\cproj f)} + \lim_{\lambda\downarrow 0} \cenv\tparen{A_\lambda(\cproj^\tperp f)}
\\
&= E(\cproj f) + E(\cproj^\tperp f)\comma
\end{align*}
which concludes the proof.
\end{proof}

The following examples show that
\begin{itemize}
\item $T_\bullet$-invariant sets are generally not strongly $T_\bullet$-invariant, thus neither are they doubly $T_\bullet$-invariant;
\item a subspace~$V$ such that both~$V$ and~$V^\tperp$ are $T_\bullet$-invariant is generally not doubly $T_\bullet$-invariant;
\item a set which is eventually $T_\bullet$-invariant (i.e.\ $T_t$-invariant for all~$t\geq c$ for some~$c>0$) is generally \emph{not} $T_\bullet$-invariant.
\end{itemize}

\begin{example}\label{ex:StrongDoubleInv}
For~$p\in [1,\infty]$ denote by~$\abs{h}_p$ the~$\ell^p$-norm of~$h\in \R^2$, and set
\[
\sgn(h)\eqdef \begin{bmatrix} \sgn(h_1)\\ \sgn(h_2)\end{bmatrix}\comma \qquad \Pi\eqdef \begin{bmatrix} 0 & 1 \\ 1 & 0 \end{bmatrix} \fstop
\]

On~$H\eqdef (\R^2,\abs{\emparg}_2)$ consider the functional
\[
E\colon h \longmapsto \begin{cases}0 &\text{if } \abs{h}_1\leq 1 \\ \tparen{\abs{h}_1-1}^2 &\text{if } \abs{h}_1> 1 \comma \abs{h}_\infty\leq 1 \\ +\infty &\text{if } \abs{h}_\infty> 1 \end{cases} \fstop
\]
For~$i=1,2$ further let~$\cproj_i$ be the orthogonal projection onto the $i^\text{th}$ axis, and note that~$\cproj_2^\tperp=\cproj_1$.

It is not difficult to show that~$E$ is proper convex lower semicontinuous, that $\dom{A}=\overline{\dom{E}}=\overline{B^\infty_1(0)}$ and that~\eqref{eq:p:EquivConvex:0} holds with~$C=H_{\cproj_i}$ for~$i=1,2$.
Furthermore,
\begin{align}
\label{eq:ex:StrongDoubleInv:1}
E(\cproj_i h)=0&\leq E(h)\comma & h&\in\dom{E} \comma \quad i=1,2\comma
\\
\label{eq:ex:StrongDoubleInv:2}
E(\cproj_1 h)+ E(\cproj_2 h) &< E(h)\comma & h&\in\dom{E}\comma \quad \abs{h}_1\geq 1 \fstop
\end{align}

By Proposition~\ref{p:EquivConvex} and~\eqref{eq:ex:StrongDoubleInv:1}, $H_{\cproj_i}$ is $T_\bullet$-invariant for~$i=1,2$, yet neither is doubly $T_\bullet$-invariant in light of~\eqref{eq:ex:StrongDoubleInv:2} and Theorem~\ref{t:Invariance}.

For every~$h\in\dom{E}$ we may explicitly compute~$T_t(h)$ as the gradient flow of~$E$ started at~$h$, satisfying
\begin{align*}
T_t(h)=\begin{cases}
h &\text{if } \abs{h}_1\leq 1 
\\ 
\begin{aligned} &-\sgn(h_1h_2)\, \Pi h + h + \sgn(h)\\
&\qquad + e^{-4t} \tparen{\abs{h}_1-1} \sgn(h)
\end{aligned}
& \text{if } \begin{aligned}\abs{h}_1>1 \\ \abs{h}_\infty\leq 1 \end{aligned}
\end{cases}\comma \qquad
h={\small\begin{bmatrix}
h_1 \\ h_2 \end{bmatrix}} \fstop
\end{align*}
Therefore,~$T_t(h)=h$ for every~$t\geq 0$ whenever~$\abs{h}_1\leq 1$, and in particular~$T_t(h)=h$ for every~$h\in \dom{E}\cap H_{\cproj_i}$ for~$i=1,2$.
On the one hand, this shows that~$T_t(\cproj_i h)=\cproj_i h$ for every~$t\geq 0$ for every~$h\in\dom{E}$.
On the other hand, whenever~$\abs{h}_1>1$ we have~$\cproj_i T_t(h)\neq \cproj_i h$ for every~$t>0$.
\end{example}

\begin{example}\label{ex:InvarianceTime}
With the same notation as in Example~\ref{ex:StrongDoubleInv} consider the functional
\[
E\colon h \longmapsto \begin{cases}0 &\text{if } \abs{h}_1\leq 1 \\ \abs{h}_1-1 &\text{if } \abs{h}_1> 1 \comma \abs{h}_\infty\leq 1 \\ +\infty &\text{if } \abs{h}_\infty> 1 \end{cases} \fstop
\]

It is not difficult to show that~$E$ is proper convex lower semicontinuous, that $\dom{A}=\overline{\dom{E}}=\overline{B^\infty_1(0)}$ and that~\eqref{eq:p:EquivConvex:0} holds with~$C=H_{\cproj_i}$ for~$i=1,2$.
For every~$h\in\dom{E}$ we may explicitly compute~$T_t(h)$ as the gradient flow of~$E$ started at~$h$, satisfying
\begin{align*}
T_t(h)=\begin{cases}
h &\text{if } \abs{h}_1\leq 1 \text{ or } t\geq \dist(h, B^1_1(0))
\\ 
h-t\, \sgn(h) & \text{if } \begin{aligned}\abs{h}_1>1 \\ \abs{h}_\infty\leq 1 \end{aligned}\comma 0\leq t < \dist(h, B^1_1(0))
\end{cases} \fstop
\end{align*}
Therefore,~$T_t(h)=h$ for every~$t\geq 0$ whenever~$\abs{h}_1\leq 1$, and in particular~$T_t(h)=h$ for every~$h\in \dom{E}\cap H_{\cproj_i}$ for~$i=1,2$.
On the one hand, this shows that~$T_t(\cproj_i h)=\cproj_i h$ for every~$t\geq 0$ for every~$h\in\dom{E}$.
On the other hand, whenever~$\abs{h}_1>1$ we have~$\cproj_i T_t(h)\neq \cproj_i h$ for every sufficiently small~$t>0$, yet~$\cproj_i T_t(h)=\cproj_i h$ for every sufficiently large~$t>0$.
\end{example}

\begin{figure}[htb!]
\centering
\begin{subfigure}[t]{.45\textwidth}
\includegraphics[scale=.32]{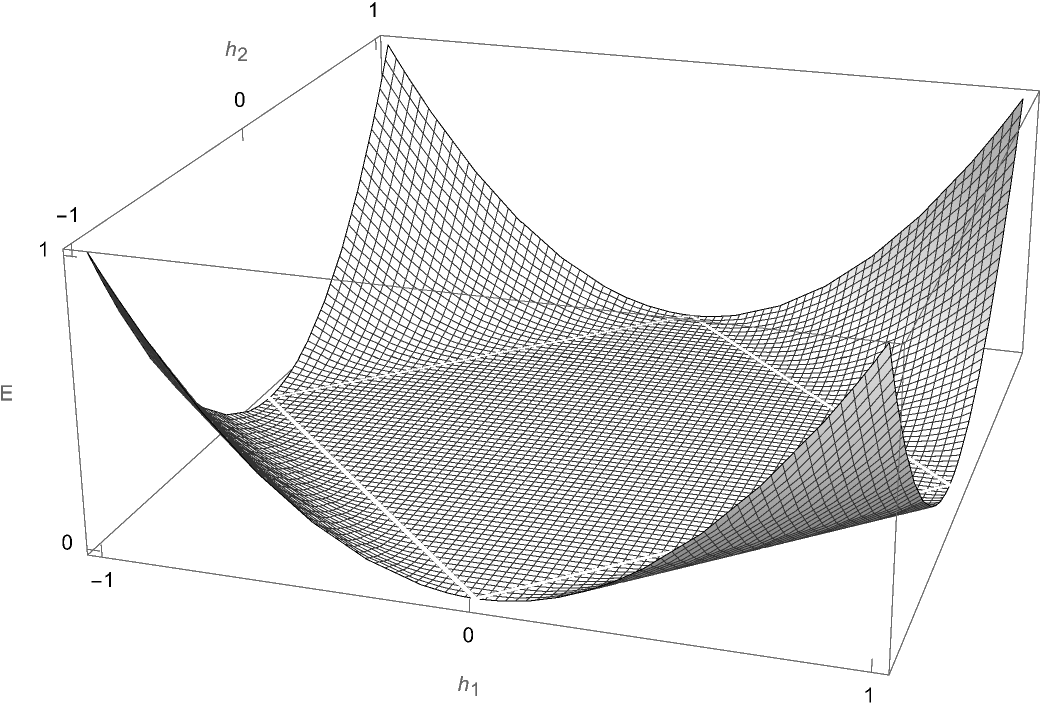}
\caption{Example~\ref{ex:StrongDoubleInv}}
\end{subfigure}
\qquad
\begin{subfigure}[t]{.45\textwidth}
\includegraphics[scale=.32]{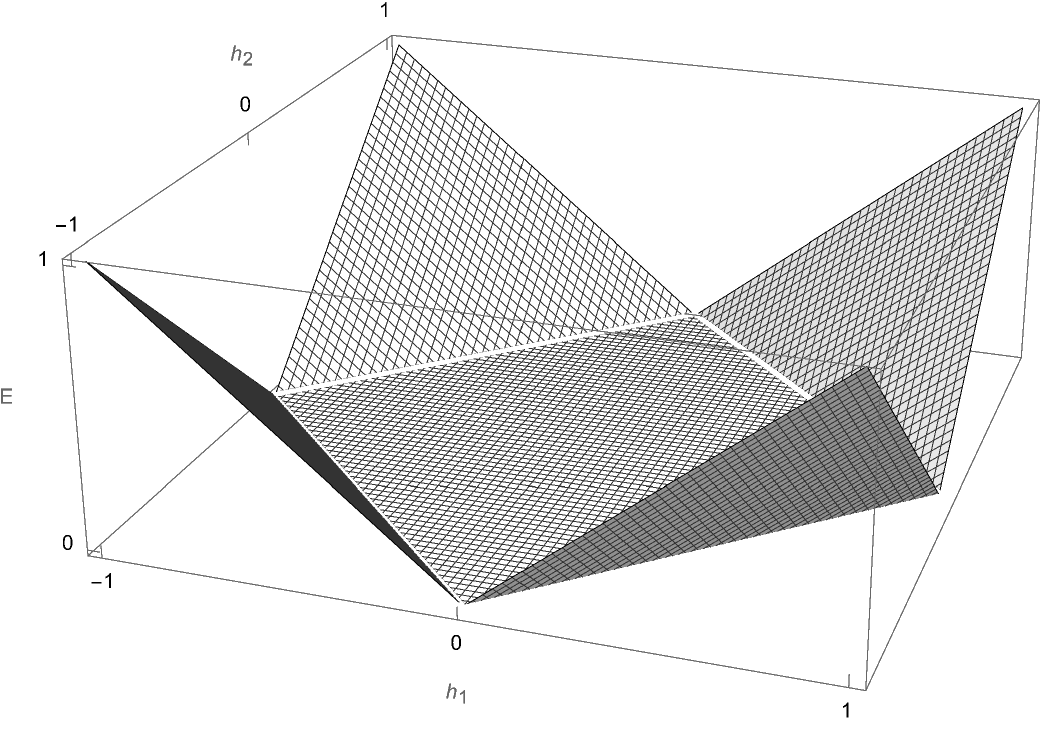}
\caption{Example~\ref{ex:InvarianceTime}}
\end{subfigure}
\caption{The functionals in Examples~\ref{ex:StrongDoubleInv} and~\ref{ex:InvarianceTime}.}
\end{figure}

In the next section we will make use of the following lemma, which we state in full generality as it is of independent interest.

\begin{lemma}\label{l:SOT}
Let~$T_\bullet\colon C\to H$ be a strongly continuous non-expansive non-linear semigroup.
The family of all strongly, resp.\ doubly, $T_\bullet$-invariant orthogonal projections is closed in the strong operator topology.
\begin{proof}
Let~$\seq{\cproj_\gamma}_\gamma$ be a net of strongly $T_\bullet$-invariant orthogonal projection operators and converging to~$\cproj$ in the strong operator topology.
It is readily verified that~$\cproj$ is itself an orthogonal projection operator.
Furthermore, since~$\cproj_\gamma f\in C$ for every~$f\in C$, then~$\cproj f = \lim_\gamma \cproj_\gamma f\in C$ since~$C$ is (strongly) closed. 

Since~$\cproj_\gamma T_t(f) = T_t(\cproj _\gamma f)$ for every~$f\in C$ and every~$t>0$, and since~$T_\bullet$ is strongly continuous, then, for every~$f\in C$ and every~$t>0$,
\[
\cproj T_t(f) = \lim_\gamma \cproj_\gamma T_t(f) = \lim_\gamma T_t(\cproj _\gamma f)= T_t(\lim_\gamma \cproj_\gamma f ) = T_t(\cproj f)\fstop
\]

If each~$\cproj_\gamma$ is doubly $T_\bullet$-invariant, the assertion above holds as well with~$\cproj^\tperp$ in place of~$\cproj$, which concludes the proof in the doubly $T_\bullet$-invariant case.
\end{proof}
\end{lemma}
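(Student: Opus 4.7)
The plan is to verify that the two defining properties of strong (resp.\ double) $T_\bullet$-invariance of a projection—being an orthogonal projection with range in $C$, and commuting with~$T_t$ on $C$—both pass to SOT limits. Fix a net $\seq{\cproj_\gamma}_\gamma$ of strongly $T_\bullet$-invariant orthogonal projections with SOT limit $\cproj$.

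First I would check that $\cproj$ is itself an orthogonal projection. Self-adjointness is preserved under SOT limits because $\scalar{\cproj f}{g} = \lim_\gamma \scalar{\cproj_\gamma f}{g} = \lim_\gamma \scalar{f}{\cproj_\gamma g} = \scalar{f}{\cproj g}$. Idempotency follows from the non-expansiveness of each $\cproj_\gamma$: for any $f\in H$,
\[
\norm{\cproj_\gamma^2 f - \cproj^2 f} \leq \norm{\cproj_\gamma(\cproj_\gamma f) - \cproj_\gamma(\cproj f)} + \norm{\cproj_\gamma(\cproj f) - \cproj(\cproj f)} \leq \norm{\cproj_\gamma f - \cproj f} + \norm{\cproj_\gamma(\cproj f)- \cproj^2 f}
\]
and both terms vanish along $\gamma$; combined with $\cproj_\gamma^2 = \cproj_\gamma \to \cproj$ this gives $\cproj^2 = \cproj$. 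The inclusion $\cproj f \in C$ for every $f\in C$ is immediate since $\cproj_\gamma f \in C$ for all $\gamma$ and $C$ is strongly closed.

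Next, the commutation relation $\cproj T_t = T_t \cproj$ on $C$ is verified for each $f\in C$ and $t\geq 0$ by the two-sided limit
\[
\cproj T_t(f) = \lim_\gamma \cproj_\gamma T_t(f) = \lim_\gamma T_t(\cproj_\gamma f) = T_t(\cproj f),
\]
where the middle equality uses the invariance of each $\cproj_\gamma$ and the last equality uses the non-expansiveness (hence strong continuity) of~$T_t$ together with $\cproj_\gamma f \to \cproj f$. This handles the strong case.

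For the double case I would simply observe that if $\cproj_\gamma \to \cproj$ in SOT then $\cproj_\gamma^\tperp = \car_H - \cproj_\gamma \to \car_H - \cproj = \cproj^\tperp$ in SOT, so the argument above applied to the net $\seq{\cproj_\gamma^\tperp}_\gamma$ yields strong $T_\bullet$-invariance of $\cproj^\tperp$ as well. I expect no real obstacle here; the only subtlety is that $T_t$ is non-linear, so in the commutation step one must be careful to push the limit through $T_t$ via continuity (non-expansiveness) rather than via linearity, which is exactly what the estimate above accomplishes.
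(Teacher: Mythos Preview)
Your proof is correct and follows essentially the same approach as the paper's: take an SOT-convergent net of strongly invariant projections, verify the limit is again an orthogonal projection mapping $C$ into $C$, and pass the commutation relation $\cproj_\gamma T_t = T_t \cproj_\gamma$ to the limit using continuity of $T_t$. Your version is slightly more explicit in two places: you spell out why the SOT limit of orthogonal projections is again an orthogonal projection (the paper just says ``readily verified''), and you correctly invoke the \emph{non-expansiveness} of $T_t$ to justify $\lim_\gamma T_t(\cproj_\gamma f)=T_t(\cproj f)$, whereas the paper's phrase ``since $T_\bullet$ is strongly continuous'' is a bit loose (strong continuity of the semigroup refers to continuity in~$t$, not continuity of the map $T_t$, which is what is actually needed and is supplied by~\ref{i:d:Semigroup:4}).
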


\subsection{Invariant subsets}
Let us now turn to the notion(s) of $E$-invariant set for a proper convex lower semicontinuous functional on~$L^2_\mssm$.
For each $\mssm$-measurable subset~$Y\subset X$ let~$\car_Y\colon L^2_\mssm\to L^2_\mssm$ be the multiplication operator induced by the indicator function~$\car_Y$ of~$Y$, and set~$H_Y\eqdef L^2_\mssm(Y)$.
Note that~$\car_Y$ is the orthogonal projector onto~$H_Y$.

\begin{definition}[Invariant set]
Let~$E\colon L^2_\mssm\to [0,\infty]$ be non-negative proper convex lower semicontinuous.
A Borel set~$Y\subset X$ is called
\begin{itemize}
\item \emph{$E$-invariant} $E(\car_Y u)\leq E(u)$ for every~$u\in L^2_\mssm$, cf.~\cite[\S3.4]{SchZim25};

\item \emph{doubly $E$-invariant} if any (hence every) of the equivalent conditions in Theorem~\ref{t:Invariance} holds with~$H=L^2_\mssm$ and~$\cproj=\car_Y$.
\end{itemize}
\end{definition}

\begin{remark}
Double $E$-invariance is strictly stronger than $E$-invariance.
Indeed, let~$X\eqdef\set{1,2}$ be the discrete space with two points, and set~$\mssm=\delta_1+\delta_2$.
It suffices to consider the functional in Example~\ref{ex:StrongDoubleInv} on~$H\cong L^2_\mssm$ and with~$\cproj_i= \car_{\set{i}}$ for~$i=1,2$.
\end{remark}

\begin{remark}[Comparison with~\cite{SchZim25}]
The definition of $E$-invariant set has been recently considered in~\cite[\S3.4]{SchZim25}.
In light of Proposition~\ref{p:EquivConvex}, if a set~$Y$ is $E$-invariant,~$\car_Y$ is $T_\bullet$-invariant.
Analogously, in light of Theorem~\ref{t:Invariance}, a set~$Y$ is doubly $E$-invariant if and only if~$\car_Y$ is doubly $T_\bullet$-invariant.

Theorem~\ref{t:Invariance}, specialized to the setting of $H=L^2_\mssm$, extends the analogous characterization for (bi-linear) Dirichlet forms, see, e.g.,~\cite[\S1.6, pp.~53f.]{FukOshTak11}.
In particular, the strongest notion of double $E$-invariance (rather than the mere $E$-invariance) provides a much better analogy with the classical results for bilinear forms, where the natural notion of invariance is precisely the assertion of Theorem~\ref{t:Invariance}\ref{i:t:Invariance:4}.
\end{remark}

\subsubsection{Restriction of functionals to invariant subsets}
Again as in the bi-linear case, it is possible to restrict a proper convex lower semicontinuous functional to an invariant set.

\begin{proposition}[Restrictions]\label{p:Restrictions}
Let~$E\colon H \to (-\infty,\infty]$ be a proper convex lower semicontinuous functional, and~$\cproj\colon H\to H$ be an orthogonal projection operator.
Suppose that some (hence every) of the equivalent conditions in Theorem~\ref{t:Invariance} is satisfied.
Then,
\begin{enumerate}[$(i)$]
\item\label{i:p:Restrictions:1} $A+\partial\mbfI_{H_\cproj}$ is a maximal monotone operator, with strongly continuous non-expansive non-linear semigroup~$\cproj\circ T_\bullet$, and strongly continuous non-expansive non-linear resolvent~$\cproj \circ J_\bullet$;
\item\label{i:p:Restrictions:2} if~$A$ is additionally cyclically monotone with associated proper convex lower semicontinuous functional~$E\colon H\to (-\infty,\infty]$, then~$A+\partial\mbfI_{H_\cproj}$ corresponds to the proper convex lower semicontinuous functional~$E+\mbfI_{H_\cproj}\colon H\to (-\infty,\infty]$.
\end{enumerate}

Assume now that~$H=L^2_\mssm$.
Then,
\begin{enumerate}[$(i)$]\setcounter{enumi}{2}
\item\label{i:p:Restrictions:3} if $E$ is additionally order-preserving and~$\cproj\colon L^2_\mssm \to L^2_\mssm$ is additionally order-preserving, then~$E+\mbfI_{H_\cproj}$ is an order-preserving functional;

\item\label{i:p:Restrictions:4} if $E$ is additionally $L^\infty$-non-expansive and~$\cproj\colon L^2_\mssm \cap L^\infty_\mssm\to L^2_\mssm$ is additionally $L^\infty$-non-expansive, then~$E+\mbfI_{H_\cproj}$ is $L^\infty$-non-expansive;
\end{enumerate}

In particular, if $E$ is a Dirichlet functional and~$\cproj\colon L^2_\mssm \cap L^\infty_\mssm\to L^2_\mssm$ is both order-preserving and $L^\infty$-non-expansive, then~$E+\mbfI_{H_\cproj}$ is a Dirichlet functional.

The analogous assertions hold with~$\cproj^\tperp$ in place of~$\cproj$.
\end{proposition}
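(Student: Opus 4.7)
The plan is to prove the four parts sequentially, drawing on Theorem~\ref{t:Invariance} (the characterization of double invariance), Proposition~\ref{p:EquivConvex} (preservation of convex sets), and Theorem~\ref{t:MaxMonSubDiff} (the energy--generator correspondence).

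For part~(i), I would first observe that Theorem~\ref{t:Invariance}\ref{i:t:Invariance:3} gives $J_\lambda(H_\cproj)\subset H_\cproj$ for every $\lambda>0$, and that Theorem~\ref{t:Invariance}\ref{i:t:Invariance:2} gives $\cproj\dom{A}\subset\dom{A}$, from which a density argument yields $\overline{\dom{A}\cap H_\cproj}=\overline{\dom{A}}\cap H_\cproj$. Proposition~\ref{p:EquivConvex}, equivalence~\ref{i:p:EquivConvex:1}$\iff$\ref{i:p:EquivConvex:5} with $C=H_\cproj$, then delivers the maximal monotonicity of $A+\partial\mbfI_{H_\cproj}$ and the identification of its minimal section with $A^\circ$ on $H_\cproj\cap\dom{A}$. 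The resolvent formula follows by solving $f\in h+\lambda(A+\partial\mbfI_{H_\cproj})(h)$ directly: projecting with $\cproj$ and invoking the commutation $\cproj A=A\cproj$ from Theorem~\ref{t:Invariance}\ref{i:t:Invariance:2}, one obtains $h=J_\lambda(\cproj f)=\cproj J_\lambda(f)$. The semigroup formula is then read off from the Crandall--Ligget product formula of Theorem~\ref{t:SemigroupResolvent}\ref{i:t:SemigroupResolvent:2} and the iterated identity $(\cproj\circ J_{t/n})^n=\cproj\circ (J_{t/n})^n$, obtained by induction from the same commutation. Part~(ii) is immediate: the inclusion $\partial E+\partial\mbfI_{H_\cproj}\subset \partial(E+\mbfI_{H_\cproj})$ is automatic, and both sides are maximal monotone (the left by~(i), the right by Theorem~\ref{t:MaxMonSubDiff}\ref{i:t:MaxMonSubDiff:1} applied to the proper convex lower semicontinuous functional $E+\mbfI_{H_\cproj}$), so maximality forces equality.

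For parts~(iii) and~(iv), and the concluding Dirichlet claim, I would verify the pair characterizations~\ref{i:d:Form:7'} and~\ref{i:d:Form:8'} directly for $E+\mbfI_{H_\cproj}$. Fix $u,v\in\dom{E}\cap H_\cproj$ and apply the double-invariance identity $E(f)=E(\cproj f)+E(\cproj^\tperp f)$ from Theorem~\ref{t:Invariance}\ref{i:t:Invariance:4} to $u\vee v$ and $u\wedge v$ in case~(iii), or to $h_\alpha^\pm(u,v)$ and $k_\alpha^\pm(u,v)$ in case~(iv). The additional order-preservation, respectively $L^\infty$-non-expansiveness, of $\cproj$ forces $H_\cproj$ to be stable under the relevant lattice operations, respectively the truncations appearing in~\eqref{eq:StripeProjections}, so that the $\cproj^\tperp$-contributions vanish. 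Combined with the property~\ref{i:d:Form:7'} (resp.~\ref{i:d:Form:8'}) already satisfied by $E$, this yields the desired inequality for $E+\mbfI_{H_\cproj}$. The final Dirichlet assertion follows by combining~(iii) and~(iv) with the convexity and lower semicontinuity of $E+\mbfI_{H_\cproj}$.

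The main technical obstacle is the stability of $H_\cproj$ under lattice operations and $\alpha$-truncations from the sole hypotheses on $\cproj$. I expect to handle this by applying Proposition~\ref{p:EquivConvex} to $\cproj^\opl{2}$ together with the explicit formulas in~\eqref{eq:StripeProjections} for the projections $\cproj_\tleq$ and $\cproj_\alpha$: the order-preservation, respectively $L^\infty$-non-expansiveness, of $\cproj$ promotes $H_\cproj^\opl{2}$ to an invariant set for these projections, which translates into the sublattice, respectively truncation-stability, property of $H_\cproj$ needed in Phase~2.
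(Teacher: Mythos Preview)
Your treatment of parts~\ref{i:p:Restrictions:1} and~\ref{i:p:Restrictions:2} matches the paper's proof closely: the same invocation of Proposition~\ref{p:EquivConvex}\ref{i:p:EquivConvex:5} for maximal monotonicity, the same resolvent computation via the commutation $\cproj A=A\cproj$, and the same passage to the semigroup via Theorem~\ref{t:SemigroupResolvent}\ref{i:t:SemigroupResolvent:2}.

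For parts~\ref{i:p:Restrictions:3} and~\ref{i:p:Restrictions:4} you take a genuinely different route. The paper argues at the \emph{semigroup} level: since $T_\bullet$ is order-preserving (resp.\ $L^\infty$-non-expansive) by Theorem~\ref{t:Main}, and $\cproj$ is too by hypothesis, the composition $\cproj\circ T_\bullet$ inherits the property, and one returns to the functional $E+\mbfI_{H_\cproj}$ by the converse implication of Theorem~\ref{t:Main}. This is two lines. Your approach stays at the functional level and hinges on the stability of $H_\cproj^\opl{2}$ under $\cproj_\tleq$ and $\cproj_\alpha$; your idea of obtaining this from Proposition~\ref{p:EquivConvex}\ref{i:p:EquivConvex:1}$\iff$\ref{i:p:EquivConvex:2} applied with functional $\mbfI_{H_\cproj^\opl{2}}$ (whose resolvent is $\cproj^\opl{2}$) and $C=C_\tleq$ or $C_\alpha$ is correct and rather elegant. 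The gain is a self-contained argument that never leaves the $(E)$-picture; the cost is the extra stability lemma. One caution: once you have the stability $\cproj_\tleq(H_\cproj^\opl{2})\subset H_\cproj^\opl{2}$, the verification of~\ref{i:d:Form:7} for $E+\mbfI_{H_\cproj}$ is immediate---for $w\in H_\cproj^\opl{2}$ one has $(E+\mbfI_{H_\cproj})^\opl{2}(\cproj_\tleq w)=E^\opl{2}(\cproj_\tleq w)\leq E^\opl{2}(w)$, while for $w\notin H_\cproj^\opl{2}$ the right side is $+\infty$. Your detour through the double-invariance identity $E(f)=E(\cproj f)+E(\cproj^\tperp f)$ is unnecessary here and, as written, circular (applying it to $u\vee v\in H_\cproj$ yields only $E(0)=0$). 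Drop that step and verify~\ref{i:d:Form:7},~\ref{i:d:Form:8} directly rather than~\ref{i:d:Form:7'},~\ref{i:d:Form:8'}; the pair versions then follow from Proposition~\ref{p:Barthelemy} if you want them.
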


Before proving Proposition~\ref{p:Restrictions}, let us note that order-preserving $L^2_\mssm$-orthogonal projections are in one-to-one correspondence with measurable subsets regarded up to $\mssm$-equivalence.

\begin{proposition}
Let~$\cproj\colon L^2_\mssm \cap L^\infty_\mssm\to L^2_\mssm$ be an order-preserving $L^2_\mssm$-orthogonal projection.
Then~$\cproj = \car_Y$ for some~$Y\in\mfA$. In particular, $\cproj$ is $L^\infty$-non-expansive.
\end{proposition}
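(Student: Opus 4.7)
The plan is to recover a measurable set $Y$ directly from $\cproj$ by combining positivity-preservation with orthogonality and idempotency. The decisive identity is that for $f\geq 0$ one should have $\cproj f \cdot \cproj^\tperp f = 0$ $\mssm$-a.e., from which $\cproj f$ vanishes off the set $Y_f := \{\cproj f > 0\}$ and equals $f$ on $Y_f$; it remains to show $Y_f$ does not depend on $f$ and then to extend the conclusion from non-negative to arbitrary $f$ by linearity.

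Concretely I would proceed as follows. First, from $\cproj$ linear and order-preserving (applied to $0 \leq f$ for $f \geq 0$), conclude $\cproj f \geq 0$; together with the analogous assertion for $\cproj^\tperp$, deduce $0 \leq \cproj f \leq f$. Second, invoke orthogonality $\tscalar{\cproj f}{\cproj^\tperp f}_{L^2_\mssm}= 0$ with both factors non-negative a.e.\ to get their pointwise product zero $\mssm$-a.e.; combined with $\cproj f + \cproj^\tperp f = f$, this yields $\cproj f = f \, \car_{Y_f}$ for $Y_f := \{\cproj f > 0\}$. Third, show $Y_f$ is independent of $f$ among strictly positive functions: given $f,g>0$ a.e., compute $\cproj(f+g)$ in two ways, as $\cproj f + \cproj g = f \, \car_{Y_f} + g\, \car_{Y_g}$ and as $(f+g)\, \car_{Y_{f+g}}$, then compare on $Y_f \setminus Y_g$, $Y_g \setminus Y_f$, and $Y_{f+g}\setminus (Y_f \cup Y_g)$, using strict positivity to force $Y_f = Y_g =: Y$ modulo $\mssm$. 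Fourth, extend to arbitrary $f \in L^2_\mssm \cap L^\infty_\mssm$ by writing it as a difference of strictly positive functions (using $\sigma$-finiteness of $\mssm$ to exhaust $X$ by sets of finite measure if necessary); linearity of $\cproj$ then gives $\cproj f = \car_Y \cdot f$ for all $f$. The final $L^\infty$-non-expansiveness is immediate from the closed form: $\norm{\car_Y u - \car_Y v}_{L^\infty_\mssm} = \norm{\car_Y(u-v)}_{L^\infty_\mssm} \leq \norm{u-v}_{L^\infty_\mssm}$.

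The step I expect to be the main obstacle is the first one, specifically obtaining $\cproj^\tperp f \geq 0$ (equivalently $\cproj f \leq f$) for $f \geq 0$. This is not automatic from ``$\cproj$ is an order-preserving $L^2_\mssm$-orthogonal projection'' alone: the $L^2$-orthogonal projection onto constants, $\cproj f := \tparen{\int f\,\diff\mssm}\cdot \car_X$, on a probability space satisfies all explicitly stated hypotheses yet is not of the form $\car_Y$. Consequently I would expect the proof either to invoke the complementary hypothesis on $\cproj^\tperp$ silently (in line with the symmetric double-projection framework of Theorem~\ref{t:Invariance}), or to route through a standard Banach-lattice identification of order-preserving $L^2$-orthogonal projections with band projections under the ambient $\sigma$-finiteness.
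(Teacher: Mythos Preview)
Your approach is genuinely different from the paper's, and your diagnosis of the gap is correct --- in fact the gap is present in the paper's own proof as well.

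The paper does not argue via the pointwise identity $\cproj f\cdot\cproj^\tperp f=0$. Instead, it first reduces to finite $\mssm$, then invokes a representation theorem of Weis \cite[Prop.~5.1(C)$\Rightarrow$(A)]{Wei84} to write $\cproj u = h\cdot(u\circ\tau)$ for some measurable $h\geq 0$ and $\tau\colon X\to X$, and then uses \emph{idempotency} (not self-adjointness) to force $h=\car_Y$ and $\tau\big|_Y=\id_Y$. So the paper's route is: structure theorem for positive operators $\leadsto$ idempotency $\leadsto$ multiplication operator; yours is: positivity of both $\cproj$ and $\cproj^\tperp$ $\leadsto$ disjointness of $\cproj f$ and $\cproj^\tperp f$ $\leadsto$ multiplication operator.

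Your counterexample is decisive: on a probability space the conditional expectation $\cproj f=\int f\,\diff\mssm$ is an order-preserving $L^2_\mssm$-orthogonal projection that is \emph{not} of the form $\car_Y$. This shows the proposition is false as stated, and it pinpoints where the paper's proof fails: the Weis representation $\cproj u=h\cdot(u\circ\tau)$ is the hallmark of lattice (or Riesz) \emph{homomorphisms} / disjointness-preserving operators, not of arbitrary positive operators; the projection onto constants admits no such representation. The paper applies the Weis result without checking the relevant hypothesis, which merely order-preserving projections do not satisfy.

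Of your two proposed repairs, the first is the right one and makes your argument go through cleanly: assume $\cproj^\tperp$ is order-preserving as well (exactly the symmetric hypothesis used throughout the double-invariance framework of Theorem~\ref{t:Invariance} and Proposition~\ref{p:Restrictions}). Then $0\leq\cproj f\leq f$ for $f\geq 0$, your disjointness step is justified, and the rest of your outline is straightforward and more elementary than the paper's route. Your second proposed repair (``Banach-lattice identification of order-preserving $L^2$-orthogonal projections with band projections'') does not exist in that generality --- your own counterexample rules it out.
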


\begin{proof}
Suppose first that~$\mssm$ is a finite measure.
Since~$\cproj$ is order-preserving, by~\cite[Prop.~5.1(C)$\implies$(A), p.~551]{Wei84} there exists a measurable~$\tau\colon X\to X$ and a measurable~$h\colon X\to \R^+_0$ such that~$\cproj u= h\cdot (u\circ\tau)$.
Since~$\cproj$ is an $L^2_\mssm$-orthogonal projection, it is idempotent. Thus, letting~$\tau^2\eqdef \tau\circ\tau$,
\begin{equation}\label{eq:p:Weis:1}
h\cdot (u\circ \tau)= h\cdot h\circ\tau \cdot (u\circ\tau^2) \comma \qquad u\in L^2_\mssm\fstop
\end{equation}
Since~$\mssm$ is finite,~$\car\in L^2_\mssm$, and choosing~$u=\car$ in~\eqref{eq:p:Weis:1} shows that~$h=h\cdot (h\circ\tau)$.
In particular, setting~$Y\eqdef \set{h>0}$, we may cancel~$h$ on both sides of~$h=h\cdot (h\circ\tau)$ restricted to~$Y$, to obtain
\begin{equation}\label{eq:p:Weis:2}
\car_Y \cdot(h\circ\tau)=\car_Y \fstop
\end{equation}
It follows that~$h\circ\tau\in \set{0,1}$, hence~$h\circ\tau=\car_Y$ by definition of~$Y$.
Thus, 
\begin{equation}\label{eq:p:Weis:3}
h=\car_Y h=\car_Y\car_{\tau^{-1}(Y)} \fstop
\end{equation}
Comparing~\eqref{eq:p:Weis:2} and~\eqref{eq:p:Weis:3}, we conclude that~$\tau^{-1}(Y)\cap Y=Y$.
Finally, replacing~$h=\car_Y$ in~\eqref{eq:p:Weis:1} we conclude that~$\tau\restr_Y=\id_Y$ by arbitrariness of~$u\in L^2_\mssm$.
This shows that~$\cproj u= \car_Y\cdot (u\circ \tau)$, which proves the assertion.

The case of $\sigma$-finite~$\mssm$ is reduced to that of finite~$\mssm$ similarly to the proof of~\cite[Prop.~3.2]{LzDSWir21}.
\end{proof}

\begin{proof}[Proof of Proposition~\ref{p:Restrictions}]
Throughout the proof~$\aproj$ is a placeholder for either~$\cproj$ or~$\cproj^\tperp$.

\paragraph{Proof of~\ref{i:p:Restrictions:1}}
If~$\cproj=\car_H$ there is nothing to prove. If otherwise, note that~$H_\aproj$ coincides with its boundary in~$H$.
Thus, from~\cite[Ex.~2.8.2, p.~46]{Bre73} we have
\[
\partial\mbfI_{H_\aproj}(h)=\begin{cases}\emp &\text{if } h\notin H_\aproj \\ H_{\aproj^\tperp} & \text{if } h\in H_\aproj\end{cases} \fstop
\]
Furthermore, under any of the conditions in Theorem~\ref{t:Invariance}, we have $\aproj \overline{\dom{A}}\subset H_\aproj$.

\paragraph{Resolvent}
Under the assumption,~$J_\lambda(H_\aproj)\subset H_\aproj$ and~\eqref{eq:p:EquivConvex:0} holds with~$C=H_\aproj$.
Then, the maximal monotonicity of~$A+\partial\mbfI_{H_\aproj}$ follows from Proposition~\ref{p:EquivConvex}\ref{i:p:EquivConvex:5}.
Since~$A+\partial\mbfI_{H_\aproj}$, it has a resolvent semigroup defined on the whole of~$H$, which we denote by~$J^\aproj_\bullet$.

Now, fix~$g\in H$.
On the one hand, setting~$f\eqdef J_\lambda(g)\in\dom{A}$, we have by definition~\eqref{eq:YoshidaRegularization} of the resolvent~$J_\lambda$, for every~$\lambda>0$,
\[
f + \lambda A(f) \ni g\comma
\]
and applying~$\aproj$ on both sides
\[
\aproj f + \lambda A(\aproj f) = \aproj f + \lambda \aproj A(f) \ni \aproj g \fstop
\]
It follows that
\begin{equation}\label{eq:p:Restrictions:1}
\aproj f + \lambda A(\aproj f) + H_{\aproj^\tperp} \ni g \fstop
\end{equation}

On the other hand, setting~$f^\aproj\eqdef J^\aproj_\lambda \in\dom{A+\partial\mbfI_{H_\aproj}}$, we have by definition~\eqref{eq:YoshidaRegularization} of the resolvent~$J^\aproj_\lambda$, for every~$\lambda>0$,
\begin{equation}\label{eq:p:Restrictions:2}
f^\aproj+\lambda A(f^\aproj) + H_{\aproj^\tperp} = f^\aproj+\lambda A(f^\aproj) + \lambda H_{\aproj^\tperp} = f^\aproj + \lambda (A+\partial\mbfI_{H_\aproj})(f^\aproj) \ni g \fstop
\end{equation}

By uniqueness of the resolvent~$f^\aproj\eqdef J^\aproj_\lambda(g)$, comparing~\eqref{eq:p:Restrictions:1} and~\eqref{eq:p:Restrictions:2} we conclude that
\[
\aproj J_\lambda(g) = \aproj f= f^\aproj = J^\aproj_\lambda(g)\comma \qquad g\in H\comma
\]
which proves the assertion.

\paragraph{Semigroup}
Since~$\aproj$ is a bounded linear operator, the conclusion follows from Theorem~\ref{t:SemigroupResolvent}\ref{i:t:SemigroupResolvent:2} similarly to~\eqref{eq:t:Invariance:7}.

\paragraph{Proof of~\ref{i:p:Restrictions:2}} Straightforward from the definition of sub-differential.

\bigskip

Throughout the rest of the proof, assume~$H=L^2_\mssm$.

\paragraph{Proof of~\ref{i:p:Restrictions:3}}
By assumption,~$\aproj\colon L^2_\mssm\to L^2_\mssm$ is order-preserving.
Since~$E$ is order-preserving, its semigroup~$T_\bullet$ is order-preserving by the forward implication Theorem~\ref{t:Main}.
Thus,~$\aproj T_\bullet$ is order-preserving and the corresponding proper convex lower semicontinuous functional~$E+\mbfI_{H_\aproj}$ is order-preserving by the converse implication in Theorem~\ref{t:Main}.

\paragraph{Proof of~\ref{i:p:Restrictions:4}}
A proof is analogous to that of~\ref{i:p:Restrictions:3} replacing \emph{order-preserving} with \emph{$L^\infty$-non-expansive}.
\end{proof}

\subsubsection{Irreducibility and families of invariant subsets}\label{sss:InvAlgebra}
We consider the notion of \emph{irreducibility} for proper convex lower semicontinuous functionals on~$L^2_\mssm$.
Say that $Y\subset X$ is \emph{$\mssm$-trivial} if it is either $\mssm$-negligible of $\mssm$-conegligible.

\begin{definition}[Irreducibility]
A proper convex lower semicontinuous functional~$E\colon L^2_\mssm\to (-\infty,\infty]$ (the corresponding semigroup, generator, resolvent) is called
\begin{itemize}
\item \emph{irreducible} if every $E$-invariant set is $\mssm$-trivial, see~\cite[\S3.4]{SchZim25};
\item \emph{strongly irreducible} if every strongly $E$-invariant set is $\mssm$-trivial;
\item \emph{doubly irreducible} if every doubly $E$-invariant set is $\mssm$-trivial.
\end{itemize}
\end{definition}

Since every doubly $E$-invariant set is $E$-invariant, every irreducible~$E$ is doubly irreducible.

\medskip

We conclude our treatment of invariance by considering the family of all invariant sets of a proper convex lower semicontinuous functional~$E\colon L^2_\mssm\to (-\infty,\infty]$, regarded up to $\mssm$-equivalence.
Following the notation in~\cite{SchZim25}, we denote by~$\mfB_\inv$ the family of all $E$-invariant subsets of~$X$, and we set
\[
\mfA_\inv\eqdef \set{Y\in\mfA : Y\in\mfB_\inv \quad \text{or} \quad Y^\complement \in\mfB_\inv} \fstop
\]
It is not difficult to show (see~\cite[Lem.~3.14]{SchZim25}) that~$\mfA_\inv$ is the $\sigma$-algebra generated by~$\mfB_\inv$, called the \emph{$E$-invariant $\sigma$-algebra}.

Analogously, we denote by~$\mfB_\dinv$ the family of all doubly $E$-invariant subsets of~$X$.
The situation for~$\mfB_\dinv$ is more involved, and only partial results are available.

\begin{proposition}
Let~$E\colon L^2_\mssm \to [0,\infty]$ be a linearly defined grounded even \emph{weakly local} Dirichlet functional.
Then,~$\mfB_\dinv$ is a $\sigma$-algebra.
\end{proposition}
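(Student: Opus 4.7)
My plan is to verify the three $\sigma$-algebra axioms for $\mfB_\dinv$ using the fourth characterization of double invariance in Theorem~\ref{t:Invariance}: namely, $Y\in\mfB_\dinv$ iff $\car_Y f, \car_{Y^\complement} f \in \dom{E}$ for every $f\in\dom{E}$ and $E(f) = E(\car_Y f) + E(\car_{Y^\complement} f)$. The conditions $\emp,X\in\mfB_\dinv$ and closure under complements are immediate, since this characterization is visibly symmetric in $Y$ and $Y^\complement$ and $E$ is grounded.

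For closure under finite intersections, I would fix $Y_1,Y_2\in\mfB_\dinv$ and $f\in\dom{E}$, and iterate double invariance: $Y_1$ applied to $f$, then $Y_2$ applied to each of $\car_{Y_1}f$ and $\car_{Y_1^\complement}f$. This produces, for each piece $S$ in the partition $\mcP\eqdef \{Y_1\cap Y_2,\, Y_1\cap Y_2^\complement,\, Y_1^\complement\cap Y_2,\, Y_1^\complement\cap Y_2^\complement\}$, a projection $u_S\eqdef \car_S f\in\dom{E}$, together with the identity $E(f)=\sum_{S\in\mcP}E(u_S)$. Linear-definedness lets me sum any subfamily of the $u_S$'s inside $\dom{E}$; in particular $\car_{(Y_1\cap Y_2)^\complement}f=\sum_{S\neq Y_1\cap Y_2}u_S\in\dom{E}$. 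Because the $u_S$ are pairwise disjointly supported, two applications of weak locality give $E(\car_{(Y_1\cap Y_2)^\complement}f)=\sum_{S\neq Y_1\cap Y_2}E(u_S)$, and combining with the earlier identity yields $E(f)=E(\car_{Y_1\cap Y_2}f)+E(\car_{(Y_1\cap Y_2)^\complement}f)$, i.e.\ $Y_1\cap Y_2\in\mfB_\dinv$.

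For closure under countable unions, I would reduce to the increasing case via $\tilde Y_n\eqdef \bigcup_{k\leq n}Y_k\in\mfB_\dinv$ (by finite intersection plus complementation) and write $Z_n\eqdef \tilde Y_n\setminus \tilde Y_{n-1}\in\mfB_\dinv$ for the disjoint differences. Fixing $f\in\dom{E}$, linear-definedness and iterated weak locality give $E(\car_{\tilde Y_n}f)=\sum_{k\leq n}E(\car_{Z_k}f)\leq E(f)$, the upper bound coming from non-negativity of $E$ together with double invariance of $\tilde Y_n$. Since $\car_{\tilde Y_n}f\to \car_Y f$ in $L^2_\mssm$ for $Y\eqdef\bigcup_n Y_n$, lower semicontinuity delivers $\car_Y f\in\dom{E}$; the analogous conclusion for $\car_{Y^\complement}f$ follows from the decreasing identity $E(\car_{\tilde Y_n^\complement}f)=E(f)-E(\car_{\tilde Y_n}f)$ and another application of lower semicontinuity. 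A final invocation of weak locality, applied to the disjointly supported pair $\car_Y f,\car_{Y^\complement}f\in\dom{E}$ whose sum is $f\in\dom{E}$, yields $E(f)=E(\car_Y f)+E(\car_{Y^\complement}f)$, so $Y\in\mfB_\dinv$.

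The main subtlety I anticipate is the bookkeeping of membership in $\dom{E}$: every invocation of weak locality requires the summands \emph{and} their sum to be in the domain, so linear-definedness must be used repeatedly at every finite stage, while in the countable stage it must be combined with lower semicontinuity and the uniform bound coming from non-negativity together with double invariance of the partial unions $\tilde Y_n$. The assumptions of evenness and of $E$ being a Dirichlet functional enter implicitly, through Proposition~\ref{p:WLocality} (which makes weak locality a meaningful hypothesis) and through Theorem~\ref{t:Invariance} (which furnishes the working characterization of $\mfB_\dinv$).
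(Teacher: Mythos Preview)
Your proof is correct. The finite-closure step is essentially the same as the paper's (you do intersections, the paper does unions; both iterate double invariance and then recombine via weak locality and linear-definedness). The genuine difference is in the countable step: the paper goes through the \emph{semigroup} characterization, observing that~$\car_{\tilde Y_n}\to\car_Y$ in the strong operator topology and invoking Lemma~\ref{l:SOT} (doubly $T_\bullet$-invariant projections form an SOT-closed family), whereas you stay entirely on the \emph{functional} side, using non-negativity plus double invariance of the~$\tilde Y_n$ to bound~$E(\car_{\tilde Y_n}f)$ and~$E(\car_{\tilde Y_n^\complement}f)$ uniformly, then lower semicontinuity to land~$\car_Y f,\car_{Y^\complement}f$ in~$\dom{E}$, and a final application of weak locality to the disjoint pair. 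Your route is more self-contained (it avoids the detour through~$T_\bullet$) but genuinely uses non-negativity and lower semicontinuity of~$E$; the paper's route is arguably cleaner once Lemma~\ref{l:SOT} is in hand and does not need to track energy bounds. One minor remark on your closing paragraph: you do not actually use Proposition~\ref{p:WLocality} anywhere---you invoke weak locality only in its defining form~\eqref{eq:p:WLocality:1}, so evenness plays no role in your argument beyond being part of the stated hypotheses.
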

\begin{proof}
It is clear from the definition of double $E$-invariance that~$\mfB_\dinv$ is closed under complementation.

\paragraph{Claim: $\mfB_\dinv$ is closed under finite unions}
Let~$Y_1,Y_2$ be doubly $E$-invariant, and let~$Z_i$ be a placeholder for either~$Y_i$ or~$Y_i^\complement$, for~$i=1,2$.
For every~$f\in\dom{E}$, we have~$\car_{Z_1} f\in\dom{E}$ by double $E$-invariance of~$Y_1$ and thus~$\car_{Z_2}\car_{Z_1} f\in\dom{E}$ by double $E$-invariance of~$Y_2$. 
Furthermore, again by definition of double $E$-invariance of~$Y_1$, respectively~$Y_2$, we have
\begin{equation}\label{eq:p:AlgebraInvariance:1}
\begin{aligned}
E(f)&=E(\car_{Y_1}f)+E(\car_{Y_1^\complement}f) = E(\car_{Y_1}f)+ E(\car_{Y_2}\car_{Y_1^\complement} f) + E(\car_{Y_2^\complement}\car_{Y_1^\complement} f)
\\
&= E(\car_{Y_1}f)+ E(\car_{Y_2\setminus Y_1} f) + E(\car_{Y_2^\complement \cap Y_1^\complement} f) \fstop
\end{aligned}
\end{equation}
Now, note that~$\car_{Y_1}f\cdot \car_{Y_2\setminus Y_1} f=0$ and~$\car_{Y_1}f+\car_{Y_2\setminus Y_1} f\in\dom{E}$ because $\dom{E}$ is a linear space.
Thus, by weak locality~\eqref{eq:p:WLocality:1} of~$E$ applied with~$u\eqdef\car_{Y_1}f$ and~$v\eqdef\car_{Y_2\setminus Y_1} f$, we may continue the chain of equalities in~\eqref{eq:p:AlgebraInvariance:1} to obtain
\begin{align*}
E(f)&=E\tparen{(\car_{Y_1} +\car_{Y_2\setminus Y_1}) f} + E(\car_{(Y_1 \cup Y_2)^\complement} f)
\\
&= E(\car_{Y_1 \cup Y_2}f)+E(\car_{(Y_1 \cup Y_2)^\complement}f)\comma
\end{align*}
which shows that~$Y_1\cup Y_2$ is doubly $E$-invariant.
For a finite number of sets the claim follows by induction.

\paragraph{Claim: $\mfB_\dinv$ is an algebra}
Since~$\emp$ is doubly $E$-invariant, and since~$\mfB_\dinv$ is closed under complementation and finite unions, $\mfB_\dinv$ is an algebra of sets.

\paragraph{Conclusion: $\mfB_\dinv$ is a $\sigma$-algebra}
It suffices to show that~$\mfB_\dinv$ contains countable \emph{nested} intersections.
To this end, let~$\seq{Y_n}_n\subset \mfB_\dinv$ with~$Y_{n+1}\subset Y_n$, and set $Y\eqdef\cap_n Y_n$.
For every~$f\in L^2_\mssm$, by Dominated Convergence with dominating function~$\car_{Y_1} \abs{f}\in L^2_\mssm$, we have~$L^2_\mssm$-$\nlim \car_{Y_n}f=\car_Y f$; that is, $\car_{Y_n}\colon L^2_\mssm\to L^2_\mssm$ converges to~$\car_{Y}\colon L^2_\mssm\to L^2_\mssm$ in the strong operator topology as~$n\to\infty$.
Then, it follows from the characterization in Theorem~\ref{t:Invariance}\ref{i:t:Invariance:1} and from Lemma~\ref{l:SOT} that~$\car_Y$ is doubly $T_\bullet$-invariant and therefore that~$Y$ is doubly $E$-invariant, which concludes the proof.
\end{proof}

{\small
\bibliographystyle{abbrv}
\bibliography{/Users/lorenzodelloschiavo/Documents/MasterBib.bib}
}

\end{document}